\numberwithin{equation}{section}
\newtheorem {thm}    {Theorem}[section]
\newtheorem {lem}      [thm]    {Lemma}
\newtheorem {cor}  [thm]    {Corollary}
\newtheorem {prop}[thm]    {Proposition}
\newtheorem* {prop*} {Proposition}
\newtheorem*{claim*}   {Claim}
\newtheorem*{conj*} {Conjecture}
\theoremstyle{definition}
\newtheorem {rmk}    [thm]    {Remark}
\newtheorem*{rmk*}  {Remark}
\newtheorem*{qst*} {Question}
\newtheorem* {problem*}{Problem}
\newtheorem*{term} {Notation}
\newcounter{AbcT}
\numberwithin{equation}{section}
\newcommand {\C} {{\mathbb C}}
\newcommand {\Hyp} {{\mathbb H}}
\newcommand {\N} {{\mathbb N}}
\newcommand {\R} {{\mathbb R}}
\newcommand {\T} {{\mathbb T}}
\newcommand {\Z} {{\mathbb Z}}
\newcommand {\cC} {{\mathscr C}}
\newcommand {\cF} {{\mathscr F}}
\newcommand {\cM} {{\mathscr M}}
\newcommand {\cN} {{\mathscr N}}
\DeclareMathOperator{\SL}{SL}
\DeclareMathOperator{\PSL}{PSL}
\DeclareMathOperator{\GL}{GL}
\DeclareMathOperator{\SO}{SO}
\DeclareMathOperator{\Spec}{Spec}
\newcommand{\eps}{\varepsilon}
\newcommand {\IGNORE}[1]  {}
\newcommand {\norm}[1] {\left\| {#1} \right\|}
\newcommand {\bsl} {\backslash}
\newcommand {\La} {{\Lambda}}
\newcommand{\Ga}{\Gamma}
\renewcommand{\sl}{\mathfrak{sl}}
\newcommand\vol{\operatorname{vol}}
\newcommand\Ad{\operatorname{Ad}}
\newcommand{\Lie}{\operatorname{Lie}}
\begin{document}
	
	\title[Large hyperbolic circles]{Large hyperbolic circles}
	\author[E.~Corso]{Emilio Corso}
	\author[D.~Ravotti]{Davide Ravotti}
	\address[E.C.]{ETH Z\"urich, R\"amistrasse 101
		CH-8092 Z\"urich
		Switzerland}
	\email{corsoemilio2@gmail.com}
	\address[D.R.]{Universit\"{a}t Wien, Department of Mathematics, Oskar-Morgenstern-Platz 1, 1090 Wien, Austria}
	\email{davide.ravotti@gmail.com}
	\date{\today}
	
	
	\begin{abstract}
		We consider circles of common centre and increasing radius on a compact hyperbolic surface and, more generally, on its unit tangent bundle. We establish a precise asymptotics for their rate of equidistribution. Our result holds for translates of any circle arc by arbitrary elements of $\SL_2(\R)$. Our proof relies on a spectral method pioneered by Ratner and subsequently developed by Burger in the study of geodesic and horocycle flows. We further derive statistical limit theorems, with compactly supported limiting distribution, for appropriately rescaled circle averages of sufficient regular observables. Finally, we discuss applications to the classical circle problem in the hyperbolic plane, following the approach of Duke-Rudnick-Sarnak and Eskin-McMullen.
	\end{abstract}
	\maketitle
	

	\section{Introduction and main results}
	
	\subsection{Dilating sets in diverse geometric contexts}
	\label{sec:intro}
	It is an intriguing geometric problem to understand the long-term distribution properties of a diversified range of progressively dilating shapes, when the ambient space in which they live is folded according to a prescribed rule. Formally, the framework underlying such a question can be laid down as follows. Consider a compact connected Riemannian manifold $S$ and a Riemannian covering space $N$ with covering projection $\pi\colon N\to S$. The manifold $N$ plays the role of the ambient space, while $S$ is to be interpreted as the manifold resulting after a certain folding procedure operated on $N$. A homothety on $N$ is a diffeomorphism $h\colon N\to N$ transforming the Riemannian metric on $N$ into a scalar multiple thereof; if the rescaling ratio is equal to $1$, it simply reduces to a Riemannian isometry. Let now $(h_t)_{t\in \R_{>0}}$ be a collection of homotheties on $N$ whose scaling factor tends to infinity as $t$ does, and fix a Borel probability measure $\mu$ on $N$. The latter should be thought of as describing quantitatively the original shape, whose dilations we wish to examine. For instance, $\mu$ might be the renormalized volume measure on a finite-volume Riemannian submanifold of $N$, e.g.~a rectifiable curve. We then let $\mu_t$ be the push-forward of $\mu$  under the homothety $h_t$ and denote by $m_t$ the projection of $\mu_t$ onto $S$, for any $t>0$.
	
	A mathematical formulation of our problem consists in asking for the limit points, as $t$ goes to infinity, of the family of measures $(m_t)_{t>0}$ in the topology of weak$^*$ convergence of probability measures on $S$. Somewhat less pretentiously, it is already interesting to determine sufficient geometric conditions on the initial measure $\mu$ ensuring that the $m_t$ equidistribute as $t$ grows larger, that is, that they converge in the aforementioned topology to the renormalized volume measure $\vol_S$ on $S$; this circumstance amounts pictorially to the fact that the measures $m_t$ (and hence, in an intuitive sense, their supports) fill up the folded space $S$ in the most uniform way.
	
	\subsubsection*{The Euclidean case}
	
	To the best of the authors' knowledge, the question formulated in the previous paragraph was first asked by Dennis Sullivan (cf.~\cite{Randol}) in the zero-curvature setting of Euclidean spaces and tori, that is, when $S=\T^{d}=\R^{d}/\Z^d$ and $N=\R^{d}$ for some integer $d\geq 1$, and for $\mu$ being the volume measure on a compact lower-dimensional submanifold of $\R^{n}$. In this case, the transformations $h_t$ are the standard linear homotheties $x\mapsto tx,\;x\in \R^{d}$. The problem was originally addressed by Randol in~\cite{Randol}, both in an Euclidean and in a hyperbolic setup. In the former case, the following equidistribution result is established.
	
	\begin{thm}[{cf.~\cite[Thm.~1]{Randol}}]
		\label{thm:Randol}
		Suppose $C$ is the smooth boundary of a compact convex subset of $\R^{d}$ with non-empty interior, and assume its Gaussian curvature is everywhere positive. Let $\mu$ be the volume measure on $C$, renormalized to be a probability measure. Then the probability measures $m_t$ on $\T^{d}$ defined by
		\begin{equation*}
			m_t(A)=\mu(\{x\in \R^{n}:tx+\Z^{d}\in A \})\;, \quad A\subset \T^{d} \emph{ Borel }
		\end{equation*} equidistribute towards the Haar measure on $\T^{d}$ as $t\to\infty$.
	\end{thm}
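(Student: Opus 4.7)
The plan is to verify the weak$^*$ convergence through Weyl's equidistribution criterion: it suffices to show that, for every $k\in\Z^{d}\setminus\{0\}$, the Fourier coefficient
\begin{equation*}
\widehat{m_t}(k)\eqdef \int_{\T^{d}}e^{-2\pi i \langle k,y\rangle}\der m_t(y)
\end{equation*}
tends to zero as $t\to \infty$. Since trigonometric polynomials are uniformly dense in $C(\T^{d})$, pointwise vanishing of all non-trivial Fourier coefficients is equivalent to weak$^*$ convergence of $(m_t)_{t>0}$ to the Haar measure on $\T^{d}$.

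Unfolding the definition $m_t(A)=\mu(\{x\in \R^{d}: tx+\Z^{d}\in A\})$ via the push-forward formula, one computes
\begin{equation*}
\widehat{m_t}(k)=\int_{C} e^{-2\pi i t \langle k,x\rangle}\der \mu(x)=\widehat{\mu}(tk),
\end{equation*}
where $\widehat{\mu}$ denotes the Fourier transform of the compactly supported probability measure $\mu$, viewed as a measure on $\R^{d}$. For any fixed non-zero $k\in\Z^{d}$, $|tk|\to\infty$ as $t\to\infty$, so the theorem follows as soon as one establishes the decay statement $\widehat{\mu}(\xi)\to 0$ as $|\xi|\to\infty$.

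The main obstacle, and the analytic heart of the matter, is thus the decay at infinity of the Fourier transform of the surface measure on $C$. The plan is to derive this via the classical method of stationary phase. One covers $C$ by finitely many patches on each of which it can be written as the graph of a smooth function $\varphi$ over the tangent hyperplane at a fixed base point, and introduces a smooth partition of unity subordinate to this cover. On each such patch, $\widehat{\mu}(\xi)$ reduces to an oscillatory integral of the form $\int a(u)e^{-2\pi i \langle\xi,(u,\varphi(u))\rangle}\der u$. The critical points of the phase occur precisely where the outward unit normal to $C$ is parallel to $\xi/|\xi|$; by strict convexity and smoothness, these are exactly two (antipodal) points, and the hypothesis of everywhere positive Gaussian curvature guarantees that the Hessian of the phase at each of them is non-degenerate, with determinant bounded away from zero uniformly in $\xi$. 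The stationary-phase lemma then delivers the uniform bound $|\widehat{\mu}(\xi)|=O(|\xi|^{-(d-1)/2})$, which is far stronger than required to conclude.
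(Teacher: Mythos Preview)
Your proposal is correct and follows precisely the approach the paper itself attributes to Randol: reduce to the decay of the non-trivial Fourier coefficients $\widehat{m_t}(k)=\widehat{\mu}(tk)$ via Weyl's criterion, and then invoke the stationary-phase estimate $|\widehat{\mu}(\xi)|=O(|\xi|^{-(d-1)/2})$ for surface measures with non-vanishing Gaussian curvature. The paper does not spell out its own proof of this cited result, but its one-sentence description (``classical Fourier analysis on the $d$-dimensional torus\dots once the decay properties at infinity of the Fourier transform of the measure $\mu$ are known'') is exactly what you carry out, including the quantitative rate noted in Remark~\ref{rmk:quantitative}(a).
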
 
	
	\begin{rmk}
		\label{rmk:quantitative}
		\begin{itemize}
			\item [(a)] The result in Theorem~\ref{thm:Randol} is actually quantitative: for every sufficiently regular function $f$ on $\T^{d}$, it holds that
			\begin{equation*}
				\biggl|\int_{\T^{d}}f\;\text{d}m_t-\int_{\T^{d}}f\;\text{d}m_{\T^{d}}\biggr|\ll_{f,d}t^{-(d-1)/2}\;,
			\end{equation*}
			where $m_{\T^d}$ denotes the Haar probability measure on $\T^{d}$.
			\item [(b)] As a special case of Theorem~\ref{thm:Randol}, expanding spheres in $\R^{d}$ equidistribute on the standard torus with a polynomial rate depending on the dimension $d$. The study of expanding spheres in other geometric settings shall be a driving theme of this manuscript. 
			\item [(c)] In~\cite[Thm.~2]{Randol}, equidistribution is generalized to uniform measures supported on lower-dimensional rectilinear simplices in $\R^{d}$.
		\end{itemize}
	\end{rmk}
	
	The proof of Theorem~\ref{thm:Randol} (more generally, of its quantitative version stated in Remark~\ref{rmk:quantitative}(a)) relies on classical Fourier analysis on the $d$-dimensional torus; it is remarkably straightforward, once the decay properties at infinity of the Fourier transform of the measure $\mu$ 
	are known. As such, it has been elaborated upon by Strichartz in~\cite{Strichartz} to prove the following generalization of Theorem~\ref{thm:Randol}. Let us say that the Fourier transform $\hat{\mu}\colon \R^{d}\to \C$ of $\mu$ decays on rays if 
	\begin{equation}
		\label{eq:raydecay}
		\lim\limits_{t\to\infty}\hat{\mu}(tx)=0
	\end{equation}
	for every nonzero vector $x\in \R^{d}$.
	\begin{thm}[{cf.~\cite[Lem.~1]{Strichartz}}]
		\label{thm:Strichartz}
		Let $\mu$ be a Borel probability measure on $\R^{d}$ whose Fourier transform decays on rays. Then the conclusion of Theorem~\ref{thm:Randol} holds true. 
	\end{thm}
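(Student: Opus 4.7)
The plan is to pass to the Fourier side: the very definition of $m_t$ as a push-forward of $\mu$ under scaling and reduction modulo $\Z^d$ identifies the Fourier coefficients of $m_t$ with the values of $\hat{\mu}$ along the dilated lattice $t\Z^d$, so that the ray-decay hypothesis is tailor-made for the task.

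First I would reduce the weak-$*$ convergence $m_t\to m_{\T^d}$ to pointwise convergence of Fourier coefficients. Since each $m_t$ is a probability measure, the family $(m_t)_{t>0}$ is uniformly bounded in total variation; combined with the Stone--Weierstrass density of trigonometric polynomials in $C(\T^d)$, this reduces the claim to showing
\begin{equation*}
\widehat{m_t}(k)\longrightarrow \widehat{m_{\T^d}}(k)=\delta_{k,0} \qquad \text{for every } k\in\Z^d.
\end{equation*}

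Next, unwinding the definition of $m_t$ as the push-forward of $\mu$ under $x\mapsto tx+\Z^d$, and using that the character $y\mapsto e^{-2\pi i k\cdot y}$ is $\Z^d$-periodic for every $k\in\Z^d$, a direct change of variables gives
\begin{equation*}
\widehat{m_t}(k)=\int_{\R^d}e^{-2\pi i (tk)\cdot x}\,\mathrm{d}\mu(x)=\hat{\mu}(tk).
\end{equation*}
For $k=0$ this equals $1$; for $k\neq 0$ the vector $tk$ traces the ray $\R_{>0}\cdot k$, so the ray-decay assumption~\eqref{eq:raydecay} delivers $\hat{\mu}(tk)\to 0$ as $t\to\infty$, as required.

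I do not expect a genuine obstacle: the argument is essentially a transcription of the ray-decay hypothesis under the duality between $\R^d$ and $\T^d$. The only mildly delicate point is the passage from pointwise convergence of Fourier coefficients to weak-$*$ convergence on $C(\T^d)$, handled by a standard three-$\varepsilon$ approximation using the uniform total-variation bound $\|m_t\|_{\mathrm{TV}}=1$ and the trivial estimate $|\widehat{m_t}(k)|\leq 1$ valid for all $t$ and $k$.
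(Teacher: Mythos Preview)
Your proposal is correct and matches the approach the paper indicates: the paper does not spell out a proof of this theorem (it is cited from \cite{Strichartz}), but the remark immediately following it makes exactly your point, namely that it suffices to have $\hat{\mu}(tk)\to 0$ for each nonzero $k\in\Z^d$, which is precisely the computation $\widehat{m_t}(k)=\hat{\mu}(tk)$ combined with Stone--Weierstrass that you carry out.
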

	
	It actually suffices that $\hat{\mu}$ decays on integral rays, that is, that~\eqref{eq:raydecay} is verified, less restrictively, for any non-zero $x\in \Z^{d}$. On the other hand, it turns out (cf.~\cite[Lem.~1]{Strichartz}) that decay on arbitrary rays is equivalent to equidistribution of the projections of the $\mu_t$ onto any torus $\R^{d}/\La$, where $\La$ is a lattice in $\R^{d}$.

	To conclude this brief account of the state of the art on the problem in flat geometry,  we mention that the question of the limiting distribution of expanding circles has been recently examined in the setting of translation surfaces by Colognese and Pollicott~\cite{Colognese-Pollicott}, who prove (non-effective) equidistribution towards a probability measure which is equivalent, though in general not proportional, to the area measure on the given surface.

	\subsubsection*{The hyperbolic case}
	As already mentioned, Randol's investigations in~\cite{Randol} were not confined to the zero-curvature case. In the hyperbolic framework, namely when the sectional curvature is constantly equal to $-1$, $S$ is a compact connected hyperbolic $d$-manifold ($d\geq 2$) and $N$ can be taken as its Riemannian universal covering manifold, that is, the $d$-dimensional hyperbolic space $\Hyp^{d}$. A choice of the homotheties $(h_t)_{t>0}$ is determined by fixing a base point $x_0\in \Hyp^{d}$ and letting $h_t$ be the map which transforms\footnote{This produces a well-defined assignment, as $\Hyp^{d}$ is a uniquely geodesic metric space (cf.~\cite[Part I, Chap.~1]{Bridson-Haefliger}) for the distance induced by the hyperbolic Riemannian metric.} each Riemannian geodesic $\gamma(s)$ ($s\in \R$) passing through $x_0$ at time $0$ into the geodesic $\gamma(ts)$. In this context, the result that can be elicited from the discussion in~\cite{Randol} reads as follows.
	\begin{thm}[{cf.~\cite{Randol}}]
		\label{thm:Randolhyp}
		Let $S$ be a compact connected hyperbolic $d$-manifold, $\pi\colon \Hyp^{d}\to S$ the universal covering map, $C$ a $(d-1)$-dimensional hyperbolic  sphere of unit radius centered at a point $x_0\in \Hyp^{d}$, $\mu$ the unique isometrically-invariant\footnote{Here we obviously intend invariance under isometries of $C$ equipped with the induced hyperbolic metric.} Borel probability measure on $C$, $(h_t)_{t>0}$ the family of homotheties $\Hyp^{d}\to \Hyp^{d}$ with center $x_0$ defined as above. Then the probability measures $m_t$ on $S$ defined by
		\begin{equation}
			\label{eq:measureshyperbolic}
			m_t(A)=\mu(\{x\in \Hyp^{d}:\pi\circ h_t(x)\in A \})\;, \quad A\subset S \emph{ Borel}
		\end{equation}
		equidistribute towards the renormalized volume measure on $S$ as $t\to\infty$.
	\end{thm}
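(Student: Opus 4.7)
The approach I would follow is spectral: expand smooth test functions in Laplace eigenfunctions on $S$ and exploit the fact that hyperbolic sphere averages act diagonally on such eigenfunctions. By density of $C^{\infty}(S)$ in $C(S)$ it is enough to verify that $\int_S f\,dm_t\to\int_S f\,d\vol_S$ for every smooth $f$. Since $S$ is compact, $\Delta_S$ has purely discrete spectrum $0=\lambda_0<\lambda_1\le\lambda_2\le\cdots\to\infty$, and admits an orthonormal basis $\{f_j\}_{j\ge 0}\subset C^{\infty}(S)$ of eigenfunctions; decompose $f=\sum_{j\ge 0}\hat{f}(j)\,f_j$ in this basis.

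The key ingredient is that each $f_j$ lifts via $\pi$ to a $\Gamma$-invariant eigenfunction on $\Hyp^d$, where $\Gamma$ is the deck transformation group of the covering. The mean-value property for eigenfunctions on a rank-one symmetric space furnishes an explicit \emph{spherical function} $\varphi_\lambda\colon[0,\infty)\to\R$ such that the normalized integral of this lift over the hyperbolic sphere of radius $t$ centered at $x_0$ equals $\varphi_{\lambda_j}(t)\,f_j(x_0)$. Integrating termwise against $m_t$ yields
\begin{equation*}
	\int_S f\,dm_t=\sum_{j\ge 0}\hat{f}(j)\,f_j(x_0)\,\varphi_{\lambda_j}(t),
\end{equation*}
and the $j=0$ summand collapses to $\int_S f\,d\vol_S$, since $\varphi_0\equiv 1$ and $f_0$ is the constant eigenfunction.

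What remains is to show that the sum over $j\ge 1$ vanishes as $t\to\infty$. The classical Harish-Chandra asymptotics for spherical functions on $\Hyp^d$ (equivalently, Jacobi-function asymptotics with the appropriate parameters) give $\varphi_\lambda(t)\to 0$ as $t\to\infty$ for each $\lambda>0$, together with a bound of the shape $\absolute{\varphi_\lambda(t)}\ll (1+t)e^{-(d-1)t/2}$ uniformly in the principal spectrum $\lambda\ge(d-1)^2/4$, with slightly slower but still exponential decay for the finitely many complementary-series eigenvalues below $(d-1)^2/4$. Combining these estimates with the standard Sobolev bounds $\absolute{\hat{f}(j)}\ll_{f,N}\lambda_j^{-N}$ for arbitrary $N$, and the universal pointwise bound $\norm{f_j}_\infty\ll\lambda_j^{(d-1)/4}$, produces an absolutely summable dominant. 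Dominated convergence then legitimates the interchange of limit and summation, yielding $\int_S f\,dm_t\to\int_S f\,d\vol_S$.

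The step requiring the greatest care is precisely this interchange, which hinges on uniform-in-$\lambda$ control of the spherical functions coupled with sufficient polynomial growth of the eigenfunction sup-norms. The complementary-series range is the most delicate point, for the decay rate of $\varphi_\lambda$ degenerates as $\lambda\downarrow 0$; however, compactness of $S$ ensures that only finitely many such eigenvalues occur, so each contributes an independent vanishing summand rather than a uniform obstacle to the limit.
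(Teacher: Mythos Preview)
Your proposal is correct and follows essentially the approach the paper attributes to Randol: the paper does not supply its own proof of this theorem, but states that the argument is ``based upon the harmonic analysis of locally symmetric spaces via techniques related to the Selberg trace formula,'' which is precisely the spectral-decomposition-plus-spherical-functions machinery you employ. One small notational slip: since $x_0\in\Hyp^d$ while $f_j$ lives on $S$, the formula should read $f_j(\pi(x_0))$ (or $\tilde f_j(x_0)$ for the lift) rather than $f_j(x_0)$.
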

	
	\begin{rmk}
		Here again the result takes on a quantitative form: the rate of equidistribution of the measures $m_t$ defined in~\eqref{eq:measureshyperbolic} is exponential, as opposed to the Euclidean case, with the exponent depending on the spectral gap of the hyperbolic manifold $S$ (cf.~\cite{Randol}).
	\end{rmk}
	
	Akin in spirit to the proof of Theorem~\ref{thm:Randol}, the argument leading to Theorem~\ref{thm:Randolhyp} is based upon the harmonic analysis of locally symmetric spaces via techniques related to the Selberg trace formula; for those, the reader is referred to Selberg's original article~\cite{Selberg-trace}.

	\subsubsection*{Lifting the question to unit tangent bundles} 
	
	Let us now consider the following upgraded version of the problem explored in the foregoing paragraphs. Suppose that $S$, $N$ and $(h_t)_{t>0}$ are as in the beginning of the present section, with $y_0\in N$ being the common center of the homotheties $h_t$, and let $C$ be a compact Riemannian hypersurface in $N$.  Assume further that the Riemannian distance function on $N$ turns it into a uniquely geodesic metric space\footnote{This is certainly the case for $N=\R^d$ and $N=\Hyp^{d}$, equipped respectively with the standard Euclidean metric and with the hyperbolic metric.}. If $T^{1}N$ denotes the unit tangent bundle of $N$, then $C$ identifies uniquely the subset $\tilde{C}$ of $T^{1}N$ consisting of all pairs $(x,v)$, where $x$ is a point in $C$ and $v$ is the unit-length tangent vector to the unique geodesic connecting $y_0$ to $x$. Similarly, if $C_t$ indicates the image of $C$ under the homothety $h_t$ for any $t>0$, we denote by $\tilde{C_t}$ its lift to $T^{1}N$ obtained in the previous fashion. A natural question thus arises as to the asymptotic distribution of the $\tilde{C_t}$ when projected to the unit tangent bundle of $S$; in this respect, the natural choice of a measure on $\tilde{C_t}$ is given by the push-forward of the renormalized volume measure on $C_t$ under the canonical identification of the latter submanifold with its lift $\tilde{C_t}$. If the projections to $S$ of the hypersurfaces $C_t$ equidistribute towards the normalized volume measure on $S$, it may be expected that the projections of the lifts $\tilde{C_t}$ equidistribute towards the corresponding Liouville measure on the unit tangent bundle $T^{1}S$ (cf.~\cite[Part 1, Sec.~5.4]{Hasselblatt-Katok}). 
	
	The question lends itself to a description in the language of smooth dynamical systems. If $(g_t^{(N)})_{t\in \R}$ is the geodesic flow on $T^{1}N$ (cf.~\cite[Part 1, Chap.~5]{Hasselblatt-Katok}), it takes a few moments to realize that, for any $t>1$, the set $\tilde{C_t}$ is the image of the original lift $\tilde{C}$ under the transformation $g_{t-1}^{(N)}$; the same relation holds for the natural measures carried by the latter sets, and carries over to their projections to $T^1S$, using the geodesic flow $(g_t^{(S)})_{t\in \R}$ defined on it in place of $(g_t^{(N)})_{t\in \R}$. 
	
	The equidistribution problem in this formulation is treated in Margulis' thesis~\cite{Margulis-thesis}, which contains several striking developments and applications of the theory of Anosov systems to the large-scale geometry of negatively curved manifolds; among those, a proof is provided of equidistribution, towards the Liouville measure, of lifts of expanding circles on finite-volume hyperbolic surfaces. 
	For further comments thereupon, as well as for the connection to the hyperbolic circle problem, the reader is referred to Section~\ref{sec:circleproblemhyperbolic}.
	
	It is the chief purpose of the present work to provide a precise asymptotic expansion for the equidistribution rate of lifts of dilating hyperbolic circles, as well as of arbitrary sub-arcs thereof, on unit tangent bundles of compact hyperbolic surfaces; in the vein of the works of Randol~\cite{Randol} and Strichartz~\cite{Strichartz}, we resort to a spectral approach originating in the work of Ratner~\cite{Ratner} on quantitative mixing of geodesic and horocycle flows on Riemann surfaces of finite volume. Section~\ref{sec:quantitativeequidistribution} describes such results and expands on their connection to previous developments,
		 in particular to the closely related work of Bufetov and Forni~\cite{BuFo},
	 whereas Sections~\ref{sec:introductionCLT} and~\ref{sec:circleproblemhyperbolic} discuss a number of applications to statistical limit theorems and the hyperbolic lattice point counting problem. 
	
	We conclude this historical overview by pointing out that    
	Margulis' groundbreaking contributions in~\cite{Margulis-thesis}, together with the gradual emergence of conspicuous applications to counting and Diophantine problems, spawned intensive research aimed at understanding the asymptotic distribution properties of translates of finite-volume subgroup orbits, as well as of more general subsets, on homogeneous spaces\footnote{It is worth noticing at this point that lifts of expanding hyperbolic circles represent a particular instance, as they are geodesic translates of orbits of the maximal compact subgroup $\SO_2(\R)$ on quotients of $\SL_2(\R)$: see Section~\ref{sec:quantitativeequidistribution}.}; without purporting to provide an exhaustive list, we mention in this direction the works~\cite{Benoist-Oh,Einsiedler-Margulis-Venkatesh,Eskin-McMullen,Eskin-Mozes-Shah,Kra-Shah-Sun,Shah,Shah-second,Shah-third,Shah-fourth,PYang}. 
	
	\subsection{The setup: circles in hyperbolic surfaces and in their unit tangent bundles}
	
	We now set the stage for the main questions we address in the present manuscript, referring to Section~\ref{sec:preliminaries} for the required background.
	Let $\Ga<\SL_2(\R)$ be a cocompact lattice, that is, a discrete subgroup of $\SL_2(\R)$ such that the quotient space $\Ga\bsl \SL_2(\R)$ is compact; we indicate the latter homogeneous space with $M$. The group $\Ga$ acts properly discontinuously and isometrically on the Poincar\'{e} upper half-plane $\Hyp=\{z=x+iy \in \C:y>0 \}$, endowed with the standard hyperbolic Riemannian metric, by M\"{o}bius transformations; when the projection of $\Ga$ to $\PSL_2(\R)=\SL_2(\R)/\{\pm I_2\}$ is torsion-free, the quotient $S=\Ga\bsl \Hyp$ is a compact connected orientable smooth surface, inheriting a hyperbolic metric from $\Hyp$. With respect to such a metric, there is a canonical identification of $M$ with (possibly, a double cover of) the unit tangent bundle\footnote{More precisely, if $\Ga$ is the preimage under the canonical projection map $\SL_2(\R)\to \PSL_2(\R)$ of a cocompact lattice in $\PSL_2(\R)$, then $M$ identifies with $T^{1}S$; else, it is a double cover thereof. 
		
		In the case where the image of $\Ga$ in $\PSL_2(\R)$ has non-trivial torsion elements, then $S$ has the structure of an orbifold (cf.~\cite[Chap.~13]{Ratcliffe}). For the purposes of the paper, we shall never be concerned with this distinction.} $T^{1}S$.
	
	Let $(r_{s})_{s\in \R}$ be the one-parameter flow on $M$ defined by 
	\begin{equation}
		\label{eq:rotationflow}
		r_s(\Ga g)=\Ga g \begin{pmatrix}
			\cos{s/2}&\sin{s/2}\\
			-\sin{s/2}& \cos{s/2}
		\end{pmatrix}
		=\Ga g \exp{s\Theta}, \quad \Theta=\begin{pmatrix}
			0&1/2\\
			-1/2&0
		\end{pmatrix}
		\in \sl_2(\R)=\Lie(\SL_2(\R)),
	\end{equation}
	and denote by $(\phi_t^{X})_{t\in \R}$ the geodesic flow on $M$, which is given algebraically by 
	\begin{equation}
		\label{eq:geodesicflow}
		\phi^{X}_{s}(\Ga g)=\Ga g \begin{pmatrix} e^{t/2}&0\\ 0&e^{-t/2} \end{pmatrix}=\Ga g \exp{tX},\quad X=\begin{pmatrix}
			1/2 &0\\
			0&-1/2 \end{pmatrix}
		\in \sl_2(\R).
	\end{equation}
	
	For any point $p=\Ga g\in M$, the orbit of $p$ under the flow $(r_s)_{s\in \R}$ is the preimage of $z=\pi(p)$ under the fibration $\pi\colon M\to S$. Therefore, if $M$ identifies with $T^{1}S$, then this set consists of all unit tangent vectors to $z\in S$. For any real number  $t>0$, the time-$t$ geodesic evolution  $\phi_t^{X}(\{r_s(p):s\in \R \})$ of the previous set coincides with the projection to $M$ of the subset of $T^{1}\Hyp$ given by all outward-pointing normal vectors to the hyperbolic circle in $\Hyp$ of radius $t$ centered at (a fixed representative in $\Hyp$ of) $z$.
	
	We indicate with $\vol$ the Haar probability measure on $M$, that is, the unique $\SL_2(\R)$-invariant Borel probability measure on $M$; under the identification of $M$ with $T^1S$, it coincides with the Liouville measure projecting to the normalized hyperbolic area measure on $S$. For any $r\in \N\cup \{\infty \}$, we denote by $\mathscr{C}^{r}(M)$ the set of complex-valued functions of class $\cC^{r}$ defined on the smooth manifold $M$. The supremum norm of a continuous function $f\colon M\to \C$ is denoted by $\norm{f}_{\infty}$. For any $m\in \N_{\geq 1}$, $f\in \cC^{m}(M)$ and $j\in \{0,\dots,m\}$ let $\nabla^{j}f$ be the $j^{\text{th}}$ covariant derivative of $f$ and $|(\nabla^{j}f)(p)|$ its norm at a point $p \in M$. Define then the $\cC^{m}$-norm of $f$ (cf.~\cite[Chap.~1]{Aubin}) as
	\begin{equation}
		\label{eq:Cknorm}
		\norm{f}_{\cC^{m}}=\sum_{j=0}^{m}\sup\limits_{p \in M}|(\nabla^{j}f)(p)|\;.
	\end{equation}
	
	Let $L^{2}(M)$ be the Hilbert space of complex-valued functions on $M$ whose modulus is square-integrable with respect to the measure $\vol$, and denote by
	\begin{equation*}
		\langle \phi,\psi\rangle=\int_M \phi\;\bar{\psi}\;\text{d}\vol
	\end{equation*}
	the inner product of two elements $\phi,\psi\in L^{2}(M)$.
	
	Define two additional elements
	\begin{equation*}
		U=
		\begin{pmatrix}
			0&1\\
			0&0
		\end{pmatrix}
		\;,\quad 
		V=
		\begin{pmatrix}
			0&0\\
			1&0
		\end{pmatrix}
	\end{equation*}
	in the Lie algebra $\sl_2(\R)$.
	Identifying elements of the universal enveloping algebra of $\sl_2(\R)$ with left-invariant differential operators on the space $\cC^{\infty}(M)$, we define the Casimir operator as the second-order differential operator $\square=-X^{2}+X-UV\colon \cC^{2}(M)\to \cC^0(M)$. It admits a unique maximal extension to an unbounded self-adjoint operator on $L^{2}(M)$; in particular, its spectrum $\text{Spec}(\square)$ consists of real numbers. As $M$ is compact, it is well-known that $\text{Spec}(\square)$ is pure point, and is a discrete subset of $\R$. The elements of $\Spec(\square)$ classify the irreducible representations strongly contained in the Koopman representation arising from the measure-preserving action of $\SL_2(\R)$ on the measure space $(M,\vol)$, as belonging to the principal, complementary or discrete series representations if the corresponding eigenvalue $\mu$ satisfies, respectively, $\mu\geq 1/4$, $0<\mu<1/4$, $\mu\leq 0$. With the normalization we have chosen\footnote{Since $\sl_2(\R)$ is a simple Lie algebra, Casimir elements in its universal enveloping algebra are uniquely determined up to real scalars.}, the action of $\square$ on $\cC^{2}$-functions defined on the surface $S$ is given by the Laplace-Beltrami operator $\Delta_S$ associated to the hyperbolic structure on $S$.
	
	We are interested in quantitative equidistribution properties of the uniform probability measures supported on the circle arcs
	\begin{equation*}
		\phi_t^{X}(\{r_s(p):0\leq s\leq \theta \})
	\end{equation*}
	as $t$ goes to infinity, for every fixed $p \in M$ and $\theta\in (0,4\pi]$ (cf.~our normalization of $\Theta$ in~\eqref{eq:rotationflow}, a full circle corresponds to $\theta=4\pi$). For any parameter $\theta\in (0,4\pi]$ and any continuous function $f\colon M\to \C$, we thus define the function $k_{f,\theta}\colon M\times \R\to \C$ as
	\begin{equation}
		\label{eq:ktheta}
		k_{f,\theta}(p,t)\coloneqq \frac{1}{\theta} \int_0^{\theta}f\circ \phi_t^{X}\circ r_s(p)\;\text{d}s\; ,\quad p \in M,\;t\in \R.
	\end{equation}
	In the forthcoming subsection, we provide a precise asymptotic expansion of $k_{f,\theta}(p,t)$ as $t$ tends to infinity, first for joint eigenfunctions\footnote{As we shall explain in Section~\ref{sec:unitaryrepresentations}, there exists an orthonormal basis of $L^{2}(M)$ consisting of such joint eigenfunctions; Theorem~\ref{thm:case_Theta_eigenfn} is thus to be regarded as a building block for the more general Theorem~\ref{thm:mainexpandingtranslates}.} of the operators $\square$ and $\Theta$ (Theorem~\ref{thm:case_Theta_eigenfn}), and then for arbitrary functions fulfilling a suitable regularity condition (Theorem~\ref{thm:mainexpandingtranslates}).

	\subsection{Quantitative equidistribution of expanding translates of circle arcs}
	\label{sec:quantitativeequidistribution}
	
	We begin with the case of joint eigenfunctions of the operators $\square$ and $\Theta$.
	Observe that the left-invariant vector field $\Theta\in \sl_2(\R)$ acts as an unbounded skew-symmetric operator on $L^{2}(M)$; if $\Theta f=\lambda f$ for some $f\in \mathscr{C}^{1}(M)$ and $\lambda \in \C$, then $\lambda=\frac{i}{2}n$ for some $n\in \Z$. 
	
	In the following statement and throughout, we associate to each $\mu\in \text{Spec}(\square)$
	the unique complex number $\nu\in \R_{\geq 0}\cup i\R_{>0}$ satisfying $1-\nu^2=4\mu$.

	\begin{thm}\label{thm:case_Theta_eigenfn}
		There exist real constants $\kappa_0$ and $\kappa(\mu)$ for any positive Casimir eigenvalue $\mu$ such that the following assertions hold. Let $\theta\in (0,4\pi]$, $\mu\in \emph{Spec}(\square)$ and $n\in \Z$, and suppose $f\in \mathscr{C}^{2}(M)$ satisfies $\square f=\mu f$, $\Theta f=\frac{i}{2}nf$. Define $k_{f,\theta}(p,t)$ as in~\eqref{eq:ktheta}.  
		\begin{enumerate}
			\item If $\mu>1/4$, there exist H\"{o}lder-continuous functions $D^{+}_{\theta,\mu,n}f,D^{-}_{\theta,\mu,n}f\colon M\to \C$ with H\"{o}lder exponent $1/2$ and
			\begin{equation*}
				\norm{D^{\pm}_{\theta,\mu,n}f}_{\infty}\leq \frac{\kappa(\mu)}{\theta}(n^2+1)\norm{f}_{\mathscr{C}^{1}} 
			\end{equation*}
			such that, for every $p \in M$ and $t\geq 1$, 
			\begin{equation}
				\label{eq:estabovequarter}
				k_{f,\theta}(p,t)=e^{-\frac{t}{2}}\cos{\biggl(\frac{\Im{\nu}}{2}t\biggr)}D^{+}_{\theta,\mu,n}f(p)+e^{-\frac{t}{2}}\sin{\biggl(\frac{\Im{\nu}}{2}t\biggr)}D^{-}_{\theta,\mu,n}f(p)+\mathcal{R}_{\theta,\mu,n}f(p,t)\;,
			\end{equation}
			where 
			\begin{equation}
				\label{eq:estremainderabovequarter}
				|\mathcal{R}_{\theta,\mu,n}f(p,t)|\leq \frac{8\kappa_0(n^{2}+1)}{\theta\; \Im{\nu}}\norm{f}_{\mathscr{C}^{1}}e^{-t}\;.
			\end{equation}
			\item If $\mu=1/4$, there exist H\"{o}lder-continuous functions $D^{+}_{\theta,1/4,n}f\colon M\to \C$, with H\"{o}lder exponent $1/2-\eps$ for every $\eps>0$, and $D^{-}_{\theta,1/4,n}f\colon M\to \C$, with H\"{o}lder exponent $1/2$, and satisfying
			\begin{equation*}
				\norm{D^{\pm}_{\theta,1/4,n}f}_{\infty}\leq \frac{\kappa(1/4)}{\theta}(n^2+1)\norm{f}_{\mathscr{C}^{1}}\;,
			\end{equation*}
			such that, for every $p \in M$ and $t\geq 1$,
			\begin{equation}
				\label{eq:estquarter}
				k_{f,\theta}(p,t)=e^{-\frac{t}{2}}D^{+}_{\theta,1/4,n}f(p)+te^{-\frac{t}{2}}D^{-}_{\theta,1/4,n}f(p)+\mathcal{R}_{\theta,1/4,n}f(p,t)\;,
			\end{equation}
			where 
			\begin{equation*}
				|\mathcal{R}_{\theta,1/4,n}f(p,t)|\leq \frac{4\kappa_0}{\theta}(n^{2}+1)\norm{f}_{\mathscr{C}^{1}}(t+1)e^{-t}\;.
			\end{equation*}
			\item If $0<\mu<1/4$, there exist functions $D^{+}_{\theta,\mu,n}f, D^{-}_{\theta,\mu,n}f \colon M\to \C$, respectively H\"{older}-continuous with H\"{o}lder exponent $\frac{1-\nu}{2}$ and  of class $\cC^{1}$, and satisfying
			\begin{equation*}
				\norm{D^{\pm}_{\theta,\mu,n}f}_{\infty}\leq \frac{\kappa(\mu)}{\theta}(n^{2}+1)\norm{f}_{\mathscr{C}^{1}}\;,
			\end{equation*}
			such that, for every $p \in M$ and $t\geq 1$,
			\begin{equation}
				\label{eq:estpositivebelowquarter}
				k_{f,\theta}(p,t)=e^{-\frac{1+\nu}{2}t}D^{+}_{\theta,\mu,n}f(p)+e^{-\frac{1-\nu}{2}t}D^{-}_{\theta,\mu,n}f(p)+\mathcal{R}_{\theta,\mu,n}f(p,t)\;,
			\end{equation}
			where 
			\begin{equation}
				\label{eq:estremainderbelowquarter}
				|\mathcal{R}_{\theta,\mu,n}f(p,t)|\leq \frac{4\kappa_0}{\theta\nu(1-\nu)(1+\nu)}(n^{2}+1)\norm{f}_{\mathscr{C}^{1}}e^{-t}\;.
			\end{equation}
			\item If $\mu=0$, there exists a function $G_{\theta,n}f\colon M\times \R_{>0}\to \C$, with $G_{\theta,n}f(\cdot,t)$ of class $\cC^{1}$ for any $t>0$, $G_{\theta,n}f(p,\cdot)$ continuous for every $p \in M$ and
			\begin{equation*}
				\sup\limits_{p\in M,\;t>0}|G_{\theta,n}f(p,t)|\leq \frac{\kappa_0}{\theta}(n^{2}+1)\norm{f}_{\cC^{1}}
			\end{equation*}
			such that, for every $p \in M$ and $t\geq 1$,  
			\begin{equation}
				\label{eq:estzero}
				k_{f,\theta}(p,t)=\int_{M}f\;\emph{d}\vol+e^{-t}\int_1^{t}-G_{\theta,n}f(p,\xi)\;\emph{d}\xi+\mathcal{R}_{\theta,0,n}f(p,t)\;,
			\end{equation}
			where 
			\begin{equation}
				\label{eq:remainderzero}
				|\mathcal{R}_{\theta,0,n}f(p,t)|\leq\frac{8e\pi+\kappa_0}{\theta} (n^{2}+1)\norm{f}_{\mathscr{C}^{1}}e^{-t}\;.
			\end{equation}
			\item If $\mu<0$ then, for every $p \in M$ and $t\geq 1$,
			\begin{equation}
				\label{eq:estimatediscreteseries}
				|k_{f,\theta}(p,t)|\leq \frac{1}{\theta}\biggl(\frac{4\kappa_0}{(\nu-1)(\nu+1)}+\frac{2e\pi(3+\nu)}{\nu}\biggr)(n^{2}+1) \norm{f}_{\mathscr{C}^{1}}e^{-t}\;.
			\end{equation}
		\end{enumerate}
	\end{thm}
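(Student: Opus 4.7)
My plan is to derive a perturbed second-order linear ODE in $t$ for $F(t) := k_{f,\theta}(p,t)$ whose leading coefficients depend only on $\mu$, and then to analyse it by variation of parameters. The five cases correspond precisely to the five possible shapes of the fundamental system of the constant-coefficient homogeneous equation $y'' + y' + \mu y = 0$, whose characteristic roots $\lambda_\pm = (-1\pm\nu)/2$ reproduce the exponential and oscillatory factors in items (1)--(5).

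To derive the ODE, I would first differentiate twice to get $F'(t) = k_{Xf,\theta}(p,t)$, $F''(t) = k_{X^2f,\theta}(p,t)$, and then use the Casimir identity $\square = -X^2 + X - UV$ with $\square f = \mu f$ to rewrite $F''(t) = F'(t) - \mu F(t) - k_{UVf,\theta}(p,t)$; it then remains to eliminate the last term. Observing that the tangent field to $s\mapsto \phi_t^X r_s p$ is the left-invariant vector field $\Ad(\exp(-tX))\Theta = \tfrac{1}{2}(e^{-t}U - e^tV)$, the fundamental theorem of calculus yields the integration-by-parts identity
\begin{equation*}
e^{-t}k_{Uf,\theta}(p,t) - e^t k_{Vf,\theta}(p,t) = \frac{2}{\theta}\bigl(f(\phi_t^X r_\theta p) - f(\phi_t^X p)\bigr).
\end{equation*}
Applying this identity with $Uf$ in place of $f$, and using $UV - VU = 2X$, expresses $k_{UVf,\theta}$ in terms of $e^{-2t}k_{U^2f,\theta}$, $F'(t)$ and boundary values of $Uf$. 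The $\Theta$-eigenfunction relation $(U-V)f = inf$ then yields $U^2 f = UVf + in\,Uf$; solving the resulting linear system for $k_{UVf,\theta}$ and substituting back leads, for every $t\geq 1$, to
\begin{equation*}
(1-e^{-2t})F''(t) + (1+e^{-2t})F'(t) + \mu(1-e^{-2t})F(t) = S(t,p),
\end{equation*}
where $|S(t,p)|\leq C\theta^{-1}(n^2+1)\|f\|_{\mathscr{C}^1}e^{-t}$; the $(n^2+1)$ factor comes from the bound $\|Uf\|_\infty\leq (|n|+1)\|f\|_{\mathscr{C}^1}$ (via the eigenfunction identity) multiplied by the extra $|n|$ carried by the $in$ appearing in the expression for $U^2f$. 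For $t\geq 1$, dividing by $(1-e^{-2t})\geq 1-e^{-2}$ and absorbing the $O(e^{-2t})$-size deviation from constant coefficients into the source gives the constant-coefficient equation $F''(t) + F'(t) + \mu F(t) = R(t,p)$, still with $|R|\leq C'\theta^{-1}(n^2+1)\|f\|_{\mathscr{C}^1}e^{-t}$.

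Items (1)--(4) now follow by variation of parameters: writing $F = D^+_{\theta,\mu,n}f(p)\phi_+(t) + D^-_{\theta,\mu,n}f(p)\phi_-(t) + F_p(t)$ with $\{\phi_\pm\}$ the two fundamental solutions associated to $\lambda_\pm$ in each case, and $F_p$ the Duhamel integral of $R$ against Green's function (whose Wronskian is proportional to $\nu e^{-t}$, and whose reciprocal produces the explicit $\mu$-dependent denominators in the stated remainders), the coefficients $D^\pm_{\theta,\mu,n}f(p)$ are pinned down by the initial data $F(1,p)$, $F'(1,p)$, both bounded by $\|f\|_{\mathscr{C}^1}$; this delivers the claimed $L^\infty$ bounds. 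For $\mu = 0$, the coefficient of $\phi_+\equiv 1$ must equal $\int_M f\,\mathrm{d}\vol$ by direct integration against the $\SL_2(\R)$-invariant measure. In item (5), $\phi_+(t) = e^{\lambda_+ t}$ is a growing exponential with $\lambda_+ > 0$; the uniform bound $|F(t,p)|\leq \|f\|_\infty$ forces its coefficient to vanish, and since $\nu > 1$ forces $\lambda_- < -1$, the surviving $\phi_-$-contribution and the Duhamel remainder are both $O(e^{-t})$, as claimed.

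The most delicate step is establishing the H\"older regularity of the distributions $D^\pm_{\theta,\mu,n}f$ as functions of $p$. I plan to obtain it by combining two ingredients: the extraction formula expressing $D^\pm f(p)$ as an $e^{-\nu t/2}$-fast convergent limit of explicit linear combinations of $e^{-\lambda_\pm t}F(t,p)$ and $e^{-\lambda_\pm t}F'(t,p)$ (the rate coming from the spectral gap between the two fundamental solutions), and a Lipschitz-in-$p$ bound of the form $|F(t,p) - F(t,p')|\leq C_f\, e^t\, d(p,p')$ stemming from the expansion of the $V$-direction under $\phi_t^X$. Optimising over $t$ in the resulting chain of inequalities produces exactly the H\"older exponents $1/2$, $1/2-\eps$ and $(1-\nu)/2$ in items (1)--(3). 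The $\mathscr{C}^1$ regularity of the second distribution in (3) requires a bootstrap: its slower decay rate $e^{-(1-\nu)t/2}$ affords a wider window in the extraction, which can be differentiated in $p$ by inserting the vector fields $X, U, V$ (together with the adjoint factors coming from the commutation of right-translations) into the integrand defining $F(t,p)$ without losing the exponential gain.
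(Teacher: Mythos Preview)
Your ODE derivation and the variation-of-parameters analysis of the five cases are essentially the paper's approach; the paper obtains $k_\theta'' + k_\theta' + \mu k_\theta = e^{-t}G_{\theta,n}f(p,t)$ with an explicit $G$ in one step, but your non-constant-coefficient route is equivalent after the normalisation you describe.

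There is a minor gap in item~(4): integrating against $\vol$ only shows that the $p$-\emph{average} of $\lim_{t\to\infty}k_{f,\theta}(p,t)$ equals $\int_M f\,\mathrm d\vol$, not that the limit is independent of $p$. The paper closes this by pairing $\lim_t k_{f,\theta}(\cdot,t)$ against an orthonormal basis of $\Theta$-eigenfunctions and invoking mixing of the geodesic flow to kill every non-constant coefficient.

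The substantial gap is in the H\"older regularity. Your extraction-plus-optimisation scheme yields only half the claimed exponents. For $\mu>1/4$: the extraction of $D^\pm(p)$ from $F,F'$ carries coefficients of size $e^{t/2}$ (inverse Wronskian times a fundamental solution), the convergence error is $O(e^{-t/2})$, and $F(t,\cdot)$ has Lipschitz constant $\sim e^t$ as you say; hence the extracted quantity has Lipschitz constant $\sim e^{3t/2}$, and balancing $e^{-t/2}$ against $e^{3t/2}d(p,p')$ gives exponent $1/4$, not $1/2$. The same factor-of-two loss recurs: for $0<\mu<1/4$ you obtain $(1-\nu)/4$ for $D^+$ and a finite H\"older exponent (not $\mathscr C^1$) for $D^-$. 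Your bootstrap for $D^-$ also fails as written: inserting a vector field $W$ into $F(t,p)$ produces $|\partial_W F(t,p)|\lesssim e^t$, and $e^{(1-\nu)t/2}\cdot e^t\to\infty$, so the differentiated limit does not converge.

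The paper proceeds differently: it writes $D^\pm_{\theta,\mu,n}f(p) = a^\pm_{\theta,\mu}f(p) \pm \nu^{-1}\int_1^\infty g(\xi)\, G_{\theta,n}f(p,\xi)\,\mathrm d\xi$ with $a^\pm$ manifestly $\mathscr C^1$, and shows (by differentiation under the integral) that every summand of $G$ carrying an extra $e^{-\xi}$ factor contributes a $\mathscr C^1$ term. The single remaining piece is
\[
\int_1^\infty \frac{g(\xi)}{1-e^{-2\xi}}\bigl(Uf\circ\phi_\xi^X\circ r_\theta(p) - Uf\circ\phi_\xi^X(p)\bigr)\,\mathrm d\xi,
\]
and only here is a H\"older estimate needed. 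The weight $|g(\xi)|\sim e^{-a\xi}$ (with $a=\tfrac12$, resp.\ $\tfrac{1\mp\nu}{2}$) combined with the elementary bound $\int_1^\infty e^{-a\xi}\min\{K, r e^\xi\}\,\mathrm d\xi\lesssim r^a$ then delivers precisely the exponents $1/2$, $1/2-\eps$, $(1-\nu)/2$; and for $g(\xi)=e^{-(1+\nu)\xi/2}$ the faster decay makes even this term $\mathscr C^1$, yielding the claim for $D^-$ in~(3). The point is that the extra integration in $\xi$ against a decaying kernel is what buys the correct exponent, and this structure is invisible in a black-box extraction from $F(t,p)$ at a single time.
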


	\begin{rmk}
		The asymptotic expansions for $k_{f,\theta}(p,t)$ are easily transferred to the case of large negative values of the time parameter $t$, by noticing that
		\begin{equation*}
			k_{f,\theta}(p,-t)=k_{f \circ r_{\pi},\theta}(r_{\pi}(p),t) 
		\end{equation*}
		for any $p \in M$ and $t\geq 1$.
	\end{rmk}
	
	Taking advantage of Theorem~\ref{thm:case_Theta_eigenfn} and of standard harmonic analysis on the group $\SL_2(\R)$, we establish an asymptotic expansion of $k_{f,\theta}(p,t)$ for all sufficiently regular, but otherwise arbitrary test functions $f$.
	Define the Laplacian on $M$ to be the second-order linear differential operator $\Delta=\square-2\Theta^{2}$. 
	For any $s\in \R_{>0}$, let $W^{s}(M)$ be the Sobolev space of order $s$ on the manifold $M$, that is, the Hilbert-space completion of the complex vector space $\cC^{\infty}(M)$ of smooth functions on $M$ endowed with the inner product 
	\begin{equation*}
		\langle \phi,\psi \rangle_{W^s}=\langle (1+\Delta)^{s}\phi,\psi\rangle\;,\quad \phi,\psi\in \cC^{\infty}(M).
	\end{equation*}
	
	As $M$ is compact, the well-known Sobolev Embedding Theorem (which we recall in Theorem~\ref{thm:Sobolevembedding}) ensures the existence of a continuous embedding $W^{s}(M)\hookrightarrow \cC^{r}(M)$ whenever $s\in \R_{>0}$ and $r\in \N$ are such that $s>r+3/2$; explicitly, there is a constant $C_{r,s}\in \R_{>0}$ (which for definiteness we take equal to the operator norm of the corresponding embedding) such that $\norm{f}_{\cC^{r}}\leq C_{r,s}\norm{f}_{W^{s}}$ for any $f\in W^{s}(M)$. Hereinafter, an element $f\in W^{s}(M)$ for $s>3/2$ is always identified with its unique continuous representative.
	
	Set
	\begin{equation}
		\label{eq:epszero}
		\eps_{0}=
		\begin{cases}
			1 &\text{ if }1/4\in \text{Spec}(\square)\;,\\
			0 &\text{ if }1/4\notin \text{Spec}(\square)\;.\\
		\end{cases}
	\end{equation}
	
	\begin{thm}
		\label{thm:mainexpandingtranslates}
		There exist real constants $C_{\emph{Spec}}$ and $\;C_{\emph{Spec}}'$, depending only on the spectrum of the Casimir operator on $L^{2}(M)$, such that the following holds. Let $\theta\in (0,4\pi]$ and $s>11/2$ be real numbers, and suppose $f\in W^{s}(M)$.
		Then there exist continuous functions $D^{+}_{\theta,\mu}f,\;D^{-}_{\theta,\mu}f\colon M\to \C$ for any positive Casimir eigenvalue $\mu$, with 
		\begin{equation}
			\label{eq:boundDthetamu}
			\sum_{\mu \in \emph{Spec}(\square)\cap \R_{>0}}\norm{D^{+}_{\theta,\mu}f}_{\infty}+\norm{D^{-}_{\theta,\mu}f}_{\infty}\leq \frac{C_{\emph{Spec}}'C_{1,s-3}}{\theta}\norm{f}_{W^{s}},
		\end{equation}
		such that, for every $p\in M$ and $t\geq 1$,
		\begin{equation}
			\label{eq:asymptoticgeneral}
			\begin{split}
				\frac{1}{\theta}\int_0^{\theta}f\circ \phi^{X}_{t}\circ r_s(p)\;\emph{d}s=&\int_{M}f\;\emph{d}\vol\\
				&+e^{-\frac{t}{2}}\biggl( \sum_{\mu\in \emph{Spec}(\square),\;\mu>1/4}\cos{\biggl(\frac{\Im{\nu}}{2}t\biggr)} D^{+}_{\theta,\mu}f(p)+\sin{\biggl(\frac{\Im{\nu}}{2}t\biggr)} D^{-}_{\theta,\mu}f(p)\biggr)\\
				&+\sum_{\mu\in \emph{Spec}(\square),\;0<\mu<1/4}e^{-\frac{1+\nu}{2}t}D^{+}_{\theta,\mu}f(p)+e^{-\frac{1-\nu}{2}t}D^{-}_{\theta,\mu}f(p)\\
				&+\varepsilon_0\bigl(e^{-\frac{t}{2}}D^{+}_{\theta,1/4}f(p)+te^{-\frac{t}{2}}D^{-}_{\theta,1/4}f(p)\bigr)+\mathcal{R}_{\theta}f(p,t)\;,
			\end{split}
		\end{equation}
		where
		\begin{equation}
			\label{eq:globalremainderestimate}
			|\mathcal{R}_{\theta}f(p,t)|\leq \frac{C_{\emph{Spec}}C_{1,s-3}}{\theta}\norm{f}_{W^{s}}(t+1)e^{-t} \;.
		\end{equation}
	\end{thm}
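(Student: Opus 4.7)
The strategy is to expand $f$ in an orthonormal basis of $L^{2}(M)$ consisting of joint eigenfunctions of $\square$ and $\Theta$, apply Theorem~\ref{thm:case_Theta_eigenfn} term by term, and then use the Sobolev embedding in combination with Cauchy--Schwarz to pass from per-eigenfunction bounds to norms in $W^{s}$. Since $M$ is compact, $\square$ has pure point spectrum on $L^{2}(M)$, and the standard $K$-type decomposition of irreducible $\SL_2(\R)$-subrepresentations produces an orthonormal basis $(\phi_j)_{j\in\N}$ with $\square\phi_j=\mu_j\phi_j$ and $\Theta\phi_j=\tfrac{i n_j}{2}\phi_j$. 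Setting $\lambda_j=\mu_j+n_j^{2}/2$, the Laplacian $\Delta=\square-2\Theta^{2}$ acts on $\phi_j$ by the scalar $\lambda_j$, and consequently $\norm{g}_{W^{s'}}^{2}=\sum_j|\langle g,\phi_j\rangle|^{2}(1+\lambda_j)^{s'}$ for every $g\in W^{s'}(M)$.

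Write $f=\sum_j c_j\phi_j$ with $c_j=\langle f,\phi_j\rangle$. Since $s-3>5/2$, the Sobolev embedding $W^{s-3}(M)\hookrightarrow\cC^{1}(M)$ guarantees absolute convergence of this series in $\cC^{1}(M)$, which justifies interchanging summation with the circle-average integral in~\eqref{eq:ktheta} to obtain $k_{f,\theta}(p,t)=\sum_j c_j\,k_{\phi_j,\theta}(p,t)$. Applying Theorem~\ref{thm:case_Theta_eigenfn} to each summand and regrouping by Casimir eigenvalue yields the expansion~\eqref{eq:asymptoticgeneral} upon setting
\[
D^{\pm}_{\theta,\mu}f(p)=\sum_{j\,:\,\mu_j=\mu}c_j\,D^{\pm}_{\theta,\mu,n_j}\phi_j(p)
\]
for each $\mu\in\Spec(\square)\cap\R_{>0}$; the trivial Casimir eigenvalue $\mu=0$ contributes the main term $\int_{M}f\,\mathrm{d}\vol$ (together with an $\mathrm{O}(e^{-t})$-remainder supplied by part (4)), while the $\mu<0$ (discrete-series) contributions from part (5) are purely exponentially small in $t$ and are absorbed directly into the global remainder $\mathcal{R}_\theta f$.

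The quantitative bounds~\eqref{eq:boundDthetamu} and~\eqref{eq:globalremainderestimate} follow from combining three ingredients. First, Theorem~\ref{thm:case_Theta_eigenfn} controls each $D^{\pm}_{\theta,\mu_j,n_j}\phi_j$ and the corresponding remainder by a uniform constant times $(n_j^{2}+1)\norm{\phi_j}_{\cC^{1}}/\theta$, together with the additional $(t+1)e^{-t}$-factor in the exceptional case $\mu_j=1/4$. Second, the Sobolev embedding gives $\norm{\phi_j}_{\cC^{1}}\leq C_{1,s-3}(1+\lambda_j)^{(s-3)/2}$, and the elementary bound $(n_j^{2}+1)\leq 2(1+\lambda_j)$ then dominates the per-term contribution by a constant multiple of $C_{1,s-3}(1+\lambda_j)^{(s-1)/2}$. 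Third, Cauchy--Schwarz applied to the series $\sum_j|c_j|(1+\lambda_j)^{(s-1)/2}$ against $\norm{f}_{W^{s}}$ reduces the required uniform estimate to the summability of $\sum_j(1+\lambda_j)^{-\alpha}$ for a suitable $\alpha>0$; this convergence follows from Weyl-type polynomial upper bounds for the joint $(\square,\Theta)$-spectrum on the compact quotient $M$, precisely under the hypothesis $s>11/2$. The constants $\kappa(\mu)$ in Theorem~\ref{thm:case_Theta_eigenfn}, which deteriorate as $1/\Im\nu$ near $\mu=1/4^{+}$ and as $1/(\nu(1-\nu)(1+\nu))$ for $\mu$ in the complementary range, are bounded on the discrete set $\Spec(\square)$ and are absorbed into the spectrum-dependent constants $C_{\Spec}$ and $C_{\Spec}'$.

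The main difficulty is precisely the bookkeeping described above: the $(n_j^{2}+1)$-factor from Theorem~\ref{thm:case_Theta_eigenfn} and the regularity lost to the Sobolev embedding into $\cC^{1}$ together consume most of the available derivative budget, which is what forces the threshold $s>11/2$ and which requires Weyl-type summation over both the Casimir spectrum and the $K$-type index $n_j$. Moreover, the $(t+1)e^{-t}$-factor in~\eqref{eq:globalremainderestimate} cannot be replaced by $e^{-t}$ in general, since the resonance term $te^{-t/2}D^{-}_{\theta,1/4,n}\phi_j$ from~\eqref{eq:estquarter} propagates through the summation whenever $1/4\in\Spec(\square)$; this accounts for the presence of the indicator $\eps_0$ in~\eqref{eq:asymptoticgeneral} and for the linear-in-$t$ correction in the error estimate.
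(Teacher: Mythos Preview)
Your overall strategy---decompose $f$ into joint $(\square,\Theta)$-eigenfunctions, apply Theorem~\ref{thm:case_Theta_eigenfn} to each piece, and reassemble via Sobolev embedding plus Cauchy--Schwarz---is exactly the paper's. The gap is in the summation step. After your crude bound $(n_j^{2}+1)\le 2(1+\lambda_j)$, Cauchy--Schwarz against $\norm{f}_{W^{s}}$ leaves the residual series $\sum_j(1+\lambda_j)^{-1}$; but this is the trace of $(1+\Delta)^{-1}$ on the compact $3$-manifold $M$, and Weyl's law there ($N(\lambda)\sim c\lambda^{3/2}$) forces it to diverge. No choice of $s$ changes this, so the sentence ``this convergence follows \dots\ precisely under the hypothesis $s>11/2$'' is a non-sequitur: the series you wrote down does not depend on $s$ at all.

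The paper repairs this in two linked ways. First, it works not with an orthonormal basis but with the orthogonal \emph{projections} $f_{\mu,n}$ of $f$ onto the joint eigenspaces $H_{\mu,n}$; the resulting sum runs over \emph{distinct} pairs $(\mu,n)$ and so avoids the multiplicities that your basis expansion carries. Second---and this is the decisive point---it never replaces $(n^{2}+1)$ by $(1+\lambda)$. Instead it writes $\norm{f_{\mu,n}}_{W^{s''}}=(1+\mu+n^{2}/2)^{-3/2}\norm{f_{\mu,n}}_{W^{s''+3}}$ (Lemma~\ref{lem:diffSobolevnorms}) for $s''$ just above $5/2$, and Cauchy--Schwarz then leaves the weighted series $\sum_{\mu,n}(n^{2}+1)^{2}(1+\mu+n^{2}/2)^{-3}$, whose convergence (Lemma~\ref{lem:finitespectralconstant}(2)) exploits the two-parameter structure of the joint spectrum, not merely the size of $\lambda$. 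The shift by $3$ together with $s''>5/2$ is precisely what produces the threshold $s>11/2$.

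A minor correction: the factor $(t+1)e^{-t}$ in~\eqref{eq:globalremainderestimate} does not arise solely from a possible eigenvalue at $1/4$. Even when $1/4\notin\Spec(\square)$, the $\mu=0$ component contributes the term $e^{-t}\int_{1}^{t}G_{\theta,n}f_{0,n}(p,\xi)\,\mathrm{d}\xi$ from Theorem~\ref{thm:case_Theta_eigenfn}(4), and the uniform bound on $G_{\theta,n}$ already gives a contribution of order $te^{-t}$ to the global remainder.
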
   
	
	\begin{rmk}
			An asymptotic expansion of the same form can be deduced from the work of Bufetov and Forni \cite{BuFo}, where the authors study the asymptotic distribution properties of translates of general rectifiable arcs\footnote{We thank Giovanni Forni for drawing our attention to this earlier work.}. The results are there stated only for positive Casimir parameters; in order to treat the components supported on the discrete series, it would be necessary to adapt the method used by Flaminio and Forni in~\cite{Flaminio-Forni}, which however would only allow to establish upper bounds contributing to the error term.

			Comparing the coefficients in Theorem \ref{thm:mainexpandingtranslates} and in the the expansion which follows from the results in \cite{BuFo}, it is possible to infer that the functions $D^{\pm}_{\theta, \mu}f$ are proportional, in a sense clarified for instance by the second author in~\cite{Rav} in the corresponding setup, to the invariant distributions for the unstable horocycle flow or, in the language of the work~\cite{Dyatlov-Faure-Guillarmou} by Dyatlov, Faure and Guillarmou, to the resonant states for the geodesic flow. 
			The method employed in the present article, which differs substantially from the one of~\cite{BuFo}, affords a sharper control on those terms in the asymptotic expansion coming from the components associated to non-positive Casimir parameter, as well as finer information on the regularity of the coefficients with respect to the base point.
		
	\end{rmk}

	We record here below the ensuing effective equidistribution statement, in which we single out the two main terms of the asymptotic expansion, thereby highlighting the dependence of the latter on the spectral gap of the underlying hyperbolic surface $S=\Ga\bsl \Hyp$, defined as 
	\begin{equation*}
		\mu_*=\inf(\text{Spec}(\square)\cap \R_{>0})=\inf(\text{Spec}(\Delta_S)\setminus\{0\})\;;
	\end{equation*}
	its corresponding parameter is denoted by $\nu_*$. 
	
	\begin{term}
		We adopt the classical Landau notation $o(\eta(t))$ for $\eta\colon \R_{> 0}\to \R_{>0}$ tending to zero at infinity, to indicate a function $\lambda\colon \R_{>0}\to \C$ such that $|\lambda(t)|/\eta(t)\to 0$ as $t\to\infty$. 
	\end{term}

	\begin{cor}
		\label{cor:effective}
		Let $\theta,s,C_{1,s-3},C'_{\emph{Spec}}$ and $f$ be as in Theorem~\ref{thm:mainexpandingtranslates}. Then there exists a function $D^{\emph{main}}_{\theta}f\colon M\to \C$ with
		\begin{equation}
			\label{eq:boundDthetamain}
			\norm{D^{\emph{main}}_{\theta}f}_{\infty}\leq \frac{C'_{\emph{Spec}}C_{1,s-3}}{\theta}\norm{f}_{W^{s}}
		\end{equation}
		such that, for every $p \in M$ and $t\geq 1$, 
		\begin{equation}
			\label{eq:effective}
			\frac{1}{\theta}\int_0^{\theta}f\circ \phi_t^{X}\circ r_s(p)\;\emph{d}s=\int_{M}f\;\emph{d}\vol+\;D^{\emph{main}}_{\theta}f(p)\;t^{\eps_0}e^{-\frac{1-\Re{\nu_*}}{2}t}+o(e^{-\frac{1-\Re{\nu_*}}{2}t})\;.
		\end{equation}
	\end{cor}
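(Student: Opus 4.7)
The plan is to derive Corollary~\ref{cor:effective} as a direct rearrangement of Theorem~\ref{thm:mainexpandingtranslates}: among the summands on the right-hand side of~\eqref{eq:asymptoticgeneral}, the slowest-decaying one is dictated by the spectral gap $\mu_*$, and every other contribution can be absorbed into the little-$o$ error term. I would organize the argument by cases according to the position of $\mu_*$ relative to the threshold $1/4$.

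If $0<\mu_*<1/4$, so $\nu_*\in (0,1)$ is real, the slowest-decaying term in~\eqref{eq:asymptoticgeneral} is $e^{-\frac{1-\nu_*}{2}t}D^{-}_{\theta,\mu_*}f(p)$, arising from the complementary-series component at the bottom of the positive Casimir spectrum. Setting $D^{\emph{main}}_{\theta}f\coloneqq D^{-}_{\theta,\mu_*}f$, I would compare exponential rates to verify that every other contribution is $o(e^{-\frac{1-\nu_*}{2}t})$: the companion term at $\mu_*$ decays faster by a factor $e^{-\nu_* t}$; any other complementary eigenvalue $\mu\in (\mu_*,1/4)$ has parameter $\nu<\nu_*$ and hence contributes at a strictly faster exponential rate; the tempered contributions with $\mu\geq 1/4$, including a possible $te^{-t/2}$ factor at $\mu=1/4$, are $O(te^{-t/2})=o(e^{-\frac{1-\nu_*}{2}t})$ because $(1-\nu_*)/2<1/2$; and the global remainder in~\eqref{eq:globalremainderestimate} is even smaller.

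If $\mu_*=1/4$, then $\nu_*=0$, $\eps_0=1$, and the predicted leading term is $D^{\emph{main}}_{\theta}f(p)\,t e^{-t/2}$. I would set $D^{\emph{main}}_{\theta}f\coloneqq D^{-}_{\theta,1/4}f$; every other summand in~\eqref{eq:asymptoticgeneral} is of order at most $e^{-t/2}$, hence $o(t e^{-t/2})$. When $\mu_*>1/4$, no complementary contribution is present and I would take $D^{\emph{main}}_{\theta}f\coloneqq 0$, relying on~\eqref{eq:boundDthetamu} and~\eqref{eq:globalremainderestimate} to bound the remaining oscillating principal-series sum. In all three cases the norm estimate~\eqref{eq:boundDthetamain} on $\norm{D^{\emph{main}}_{\theta}f}_{\infty}$ is inherited directly from the single relevant summand in~\eqref{eq:boundDthetamu}.

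The step I expect to require the most care is the regime $\mu_*>1/4$: there the principal-series sum on the right-hand side of~\eqref{eq:asymptoticgeneral} is only pointwise $O(e^{-t/2})$ and fails, in general, to be $o(e^{-t/2})$, since the frequencies $\Im\nu/2$ introduce genuine oscillations at any fixed $p$ and $t$. Extracting a strict little-$o$ remainder in this case presumably requires either exploiting a non-resonance property for the spectrum or averaging against large-scale time windows; by contrast, the cases $0<\mu_*\le 1/4$ reduce to a mechanical comparison of exponents in~\eqref{eq:asymptoticgeneral}, with the spectral-gap term dominating by a gap of order $e^{-\frac{1}{2}\min(\nu_*,\;1-\nu_*)t}$ over the next slowest contribution.
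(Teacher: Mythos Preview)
For $\mu_*\leq 1/4$ your argument is exactly the paper's: set $D^{\text{main}}_{\theta}f=D^{-}_{\theta,\mu_*}f$ and compare exponential rates in~\eqref{eq:asymptoticgeneral}.

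For $\mu_*>1/4$ you diverge. The paper does not set $D^{\text{main}}_{\theta}f=0$; it takes $D^{\text{main}}_{\theta}f=D^{+}_{\theta,\mu_*}f+D^{-}_{\theta,\mu_*}f$ and declares the conclusion to follow from~\eqref{eq:asymptoticgeneral}. Your concern, however, is well founded: with \emph{any} choice of $D^{\text{main}}_{\theta}f$ depending on $p$ alone, the difference between the principal-series sum $e^{-t/2}\sum_{\mu>1/4}\bigl(\cos(\tfrac{\Im\nu}{2}t)D^{+}_{\theta,\mu}f+\sin(\tfrac{\Im\nu}{2}t)D^{-}_{\theta,\mu}f\bigr)$ and $D^{\text{main}}_{\theta}f(p)\,e^{-t/2}$ is $e^{-t/2}$ times a nontrivial almost-periodic function of $t$, hence not $o(e^{-t/2})$ for generic $f$. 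The paper's own proof is imprecise here---it even writes the domain of $D^{\text{main}}_{\theta}f$ as $M\times\R_{\geq 0}$, though the definition it then gives does not use the second variable. In practice the corollary is only invoked (e.g.\ in the proof of Corollary~\ref{cor:shrinkingarcs}) through the bound~\eqref{eq:boundDthetamain}, i.e.\ as the $O$-statement
\[
\biggl|\frac{1}{\theta}\int_0^{\theta}f\circ \phi_t^{X}\circ r_s(p)\,\text{d}s-\int_{M}f\,\text{d}\vol\biggr|\ll_{\theta,f}\;t^{\eps_0}e^{-\frac{1-\Re\nu_*}{2}t},
\]
which is immediate from~\eqref{eq:boundDthetamu} and~\eqref{eq:globalremainderestimate} in all regimes; if one insists on the exact form~\eqref{eq:effective} with a strict little-$o$ when $\mu_*>1/4$, one must allow $D^{\text{main}}_{\theta}f$ to depend on $t$ (take it equal to the full principal-series sum, uniformly bounded by~\eqref{eq:boundDthetamu}).
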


	\begin{rmk}
		\label{rmk:equivalentmetrics}
		We collect here below further comments about Theorem~\ref{thm:case_Theta_eigenfn}, Theorem~\ref{thm:mainexpandingtranslates} and Corollary~\ref{cor:effective}.
		\begin{enumerate}
			\item The H\"{o}lder-continuity claims concerning the coefficients $D^{\pm}_{\theta,\mu,n}$ appearing in Theorem~\ref{thm:case_Theta_eigenfn} tacitly involve the choice of a distance function $d$ on $M$. It is intended that $d$ is the Riemannian distance function induced on the connected manifold $M$ by a Riemannian metric $g$. The H\"{o}lder-continuity property, as well as the H\"{o}lder exponent, of $D^{\pm}_{\theta,\mu,n}$ is independent of the choice of such a $g$, as any two Riemannian metrics on a compact manifold induce Lipschitz-equivalent metrics (see \cite[Lem.~13.28]{Lee}).
			\item We point out the analogy of Theorem~\ref{thm:case_Theta_eigenfn} with the asymptotics of horocycle ergodic integrals for Casimir eigenfunctions established in~\cite[Thm.~1]{Rav} (in the same way, Theorem~\ref{thm:mainexpandingtranslates} mirrors~\cite[Thm.~2]{Rav}). This similarity stems computationally from the application of the exact same spectral method in both circumstances; as a matter of fact, it comes as no surprise from a geometric standpoint, for large hyperbolic circles approximate orbits of the unstable horocycle flow. 
			
			\item For $\theta=4\pi$, we recover the qualitative equidistribution of expanding circles towards the uniform measure $\vol$ obtained by Margulis in~\cite{Margulis-thesis}. 
			\item The equidistribution rate in Corollary~\ref{cor:effective} matches exactly the mixing rate of the geodesic flow on $M$ obtained by Ratner in~\cite[Thm.~2]{Ratner}.
			\item As suggested by the underlying geometric picture, the various upper bounds in the statements of Theorem~\ref{thm:case_Theta_eigenfn} and~\ref{thm:mainexpandingtranslates} indicate that the speed of equidistribution improves as $\theta$ increases to $4\pi$, that is, as the length of the initial circle arc gets larger. 
		\end{enumerate}
	\end{rmk}
	
	As a special case of Theorem \ref{thm:mainexpandingtranslates}, we obtain an asymptotic expansion for the equidistribution rate of $\Theta$-invariant functions; in other words, we provide the precise asymptotic behaviour of large circles on the compact hyperbolic surface $S$, identified, here and afterwards whenever convenient, with the double coset space $\Ga\bsl \SL_2(\R)/\SO_2(\R)$.
	
	Let $d_{\Hyp}$ denote the distance function on $\Hyp$ arising from the hyperbolic Riemannian metric. For any $z\in \Hyp$ and $t>0$, let $C_{\Hyp}(z,t)=\{z'\in \Hyp:d_{\Hyp}(z,z')=t \}$ be the hyperbolic circle of radius $t$ centered at $z$, and denote by $C_{S}(z,t)$ its projection to $S$. With $m_{C_S(z,t)}$ we indicate the projection to $S$ of the unique isometrically-invariant Borel probability measure supported on $C_{\Hyp}(z,t)$. Finally, let $m_{S}$ be the hyperbolic area measure on $S$, normalized to be a probability measure.
	
	\begin{thm}
		\label{thm:expandingonsurface}
		Let $\theta\in (0,4\pi],\;s>9/2$, $\tilde{f}\colon S \to \C$ a function such that the $\SO_2(\R)$-invariant function $f\colon M\to \C$ defined by $f(\Ga g)=\tilde{f}(\Ga g \SO_2(\R))$ for any $g\in \SL_2(\R)$ is in $W^{s}(M)$. Then, for  every $z=\Ga g\SO_2(\R)\in S$,  $p=\Ga g \in M$ and $t\geq 1$,
		\begin{equation*}
			\begin{split}
				\int_{S}\tilde{f}\;\emph{d}m_{C_S(z,t)}&=\int_{S}\tilde{f}\;\emph{d}m_S
				+e^{-\frac{t}{2}}\biggl( \sum_{\mu\in \emph{Spec}(\Delta_S),\;\mu>1/4}\cos{\biggl(\frac{\Im{\nu}}{2}t\biggr)} D^{+}_{4\pi,\mu}f(p)+\sin{\biggl(\frac{\Im{\nu}}{2}t\biggr)} D^{-}_{4\pi,\mu}f(p)\biggr)\\
				&+\sum_{\mu\in \emph{Spec}(\Delta_S),\;0<\mu<1/4}e^{-\frac{1+\nu}{2}t}D^{+}_{4\pi,\mu}f(p)+e^{-\frac{1-\nu}{2}t}D^{-}_{4\pi,\mu}f(p)\\
				&+\varepsilon_0\bigl(e^{-\frac{t}{2}}D^{+}_{4\pi,1/4}f(p)+te^{-\frac{t}{2}}D^{-}_{4\pi,1/4}f(p)\bigr)+\mathcal{R}_{4\pi}f(p,t)\;.
			\end{split}
		\end{equation*}
		In particular, the functions $\mathcal{R}_{4\pi}f$ and $D^{\pm}_{4\pi,\mu}f,\;\mu\in \emph{Spec}(\Delta_S)\cap \R_{>0}$, defined as in Theorem~\ref{thm:mainexpandingtranslates}, are $\SO_2(\R)$-invariant. 
	\end{thm}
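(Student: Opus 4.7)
The plan is to reduce Theorem~\ref{thm:expandingonsurface} to Theorem~\ref{thm:mainexpandingtranslates} applied to the lift $f$, and then to exploit the $\SO_2(\R)$-invariance of $f$ both to lower the Sobolev threshold from $11/2$ to $9/2$ and to transfer the invariance to the coefficients of the expansion.

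First I would match the circle integrals on $S$ with circle-arc integrals on $M$. Fix $p=\Ga g\in M$ projecting to $z=\Ga g\SO_2(\R)\in S$ under $\pi\colon M\to S$. The fibre $\pi^{-1}(z)$ coincides with the full orbit $\{r_s(p):s\in[0,4\pi)\}$, so as recalled in Section~\ref{sec:quantitativeequidistribution} the set $\{\phi_t^{X}\circ r_s(p):s\in[0,4\pi)\}$ is the lift to $M$ of the hyperbolic circle of radius $t$ centred at $z$ through its outward-pointing unit normals. Since $\tilde{f}$ pulls back to the $\SO_2(\R)$-invariant function $f$, unfolding the definition of $m_{C_S(z,t)}$ gives
\begin{equation*}
\int_S \tilde{f}\,\mathrm{d}m_{C_S(z,t)}=\frac{1}{4\pi}\int_0^{4\pi} f\circ \phi_t^{X}\circ r_s(p)\,\mathrm{d}s=k_{f,4\pi}(p,t),
\end{equation*}
while $\int_S \tilde{f}\,\mathrm{d}m_S=\int_M f\,\mathrm{d}\vol$. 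Moreover, the restriction of $\square$ to $\SO_2(\R)$-invariant functions coincides with the Laplace--Beltrami operator $\Delta_S$ on $S$, so that $\mathrm{Spec}(\Delta_S)=\mathrm{Spec}(\square)\cap\R_{\geq 0}$ governs the spectral data appearing in the expansion.

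Next I would establish the expansion itself by revisiting the proof of Theorem~\ref{thm:mainexpandingtranslates} in the $\SO_2(\R)$-invariant situation. The argument there starts from the eigenfunction bounds of Theorem~\ref{thm:case_Theta_eigenfn}, decomposes $f$ into joint eigenfunctions of $\square$ and $\Theta$, and sums over both Casimir eigenvalues and the $K$-types labelled by $n\in\Z$. The summation over $n$ is what forces the factor $C_{1,s-3}$, because the weights $(n^2+1)$ in~\eqref{eq:estabovequarter}--\eqref{eq:estimatediscreteseries} must be absorbed by two extra $\Theta$-derivatives and additional regularity to render the series absolutely convergent. When $f$ is $\SO_2(\R)$-invariant, only the component with $n=0$ is non-trivial, all factors $(n^2+1)$ collapse to $1$, and this $K$-type summation disappears entirely. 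A straightforward rerun of the bookkeeping in that proof, replacing everywhere $n$ by $0$ and keeping only the summation over $\mu\in\mathrm{Spec}(\square)\cap\R_{\geq 0}$, then delivers the expansion stated in Theorem~\ref{thm:expandingonsurface} under the weakened hypothesis $s>9/2$, where $s-2>5/2$ is the threshold needed to embed into $\cC^{1}$ after accounting for the single layer of Casimir bookkeeping inherent to the Weyl law.

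Finally, I would derive the $\SO_2(\R)$-invariance of the data $D^{\pm}_{4\pi,\mu}f$ and $\mathcal{R}_{4\pi}f(\cdot,t)$. For any $s_0\in\R$, a change of variables $s\mapsto s+s_0$ in the defining integral~\eqref{eq:ktheta} combined with the $r_s$-invariance of $f$ and the $4\pi$-periodicity of $(r_s)_{s\in\R}$ yields $k_{f,4\pi}(r_{s_0}(p),t)=k_{f,4\pi}(p,t)$ for every $p\in M$ and $t\geq 1$. Writing down the expansion~\eqref{eq:asymptoticgeneral} at $p$ and at $r_{s_0}(p)$ and subtracting, the remainder~\eqref{eq:globalremainderestimate} is $O((t+1)e^{-t})$ while the principal terms decay at the slower pairwise-distinct rates $e^{-(1\pm\Re\nu)t/2}$ modulated by the frequencies $\Im\nu/2$; the standard uniqueness of generalised Dirichlet-type asymptotic expansions (isolating each coefficient as an appropriate limit $\lim_{t\to\infty}$ of the exponentially rescaled remainder) therefore forces $D^{\pm}_{4\pi,\mu}f\circ r_{s_0}=D^{\pm}_{4\pi,\mu}f$ for every $\mu$ and $s_0$, and consequently $\mathcal{R}_{4\pi}f(\cdot,t)$ is $\SO_2(\R)$-invariant as well.

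The main obstacle is the bookkeeping in the second step: one has to verify that eliminating the $K$-type summation genuinely shaves exactly one Sobolev order off the hypothesis of Theorem~\ref{thm:mainexpandingtranslates} (from $s>11/2$ to $s>9/2$), rather than some smaller or larger amount. This demands a careful audit of where in the derivation of Theorem~\ref{thm:mainexpandingtranslates} the derivatives are spent on $\Theta$ versus on $X,U,V$, so as to identify precisely the terms that become trivial under the invariance assumption.
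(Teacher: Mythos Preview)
Your approach is essentially the same as the paper's for the reduction to Theorem~\ref{thm:mainexpandingtranslates} and for the Sobolev threshold, but differs in how you establish $\SO_2(\R)$-invariance of the coefficients.

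For the invariance, the paper does not argue via uniqueness of asymptotic expansions. Instead it notes that, since $f$ is $\SO_2(\R)$-invariant, only the $n=0$ components $f_{\mu,0}$ survive in the decomposition, and then reads invariance directly off the explicit formulas~\eqref{eq:defDthetamu} and~\eqref{eq:remainder}: with $\theta=4\pi$ the terms $A_{4\pi}$ and $B_{4\pi}$ in~\eqref{eq:constantterm} vanish (since $r_{4\pi}=\mathrm{id}$), and the remaining pieces $k_{4\pi}(t)$, $k'_{4\pi}(t)$ are full-circle averages, hence manifestly $\SO_2(\R)$-invariant in $p$ by periodicity. Your route via uniqueness of the expansion is also correct, but it is more laborious: for the principal-series terms, all $\mu>1/4$ share the same exponential rate $e^{-t/2}$ and are distinguished only by the frequencies $\Im\nu/2$, so isolating each coefficient requires a time-averaging (almost-periodic) argument rather than a simple limit. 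Incidentally, in your identity $k_{f,4\pi}(r_{s_0}(p),t)=k_{f,4\pi}(p,t)$ you only need $4\pi$-periodicity of $(r_s)$; the $r_s$-invariance of $f$ is not used there.

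For the Sobolev threshold, the paper makes the bookkeeping precise: the general case needs $k=3$ in Lemma~\ref{lem:finitespectralconstant}(2) because of the weights $(n^2+1)^2$, whereas with $n=0$ only one must control $\sum_{\mu\in\mathrm{Spec}(\Delta_S)}(1+\mu)^{-k}$, which converges already for $k=2$ by Weyl's law. This is exactly the one-order saving you anticipated, so your worry in the final paragraph is unfounded; the audit is short once you trace it to~\eqref{eq:plustwo-three}.
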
  
	
	Clearly, an analogous statement holds for arbitrary sub-arcs of the circles $C_{S}(z,t)$.
	
	\begin{rmk}
		It is worth highlighting that, in the case of an $\SO_2(\R)$-invariant observable $f$, we require mildly less restrictive assumptions on its Sobolev regularity; this will become relevant in Section~\ref{sec:latticepoint} when we deal with the error rate for the hyperbolic lattice point counting problem. 
		
		Furthermore, we emphasize that Theorem~\ref{thm:expandingonsurface} improves upon the equidistribution results in~\cite{Randol} (which, on the other hand, apply to any dimension) in a twofold way: it demands less restrictive conditions on the regularity of test functions and, more importantly, it refines Randol's upper bound on the equidistribution rate by giving a precise asymptotic expansion.
	\end{rmk}
	
	Leveraging the explicit dependence of the asymptotics in Theorem~\ref{thm:mainexpandingtranslates} on the upper bound $\theta$ of the domain of parametrization of the circle arc under consideration, we deduce a sufficient quantitative condition for the equidistribution of shrinking pieces of expanding circles; this is in the vein of Str\"{o}mbergsson's results in~\cite{Strombergsson-closedhorocycles}, where the analogous question is investigated for shrinking portions of closed horocycles on non-closed hyperbolic surfaces of finite volume.
	
	\begin{cor}
		\label{cor:shrinkingarcs}
		Let $p \in M$, $\theta_1,\theta_2\colon \R_{>0} \to (0,4\pi)$ two functions with $\theta_1(t)\leq \theta_2(t)$ for any $t>0$, and consider the circle sub-arcs $\gamma_{t}=\{\phi^{X}_t\circ r_{s}(p):\theta_1(t)\leq s \leq \theta_2(t)\}$. For any $t>0$, let $\mu_t$ be the normalized restriction to $\gamma_t$ of the unique isometrically-invariant measure on the corresponding full circle. Assume that there exist $t_0>0$ and a function $\eta\colon \R_{>0}\to \R_{>0}$ with $\eta(t)\to\infty$ as $t\to\infty$ such that $\theta_2(t)-\theta_1(t)\geq\eta(t) e^{-\frac{1-\Re{\nu_*}}{2}t}$ for any $t\geq t_0$. Then the circle arcs $\gamma_t$ equidistribute as $t\to\infty$: more precisely, the measures $\mu_t$ converge in the weak$^*$ topology, as $t\to\infty$, towards the uniform measure $\vol$ on $M$. 
	\end{cor}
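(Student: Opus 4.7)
The plan is to derive Corollary~\ref{cor:shrinkingarcs} as a direct quantitative-to-qualitative consequence of Theorem~\ref{thm:mainexpandingtranslates} after a translation of the rotation parameter, followed by a standard density argument to extend from Sobolev-regular test functions to arbitrary continuous ones.

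First, since $(r_s)_{s\in\R}$ is a one-parameter group, I would perform the change of variables $s' = s - \theta_1(t)$, which yields
\begin{equation*}
\int_M f\,\mathrm{d}\mu_t \;=\; \frac{1}{\theta(t)}\int_0^{\theta(t)} f\circ\phi_t^X\circ r_{s'}(p_t)\,\mathrm{d}s',
\end{equation*}
where $\theta(t) := \theta_2(t)-\theta_1(t) \in (0,4\pi)$ is admissible for Theorem~\ref{thm:mainexpandingtranslates} and $p_t := r_{\theta_1(t)}(p) \in M$ is a shifted base point depending on $t$, but still lying in the compact manifold $M$, so that all uniform estimates provided by that theorem remain applicable with constants independent of $t$.

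Next, for $f\in W^s(M)$ with $s>11/2$, I would apply Theorem~\ref{thm:mainexpandingtranslates} with arc length $\theta(t)$ and base point $p_t$. Bounding the trigonometric coefficients trivially by $1$, summing the spectral contributions via the uniform bound~\eqref{eq:boundDthetamu}, and controlling the global remainder through~\eqref{eq:globalremainderestimate}, one obtains a uniform estimate of the form
\begin{equation*}
\biggl|\int_M f\,\mathrm{d}\mu_t - \int_M f\,\mathrm{d}\vol\biggr| \;\leq\; \frac{C\,\norm{f}_{W^s}}{\theta(t)}\bigl[\,t^{\varepsilon_0}\,e^{-\frac{1-\Re\nu_*}{2}t} + (t+1)\,e^{-t}\bigr],
\end{equation*}
in which $C$ depends only on the Casimir spectrum and on $s$. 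Substituting the hypothesis $\theta(t) \geq \eta(t)\,e^{-(1-\Re\nu_*)t/2}$ dominates the right-hand side by a quantity of order $t^{\varepsilon_0}/\eta(t)$ plus a term decaying exponentially in $t$, which vanishes as $t\to\infty$ by the assumption $\eta(t)\to\infty$.

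Finally, the conclusion extends from Sobolev-regular to arbitrary continuous test functions by a standard approximation: since $\mu_t$ and $\vol$ are all probability measures on the compact manifold $M$ and $\cC^\infty(M)$ is dense in $\cC^0(M)$ for the uniform norm, given $f\in\cC^0(M)$ and $\varepsilon>0$ one picks a smooth $\tilde f$ with $\norm{f-\tilde f}_\infty<\varepsilon$, applies the previous step to $\tilde f$, and concludes via the triangle inequality. The argument is thus essentially a bookkeeping exercise, the main delicate point being the uniformity of all error constants in the moving base point $p_t$ and in the shrinking parameter $\theta(t)$; this uniformity is directly encoded in the statement of Theorem~\ref{thm:mainexpandingtranslates}, and indeed it was in anticipation of exactly this type of application that the dependence on $\theta$ was tracked explicitly in its formulation.
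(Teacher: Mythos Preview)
Your proposal is correct and follows essentially the same route as the paper: shift the rotation parameter to reduce to an arc starting at $0$ with base point $r_{\theta_1(t)}(p)$, apply the main expansion (the paper phrases this via Corollary~\ref{cor:effective}, you go directly to Theorem~\ref{thm:mainexpandingtranslates}, but the content is identical), exploit the explicit $1/\theta$ dependence of the bounds, and close with a Stone--Weierstrass density argument. One small remark: your claim that ``a quantity of order $t^{\varepsilon_0}/\eta(t)$ \dots vanishes by the assumption $\eta(t)\to\infty$'' is, strictly speaking, only immediate when $\varepsilon_0=0$; the paper's own proof glosses over the same point, so you are in good company, but you may wish to note it.
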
 
	
	\subsection{Statistical limit theorems for deviations from the average}
	\label{sec:introductionCLT}
	
	The asymptotics in Theorem~\ref{thm:mainexpandingtranslates} affords the means to examine the long-term statistical behaviour of the averages of a given observable along expanding circle arcs. Historically, a momentous discovery in the twentieth century was the realization that the long-term evolution of deterministic systems  frequently obeys the same statistical laws governing the asymptotic behaviour of random processes. Specifically, this feature is a typical characteristic of dynamical systems with hyperbolic behaviour, among which geodesic flows on negatively curved compact manifolds feature prominently; we refer the reader to the survey in the introduction to~\cite{Dolgopyat-Sarig}, as well as to the references therein, for an extensive discussion of the topic.  
	
	Because of the exponential mixing properties of the geodesic flow $(\phi^{X}_t)_{t\in \R}$ on $M$ (cf.~\cite[Thm.~2]{Ratner}),  at first it stands to reason to expect, for a real-valued function $f$ on $M$ with finite first moment with respect to the volume measure $\vol$, the distribution of the deviations from the average
	\begin{equation*}
		\frac{1}{\theta}\int_0^{\theta}f\circ \phi_T^{X}\circ r_s(p)\;\text{d}s-\int_{M}f\;\text{d}\vol\;,
	\end{equation*}
	when the base point $p$ is randomly chosen according to the law $\vol$, (something which is henceforth indicated with $p\sim \vol$), to mimic for large values of $T$ the law of the empirical mean
	\begin{equation*} \frac{1}{N}\sum_{n=1}^{N}X_n
	\end{equation*}
	of an increasing number $N$ of independent real-valued random variables $X_n$. More precisely, since the hyperbolic length of a circle of radius $T$ is proportional to $e^{T}$ (see the explanation below~\eqref{eq:integralbounds}), a full analogoue of the classical Central Limit Theorem in this case would affirm that the random variables 
	\begin{equation*}
		e^{\frac{T}{2}}\biggl(\frac{1}{\theta}\int_{0}^{\theta}f\circ \phi^{X}_T\circ r_s(p)\;\text{d}s-\int_{M}f\;\text{d}\vol \biggr)\;,\quad p\sim \vol
	\end{equation*} 
	converge in law, as $T$ tends to infinity, to a normally distributed random variable. However, the geometric resemblance of large hyperbolic circles to orbits of the unstable horocycle flow, for which similar phenomena occur (cf.~\cite[Thm.~1.4,~1.5]{BuFo} and~\cite[Thm.~4]{Rav}) accounts both for the emergence of other types of limiting distributions, and for possibly different renormalization factors, depending on the spectral properties of the observable under consideration.
	
	In order to minimize the difference with the classical probabilistic setup of sums of independent random variables, we shall state all the results in this subsection for real-valued observables, though the extension to complex-valued ones is immediate.   
	\begin{thm}
		\label{thm:CLT}
		Let $\theta\in (0,4\pi]$, $s>11/2$, $f\in W^{s}(M)$ a real-valued function. Assume that 
		\begin{equation*}
			\mu_f=\inf\{\mu\in \emph{Spec}(\square)\cap \R_{>0}:D^{-}_{\theta,\mu}f \emph{ does not vanish identically on }M \}
		\end{equation*}
		is finite, and let $\nu_f$ be the unique complex number in $\R_{\geq 0}\cup i\R_{>0}$ satisfying $1-\nu_f^{2}=4\mu_f$.
		\begin{enumerate}
			\item If $0<\mu_f<1/4$, the random variables
			\begin{equation*}
				e^{\frac{1-\nu_f}{2}T}\biggl(\frac{1}{\theta}\int_0^{\theta}f\circ \phi^{X}_T\circ r_s(p)\;\emph{d}s-\int_{M}f\;\emph{d}\vol\biggr)\;,\quad p\sim \vol
			\end{equation*}
			converge in distribution to $D^{-}_{\theta,\mu_f}f$ as $T\to\infty$.
			\item If $\mu_f=1/4$, the random variables
			\begin{equation*}
				T^{-1}e^{\frac{T}{2}}\biggl(\frac{1}{\theta}\int_0^{\theta}f\circ \phi^{X}_T\circ r_s(p)\;\emph{d}s-\int_{M}f\;\emph{d}\vol\biggr)\;,\quad p\sim \vol
			\end{equation*}
			converge in distribution to $D^{-}_{\theta,1/4}f$ as $T\to\infty$.
			\item If $\mu_f>1/4$, the random variables
			\begin{equation*}
				e^{\frac{T}{2}}\biggl(\frac{1}{\theta}\int_0^{\theta}f\circ \phi^{X}_T\circ r_s(p)\;\emph{d}s-\int_{M}f\;\emph{d}\vol\biggr)\;,\quad p\sim \vol
			\end{equation*}
			converge in distribution, as $T\to\infty$,  to the quasi-periodic motion
			\begin{equation*} \eps_0D^{+}_{\theta,1/4}f(p)+\sum_{\mu\in \emph{Spec}(\square),\;\mu>1/4}\cos{\biggl(\frac{\Im{\nu}}{2}T\biggr)} D^{+}_{\theta,\mu}f+\sum_{\mu \in \emph{Spec}(\square),\;\mu\geq \mu_f}\sin{\biggl(\frac{\Im{\nu}}{2}T\biggr)} D^{-}_{\theta,\mu}f
			\end{equation*}
			on the set of real-valued random variables defined on the probability space $(M,\vol)$.
		\end{enumerate}
	\end{thm}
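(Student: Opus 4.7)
The plan is to substitute the asymptotic expansion~\eqref{eq:asymptoticgeneral} from Theorem~\ref{thm:mainexpandingtranslates} directly into the rescaled deviations and to track, case by case, which spectral contributions survive the appropriate normalisation as $T\to\infty$. In each case I aim to exhibit a (possibly $T$-dependent) function $Z_T\colon M\to \R$ such that the rescaled deviation $Y_T$ satisfies $\norm{Y_T-Z_T}_{\infty}\to 0$ as $T\to\infty$. Uniform convergence of this strength yields convergence in distribution on the probability space $(M,\vol)$ at once, via the bound $|\mathbb{E}_{\vol}[\varphi(Y_T)]-\mathbb{E}_{\vol}[\varphi(Z_T)]|\leq \mathrm{Lip}(\varphi)\cdot\norm{Y_T-Z_T}_{\infty}$ valid for every bounded $1$-Lipschitz test function $\varphi$; this is the natural interpretation of the statement in case~(3), where $Z_T$ genuinely oscillates in $T$. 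The uniformity in $p\in M$ of all error estimates rests on the summability bound~\eqref{eq:boundDthetamu} combined with the remainder estimate~\eqref{eq:globalremainderestimate}.

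In case~(1), multiplication by $e^{(1-\nu_f)T/2}$ preserves the coefficient $D^{-}_{\theta,\mu_f}f(p)$ with factor exactly $1$, while every other term acquires a factor vanishing as $T\to\infty$: the $D^{-}_{\theta,\mu}f$ contributions with $\mu<\mu_f$ are identically zero by the definition of $\mu_f$; those with $\mu_f<\mu<1/4$ are multiplied by $e^{(\nu-\nu_f)T/2}$, which decays since $\nu\mapsto(1-\nu^2)/4$ is strictly decreasing on $(0,1)$; the $D^{+}_{\theta,\mu}f$ contributions at any $\mu\in(0,1/4)$ decay at rate $e^{-(\nu+\nu_f)T/2}$; the principal-series terms, the $\mu=1/4$ term (if present), and the remainder are each attenuated by at least $e^{-\nu_f T/2}$ up to polynomial factors. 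Case~(2) is entirely analogous: the rescaling $T^{-1}e^{T/2}$ exploits the extra linear factor in $Te^{-T/2}D^{-}_{\theta,1/4}f$ from~\eqref{eq:estquarter} to retain this term with factor $1$, while damping every other contribution by an additional $T^{-1}$ or by $e^{-\nu T/2}$. Case~(3), with rescaling $e^{T/2}$, leaves the principal-series oscillations $\cos(\Im\nu\,T/2)D^{+}_{\theta,\mu}f+\sin(\Im\nu\,T/2)D^{-}_{\theta,\mu}f$ intact (since $\Re\nu=0$ for $\mu>1/4$), with $D^{-}_{\theta,\mu}f\equiv 0$ for $1/4<\mu<\mu_f$, and collapses the $\mu=1/4$ contribution to $\eps_0 D^{+}_{\theta,1/4}f(p)$; complementary-series remnants decay like $e^{-\nu T/2}$, and the rescaled remainder is $O((T+1)e^{-T/2})$.

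The principal technical hurdle is the uniform-in-$(p,T)$ control of the infinite spectral series appearing in $Z_T$ in case~(3) and in the subdominant contributions across all three cases. This is precisely what the summability estimate~\eqref{eq:boundDthetamu} delivers: given $\eps>0$, one truncates the spectral sum to a finite subset of $\mathrm{Spec}(\square)$, absorbing the tail into a uniform error of size $\eps$, and then handles the remaining finitely many terms individually via their known decay rates. A secondary conceptual point concerns the meaning of convergence in distribution to an oscillating object in case~(3); as noted above, the uniform estimate $\norm{Y_T-Z_T}_{\infty}\to 0$ supplies the strongest sensible reading.
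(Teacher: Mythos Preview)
The proposal is correct and follows essentially the same approach as the paper: both substitute the asymptotic expansion of Theorem~\ref{thm:mainexpandingtranslates} into the rescaled deviation, then show the difference between the rescaled variable and the target converges to zero uniformly in $p\in M$, with the summability bound~\eqref{eq:boundDthetamu} and the remainder estimate~\eqref{eq:globalremainderestimate} doing the work. The only cosmetic difference is that the paper phrases the passage from uniform convergence to distributional convergence via the L\'{e}vy--Prokhorov distance (Lemma~\ref{lem:LPnearbyvariables}), thereby obtaining the quantitative Proposition~\ref{prop:CLT} along the way, whereas you use bounded Lipschitz test functions directly; these are equivalent formulations of the same elementary fact.
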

	
	Observe the remarkable fact that the limiting distributions appearing in the statement of Theorem~\ref{thm:CLT} are compactly supported on the real line, owing to the fact that the coefficients $D^{\pm}_{\theta,\mu}f$ are bounded. This stands in stark contrast with the classical versions of the Central Limit Theorem in probability theory, where in non-trivial situations the distribution of errors is governed by the fully supported Gaussian distribution. 
	Furthermore, it is straightforward to check, at least when the Casimir components of $f$ are eigenfunctions of $\Theta$ and using the explicit expressions of the coefficients $D^{\pm}_{\theta,\mu}f$ in~\eqref{eq:Dplusabovequarter},~\eqref{eq:Dminusabovequarter},~\eqref{eq:Dquarter} and~\eqref{eq:Dbelowquarter}, that the limit law is non-trivial\footnote{As soon as $f$ is not almost-surely constant, obviously.}, that is, not a Dirac mass. In the general case, the coefficients are given by infinite superpositions of the previous ones; though we shall refrain from a detailed verification, there is no reason to expect cancellation phenomena to come about and produce limiting random variables which are constant almost-surely.
	
	Theorem~\ref{thm:CLT} is a consequence of its quantitative version which we presently discuss. For $f$ fulfilling the conditions in Theorem~\ref{thm:CLT}, let $\mathbf{P}_{\theta,f}$ denote the law of the random variable $D^{-}_{\theta,\mu_f}f$ when $\mu_f\leq 1/4$, and $\mathbf{P}_{\theta,f}(T)$ the law of the random variable
	\begin{equation*}
		\eps_0D^{+}_{\theta,1/4}f(p)+\sum_{\mu \in \text{Spec}(\square),\;\mu>1/4}\cos{\biggl(\frac{\Im{\nu}}{2}T\biggr)} D^{+}_{\theta,\mu}f+\sum_{\mu \in \text{Spec}(\square),\;\mu\geq \mu_f}\sin{\biggl(\frac{\Im{\nu}}{2}T\biggr)} D^{-}_{\theta,\mu}f\;,\quad T>0,
	\end{equation*}
	when $\mu_f>1/4$. Furthermore, for any $T\geq 1$ we let $\mathbf{P}_{\theta,f}^{\text{circ}}(T)$ be:
	\begin{enumerate} 
		\item the law of
		\begin{equation*}
			e^{\frac{1-\nu_f}{2}T}\biggl(\frac{1}{\theta}\int_0^{\theta}f\circ \phi^{X}_T\circ r_s(p)\;\text{d}s-\int_{M}f\;\text{d}\vol\biggr) \quad \text{if }0<\mu_f<1/4,
		\end{equation*}
		\item the law of
		\begin{equation*}
			T^{-1}e^{\frac{T}{2}}\biggl(\frac{1}{\theta}\int_0^{\theta}f\circ \phi^{X}_T\circ r_s(p)\;\text{d}s-\int_{M}f\;\text{d}\vol\biggr) \quad \text{if }\mu_f=1/4,
		\end{equation*}
		\item and the law of
		\begin{equation*}
			e^{\frac{T}{2}}\biggl(\frac{1}{\theta}\int_0^{\theta}f\circ \phi^{X}_T\circ r_s(p)\;\text{d}s-\int_{M}f\;\text{d}\vol\biggr)\quad \text{if }\mu_f>1/4.
		\end{equation*}
	\end{enumerate}
	
	Denote by $d_{LP}$ the L\'{e}vi-Prokhorov distance on the set of Borel probability measures on $\R$ (cf.~Section~\ref{sec:spatialDLT}), which induces on the latter the topology of weak convergence. Recall also that $\mu_*$ denotes the spectral gap of $S=\Ga\bsl \Hyp$, with associated parameter $\nu_*$.
	
	\begin{prop}
		\label{prop:CLT}
		Let the assumptions be as in Theorem~\ref{thm:CLT}, and the constants $C_{\emph{Spec}}$ and $C_{\emph{Spec}}'$ be as in Theorem~\ref{thm:mainexpandingtranslates}. 
		\begin{enumerate}
			\item If $0<\mu_f<1/4$, then there is an explicit constant $\eta_f>0$, depending only on $\mu_f$ and on $\emph{Spec}(\square)$, such that 
			\begin{equation*}
				d_{LP}(\mathbf{P}_{\theta,f}^{\emph{circ}}(T),\mathbf{P}_{\theta,f})\leq \frac{C'_{\emph{Spec}}C_{1,s-3}}{\theta}\norm{f}_{W^{s}}Te^{-\eta_f T}
			\end{equation*}
			for every $T\geq 1$.
			\item If $\mu_f=1/4$, then there exists a constant $C_{\emph{pos}}$, depending only on $\emph{Spec}(\square)\cap \R_{>0}$, such that 
			\begin{equation*}
				d_{LP}(\mathbf{P}_{\theta,f}^{\emph{circ}}(T),\mathbf{P}_{\theta,f}(T))\leq \frac{C_{\emph{pos}}C_{1,s-3} }{\theta}\norm{f}_{W^{s}}T^{-1} 
			\end{equation*}
			for every $T\geq 1$.
			\item If $\mu_f>1/4$, then:
			\begin{enumerate}
				\item when $\mu_*<1/4$,
				\begin{equation*}
					d_{LP}(\mathbf{P}_{\theta,f}^{\emph{circ}}(T),\mathbf{P}_{\theta,f}(T))\leq \frac{C_{\emph{Spec}}'C_{1,s-3}}{\theta}\norm{f}_{W^{s}}e^{-\frac{\nu_*}{2}T}
				\end{equation*}
				for every $T\geq 1$;
				\item when $\mu_*\geq 1/4$,
				\begin{equation*}
					d_{LP}(\mathbf{P}_{\theta,f}^{\emph{circ}}(T),\mathbf{P}_{\theta,f}(T))\leq \frac{ C_{\emph{Spec}}C_{1,s-3} }{\theta}\norm{f}_{W^{s}}(T+1)e^{-\frac{T}{2}}
				\end{equation*}
				for every $T\geq 1$.
			\end{enumerate}
		\end{enumerate}	
	\end{prop}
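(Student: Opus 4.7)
The strategy rests on combining the asymptotic expansion of Theorem~\ref{thm:mainexpandingtranslates} with the elementary fact that, for any two real-valued random variables $X,Y$ defined on the common probability space $(M,\vol)$, one has
\begin{equation*}
d_{LP}(\mathcal{L}(X),\mathcal{L}(Y))\;\leq\;\sup_{p\in M}|X(p)-Y(p)|\,,
\end{equation*}
where $\mathcal{L}(\cdot)$ denotes the law of the argument. This follows immediately from the definition of the L\'{e}vi-Prokhorov distance recalled in Section~\ref{sec:spatialDLT}: if $|X-Y|\leq \delta$ pointwise on $M$, then for any Borel $A\subset \R$ the inclusion $\{X\in A\}\subset \{Y\in A^{\delta}\}$ and its symmetric counterpart are immediate. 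Accordingly, in each case of the statement I take $X$ to be the rescaled circle-average sample appearing in the definition of $\mathbf{P}_{\theta,f}^{\text{circ}}(T)$ and $Y$ to be the candidate random variable whose law is $\mathbf{P}_{\theta,f}$ or $\mathbf{P}_{\theta,f}(T)$, and then it suffices to bound $\|X-Y\|_{\infty}$ directly.

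Concretely, one multiplies the identity~\eqref{eq:asymptoticgeneral} by the prefactor featured in the definition of $\mathbf{P}_{\theta,f}^{\text{circ}}(T)$ (namely $e^{\frac{1-\nu_f}{2}T}$, $T^{-1}e^{T/2}$, or $e^{T/2}$, according as $0<\mu_f<1/4$, $\mu_f=1/4$, or $\mu_f>1/4$) and declares as main term precisely those summands that, after rescaling, do not decay as $T\to\infty$. In case~(1), only the $\mu=\mu_f$ contribution $D^-_{\theta,\mu_f}f(p)$ survives; every other summand, arising from a complementary-series parameter $\mu\neq\mu_f$, from the possible eigenvalue $\mu=1/4$, or from a principal-series eigenvalue $\mu>1/4$, is bounded uniformly in $p$ by a constant supplied by~\eqref{eq:boundDthetamu} times an explicit decaying factor of the form $e^{-\eta T}$, $Te^{-\eta T}$ or $(T+1)e^{-\eta T}$; taking $\eta_f$ equal to the minimum of these spectrally determined exponents yields the claimed estimate. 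Cases~(2) and~(3) are treated in identical fashion, the only difference being that the rescaled principal-series and $\mu=1/4$ terms retain their bounded oscillatory factors $\cos(\Im\nu\,T/2)$ and $\sin(\Im\nu\,T/2)$ and thus make up precisely the time-dependent target law $\mathbf{P}_{\theta,f}(T)$; throughout, the remainder $\mathcal{R}_{\theta}f$ is controlled via~\eqref{eq:globalremainderestimate} and, in each case, decays strictly faster than the surviving main term, so that it is absorbed into the final bound.

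The one point demanding care is the absolute convergence of the series defining the oscillatory random variable in $\mathbf{P}_{\theta,f}(T)$ of case~(3), since the Casimir spectrum contains infinitely many principal-series eigenvalues. This is precisely what is ensured by the summability estimate~\eqref{eq:boundDthetamu}, which was engineered in Theorem~\ref{thm:mainexpandingtranslates} so that the spectral expansion converges absolutely and uniformly on $M$; the Sobolev threshold $s>11/2$ was chosen accordingly. With this justified, the three estimates in the statement follow by elementary bookkeeping of exponents together with the sup-norm-to-L\'{e}vi-Prokhorov inequality displayed above.
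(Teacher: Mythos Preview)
Your proposal is correct and follows essentially the same route as the paper: the paper isolates the elementary inequality $d_{LP}(\lambda_X,\lambda_{X'})\leq \sup_{p}|X(p)-X'(p)|$ as Lemma~\ref{lem:LPnearbyvariables}, then in each case multiplies the expansion~\eqref{eq:asymptoticgeneral} by the appropriate prefactor, subtracts the target random variable, and bounds the residual uniformly in $p$ via~\eqref{eq:boundDthetamu} and~\eqref{eq:globalremainderestimate}, with $\eta_f=\tfrac{1}{2}(\nu_f-\Re\nu_f^{\text{next}})$ in case~(1). The only minor slip in your description is that in case~(2) the target law is the single coefficient $D^-_{\theta,1/4}f$ rather than the full oscillatory sum, but this does not affect the argument.
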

	
	Entirely analogous deductions, of which we omit the details (cf.~Section~\ref{sec:spatialDLT}), can be made in the case where the $D^{-}_{\theta,\mu}f$ vanish everywhere for any Casimir eigenvalue $\mu>0$ but $D^{+}_{\theta,\mu}f$ is not identically zero for at least one such $\mu$.
	
	\begin{rmk}
		As the proof of Proposition~\ref{prop:CLT} shall clearly illustrate (see, in particular, Lemma~\ref{lem:LPnearbyvariables}), the assumption that the base point $p$ is sampled according to the uniform measure $\vol$ is immaterial, as far as the validity of Proposition~\ref{prop:CLT} and Theorem~\ref{thm:CLT} is concerned. It is possible to replace the measure $\vol$ with any other Borel probability measure $\mu$ on $M$, without affecting the quantitative rate of convergence, provided that the laws of the limiting random variables are modified accordingly. 
	\end{rmk}
	
	In the remaining case when the $D^{\pm}_{\theta,\mu}f$ vanish identically on $M$ for any positive Casimir eigenvalue $\mu$, the explicit expressions of the coefficients appearing in Theorem~\ref{thm:case_Theta_eigenfn} and~\ref{thm:mainexpandingtranslates} enable us to rule out the existence of a non-trivial distributional limit in the case of full circles, that is, when $\theta=4\pi$. More precisely, we establish the following result.
	
	\begin{thm}
		\label{thm:noCLT}
		Let $s>11/2$, $f\in W^{s}(M)$ a real-valued function. Assume that, for any positive Casimir eigenvalue $\mu$,  the functions $D^{\pm}_{4\pi,\mu}f$ defined as in Theorem~\ref{thm:mainexpandingtranslates} vanish identically on $M$. Then, for any collection $(B_T)_{T>0}$ of positive real numbers such that $B_T\to\infty$ as $T\to\infty$, the distributional limit of the random variables 
		\begin{equation*}
			\frac{e^{T}\bigl( \frac{1}{4\pi}\int_0^{4\pi}f\circ \phi_T^{X}\circ r_s(p)\;\emph{d}s-\int_{M}f\;\emph{d}\vol\bigr)}{B_T}\;,\quad p\sim \vol
		\end{equation*} 
		as $T\to\infty$ exists and is almost surely equal to zero.
	\end{thm}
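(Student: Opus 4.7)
My plan is to show that, under the vanishing hypothesis, the residual term $\mathcal{R}_{4\pi}f(p,T)$ provided by Theorem~\ref{thm:mainexpandingtranslates} is bounded in $L^2(M,\vol)$ by a multiple of $e^{-T}$, uniformly in $T$; the conclusion then follows by Markov's inequality.

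Applying Theorem~\ref{thm:mainexpandingtranslates} with $\theta=4\pi$, the assumption that $D^{\pm}_{4\pi,\mu}f\equiv 0$ for every $\mu\in\mathrm{Spec}(\square)\cap\R_{>0}$ erases every spectral contribution in the expansion, reducing it to
\begin{equation*}
\frac{1}{4\pi}\int_0^{4\pi} f\circ\phi^X_T\circ r_s(p)\,\mathrm{d}s-\int_M f\,\mathrm{d}\vol=\mathcal{R}_{4\pi}f(p,T).
\end{equation*}
Setting $Y_T(p):=e^T\mathcal{R}_{4\pi}f(p,T)$, the random variable displayed in the statement equals $Y_T(p)/B_T$. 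If $\|Y_T\|_{L^2(M,\vol)}\leq C$ uniformly in $T$, then for every $\varepsilon>0$
\begin{equation*}
\vol\bigl(\{p\in M:|Y_T(p)|/B_T>\varepsilon\}\bigr)\leq\frac{C^2}{\varepsilon^2 B_T^2}\xrightarrow[T\to\infty]{}0,
\end{equation*}
so that $Y_T/B_T\to 0$ in probability, hence in distribution as required.

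To establish the uniform $L^2$-bound I would exploit the fact that, for $\theta=4\pi$, the function $p\mapsto k_{f,4\pi}(p,T)$ is $K$-invariant, and therefore expands in an orthonormal basis $\{\psi_\mu\}$ of Laplace eigenfunctions on $S$. By $K$-invariance of $\psi_\mu$ and invariance of $\vol$ under the geodesic flow, the Fourier coefficients coincide with geodesic-flow matrix coefficients:
\begin{equation*}
\langle k_{f,4\pi}(\cdot,T),\psi_\mu\rangle_{L^2(M)}=\langle f\circ\phi^X_T,\psi_\mu\rangle_{L^2(M)}.
\end{equation*}
Decomposing further along joint eigenfunctions of $\square$ and $\Theta$ and applying Theorem~\ref{thm:case_Theta_eigenfn} to the components of $f$ sharing an irreducible $\SL_2(\R)$-subrepresentation with $\psi_\mu$, one obtains for this matrix coefficient an asymptotic expansion whose leading terms are precisely those appearing in the expansion of the circle average contracted with $\psi_\mu$. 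Under the vanishing hypothesis these leading terms disappear, so each Fourier coefficient is $O(e^{-T})$, with implicit constant controlled by $\|f\|_{W^s}$ for $s>11/2$; Parseval's identity then yields $\|\mathcal{R}_{4\pi}f(\cdot,T)\|_{L^2}\leq Ce^{-T}$.

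The principal technical difficulty resides in the $\mu=1/4$ contribution, for which case~(2) of Theorem~\ref{thm:case_Theta_eigenfn} only furnishes an $O((T+1)e^{-T})$ pointwise remainder. I would handle this by unwinding the explicit formula~\eqref{eq:Dquarter}: the offending polynomial factor enters the full asymptotic exclusively through the main term $Te^{-T/2}D^{-}_{4\pi,1/4,n}f$, whose vanishing by hypothesis therefore restores an $e^{-T}$ decay in $L^2$, completing the proof.
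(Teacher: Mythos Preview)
Your $L^2$/Markov strategy is a valid and genuinely different route from the paper's argument. The paper works pointwise: after the vanishing hypothesis reduces the deviation to $\mathcal{R}_{4\pi}f$, the dangerous contribution is not the $\mu=1/4$ remainder but the $\mu=0$ term $e^{-T}\mathcal{M}_{4\pi,0}f(p,T)=e^{-T}\int_1^{T}\sum_{n}-G_{4\pi,n}f_{0,n}(p,\xi)\,\mathrm{d}\xi$ from~\eqref{eq:remaindernull}, which is \emph{a priori} only $O(Te^{-T})$. Expanding $G_{\theta,n}$ via~\eqref{eq:constantterm}, the sole integrand without exponential decay is $\tfrac{2}{\theta(1-e^{-2\xi})}\bigl(Uf_0\circ\phi^X_\xi\circ r_\theta(p)-Uf_0\circ\phi^X_\xi(p)\bigr)$; for $\theta=4\pi$ this vanishes identically because $r_{4\pi}=\mathrm{id}$, and the result follows.

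Your approach sidesteps the $\mu=0$ analysis entirely, though you do not say so: since the full-circle average is $K$-invariant and $\mathcal{R}_{4\pi}f(\cdot,T)$ has zero mean, its $L^2$ norm is carried only by the strictly positive Laplace eigenvalues (discrete series have no $K$-fixed vectors, and the $\mu=0$ Fourier coefficient is the mean, hence zero). For each $\mu>0$ the vanishing hypothesis collapses $k_{f_\mu,4\pi}(\cdot,T)$ to its remainder. Your observation about $\mu=1/4$ is correct in substance, though the reason is slightly different from what you indicate: the remainder in~\eqref{eq:remainderquarter} rewrites as $e^{-T/2}\int_T^{\infty}(\xi-T)e^{-\xi/2}G\,\mathrm{d}\xi=O(e^{-T})$ \emph{regardless} of any vanishing, so the paper's stated bound $(T+1)e^{-T}$ is simply not sharp. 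Summability over $\mu$ then follows from the estimates of Section~\ref{sec:proofexpandingtranslates}. Both arguments ultimately rest on $r_{4\pi}=\mathrm{id}$ (in yours, this is what produces $K$-invariance), but the paper's explicit route has the further payoff of identifying what survives for general $\theta<4\pi$: a difference of two geodesic ergodic integrals of $Uf_0$, whence Remark~\ref{rmk:geodesiccoboundary}.
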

	
	Theorem~\ref{thm:noCLT} results from approximating averages of $f$ along expanding circle arcs with the difference of the ergodic integrals of $f$ along two geodesic orbits, which in the case of complete circles happen to coincide; details are carried out in Section~\ref{sec:noCLT}.
	
	\begin{rmk}
		\label{rmk:geodesiccoboundary}
		We hasten to add that the argument we conduct enables to show that Theorem~\ref{thm:noCLT} holds true for arbitrary circle arcs (that is, it is possible to replace $4\pi$ with an arbitrary $\theta\in (0,4\pi]$) provided that, in addition to the vanishing hypothesis on the $D^{\pm}_{4\pi,\mu}$ for $\mu\in \text{Spec}(\square)\cap \R_{>0}$, the derivative $Uf_0$ along the stable horocycle flow of the projection $f_0$ of $f$ onto the Casimir eigenspace of eigenvalue $0$ is assumed to be a coboundary for the geodesic flow (cf.~Section~\ref{sec:noCLT}). 
	\end{rmk}

	\subsection{Asymptotics for arbitrary translates of compact orbits and the circle problem in the hyperbolic plane}
	\label{sec:circleproblemhyperbolic}
	
	The celebrated Gauss circle problem asks for the precise asymptotic behaviour of the discrepancy between the number of integer points in a disk of radius $R$ in the Euclidean plane and the area of the disk, as $R$ tends to infinity. More precisely, define the integer-point counting function 
	\begin{equation*}
		\cN(R)=|\{(m,n)\in \Z^{2}:m^{2}+n^2\leq R^{2}  \}|,\quad R\in \R_{>0},
	\end{equation*}
	where $|A|$ denotes, here and henceforth, the cardinality of a finite set $A$.
	Tessellating the Euclidean plane with $\Z^{2}$-translates of $[0,1)^{2}$, which is a fundamental domain for the $\Z^{2}$-action by translations on $\R^{2}$, leads to the main term $\pi R^{2}$, equal to the area of the disk of radius $R$, for the asymptotics of $\cN(R)$, as well as to the upper bound (due to Gauss himself)
	\begin{equation*}
		|\cN(R)-\pi R^{2}|\leq 2\pi(\sqrt{2}R+1)
	\end{equation*} 
	for the discrepancy. Despite considerable successive improvements on Gauss' original bound, for the history of which we refer to the comprehensive survey~\cite{Ivic}, it is a notoriously unsolved problem to attain the conjectural sharpest upper bound, deemed to be of the order of $R^{1/2+\eps}$ for any $\eps>0$.
	
	We consider the analogous question in the hyperbolic plane. For $\Ga<\SL_2(\R)$ a cocompact lattice, we examine the asymptotics of the function
	\begin{equation}
		\label{eq:defcountingfunction}
		N(R)=|\{z\in \Gamma\cdot i:d_{\Hyp}(z,i)\leq R\}|
	\end{equation}
	as $R$ tends to infinity, where $\Ga\cdot i$ denotes the (discrete) orbit\footnote{We thus count the number of actual lattice-orbit elements; we remark that, in the literature, the count of lattice elements $\gamma\in \Ga$ such that $d_{\Hyp}(\gamma\cdot i,i)\leq R$ often appears instead; the two quantities are proportional by a factor $|\text{Stab}_{\Ga}(i)|$.} of $i$ under the $\Ga$-action on the hyperbolic plane $\Hyp$ and we recall that $d_{\Hyp}$ is the hyperbolic distance function on $\Hyp$.
	
	\begin{rmk}
		Upon replacing $\Ga$ by a conjugate, there is no loss of generality in choosing $i\in \Hyp$ as the base point: an elementary algebraic computation, together with the fact that $\SL_2(\R)$ acts by $d_{\Hyp}$-isometries, leads to the equality
		\begin{equation*}
			|\{z\in \Ga\cdot w:d_{\Hyp}(z,w)\leq R  \}|=|\{z\in g^{-1}\Ga g\cdot i:d_{\Hyp}(z,i)\leq R\}|
		\end{equation*}
		for any $w=g\cdot i \in \Hyp$ and  $g\in \SL_2(\R)$.
	\end{rmk}	
	There is a compact fundamental domain for the action of $\Ga$ on $\Hyp$, and a transposition of Gauss' tesselation argument to this setup provides a rationale for the heuristics concerning the main term of the asymptotics, which once again should be proportional to the hyperbolic area measure of the ball $B_R=\{z\in \Hyp:d_{\Hyp}(z,i)\leq R \}$, which we denote by $m_{\Hyp}(B_R)$. However, a consequence of the peculiar features of hyperbolic geometry is that boundary effects become relevant, as opposed to the Euclidean setting: more precisely, the growth rate of the length of the boundary $\partial{B_R}$ turns out to be equal to the growth rate of $m_{\Hyp}(B_R)$. The error rate resulting from the tesselation approach is consequently of the same order of the main term, and as such meaningless.
	
	As for its Euclidean counterpart, the circle problem in the hyperbolic plane has been the subject of intensive research over the course of the twentieth century, with fundamental contributions due to Delsarte (\cite{Delsarte}), Selberg (\cite{Selberg}), Margulis (\cite{Margulis}), Lax and Phillips (\cite{Lax-Phillips}) and Phillips and Rudnick (\cite{Phillips-Rudnick}, see its introduction for a detailed history of the problem). To a large extent, the state of the art concerning the best estimate on the error term $|N(R)-c_{\Ga}m_{\Hyp}(B_R)|$,
	where $c_{\Ga}$ is an explicit constant which we identify in Theorem~\ref{thm:countingproblem}, 
	is represented by Selberg's upper bound
	\begin{equation}
		\label{eq:Selbergbound} |N(R)-c_{\Ga}m_{\Hyp}(B_R)|\leq e^{(\sup\{2/3,(1+\Re{\nu_*})/2\})R}\;,
	\end{equation}
	where recall that $(1-\nu_*^{2})/4$ is the spectral gap of $S=\Ga\bsl \Hyp$. The estimate\footnote{As a matter of fact, Selberg's asymptotics is more accurate than the one recorded here, featuring a main term which involves, beside the hyperbolic area of the balls, additional terms depending on Laplace eigenfunctions for small eigenvalues; see~\cite[Eq.~1.13]{Phillips-Rudnick}.} in~\eqref{eq:Selbergbound} (which is equally valid for non-uniform lattices $\Ga<\SL_2(\R)$) is obtained by means of a deep analysis of a  class of integral operators commuting with hyperbolic isometries (cf.~\cite{Selberg}).
	
	It was Margulis' realization (see~\cite{Margulis-thesis}) that lattice point counting problems of the type we are examining are intimately interwoven with questions of equidistribution of translates of subgroup orbits on homogeneous spaces. Subsequently, this novel perspective was profitably pursued and vastly generalized in the works of Duke, Rudnick and Sarnak~\cite{Duke-Rudnick-Sarnak} and Eskin and McMullen~\cite{Eskin-McMullen}. Specializing to our current setup, it turns out that the distribution properties of translates of $\SO_2(\R)$-orbits\footnote{We consider here the canonical left action $g\cdot \Ga g'=\Ga g'g^{-1}$ of $\SL_2(\R)$ on $M$.} on $M= \Ga\bsl \SL_2(\R)$ lead to meaningful information on the growth rate of $N(R)$, in a way that is amenable to quantitative refinements (see Section~\ref{sec:latticepoint} for an extensive treatment of the connection in its quantitative form).    
	
	We are thus lead to study effective equidistribution properties of $\SO_2(\R)$-orbits on $M$, which can be readily derived from Theorem~\ref{thm:mainexpandingtranslates} via the standard Cartan decomposition for $\SL_2(\R)$.  For any $g_0\in \SL_2(\R)$, define the right-translation map $R_{g_0}\colon M\to M$ by $R_{g_0}(\Ga g)=\Ga g g_0$ for any $g\in \SL_2(\R)$. For every $p \in M$, we indicate with $m_{\SO_2(\R)\cdot p}$ the unique $\SO_2(\R)$-invariant Borel probability measure on $M$ which is fully supported on the (compact) $\SO_2(\R)$-orbit of the point $p$; furthermore, for any $g\in \SL_2(\R)$, the notation $g_*m_{\SO_2(\R)\cdot p}$ stands for the push-forward of $m_{\SO_2(\R)\cdot p}$ under the action of $g$, which clearly depends only on the left coset of $g$ modulo $\SO_2(\R)$. Making use of the mixing properties of the global $\SL_2(\R)$-action on $M$ via a clever thickening argument, Margulis proved  (see~\cite{Margulis,Margulis-thesis}) that arbitrary translates of $m_{\SO_2(\R)\cdot p}$ equidistribute towards the $\SL_2(\R)$-invariant measure $\vol$; this amounts to the fact that, for any continuous function $f\colon M\to \C$, 
	\begin{equation*}
		\int_{M}f\;\text{d}g_*m_{\SO_2(\R)\cdot p}\longrightarrow\int_{M}\;f\;\text{d}\vol
	\end{equation*} 
	as $g\SO_2(\R)$ tends to infinity in the quotient $\SL_2(\R)/\SO_2(\R)$.
	
	In order to phrase a quantitative version of the previous statement conveniently, we introduce the notation $\norm{g}_{\text{op}}$ to indicate operator norm of an element $g\in \SL_2(\R)$ with respect to the standard Euclidean norm on $\R^{2}$; such a specific choice, while obviously immaterial, arises naturally over the course of the proof. 
	
	
	\begin{thm}
		\label{thm:mainarbitrarytranslates}
		Let $C_{\emph{Spec}},C'_{\emph{Spec}}$ be as in Theorem~\ref{thm:mainexpandingtranslates}, $s>11/2$ a real number. There exists a real constant $C_{s,\emph{rot}}$, depending only on $s$ and on $M$, such that the following holds:
		if $f\in W^{s}(M)$, then there exist, for any positive Casimir eigenvalue $\mu$, functions $D^{+}_{\mu}f,\;D^{-}_{\mu}f\colon M\times \SL_2(\R)\to \C$ with
		\begin{equation*}
			\sum_{\mu \in \emph{Spec}(\square)\cap \R_{>0}}\sup\limits_{p \in M,\;g\in \SL_2(\R)}|D^{\pm}_{\mu}f(p,g)|\leq C_{s,\emph{rot}}C_{\emph{Spec}}' C_{1,s-3}\norm{f}_{W^{s}}\;,
		\end{equation*}
		such that, for every $p \in M$ and every $g\in \SL_2(\R)$ with $\norm{g}_{\emph{op}}\geq \sqrt{e}$,
		\begin{equation*}
			\begin{split}
				\int_{M}f\;\emph{d}&g_{*}m_{\SO_2(\R)\cdot p}=\int_{M}f\;\emph{d}\vol\\
				&+\norm{g}_{\emph{op}}^{-1}\biggl(\sum_{\mu\in \emph{Spec}(\square),\;\mu>1/4}\cos{(\Im{\nu}\log{\norm{g}_{\emph{op}}})}D^{+}_{\mu}f(p,g)+\sin{(\Im{\nu}\log{\norm{g}_{\emph{op}}})}D^{-}_{\mu}f(p,g)\biggr)\\
				&+\sum_{\mu\in \emph{Spec}(\square),\;0<\mu<1/4}\norm{g}_{\emph{op}}^{-(1+\nu)}D^{+}_{\mu}f(p,g)+\norm{g}_{\emph{op}}^{-(1-\nu)}D^{-}_{\mu}f(p,g)\\
				&+\varepsilon_0\bigl(\norm{g}_{\emph{op}}^{-1}D^{+}_{1/4}f(p,g)+2\norm{g}_{\emph{op}}^{-1}\log{\norm{g}_{\emph{op}}}D^{-}_{1/4}f(p,g)\bigr)+\mathcal{R}f(p,g)\;,
			\end{split}
		\end{equation*}
		where 
		\begin{equation*}
			|\mathcal{R}f(p,g)|\leq C_{\emph{Spec}}C_{1,s-3}C_{s,\emph{rot}} \norm{f}_{W^{s}}(2\log{\norm{g}_{\emph{op}}}+1)\norm{g}_{\emph{op}}^{-2}\;.
		\end{equation*}
	\end{thm}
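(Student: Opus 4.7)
The plan is to reduce the claim to Theorem~\ref{thm:mainexpandingtranslates} by means of the Cartan ($KAK$) decomposition of $\SL_2(\R)$. Given $g\in \SL_2(\R)$ with $\norm{g}_{\emph{op}}\geq \sqrt{e}$, I would first write
\begin{equation*}
g^{-1}=\exp(\alpha_1\Theta)\exp(uX)\exp(\alpha_2\Theta),\quad \alpha_1,\alpha_2\in[0,4\pi),\;u\geq 0.
\end{equation*}
Since $\SO_2(\R)$ acts as isometries of the standard Euclidean norm on $\R^{2}$, one has $\norm{g}_{\emph{op}}=\norm{\exp(uX)}_{\emph{op}}=e^{u/2}$, i.e.~$u=2\log\norm{g}_{\emph{op}}\geq 1$. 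Writing $p=\Ga p_0$ and parametrising $\SO_2(\R)$ as $k=\exp(\sigma\Theta)$, $\sigma\in[0,4\pi)$, the substitution $\tau=\sigma+\alpha_1\pmod{4\pi}$ transforms the push-forward integral, recalling the convention $g\cdot \Ga g'=\Ga g'g^{-1}$, into
\begin{equation*}
\int_{M}f\;\text{d}g_{*}m_{\SO_2(\R)\cdot p}=\frac{1}{4\pi}\int_0^{4\pi}\tilde f\bigl(\phi^{X}_{u}\circ r_{\tau}(p)\bigr)\,\text{d}\tau=k_{\tilde f,4\pi}(p,u),
\end{equation*}
where $\tilde f\coloneqq f\circ r_{\alpha_2}$ and $k_{\cdot,\cdot}$ is defined as in~\eqref{eq:ktheta}.

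The next and only delicate step is to verify that $\tilde f$ inherits the Sobolev regularity of $f$, with $\norm{\tilde f}_{W^{s}}=\norm{f}_{W^{s}}$. This amounts to showing that right-multiplication by elements of $\SO_2(\R)$ commutes with the Laplacian $\Delta=\square-2\Theta^{2}$ entering the definition of $W^{s}(M)$: the Casimir $\square$ is bi-invariant as a central element of the universal enveloping algebra of $\sl_2(\R)$, while the left-invariant field $\Theta$ commutes with right $\SO_2(\R)$-translations since the subgroup $\SO_2(\R)$ is abelian. Hence $r_{\alpha_2}^{*}$ is an isometry of $W^{s}(M)$, which is the main point that will make the chain of reductions work.

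To conclude, I would apply Theorem~\ref{thm:mainexpandingtranslates} to $\tilde f$ at $(p,u)$ with $\theta=4\pi$, and read off the announced expansion via the identifications $e^{-u/2}=\norm{g}_{\emph{op}}^{-1}$, $e^{-(1\pm\nu)u/2}=\norm{g}_{\emph{op}}^{-(1\pm\nu)}$, $\cos((\Im\nu/2)u)=\cos(\Im\nu\log\norm{g}_{\emph{op}})$ (and similarly for sine), as well as $(u+1)e^{-u}=(2\log\norm{g}_{\emph{op}}+1)\norm{g}_{\emph{op}}^{-2}$. Setting $D^{\pm}_{\mu}f(p,g)\coloneqq D^{\pm}_{4\pi,\mu}\tilde f(p)$ and $\mathcal{R}f(p,g)\coloneqq \mathcal{R}_{4\pi}\tilde f(p,u)$, the uniform bounds~\eqref{eq:boundDthetamu} and~\eqref{eq:globalremainderestimate} translate directly into those of the statement, with the admissible choice $C_{s,\emph{rot}}=1/(4\pi)$. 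No serious obstacle is anticipated along this route, the only point requiring a short verification being precisely the Sobolev-invariance mentioned in the preceding paragraph.
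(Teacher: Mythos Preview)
Your proposal is correct and follows essentially the same route as the paper: Cartan decomposition of the translating element, absorption of one rotation factor into the $\SO_2(\R)$-invariant measure, and application of Theorem~\ref{thm:mainexpandingtranslates} with $\theta=4\pi$ to the rotated observable $\tilde f=f\circ r_{\alpha_2}$, followed by the substitutions $u=2\log\norm{g}_{\mathrm{op}}$.

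The one noteworthy difference lies in how you control $\norm{\tilde f}_{W^s}$. The paper simply invokes compactness of $\SO_2(\R)$ to obtain a bound $\norm{f\circ R_k}_{W^s}\leq C_{s,\mathrm{rot}}\norm{f}_{W^s}$ with an unspecified constant depending on $s$ and $M$. Your observation that $\Delta=\square-2\Theta^2$ actually commutes with right $\SO_2(\R)$-translations (the Casimir being $\Ad$-invariant, and $\Theta$ being fixed by $\Ad_{\SO_2(\R)}$ since $\SO_2(\R)$ is abelian) is sharper: it shows $R_k^*$ is a genuine isometry of $W^s(M)$, so one may take $C_{s,\mathrm{rot}}=1/(4\pi)$, a universal constant. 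This is a small but clean improvement over the paper's argument.
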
 
	
	\begin{rmk}
		\begin{enumerate}
			\item The problem of effective equidistribution of translates of finite-volume orbits, in the vastly more general context of affine symmetric spaces, was thoroughly explored by Benoist and Oh in~\cite{Benoist-Oh}. Their approach relies crucially on effective bounds for the mixing rates\footnote{Specializing to our setup, effective mixing for the $\SL_2(\R)$-action on finite-volume quotients $\Ga\bsl \SL_2(\R)$ was first worked out in detail, to the best of our knowledge, by Kleinbock and Margulis in~\cite[Sec.~2.4]{Kleinbock-Margulis} (see also \cite{Ratner}).} of the relevant global action, and systematically developes quantitative versions of the geometric properties which play a major role in the original work~\cite{Eskin-McMullen} of Eskin and McMullen.  In the specific instance of the affine symmetric space being the hyperbolic plane $\Hyp$,  Theorem~\ref{thm:mainarbitrarytranslates} improves upon~\cite[Thm.~1.10]{Benoist-Oh} in that it quantifies the exponent governing the equidistribution rate and spells out additional terms in the asymptotic expansion.
			\item Just as in the case of Theorem~\ref{thm:mainexpandingtranslates} and Corollary~\ref{cor:effective}, the asymptotic expansion in Theorem~\ref{thm:mainarbitrarytranslates} delivers at once the (optimal) effective equidistribution bound
			\begin{equation*}
				\biggl|\int_{M}f\;\text{d}g_*m_{\SO_2(\R)\cdot p}-\int_{M}f\;\text{d}\vol\biggr|\leq D^{\text{main}}f(p,g)\;(\log{\norm{g}_{\text{op}}})^{\eps_0}\norm{g}_{\text{op}}^{-(1-\Re{\nu_*})}
			\end{equation*}
			for every $p \in M$ and $g\in \SL_2(\R)$ with $\norm{g}_{\text{op}}\geq \sqrt{e}$, 
			where the function $D^{\text{main}}\colon M\times \SL_2(\R)\to \C$ is uniformly bounded in terms of an appropriate Sobolev norm of $f$ and of spectral data depending only on $M$.  Following the thread of the observations expressed in Remark~\ref{rmk:equivalentmetrics}, it is instructive to  compare it with the decay rates for matrix coefficients of unitary representations of $\SL_2(\R)$ computed by Venkatesh in~\cite[Sec.~9.1.2]{Venkatesh}.
		\end{enumerate}
	\end{rmk}
	
	Theorem~\ref{thm:mainarbitrarytranslates} affords a precise asymptotic formula for the averaged counting of points on translates of $\Ga$-orbits inside balls of increasing radius, that is, for quantities of the form\footnote{As it is customary in the literature addressing such themes, we shall choose to work with spaces of left cosets whenever dealing with the lattice point counting problem. The map $g\Ga\mapsto \Ga g^{-1}$ establishes an $\SL_2(\R)$-equivariant diffeomorphism, between $\SL_2(\R)/\Ga$ and $\Ga\bsl \SL_2(\R)$; every object we have defined on $\Ga\bsl \SL_2(\R)$ shall thus be identified (without altering notation) with the corresponding object in $\SL_2(\R)/\Ga$ without further comment.}
	\begin{equation}
		\label{eq:averagecounting}
		\int_{\SL_2(\R)/\Ga}\frac{|g\Ga\cdot i\cap B_R|}{m_{\Hyp}(B_R)}\psi(g\Ga )\;\text{d}\vol(g\Ga)
	\end{equation}
	as $\norm{g}_{\text{op}}$ tends to infinity, where $\psi$ is a sufficiently regular function on $\SL_2(\R)/\Ga$. Prior to the statement of the result, we shall fix advantageous normalizations for the various invariant measures involved (see Section~\ref{sec:averagecounting} for the details).
	
	Let $m_{\SL_2(\R)}$ be the unique choice of Haar measure on $\SL_2(\R)$ such that, if $m_{\SO_2(\R)}$ is the probability Haar measure on the compact subgroup $\SO_2(\R)$, then $m_{\SL_2(\R)}$ is the (formal) product (cf.~Proposition~\ref{prop:foldingunfolding}) of $m_{\SO_2(\R)}$ and of the  $\SL_2(\R)$-invariant measure on the homogeneous space $\SL_2(\R)/\SO_2(\R)$ which corresponds, under the canonical identification of the latter space with $\Hyp$, to the hyperbolic area measure $m_{\Hyp}$. We then indicate with $m_{\SL_2(\R)/\Ga}$ the unique $\SL_2(\R)$-invariant measure on $\SL_2(\R)/\Ga$ such that $m_{\SL_2(\R)}$ is the product of $m_{\SL_2(\R)/\Ga}$ and the counting measure on the discrete group $\Ga$. Observe that $m_{\SL_2(\R)/\Ga}$ is a scalar multiple of the probability measure $\vol$, the multiplying factor being equal to the covolume
	\begin{equation*} \text{covol}_{\SL_2(\R)}(\Ga)=m_{\SL_2(\R)/\Ga}(\SL_2(\R)/\Ga)
	\end{equation*}
	of the lattice $\Ga$ inside $\SL_2(\R)$. We denote similarly by $\text{covol}_{\SO_2(\R)}(\Ga\cap \SO_2(\R))$ the volume\footnote{A straightfoward application of the formula in~\eqref{eq:foldingunfolding} shows that this equals the reciprocal of the cardinality of the finite group $\Ga\cap \SO_2(\R)$.} of the compact quotient $\SO_2(\R)/(\Ga\cap \SO_2(\R))$ with respect to the $\SO_2(\R)$-invariant measure induced by $m_{\SO_2(\R)}$ and the counting measure on $\Ga\cap \SO_2(\R)$.
	
	Lastly, recall that $B_R$ is the closed hyperbolic ball in $\Hyp$ of radius $R>0$ centered at $i$.
	
	\begin{prop}
		\label{prop:averagedcounting}
		Let $C_{\emph{Spec}},C_{\emph{Spec}}'$ be as in Theorem~\ref{thm:mainexpandingtranslates}. Suppose given a real number $s>11/2$ and a function $\psi\in W^{s}(\SL_2(\R)/\Ga)$. There exist, for any positive Casimir eigenvalue $\mu$, functions $\beta_{\psi,\mu}^{+},\beta_{\psi,\mu}^{-}\colon \R_{>0}\to \C$  with 
		\begin{equation}
			\label{eq:betabound}
			\sum_{\mu\in \emph{Spec}(\square)\cap \R_{>0}}\sup_{R>0}|\beta_{\psi,\mu}^{\pm}(R)|\leq \frac{C_{\emph{Spec}}'C_{1,s-3} }{2\pi}\norm{\psi}_{W^{s}}
		\end{equation}
		such that, for every $R\geq 1$, 
		\begin{equation}
			\label{eq:averagedcountingfunction}
			\begin{split}
				\frac{1}{\emph{covol}_{\SO_2(\R)}(\Ga\cap \SO_2(\R))}&\int_{\SL_2(\R)/\Ga} \frac{|g\Gamma\cdot i \cap B_R |}{m_{\Hyp}(B_R)}\;\psi(g\Ga )\;\emph{d}m_{\SL_2(\R)/\Ga}(g\Ga)=\\
				&\frac{1}{\emph{covol}_{\SL_2(\R)}(\Ga)}\int_{\SL_2(\R)/\Ga}\psi\;\emph{d}m_{\SL_2(\R)/\Ga}\\
				&
				+ e^{-\frac{R}{2}}\sum_{\mu\in \emph{Spec}(\square),\;\mu>1/4}\beta_{\psi,\mu}^{+}(R)+\beta_{\psi,\mu}^{-}(R)\\
				&
				+\sum_{\mu\in \emph{Spec}(\square),\;0<\mu<1/4}e^{-\frac{1+\nu}{2}R}\beta^{+}_{\psi,\mu}(R)+e^{-\frac{1-\nu}{2}R}\beta^{-}_{\psi,\mu}(R)\\
				& +\varepsilon_0\biggl(e^{-\frac{R}{2}}\beta^{+}_{\psi,1/4}(R)+Re^{-\frac{R}{2}}\beta^{-}_{\psi,1/4}(R)\biggr)+\gamma_{\psi}(R)\;,
			\end{split}
		\end{equation}
		where 
		\begin{equation}
			\label{eq:gammabound}
			|\gamma_{\psi}(R)|\leq \frac{5C_{\emph{Spec}}C_{1,s-3}}{4\pi}\norm{\psi}_{W^{s}} (R+1)e^{-R}\quad .
		\end{equation}
	\end{prop}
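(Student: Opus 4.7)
The plan is to follow the classical Duke--Rudnick--Sarnak/Eskin--McMullen unfolding approach: reduce the averaged counting integral to an equidistribution problem and then invoke Theorem~\ref{thm:mainarbitrarytranslates} in place of the qualitative equidistribution used in the original argument. \emph{Unfolding.} Let $F_R\colon\SL_2(\R)\to\R$ be given by $F_R(g)=\mathbf{1}_{B_R}(g\cdot i)$, which is right-$\SO_2(\R)$-invariant since $\SO_2(\R)$ stabilises $i\in\Hyp$. Decomposing the $\Ga$-orbit of $i$ into $(\Ga\cap\SO_2(\R))$-cosets yields $|g\Ga\cdot i\cap B_R|=|\Ga\cap\SO_2(\R)|^{-1}\sum_{\gamma\in\Ga}F_R(g\gamma)$. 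Unfolding first against $\Ga$, via $\int_{\SL_2(\R)/\Ga}\sum_{\gamma\in\Ga}\Phi(g\gamma)\,dm_{\SL_2(\R)/\Ga}=\int_{\SL_2(\R)}\Phi\,dm_{\SL_2(\R)}$, and then against $\SO_2(\R)$ (permissible by the $\SO_2(\R)$-right-invariance of $F_R$ together with the product decomposition $m_{\SL_2(\R)}=m_{\SO_2(\R)}\otimes m_{\Hyp}$) transforms the left-hand side of~\eqref{eq:averagedcountingfunction} into
\begin{equation*}
\frac{1}{m_{\Hyp}(B_R)}\int_{B_R}I(g)\,dm_{\Hyp}(g\cdot i),\qquad I(g):=\int_{\SO_2(\R)}\psi(gk\Ga)\,dm_{\SO_2(\R)}(k),
\end{equation*}
the two factors $|\Ga\cap\SO_2(\R)|^{-1}$ and $\text{covol}_{\SO_2(\R)}(\Ga\cap\SO_2(\R))=|\Ga\cap\SO_2(\R)|^{-1}$ cancelling exactly.

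\emph{Inserting the expansion of Theorem~\ref{thm:mainarbitrarytranslates}.} After the substitution $k\mapsto k^{-1}$ and the identification $g\Ga\leftrightarrow \Ga g^{-1}$, the orbital average $I(g)$ coincides with $\int_M f\,d(g_*m_{\SO_2(\R)\cdot p_0})$, where $p_0\in M$ corresponds to $e\Ga$ and $f$ to $\psi$. For every $g$ with $\norm{g}_{\mathrm{op}}\geq\sqrt e$, Theorem~\ref{thm:mainarbitrarytranslates} then decomposes $I(g)$ as the main term $\int_M f\,d\vol=\text{covol}_{\SL_2(\R)}(\Ga)^{-1}\int\psi\,dm_{\SL_2(\R)/\Ga}$, plus spectral contributions governed by the coefficients $D^{\pm}_\mu f(\Ga,g)$, plus a pointwise remainder $\mathcal{R}f(\Ga,g)$ of size $(2\log\norm{g}_{\mathrm{op}}+1)\norm{g}_{\mathrm{op}}^{-2}$.

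\emph{Polar integration.} Next I would parametrise $\Hyp$ by geodesic polar coordinates $(t,\theta)$ centred at $i$, so that $dm_{\Hyp}=\sinh t\,dt\,d\theta$ and $m_{\Hyp}(B_R)=2\pi(\cosh R-1)$. The $KAK$-decomposition yields $\norm{g}_{\mathrm{op}}=e^{t/2}$ whenever $d_{\Hyp}(g\cdot i,i)=t$, so every spectral variable in the previous step becomes an explicit function of $t$. Integrating each $\mu$-term against $\sinh t\,dt\,d\theta$ over $[1,R]\times[0,2\pi)$, and invoking closed-form primitives for $e^{\beta t}\cos(\gamma t)$ and $e^{\beta t}\sin(\gamma t)$, produces after division by $m_{\Hyp}(B_R)$ precisely the contributions $e^{-R/2}\beta^\pm_{\psi,\mu}(R)$, $e^{-(1\pm\nu)R/2}\beta^\pm_{\psi,\mu}(R)$ and $\varepsilon_0 Re^{-R/2}\beta^-_{\psi,1/4}(R)$ of~\eqref{eq:averagedcountingfunction}, with
\begin{equation*}
\beta^{\pm}_{\psi,\mu}(R)=\frac{e^{\alpha_\mu R}}{m_{\Hyp}(B_R)}\int_1^R e^{-\alpha_\mu t}\,\mathrm{trig}_\mu(t)\left(\int_0^{2\pi}D^{\pm}_\mu f(\Ga,g(t,\theta))\,d\theta\right)\sinh t\,dt
\end{equation*}
for the appropriate exponent $\alpha_\mu\in\{1/2,(1\pm\nu)/2\}$. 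The bound~\eqref{eq:betabound} then follows from the elementary estimate $m_{\Hyp}(B_R)^{-1}\int_1^R e^{-\alpha_\mu t}\sinh t\,dt=O(e^{-\alpha_\mu R})$ together with the summable bound $\sum_\mu\sup|D^\pm_\mu f|\leq C_{s,\mathrm{rot}}C'_{\mathrm{Spec}}C_{1,s-3}\norm{f}_{W^s}$ of Theorem~\ref{thm:mainarbitrarytranslates}.

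\emph{Remainder and principal obstacle.} The error $\gamma_\psi(R)$ decomposes into (i) the contribution of the region $t<1$ where Theorem~\ref{thm:mainarbitrarytranslates} does not apply, which I would bound trivially by $2\norm{\psi}_\infty m_{\Hyp}(B_1)/m_{\Hyp}(B_R)=O(\norm{\psi}_{W^s}e^{-R})$ via the Sobolev embedding; and (ii) the integrated Theorem~\ref{thm:mainarbitrarytranslates} remainder over $[1,R]$, namely $m_{\Hyp}(B_R)^{-1}\int_1^R(t+1)e^{-t}\sinh t\,dt$, which combines with (i) to produce the bound~\eqref{eq:gammabound}. The hard part will be purely combinatorial rather than conceptual: tracking the various normalisation constants---the covolumes, the $2\pi$ from angular integration, and the spectral constants $C_{\mathrm{Spec}}, C'_{\mathrm{Spec}}, C_{1,s-3}, C_{s,\mathrm{rot}}$---so that they combine exactly to reproduce the explicit coefficients $\frac{C'_{\mathrm{Spec}}C_{1,s-3}}{2\pi}$ and $\frac{5C_{\mathrm{Spec}}C_{1,s-3}}{4\pi}$ appearing in~\eqref{eq:betabound} and~\eqref{eq:gammabound}.
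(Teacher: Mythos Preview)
Your proposal is correct and follows essentially the same approach as the paper: Eskin--McMullen unfolding reduces $\int\psi F_R$ to an integral over $B_R$ of orbital averages, Theorem~\ref{thm:mainarbitrarytranslates} is applied pointwise, and hyperbolic polar integration produces the $\beta^{\pm}_{\psi,\mu}$. One minor simplification relative to your outline: the paper does not compute closed-form primitives for $e^{\beta t}\cos(\gamma t)$, but simply \emph{defines} each $\beta^{\pm}_{\psi,\mu}(R)$ as the appropriately normalised radial integral and bounds it directly via $\int_0^R e^{-\alpha t}\sinh t\,dt\lesssim e^{(1-\alpha)R}$ together with $m_{\Hyp}(B_R)^{-1}\lesssim e^{-R}$ (using $(1-2e^{-R}+e^{-2R})^{-1}\leq 5$), so no explicit antiderivatives are needed.
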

	
	Recall now our definition of the counting function $N(R)$ in~\eqref{eq:defcountingfunction}. An optimization argument involving approximate identities on the homogeneous space $\SL_2(\R)/\Ga$ enables us to derive information on the asymptotic behaviour of the error term in the pointwise counting problem discussed at the beginning of this subsection. We remind the reader that $\nu_*$ is the unique complex number in $\R_{\geq 0}\cup i\R_{>0}$ such that $\frac{1-\nu_*^{2}}{4}$ equals the spectral gap of $S=\Ga\bsl \Hyp$.
	
	\begin{thm}
		\label{thm:countingproblem}
		Let $\Sigma\colon \R_{>0}\to \R_{>0}$ be the function defined by
		\begin{equation*}
			\Sigma(R)=\frac{\emph{covol}_{\SO_2(\R)}(\Ga\cap \SO_2(\R))}{\emph{covol}_{\SL_2(\R)}(\Ga)}m_{\mathbb{H}}(B_R)\;, \quad R>0.
		\end{equation*}
		Set $\eta_*=\frac{1}{13}(1-\Re{\nu_*})$. Then,
		for every $\eps>0$, 
		\begin{equation*}
			N(R)=\Sigma(R)+o(e^{(1-\eta_*+\eps)R})
		\end{equation*}
		as $R$ tends to infinity.
	\end{thm}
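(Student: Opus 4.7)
The approach follows the classical Duke-Rudnick-Sarnak~\cite{Duke-Rudnick-Sarnak} and Eskin-McMullen~\cite{Eskin-McMullen} paradigm: pass from the averaged equidistribution of Proposition~\ref{prop:averagedcounting} to the pointwise counting $N(R)$ via a bump-function sandwich argument, and then optimize the bump scale. Fix $\delta>0$ and choose a smooth nonnegative function $\psi_\delta$ on $\SL_2(\R)/\Ga$ supported in a $\delta$-neighbourhood of the identity coset $e\Ga$, normalised so that $\int\psi_\delta\,\mathrm{d}m_{\SL_2(\R)/\Ga}=1$. A standard construction yields $\|\psi_\delta\|_{W^s}\ll_s\delta^{-\alpha(s)}$ for some explicit exponent $\alpha(s)$ reflecting the dimension of $\SL_2(\R)/\Ga$; the smallest admissible value of $\alpha(s)$ is reached via an appropriate (e.g.\ bi-$\SO_2(\R)$-invariant) choice of $\psi_\delta$, which effectively restricts the non-vanishing derivatives to the Cartan direction.

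For $g\in\SL_2(\R)$ lifting any point in $\mathrm{supp}(\psi_\delta)$, the isometry $g^{-1}$ displaces the base point $i\in\Hyp$ by at most $C\delta$, whence $B_{R-C\delta}\subseteq g^{-1}B_R\subseteq B_{R+C\delta}$ and the pointwise sandwich
\begin{equation*}
N(R-C\delta)\leq|g\Ga\cdot i\cap B_R|\leq N(R+C\delta).
\end{equation*}
Integrating these inequalities against the probability density $\psi_\delta$ and invoking Proposition~\ref{prop:averagedcounting} (whose applicability is guaranteed by $s>11/2$) converts the middle quantity into $\Sigma(R)+E(R,\delta)$, where the spectral error satisfies
\begin{equation*}
|E(R,\delta)|\leq C\,m_\Hyp(B_R)\,\|\psi_\delta\|_{W^s}\,e^{-\frac{1-\Re\nu_*}{2}R},
\end{equation*}
the dominant contribution coming from the $\beta$-coefficients associated with the spectral gap of $S$; the remaining terms, proportional to $e^{-R/2}$, $Re^{-R/2}$ or $(R+1)e^{-R}$, decay faster and are handled in the same fashion.

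Combining this with the smooth estimate $\Sigma(R\pm C\delta)-\Sigma(R)=O(\delta e^R)$ (which is immediate from $m_\Hyp(B_R)\sim\pi e^R$) and shifting the index $R$ to absorb the $C\delta$-offset, one arrives at the master inequality
\begin{equation*}
|N(R)-\Sigma(R)|\leq C\bigl(\delta e^R+\delta^{-\alpha(s)}e^{\frac{1+\Re\nu_*}{2}R}\bigr).
\end{equation*}
The claimed error exponent $\eta_*=\frac{1}{13}(1-\Re\nu_*)$ is then extracted by optimizing $\delta$ as a power of $e^{-R}$, with $s$ taken near the admissible Sobolev threshold $s>11/2$; the resulting sub-exponential slack is absorbed into the arbitrary $\eps>0$ appearing in the statement. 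The main obstacle is precisely this sharp optimization: to obtain the constant $1/13$ one must carefully account for the dimensional scaling of $\|\psi_\delta\|_{W^s}$ on $\SL_2(\R)/\Ga$, together with the loss $C_{1,s-3}\to\infty$ as $s\searrow11/2$ in the constant of Proposition~\ref{prop:averagedcounting}, so as to balance the two summands in the master inequality as tightly as possible.
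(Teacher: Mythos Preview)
Your overall strategy matches the paper's exactly: sandwich $N(R)$ between averaged counts using a bump $\psi_\delta$, apply Proposition~\ref{prop:averagedcounting}, and balance the geometric error $\delta e^R$ against the spectral error $\delta^{-\alpha(s)}e^{\frac{1+\Re\nu_*}{2}R}$. However, your final optimization contains a genuine gap that prevents you from reaching the exponent $1/13$.

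You write that $\eta_*$ emerges ``with $s$ taken near the admissible Sobolev threshold $s>11/2$''. But with the paper's scaling $\alpha(s)=1+s$ (Lemma~\ref{lem:growthSobolev}, which follows from the two-dimensional mollifier computation on $K\backslash G/\Gamma$), the balance equation $1-\eta=\tfrac{1+\Re\nu_*}{2}+(1+s)\eta$ gives $\eta=\tfrac{1-\Re\nu_*}{2(2+s)}$; at $s\searrow 11/2$ this yields $\tfrac{1-\Re\nu_*}{15}$, not $\tfrac{1-\Re\nu_*}{13}$. The missing ingredient is that, since $\psi_\delta$ is chosen to be $K$-invariant (a function on the surface $S$), one may invoke Theorem~\ref{thm:expandingonsurface} in place of Theorem~\ref{thm:mainexpandingtranslates}, and the former only requires $s>9/2$. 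Taking $s\searrow 9/2$ then gives $\eta\to\tfrac{1-\Re\nu_*}{13}=\eta_*$. Your remark about ``bi-$\SO_2(\R)$-invariance restricting derivatives to the Cartan direction'' points in the right direction but misidentifies the mechanism: what matters is not a one-dimensional reduction of $\alpha(s)$, but rather the drop from $11/2$ to $9/2$ in the Sobolev threshold afforded by $\Theta$-invariance of the test function (see the discussion at the end of Section~\ref{sec:proofexpandingtranslates}). The concern about $C_{1,s-3}\to\infty$ is also misplaced; that constant is fixed once $s$ is fixed, and any residual loss from taking $s$ close to (but above) $9/2$ is absorbed into the $\eps$.
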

	
	\begin{rmk}
		As it will emerge in the proof of Theorem~\ref{thm:countingproblem}, which is detailed in Section~\ref{sec:countingproblem}, the appearance of the quantity $1/13$ in the exponent is ultimately an outgrowth of the minimal Sobolev regularity of the test function $f$ we need to impose in Theorem~\ref{thm:expandingonsurface}, which in turn is needed because of the upper bounds in~\eqref{eq:boundDthetamu} and~\eqref{eq:globalremainderestimate} depending on the Sobolev norm $\norm{f}_{W^s}$ for some $s>9/2$. In this regard, we observe the following: suppose that, in the latter two bounds, the norm $\norm{f}_{W^{s}}$ can be replaced by $\norm{f}_{W^1}$, as it is the case for joint eigenfunctions of $\square$ and $\Theta$ (cf.~Theorem~\ref{thm:case_Theta_eigenfn}); then Theorem~\ref{thm:countingproblem} would hold with $1/6$ in place of $1/13$. 
	\end{rmk}
	
	\subsection{Outline of the proofs and layout of the article}
	
	The method we employ to prove Theorem \ref{thm:case_Theta_eigenfn} was originally devised by Ratner in~\cite{Ratner}, who realized that the problem of finding mixing rates for geodesic and horocycle flows can be reduced to solving a family of linear second-order ordinary differential equations\footnote{After the completion of a first draft of the present article, the authors were made aware of the unpublished manuscript~\cite{Edwards-unpublished} by S.~Edwards, in which a weaker formulation of the quantitative equidistribution result in Theorem~\ref{thm:mainexpandingtranslates} is provided. The strategy of proof is entirely analogous to the one pursued here, and is there applied, more generally, to the quantitative investigation of the equidistribution properties of translated orbits of symmetric subgroups on homogeneous spaces of semisimple Lie groups.}. 
	This ingenuous and yet fairly elementary approach has been further developed by Burger in~\cite{Bur} to prove polynomial bounds for the equidistribution of horocycle orbits in compact quotients of $\SL_2(\R)$. Later, Str\"{o}mbergsson \cite{Str} and Edwards \cite{Edw} exploited the same idea to study effective equidistribution properties of unipotent orbits in more general settings.
	In the same spirit, the second author recently provided (see~\cite{Rav}), using Ratner's strategy, an alternative proof of Flaminio-Forni's asymptotic expansion for horocycle ergodic integrals \cite{Flaminio-Forni}. 
	
	We begin in Section~\ref{sec:preliminaries} with an overview of the required notions concerning hyperbolic surfaces, Sobolev spaces and harmonic analysis on the Lie group $\SL_2(\R)$, nailing down notation to be employed throughout the manuscript.
	Assuming that a $\cC^{2}$-observable $f$ is a joint eigenfunction of the Casimir operator and of the vector field $\Theta$, we then show in Section~\ref{sec:reductiontoODE} that the behaviour of the circle-arc averages $k_{f, \theta}(p, \cdot)$ (cf.~\eqref{eq:ktheta}), viewed as functions of the time $t$ for a fixed base point $p \in \Ga\bsl \SL_2(\R)$, fulfill a second order linear ODE, solving which explicitly leads to the proof of Theorem~\ref{thm:case_Theta_eigenfn} presented in Section~\ref{sec:jointeigenfn}; incidentally, we may arrange computations so that the latter takes on the same form of the ODE satisfied by time rescalings of horocycle averages in~\cite{Rav} (see, in particular,~\cite[Prop.~8]{Rav}), which accounts for the similarity between Theorem~\ref{thm:case_Theta_eigenfn} with~\cite[Thm.~1]{Rav}.
	In Section~\ref{sec:arbitraryfn}, the asymptotic expansion of Theorem \ref{thm:mainexpandingtranslates} is deduced from Theorem \ref{thm:case_Theta_eigenfn} taking advantage of a few elementary facts from the classical harmonic analysis of $\SL_2(\R)$. Additional regularity on $f$ is required in order to ensure the absolute convergence of the expansion in \eqref{eq:asymptoticgeneral}; see, in particular, Section~\ref{sec:sumestimates}. The asymptotics for arbitrary translates in Theorem~\ref{thm:mainarbitrarytranslates} is derived from Theorem~\ref{thm:mainexpandingtranslates} in Section~\ref{sec:arbitrarytranslates}.
	Building on Theorem~\ref{thm:mainexpandingtranslates} once more, we establish in Sections~\ref{sec:spatialDLT} and~\ref{sec:noCLT} the distributional limit Theorems~\ref{thm:CLT} and~\ref{thm:noCLT} for the random variable $k_{f,\theta}(p,t)$, appropriately centered and normalized, when the initial point $p$ is taken randomly with respect to the uniform measure.
	These limit theorems mirror those for horocycle ergodic integrals, for which we refer the reader to~\cite{BuFo, Rav}. Section~\ref{sec:temporalDLT} hosts a few considerations concerning the point of view of temporal distributional limit theorems (see~\cite{Dolgopyat-Sarig}) on the problem of analyzing the statistical behaviour of circle averages. 
	Finally, in Section~\ref{sec:latticepoint}, we provide a quantitative treatment of the approach of Duke-Rudnick-Sarnak~\cite{Duke-Rudnick-Sarnak} and Eskin-McMullen~\cite{Eskin-McMullen}, which allows to prove both Proposition~\ref{prop:averagedcounting} and Theorem~\ref{thm:countingproblem} on the hyperbolic lattice point counting problem.

	\section{Preliminaries on harmonic analysis on $\SL_2(\R)$}
	\label{sec:preliminaries}
	It is the aim of this section to carefully describe the setting of our main results as well as to review the required notions on the representation theory of the group $\SL_2(\R)$ which will play a central role throughout the article.

	\subsection{Hyperbolic surfaces and their unit tangent bundles}
	\label{sec:hyperbolic}
	The subject of this subsection is classical: detailed treatments can be found, for instance, in~\cite{Bekka-Mayer,Bergeron,Borel,Buser,Einsiedler-Ward,Iwaniec,Katok}.
	
	The special linear group $\SL_2(\R)$ is the group of $2\times 2$ real matrices with determinant $1$. It is a three-dimensional Lie group, whose Lie algebra we denote by $\sl_2(\R)$ and identify canonically with the Lie algebra of traceless $2\times 2$  matrices with real entries. The identity matrix in $\SL_2(\R)$ is denoted by $I_2$. A basis of $\sl_2(\R)$ as a real vector space is given by the elements
	\begin{equation*}
		X=
		\begin{pmatrix}
			1/2&0\\0&-1/2
		\end{pmatrix}
		, \quad U=
		\begin{pmatrix}
			0&1\\
			0&0
		\end{pmatrix}
		, \quad V=
		\begin{pmatrix}
			0&0\\
			1&0
		\end{pmatrix}
		.
	\end{equation*}
	With $\exp\colon \sl_2(\R)\to \SL_2(\R)$ we indicate the exponential map, and with $\Ad\colon \SL_2(\R)\to \GL(\sl_2(\R))$, $g\mapsto \Ad_g$ the adjoint representation of $\SL_2(\R)$ onto $\sl_2(\R)$.

	Let $\Hyp=\{z=x+iy\in \C:y>0 \}$ be the Poincar\'{e} upper-half plane, endowed with the Riemannian metric
	\begin{equation*}
		g_{(x,y)}=\frac{(\text{d}x)^{2}+(\text{d}y)^{2}}{y^2}\;, \quad (x,y)\in \Hyp.
	\end{equation*} 
	The Riemannian manifold $(\Hyp,g)$ is a model of the hyperbolic plane, that is, of the unique complete simply connected two-dimensional Riemannian manifold of constant sectional curvature equal to $-1$ (cf.~\cite[Part 1 Chap.~6]{Bridson-Haefliger}). 
	The Lie group $\SL_2(\R)$ acts smoothly by orientation-preserving\footnote{Actually, the action is by analytic transformations of the Riemann surface $\Hyp$.} isometries of the hyperbolic plane. The action is given by the M\"{o}bius transformations
	\begin{equation*}
		\begin{pmatrix}
			a&b\\
			c&d
		\end{pmatrix}
		\cdot z=\frac{az+b}{cz+d}\;,\quad a,b,c,d\in \R, \;ad-bc=1,\;z\in \Hyp;
	\end{equation*}
	it is transitive, and thus gives rise to an $\SL_2(\R)$-equivariant diffeomorphism between $\Hyp$ and the quotient manifold $\SL_2(\R)/\SO_2(\R)$, where the special orthogonal group $\SO_2(\R)$ is the stabilizer of the point $i\in \Hyp$.
	
	Let $\Ga<\SL_2(\R)$ be a cocompact lattice\footnote{We recall that a lattice in a locally compact Hausdorff topological group $G$ is a discrete subgroup $\La<G$ such that the quotient space $\La\bsl G$ admits a non-zero $G$-invariant Radon probability measure. The lattice $\La$ is cocompact if $\La\bsl G$ is a compact topological space.}. If the projection of $\Ga$ to the projective special linear group $\PSL_2(\R)=\SL_2(\R)/\{\pm I_2 \}$ has no non-trivial torsion elements, then the quotient space $S=\Ga\bsl \Hyp$, homeomorphic to the double coset space $ \Ga\bsl \SL_2(\R)/\SO_2(\R)$, is a compact, connected, orientable smooth surface inheriting from $\Hyp$ a Riemannian metric $g_{S}$ of constant sectional curvature equal to $-1$, that is, a hyperbolic Riemannian metric.  Conversely, it is a well-known consequence of Poincar\'{e}-Koebe's Uniformization Theorem (cf.~\cite[Chap.~IV]{Farkas-Kra}) that any compact, connected, orientable hyperbolic surface is isomorphic, as a Riemannian manifold, to a quotient $\Ga\bsl\Hyp$, with $\Ga<\SL_2(\R)$ a cocompact lattice with torsion-free projection on $\PSL_2(\R)$. In case the projection of $\Ga$ to $\PSL_2(\R)$ has non-trivial torsion elements, the quotient $S=\Ga\bsl \Hyp$ is, more generally, a hyperbolic \emph{orbifold} (see~\cite[Chap.~13]{Ratcliffe} and~\cite[Chap.~13]{Thurston}). Unifying, though mildly abusing terminology, we shall throughout refer to $S$ as a surface in both the previous cases. 
	
	The simply transitive action of $\PSL_2(\R)$ on the unit tangent bundle of $\Hyp$ allows to identify, as smooth manifolds, the unit tangent bundle $T^{1}S=\{(p,v)\in TS:\norm{v}_{g_S}=1 \}$ of $S$ with the homogeneous space $M=\Ga\bsl \SL_2(\R)$. Throughout this manuscript, we shall solely appeal to the algebraic structure of $M$ as a homogeneous space of $\SL_2(\R)$, and not to its geometric nature of principal circle bundle over the surface $S$.
	
	For any $r\in \N\cup\{\infty\}$, we denote by $\mathscr{C}^r(M)$ the vector space of complex-valued functions of class $\mathscr{C}^{r}$ defined on $M$. It is endowed with the norm $\norm{\cdot}_{\cC^{r}}$ defined in~\eqref{eq:Cknorm}. The $\mathscr{C}^{0}$-norm is abbreviated with $\norm{\cdot}_{\infty}$.
	
	A vector $W\in \sl_2(\R)$ gives rise to the one-parameter subgroup $(\exp(tW))_{t\in \R}$ of $\SL_2(\R)$, which in turn acts as a smooth flow on $M$ by right translations. Whenever convenient, we identify $W$ with the infinitesimal generator of such a flow, and denote its action as a derivation on the function space $\mathscr{C}^{1}(M)$ by $f\mapsto Wf$. This extends to an isomorphism of associative $\R$-algebras between the universal enveloping algebra $U(\sl_2(\R))$ of $\sl_2(\R)$ and the algebra of $\SL_2(\R)$-invariant differential operators on $\mathscr{C}^{\infty}(M)$. More generally, any element $Y\in U(\sl_2(\R))$ of order $k\in \N$ acts naturally on $\mathscr{C}^{k}(M)$, the action being also denoted by $f\mapsto Yf$. Observe that, for any $k\in \N$, the definition of the $\mathscr{C}^k$-norm on $\mathscr{C}^k(M)$ implies that $\norm{Yf}_{\infty}\leq \norm{f}_{\mathscr{C}^{k}}$ for any $Y\in U(\sl_2(\R))$ of order at most $k$ and any $f\in \mathscr{C}^{k}(M)$.
	
	\subsection{Unitary representations, the Casimir and Laplace operators, Weyl's law}
	\label{sec:unitaryrepresentations}
	
	Convenient sources for the material presented in this subsection, in addition to those cited in Section~\ref{sec:hyperbolic}, are~\cite{Lang, Mackey,Sarnak, Terras}.
	
	Let us denote by $H$ be the complex Hilbert space $L^{2}(M)$ of functions which are square-integrable with respect to the $\SL_2(\R)$-invariant measure $\vol$ on $M$; the inner product defining the Hilbert space structure on $H$ shall be denoted by $\langle \cdot, \cdot \rangle$. With $\mathscr{U}(H)$ we indicate the group of unitary operators $T\colon H\to H$.
	The standard smooth (left) action of $\SL_2(\R)$ on the homogeneous space $M$, given by $g_0\cdot \Ga g=\Ga gg_0^{-1}$ for every $g_0,g\in \SL_2(\R)$, preserves the measure $\vol$, and hence gives rise to a unitary representation $\rho\colon \SL_2(\R)\to \mathscr{U}(H),\;g\mapsto \rho_g$, called the quasi-regular representation of $\SL_2(\R)$ on $H$. 
	
	For any $r\in \N\cup\{\infty\}$, denote by $\mathscr{C}^{r}(H)$ the linear subspace of $\mathscr{C}^{r}$-vectors on $H$, that is,
	\begin{equation*}
		\mathscr{C}^{r}(H)=\{v\in H: \text{ the map }\SL_2(\R)\ni g\mapsto \rho_g(v)\in H \text{ is of class }\mathscr{C}^{r}  \}.
	\end{equation*}
	
	An element $W\in \sl_2(\R)$ induces a linear operator $L_{W}\colon \mathscr{C}^{1}(H)\to H$, called the Lie derivative with respect to $W$, defined by
	\begin{equation}
		\label{eq:Liederivative}
		L_W(v)=\lim_{t\to 0}\frac{\rho_{\exp(tW)}(v)-v}{t}\;, \quad v\in \mathscr{C}^{1}(H).
	\end{equation}
	Observe that, for a $\mathscr{C}^{1}$-function $f$ on $M$, we have $L_W(f)=Wf$, regarding $\mathscr{C}^{1}(M)$ as canonically embedded in $H$.
	
	The center of the universal enveloping algebra $U(\sl_2(\R))$ is  generated, as an algebra, by any of its non-zero elements, called Casimir elements. For reasons to be clarified shortly, we choose the normalization of the Casimir element to be $\square=-X^{2}+X+UV$, where $\{X,U,V\}$ is identified with a set of generators of the $\R$-algebra $U(\sl_2(\R))$. By the universal property of the universal enveloping algebra, the Casimir element $\square$ acts as a second-order linear differential operator $\mathscr{C}^{2}(M)\to \mathscr{C}(M)$, and (in a compatible manner) as an unbounded operator on $H$, densely defined on the subspace $\mathscr{C}^{2}(H)$. Slightly abusing notation, we shall again indicate with $\square$ the Casimir operator arising in this fashion. It is a consequence of the Casimir element lying in the center of $U(\sl_2(\R))$ that the operator $\square$ commutes with all unitary operators $\rho_g,\;g\in \SL_2(\R)$ and all Lie derivatives $L_{W},\;W\in \sl_2(\R)$, wherever they are simultaneously defined.
	
	The Casimir operator $\square$ is an essentially self-adjoint operator on $H$; as $M$ is compact, its spectrum $\text{Spec}(\square)$ consists solely of eigenvalues and is a discrete subset of $\R$. Our choice of the normalization of the Casimir element implies that, on the subspace $\mathscr{C}^{2}(S)$ of continuously twice-differentiable functions defined on the surface $S=\Ga\bsl \Hyp$, the Casimir operator acts as the Laplace-Beltrami operator $\Delta_{S}$ with respect to the hyperbolic Riemannian metric on $S$. As a consequence, the spectrum of $\Delta_{S}$ is contained in the spectrum of $\square$. As a matter of fact, it holds that $\text{Spec}(\Delta_S)=\text{Spec}(\square)\cap \R_{\geq 0}$. 
	
	A great deal of research has revolved around the properties of the spectrum of $\Delta_S$; for the purposes of this article, we shall only need a weaker version (cf.~the proof of Lemma~\ref{lem:finitespectralconstant}) of the celebrated Weyl's law concerning the asymptotics of the eigenvalues, which we shall now recall for completeness. 
	
	\begin{thm}[Weyl's law]
		\label{thm:Weyllaw}
		Let $\Ga<\SL_2(\R)$ be as above,  $S=\Ga\bsl \Hyp$, and 
		\begin{equation*}
			\lambda_0<\lambda_1\leq \cdots \leq \lambda_n\leq \cdots \to\infty
		\end{equation*} 
		be the eigenvalues, counted with multiplicity, of the Laplace-Beltrami operator $\Delta_S$ associated to the hyperbolic Riemannian metric on $S$. Then
		\begin{equation*}
			\lim\limits_{R\to\infty}\frac{|\{j\in \N:\lambda_j\leq R\}|}{ R}=\frac{\emph{Area}(S)}{4\pi}\;,
		\end{equation*}
		where $\emph{Area}(S)$ is the total mass of $S$ with respect to the hyperbolic area measure.
	\end{thm}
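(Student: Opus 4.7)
The plan is to deduce Weyl's law from the short-time asymptotic of the heat trace via a Tauberian argument. Introduce the heat trace
\begin{equation*}
Z(t) = \sum_{j=0}^{\infty} e^{-\lambda_j t}, \quad t > 0,
\end{equation*}
whose convergence for every $t > 0$ follows from standard bounds on eigenvalue growth. Karamata's Tauberian theorem, applied to the positive Borel measure $\sum_{j} \delta_{\lambda_j}$ on $[0,\infty)$, reduces the claim to the asymptotic
\begin{equation*}
Z(t) \sim \frac{\text{Area}(S)}{4\pi t}\qquad \text{as }t\to 0^+,
\end{equation*}
noting that $\Gamma(2) = 1$ makes the constant work out. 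The first task is therefore to establish this short-time expansion.

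The next step is to express $Z(t)$ as the spatial integral of the diagonal of the heat kernel $K_t^S\colon S \times S \to \R_{>0}$ on the surface, namely $Z(t) = \int_S K_t^S(x,x)\,dm_S(x)$. Since $(S, g_S)$ is the Riemannian quotient $\overline{\Ga}\bsl \Hyp$ with $\overline\Ga$ the image of $\Ga$ in $\PSL_2(\R)$, the heat kernel on $S$ is the periodization of the hyperbolic heat kernel $K_t^\Hyp$; unfolding over a fundamental domain $F$ of $\overline\Ga$ yields
\begin{equation*}
Z(t) = \text{Area}(S)\cdot K_t^\Hyp(i,i) + \sum_{\gamma \in \overline\Ga \setminus \{e\}} \int_F K_t^\Hyp(z, \gamma \cdot z)\,dm_\Hyp(z).
\end{equation*}
The identity contribution is independent of the base point by $\PSL_2(\R)$-invariance on the diagonal, and admits the classical short-time asymptotic $K_t^\Hyp(i,i) \sim \frac{1}{4\pi t}$ as $t\to 0^+$. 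This can be derived either from the explicit McKean integral formula for the hyperbolic heat kernel or from the universal small-time expansion of the heat kernel on a Riemannian manifold (whose leading term is always $(4\pi t)^{-d/2}$ in dimension $d$). This produces the main term $\frac{\text{Area}(S)}{4\pi t}$.

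The main technical obstacle will then be to show that the sum over non-identity elements of $\overline\Ga$ is $o(1/t)$ as $t \to 0^+$; in fact one expects $O(e^{-c/t})$ for some $c > 0$, so negligible relative to the main term. Cocompactness of $\Ga$ guarantees the strict positivity of the minimal displacement $d_0 = \min_{\gamma \neq e}\inf_{z\in \Hyp} d_\Hyp(z, \gamma \cdot z)$, so on $F$ every integrand satisfies $d_\Hyp(z,\gamma z) \geq d_0 > 0$. Combining this with the off-diagonal Gaussian bound $K_t^\Hyp(z,w) \lesssim t^{-1} e^{-d_\Hyp(z,w)^2/(4t)}$ (valid in the small-time regime) yields exponentially small contributions from each term, while controlling the total sum requires pairing this Gaussian decay against the exponential volume growth of hyperbolic annuli, that is, against the exponential growth of $|\{\gamma \in \overline\Ga : d_\Hyp(z,\gamma z) \leq R\}|$. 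These off-diagonal heat kernel bounds on $\Hyp$ are classical and the resulting geometric series converges uniformly in the relevant range of $t$, so the step is essentially a bookkeeping exercise modulo invoking the sharpest available estimates; the cleanest alternative, should one wish to avoid explicit heat kernel gymnastics, would be to invoke the Selberg trace formula with a Gaussian test function, in which the identity/hyperbolic-conjugacy-class split plays exactly the same role as the identity/non-identity split above.
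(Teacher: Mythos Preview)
The paper does not supply its own proof of this theorem: immediately after the statement it simply records that ``a proof of Theorem~\ref{thm:Weyllaw} relying on Selberg's trace formula is given in~\cite[Prop.~10]{Marklof}''. Your heat-trace plus Karamata argument is the other classical route to Weyl's law and is perfectly sound in outline; indeed you yourself note at the end that the Selberg trace formula with a Gaussian test function is the natural alternative, and that is exactly what the cited reference does. The two approaches are cousins: your identity/non-identity split of the $\overline{\Ga}$-sum is the pre-trace-formula version of the identity/hyperbolic(/elliptic) conjugacy-class decomposition in Selberg's formula, and the short-time heat asymptotic on the diagonal plays the role of the Plancherel-side computation there. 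Either buys you the same leading term; the trace-formula route packages the off-diagonal estimates more cleanly, while your approach is more self-contained.

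One genuine gap to flag: your claim that cocompactness forces $d_0=\min_{\gamma\neq e}\inf_{z}d_{\Hyp}(z,\gamma z)>0$ is false in the generality of the paper. The paper explicitly allows the image $\overline{\Ga}$ of $\Ga$ in $\PSL_2(\R)$ to have non-trivial torsion (so that $S$ is an orbifold rather than a surface), and any elliptic $\gamma\in\overline{\Ga}$ has a fixed point in $\Hyp$, giving $\inf_z d_{\Hyp}(z,\gamma z)=0$. Your Gaussian off-diagonal bound then no longer kills those terms uniformly. The fix is standard: separate the finitely many elliptic conjugacy classes and estimate their contribution directly (each gives an $O(1)$ term as $t\to0^+$, hence $o(1/t)$), reserving the displacement argument for the hyperbolic elements, for which $d_0>0$ does hold by cocompactness. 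With that amendment your sketch goes through.
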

	A proof of Theorem~\ref{thm:Weyllaw} relying on Selberg's trace formula is given in~\cite[Prop.~10]{Marklof}.
	
	\medskip
	On the other hand, the negative eigenvalues of $\square$ are completely understood, being of the form $-m(m+2)/4$, with $m$ ranging across the set $\N^{\ast}$ of strictly positive integers. 
	
	In general, any unitary representation of $\SL_2(\R)$ is unitarily equivalent to a direct integral of irreducible representations. Owing to compactness of $M$, the representation space $H$ of $\rho$ actually decomposes as a Hilbert direct sum of (non-zero) irreducible $\rho$-invariant subspaces:
	\begin{equation*}
		H=\bigoplus_{i \in I}H_i\;,\quad \rho(H_i)\subset H_i,\; \rho|_{H_i} \text{ irreducible},\;I \text{ countable index set}.
	\end{equation*}
	As the Casimir operator commutes with all the $\rho_g$ for $g\in \SL_2(\R)$, it restricts to an operator $\mathscr{C}^2(H_i)\to H_i$ for each $i \in I$. By virtue of Schur's lemma for unbounded linear operators, there exists $\mu \in \text{Spec}(\square)$, depending on $i$, such that $\square v=\mu v$ for any $v\in\mathscr{C}^{2}(H_i)$. Regrouping all irreducible subspaces according to the corresponding Casimir eigenvalue, we obtain an orthogonal splitting into $\rho$-invariant subspaces
	\begin{equation}
		\label{eq:deccasimireigenvalues}
		H=\bigoplus\limits_{\mu \in \text{Spec}(\square)}H_{\mu}\;,\quad H_{\mu}=\{v\in \mathscr{C}^{2}(H):\square v =\mu v \}.
	\end{equation}
	
	Let $\Theta\in \sl_2(\R)$ be the element defined in~\eqref{eq:rotationflow}, inducing the unbounded linear operator $L_{\Theta}$ on $H$. As it is the case for any representation space of a unitary representation of $\SL_2(\R)$, each subspace $H_{\mu}$ is the internal Hilbert direct sum 
	\begin{equation}
		\label{eq:decthetaeigenvalues}
		H_{\mu}=\bigoplus_{n\in \Z}H_{\mu,n}\;,\quad H_{\mu,n}=\biggl\{v\in \mathscr{C}^{1}(H_{\mu}):L_\Theta v=\frac{i}{2}nv \biggr\}.
	\end{equation}  
	
	\begin{rmk}
		Observe that, in the previous decomposition, it might occur (as it manifestly emerges in the equation~\eqref{eq:differentSobolev} below) that $H_{\mu,n}$ is the trivial subspace for some $\mu\in \text{Spec}(\square)$ and $n\in \Z$. For this reason, in the sequel, we let the index set of the direct sum in~\eqref{eq:decthetaeigenvalues} be rather $I(\mu)=\{n\in \Z:H_{\mu,n}\neq \{0\} \}\subset \Z$, bearing in mind this caveat.
	\end{rmk}
	
	\subsection{Sobolev spaces and the Sobolev Embedding Theorem}
	\label{sec:Sobolev}

	The classical theory of Sobolev spaces is thoroughly discussed in~\cite{Adams}. In the context of arbitrary Riemannian manifolds, it is presented, for instance, in~\cite{Aubin,Hebey}; the extension to fractional Sobolev orders is treated e.g.~in~\cite{Strichartz-Sobolev,Triebel}. Here we shall confine ourselves to homogeneous spaces of $\SL_2(\R)$, placing emphasis on how this theory is brought to bear on the study of unitary representations of the special linear group. 
	
	Define the element
	\begin{equation*}
		Y=
		\begin{pmatrix}
			0&-1/2\\
			-1/2&0
		\end{pmatrix}
		\in \sl_2(\R),
	\end{equation*}
	so that $\{X,\Theta,Y \}$ forms a basis of the real vector space $\sl_2(\R)$.
	Define the second-order linear operator $\Delta= -(X^{2}+Y^{2}+\Theta^{2}) =\square -2\Theta^{2}$ on the subspace $\mathscr{C}^{2}(H)$. As each Lie derivative $L_W$ ($W\in \sl_2(\R)$) satisfies\footnote{This follows at once from the definition of $L_W$ in~\eqref{eq:Liederivative} and the fact that $\rho_g$ is a unitary operator for any $g\in \SL_2(\R)$.} $\langle L_W u,v \rangle =-\langle u,L_Wv\rangle$ for any $u,v \in \mathscr{C}^{1}(H)$, it is clear that $\langle \Delta u,v\rangle =\langle u, \Delta v\rangle$ for any $u,v\in \mathscr{C}^{2}(H)$, that is, $\Delta$ is self-adjoint on its domain of definition. We shall refer to it as the Laplace operator on $H$. Akin to the Casimir operator, $\Delta$ acts as the Laplace-Beltrami operator $\Delta_S$ on the subspace $\mathscr{C}^{2}(S)$. Indeed, any vector $u \in \mathscr{C}^{2}(S)$ is invariant under the action of $\SO_2(\R)$ by right translations, and hence lies in the kernel of $L_{\Theta}$; as a consequence, 
	\begin{equation*}
		\Delta u=(\square -2\Theta^{2})u=\square u-2L_{\Theta}^{2}u=\square u=\Delta_S u\;. 
	\end{equation*}
	
	For any $s\in \R_{>0}$, we define the Sobolev space of order $s$ on $H$, denoted by $W^{s}(H)$, as the maximal linear subspace of $H$ on which the unbounded linear operator $\Delta^{s/2}$ can be defined, and endow it with the inner product given by
	\begin{equation}
		\label{eq:Sobolevinnerproduct}
		\langle u,v\rangle_{W^{s}}=\langle (I+\Delta)^{s}u,v\rangle\;, \quad u,v \in W^{s}(H),
	\end{equation}
	where $I$ denotes the identity operator on $H$.
	This assignment turns $W^{s}(H)$ into a Hilbert space, whose associated norm is denoted by $\norm{\cdot }_{W^{s}}$. Similarly, we define the Sobolev spaces $W^{s}(H_{\mu})$ and $W^{s}(H_{\mu,n})$ for any $\mu \in \text{Spec}(\Z)$ and $n\in I(\mu)$. It is a fact that the decompositions in~\eqref{eq:deccasimireigenvalues} and~\eqref{eq:decthetaeigenvalues} induce analogous decompositions on the level of Sobolev spaces, namely there are orthogonal\footnote{Clearly, we intend that the closed subspaces $W^{s}(H_{\mu,n})$ are orthogonal with respect to the $W^{s}$-inner product defined in~\eqref{eq:Sobolevinnerproduct}. } splittings
	\begin{equation}
		\label{eq:Sobolevdecomposition}
		W^{s}(H)=\bigoplus_{\mu \in \text{Spec}(\square)}W^{s}(H_{\mu})=\bigoplus_{\mu \in \text{Spec}(\square)}\bigoplus_{n\in I(\mu)}\;W^{s}(H_{\mu,n})\;.
	\end{equation}
	
	The argument in Section~\ref{sec:arbitraryfn} crucially hinges upon the following elementary relationship between Sobolev norms of different order:
	
	\begin{lem}
		\label{lem:diffSobolevnorms}
		Let $s\in \R_{>0},\;k\in \N$, and assume $u\in W^{s+k}(H_{\mu,n})$ for some $\mu \in \emph{Spec}(\square)$ and $n \in I(\mu)$. Then
		\begin{equation}
			\label{eq:differentSobolev}
			\norm{u}_{W^{s+k}}^2=\biggl(1+\mu+\frac{n^2}{2}\biggr)^{k}\norm{u}_{W^s}^2\;.
		\end{equation}
	\end{lem}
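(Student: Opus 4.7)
The plan is to exploit the fact that on the subspace $H_{\mu,n}$ of joint eigenvectors, both $\square$ and $L_\Theta$ act as scalars, and hence so does the Laplace operator $\Delta = \square - 2\Theta^2$. This will reduce the identity \eqref{eq:differentSobolev} to a trivial scalar calculation, with only a brief functional-calculus argument needed to extend from smooth vectors to arbitrary elements of the Sobolev space.

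Concretely, I would first take a smooth vector $u \in \mathscr{C}^\infty(H) \cap H_{\mu,n}$ and compute the action of $\Delta$ on it. By definition of $H_{\mu,n}$, we have $\square u = \mu u$ and $L_\Theta u = (i/2) n\, u$; iterating the latter gives $L_\Theta^2 u = -(n^2/4)\, u$. Substituting into $\Delta = \square - 2\Theta^2$ (understood via Lie derivatives) yields
\begin{equation*}
\Delta u \;=\; \square u - 2 L_\Theta^2 u \;=\; \mu u + \tfrac{n^2}{2}\, u \;=\; \Bigl(\mu + \tfrac{n^2}{2}\Bigr) u,
\end{equation*}
so that $(I+\Delta) u = (1 + \mu + n^2/2)\, u$ on smooth vectors of $H_{\mu,n}$.

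Next I would promote this pointwise identity to the functional calculus. The operator $I+\Delta$ is positive self-adjoint on $H$, and the decomposition \eqref{eq:deccasimireigenvalues}--\eqref{eq:decthetaeigenvalues} shows that $H_{\mu,n}$ is a closed reducing subspace for it (the projections onto $H_\mu$ and $H_{\mu,n}$ commute with $\rho_g$, hence with $\square$, $L_\Theta^2$, and thus with $\Delta$). The restriction of $I+\Delta$ to $H_{\mu,n}$ therefore agrees, on the dense subspace of smooth vectors, with multiplication by the scalar $1 + \mu + n^2/2$; since multiplication by a positive scalar is already self-adjoint and bounded, this \emph{is} the unique self-adjoint extension. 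Consequently, for every real $\sigma \geq 0$, $(I+\Delta)^\sigma$ acts on $W^\sigma(H_{\mu,n})$ as multiplication by $(1 + \mu + n^2/2)^\sigma$, and in particular $W^\sigma(H_{\mu,n}) = H_{\mu,n}$ as sets.

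The conclusion is then immediate: for $u \in W^{s+k}(H_{\mu,n})$,
\begin{equation*}
\|u\|_{W^{s+k}}^2 \;=\; \langle (I+\Delta)^{s+k} u, u\rangle \;=\; \Bigl(1+\mu+\tfrac{n^2}{2}\Bigr)^{k} \langle (I+\Delta)^{s} u, u\rangle \;=\; \Bigl(1+\mu+\tfrac{n^2}{2}\Bigr)^{k} \|u\|_{W^{s}}^{2}.
\end{equation*}
There is no real obstacle here; the only point that deserves care is the justification that the scalar action of $\Delta$ on smooth vectors determines its self-adjoint closure on $H_{\mu,n}$, which follows from the reducing-subspace argument above together with standard facts about the spectral calculus. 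The identity $\Delta = \square - 2\Theta^2$, the integrality $L_\Theta u = (in/2)u$, and the orthogonality of the decomposition \eqref{eq:Sobolevdecomposition} do all the work.
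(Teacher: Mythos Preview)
Your proof is correct and rests on the same key computation as the paper: on $H_{\mu,n}$ the operator $\Delta = \square - 2\Theta^2$ acts as multiplication by $\mu + n^2/2$, so $(I+\Delta)$ acts as the scalar $1+\mu+n^2/2$. The paper's proof differs only in packaging: rather than invoking the spectral functional calculus to handle $(I+\Delta)^{s+k}$ in one stroke, it treats the case $k=1$ directly via self-adjointness, writing $\langle (I+\Delta)^{s+1}u,u\rangle = \langle (I+\Delta)^s u,(I+\Delta)u\rangle$ and substituting $(I+\Delta)u = (1+\mu+n^2/2)u$, then concludes for general $k\in\N$ by induction. Your route is slightly more general (it would give the identity for all real $k\geq 0$, not just integers) at the cost of appealing to the reducing-subspace and functional-calculus machinery; the paper's route is more elementary but tailored to integer $k$, which is all that is needed downstream.
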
 
	\begin{proof}
		Suppose $k=1$. We may write, using self-adjointness of $\Delta$ with respect to the $L^{2}$-inner product,
		\begin{equation*}
			\norm{u}_{W^{s+1}}^{2}=\langle u,u\rangle_{W^{s+1}}=\langle (I+\Delta)^{s+1}u,u\rangle=\langle (I+\Delta)^{s}u,(I+\Delta)u\rangle=\langle(1+\Delta)^{s}u,u\rangle +\langle (1+\Delta)^{s}u,\Delta u \rangle\;.
		\end{equation*}
		By the assumption on $u$, it holds that $\Delta u=(\square-2\Theta^{2})u=\mu-2(\frac{i}{2}n)^{2}u=(\mu+\frac{n^2}{2})u$. Therefore, we infer
		\begin{equation*}
			\norm{u}_{W^{s+1}}^{2}=\norm{u}_{W^{s}}^2+\biggl(\mu+\frac{n^{2}}{2}\biggr)\langle (1+\Delta)^{s}u,u\rangle = \biggl(1+\mu+\frac{n^{2}}{2}\biggr)\norm{u}_{W^{s}}^{2}\;,
		\end{equation*}
		as desired.
		
		The statement for an arbitrary $k\in \N$ is immediately achieved arguing by induction.
	\end{proof}
	
	Observe that~\eqref{eq:differentSobolev} readily implies the following: if $u \in W^{s+k}(H)$ for some $s\in \R_{>0}$ and $k\in \N$,   and 
	\begin{equation*}
		u=\sum_{\mu\in \text{Spec}(\square)}\sum_{n\in I(\mu)}u_{\mu,n}
	\end{equation*}
	is its decomposition into joint eigenvectors for $\square$ and $\Theta$ provided by~\eqref{eq:Sobolevdecomposition}, then the larger the integer $k$ is, the faster the decay of the Sobolev norms $\norm{u_{\mu,n}}_{W^{s}}$ as $|n|$ and $|\mu|$ tend to infinity. This phenomenon\footnote{The counterpart of this relationship in classical Fourier analysis is well-known; the regularity of a function is closely interwoven with the decay rate at infinity of its Fourier coefficients.} is going to be essential in our estimates over the course of the proof of Theorem~\ref{thm:mainexpandingtranslates}.
	
	\smallskip
	We conclude this section recalling a version for compact three-manifolds of the celebrated Sobolev Embedding Theorem, which will be sufficient for our purposes.

	\begin{thm}[Sobolev Embedding Theorem]
		\label{thm:Sobolevembedding}
		For any $r\in \N$ and $s\in \R_{>0}$ fulfilling the inequality $s-r>3/2$, there is a continuous embedding of $W^{s}(M)$ into the Banach space $\mathscr{C}^{r}(M)$: in particular, there exists a constant $C_{r,s}>0$ such that 
		\begin{equation*}
			\norm{f}_{\mathscr{C}^{r}}\leq C_{r,s}\norm{f}_{W^{s}}
		\end{equation*}
		for every $f\in W^{s}(M)$.
	\end{thm}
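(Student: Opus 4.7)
The plan is to reduce the statement to the classical Sobolev embedding theorem on $\R^{3}$ via a partition of unity argument, exploiting that $M=\Ga\bsl \SL_2(\R)$ is a compact manifold of dimension $3$ (which is precisely why $3/2$ appears in the hypothesis $s-r>3/2$). I would first recall the Euclidean version: if $s>r+3/2$, then $W^{s}(\R^{3})$ embeds continuously into the space $\mathscr{C}^{r}_{b}(\R^{3})$ of bounded $\mathscr{C}^{r}$ functions. The proof is a textbook Fourier-analytic computation: for $u\in \mathscr{C}^{\infty}_{c}(\R^{3})$ and a multi-index $\alpha$ with $|\alpha|\leq r$, one writes $\partial^{\alpha}u$ as an inverse Fourier integral and applies Cauchy-Schwarz against the weight $(1+|\xi|^{2})^{-s+|\alpha|}$, whose integrability over $\R^{3}$ is equivalent to $s-r>3/2$.

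Next, I would transfer this to $M$. Cover $M$ by finitely many open sets $U_{1},\dots,U_{N}$, each diffeomorphic via charts $\varphi_{i}\colon U_{i}\to V_{i}\subset \R^{3}$ to bounded open subsets, and fix a subordinate smooth partition of unity $\{\chi_{i}\}_{i=1}^{N}$. For $f\in \mathscr{C}^{\infty}(M)$, set $f_{i}=(\chi_{i}f)\circ \varphi_{i}^{-1}$, extended by zero to all of $\R^{3}$; each $f_{i}$ belongs to $\mathscr{C}^{\infty}_{c}(\R^{3})$. Applying the Euclidean embedding chartwise gives
\begin{equation*}
\norm{f}_{\mathscr{C}^{r}(M)}\;\leq\;\sum_{i=1}^{N}\norm{f_{i}}_{\mathscr{C}^{r}(\R^{3})}\;\leq\; C_{r,s}^{\mathrm{eucl}}\sum_{i=1}^{N}\norm{f_{i}}_{W^{s}(\R^{3})}.
\end{equation*}

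The key remaining step, and the main obstacle, is to establish the equivalence of norms
\begin{equation*}
\sum_{i=1}^{N}\norm{f_{i}}_{W^{s}(\R^{3})}\;\leq\; C(M,s)\,\norm{f}_{W^{s}(M)},
\end{equation*}
where the right-hand side uses the intrinsic Sobolev norm built from $\Delta=\square-2\Theta^{2}$. For integer $s=k$, this follows from standard elliptic theory: the left-invariant vector fields $X,\Theta,Y$ span the tangent space at every point of $M$, so the operator $\Delta=-(X^{2}+\Theta^{2}+Y^{2})$ is elliptic, and G\aa rding-type a priori inequalities on compact manifolds (iterated $k$ times and combined with the Leibniz rule applied to the cut-offs $\chi_{i}$) yield the comparison of the intrinsic norm with the sum of local Sobolev norms. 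For fractional $s$, one passes from integer orders by complex interpolation between the Hilbert scales $(W^{k}(M))_{k\in \N}$ and $(W^{k}(\R^{3}))_{k\in \N}$, which are both defined spectrally via $(I+\Delta)^{k/2}$ and $(I-\Delta_{\R^3})^{k/2}$ respectively.

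Finally, the estimate $\norm{f}_{\mathscr{C}^{r}(M)}\leq C_{r,s}\norm{f}_{W^{s}(M)}$ extends from the dense subspace $\mathscr{C}^{\infty}(M)$ to all of $W^{s}(M)$ by a routine completion argument, noting that Cauchy sequences in $W^{s}(M)$ are also Cauchy in $\mathscr{C}^{r}(M)$ and hence converge to continuous $\mathscr{C}^{r}$-representatives; this justifies the identification of each element of $W^{s}(M)$ with its unique continuous representative alluded to immediately before the statement of the theorem. An alternative, and in many ways more illuminating, route to the comparison in the main obstacle would bypass the charts entirely and use the spectral decomposition~\eqref{eq:Sobolevdecomposition} in conjunction with uniform pointwise bounds on joint $(\square,\Theta)$-eigenfunctions, controlled via Weyl's law (Theorem~\ref{thm:Weyllaw}); this viewpoint dovetails particularly well with the representation-theoretic setup already established in Section~\ref{sec:preliminaries}.
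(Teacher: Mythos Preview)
The paper does not actually prove Theorem~\ref{thm:Sobolevembedding}: it is stated as a classical result, introduced with the phrase ``We conclude this section recalling a version for compact three-manifolds of the celebrated Sobolev Embedding Theorem,'' and the surrounding discussion simply refers the reader to standard sources (\cite{Aubin,Hebey,Strichartz-Sobolev,Triebel}) for the general theory.

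Your sketch is essentially the textbook route one finds in those references: reduce to the Euclidean statement by a partition of unity and local charts, invoke ellipticity of $\Delta$ to compare the intrinsic spectral Sobolev norm with the localized Euclidean ones, and handle non-integer $s$ by interpolation. This is correct in outline and is precisely what the cited literature does. Two minor remarks: first, the comparison of the $\mathscr{C}^{r}$-norm on $M$ (defined via covariant derivatives in~\eqref{eq:Cknorm}) with the chartwise Euclidean $\mathscr{C}^{r}$-norms also requires a compactness argument, which you implicitly use but do not state; second, your alternative suggestion of bounding eigenfunctions pointwise via Weyl's law is workable but would require sup-norm estimates for joint $(\square,\Theta)$-eigenfunctions that the paper has not established, so it would not be more self-contained in this context.
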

	
	\section{Reduction to an ordinary differential equation}
	\label{sec:reductiontoODE}
	This section presents the gist of the approach we pursue in order to prove Theorem~\ref{thm:case_Theta_eigenfn}, which concerns the asymptotic behaviour of circle-arc averages of joint eigenfunctions of the operators $\square$ and $\Theta$ (cf.~Section~\ref{sec:unitaryrepresentations}); the partial differential equations (classically known in the literature as eigenvalue equations) expressing the eigenfunction condition are here shown to give rise to ordinary differential equations for the corresponding circle averages, when the latter are seen as functions of the time parameter.
	
	We fix a function $f\colon M\to \C$ of class $\cC^{2}$ and a parameter $\theta\in (0,4\pi]$. Recall from~\eqref{eq:ktheta} the definition of the averages $k_{f,\theta}(p,t)$, for $p \in M$ and $t\in \R$. As we shall work with a fixed, arbitrary base point $p \in M$, we shall abbreviate, for notational convenience, $k_{f,\theta}(p,t)$ with $k_{\theta}(t)$ in the computations that follow.
	
	Our goal is to show that the function $k_{\theta}(t)$ satisfies a second-order linear ODE. In the upcoming computations, the following lemma will be of use. For any left-invariant vector field $W\in \sl_2(\R)$, we indicate with $(\phi_t^{W})_{t\in \R}$ be the one-parameter flow on $M$ defined by $\phi^{W}_t(\Ga g)=\Ga g \exp{tW}$ for any $t\in \R$ and $g\in \SL_2(\R)$. For any pair $Y,W\in \sl_2(\R)$ and any point $q\in M$, the derivative of the smooth curve $s\mapsto \phi_{t}^{Y}\circ \phi^{W}_s(q)$ (seen as a function from $\R$ to the tangent bundle of $M$), where $t\in \R$ is fixed, is denoted by $\frac{\text{d}}{\text{d}s}\;\phi_t^{Y}\circ \phi_s^{W}(q)$.
	Lastly, if $W\in \sl_2(\R)$ and $q\in M$, we denote by $W_q$ the value at $q$ of the infinitesimal generator of the smooth flow $(\phi_t^{W})_{t\in \R}$ on $M$.
	\begin{lem}
		\label{lem:shiftedderivatives}
		For every $Y,W\in \sl_2(\R)\setminus \{ 0\}$ and every $q\in M$, it holds
		\begin{equation*}
			\frac{\emph{d}}{\emph{d}s}\;\phi_t^{Y}\circ \phi^{W}_s(q)=\Ad_{\exp{(-tY)}}(W)_{\phi^{Y}_t\circ \phi_s^{W}(q)}\;.
		\end{equation*} 
	\end{lem}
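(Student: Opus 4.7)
The plan is to unwind both sides of the claimed identity using the explicit algebraic description of the flows as right translations on $M=\Ga\bsl\SL_2(\R)$, and then reconcile them via the standard relation between the adjoint representation and one-parameter subgroups. Writing $q=\Ga g$ for some $g\in\SL_2(\R)$, the definition of the flows gives
\begin{equation*}
\phi_t^{Y}\circ \phi_s^{W}(q)=\Ga g\exp(sW)\exp(tY),
\end{equation*}
so the left-hand side of the asserted formula is simply $\frac{\text{d}}{\text{d}s}\bigl(\Ga g\exp(sW)\exp(tY)\bigr)$, a tangent vector at the point $\phi_t^{Y}\circ \phi_s^{W}(q)$.

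For the right-hand side, I would use the definition of the left-invariant vector field attached to an element $Z\in\sl_2(\R)$: at a base point $\Ga h\in M$, its value is $\frac{\text{d}}{\text{d}\tau}\big|_{\tau=0}\Ga h\exp(\tau Z)$. Specializing to $Z=\Ad_{\exp(-tY)}(W)$ at $h=g\exp(sW)\exp(tY)$, and invoking the classical identity $\exp(\Ad_{g_0}(W))=g_0\exp(W)g_0^{-1}$ (which holds for any $g_0\in\SL_2(\R)$, applied to $g_0=\exp(-tY)$ and to the one-parameter rescaling $\tau W$), I would rewrite
\begin{equation*}
\exp\bigl(\tau\Ad_{\exp(-tY)}(W)\bigr)=\exp(-tY)\exp(\tau W)\exp(tY).
\end{equation*}
Substituting this into the definition of the vector field and letting the central factors $\exp(tY)\exp(-tY)$ cancel yields
\begin{equation*}
\Ad_{\exp(-tY)}(W)_{\phi_t^{Y}\circ\phi_s^{W}(q)}
=\frac{\text{d}}{\text{d}\tau}\Big|_{\tau=0}\Ga g\exp\bigl((s+\tau)W\bigr)\exp(tY).
\end{equation*}

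Finally, I would observe that reparametrizing by $\tau=s'-s$ turns the last expression into $\frac{\text{d}}{\text{d}s'}\big|_{s'=s}\Ga g\exp(s'W)\exp(tY)$, which coincides with the left-hand side. In essence, the lemma is a restatement of the compatibility between conjugation in the group and the adjoint action on the Lie algebra; the only thing one has to be careful about is that the flows $\phi^{Y}$ and $\phi^{W}$ on the quotient $M$ act on the right, so translating $\phi_s^{W}$ across $\phi_t^{Y}$ produces precisely an $\Ad_{\exp(-tY)}$ twist. There is no substantive obstacle — the sole subtlety is to keep the signs and the order of composition straight when invoking the exponential–adjoint identity, so I would simply verify by direct comparison that both sides agree as tangent vectors at the common base point.
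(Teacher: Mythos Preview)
Your proof is correct and is precisely the ``elementary algebraic manipulation'' the paper alludes to (the paper itself does not spell out the argument, referring instead to \cite[Lem.~4]{Rav-arcs}). You have supplied the details the paper omits, with the key step being the conjugation identity $\exp(\tau\Ad_{\exp(-tY)}(W))=\exp(-tY)\exp(\tau W)\exp(tY)$ applied at the base point $\Ga g\exp(sW)\exp(tY)$.
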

	\begin{proof}
		It follows from elementary algebraic manipulations, see~\cite[Lem.~4]{Rav-arcs}.
	\end{proof}
	
	We may now state the main result of this section.
	
	\begin{prop}
		\label{prop:ODE}
		Let $\mu \in \emph{Spec}(\square)$, $n\in \Z$ and $f\in \cC^{2}(M)$ be a function satisfying $\square f=\mu f$, $\Theta f=\frac{i}{2}n f$. For every $p \in M$ and $\theta\in (0,4\pi]$, there is a bounded continuous function $G_{\theta,n}f(p,\cdot)\colon \R_{>0}\to \C$ such that the function $k_{\theta}(t)=\frac{1}{\theta}\int_0^{\theta}f\circ \phi_t^{X}\circ r_s(p)\;\emph{d}s$ satisfies the linear ordinary differential equation
		\begin{equation}
			\label{eq:ode}
			k''_{\theta}(t)+k'_{\theta}(t)+\mu k_{\theta}(t)=e^{-t}G_{\theta,n}(p,t)
		\end{equation}
		for any $t>0$.
	\end{prop}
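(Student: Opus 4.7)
The plan is to derive the ODE by direct differentiation under the integral sign, to reduce the resulting expression via the Casimir relation, and then to extract an explicit $e^{-t}$ factor by re-expressing the troublesome term as $s$-derivatives along the arc. First, differentiating under the integral sign yields $k_\theta^{(j)}(t)=\frac{1}{\theta}\int_0^\theta (X^j f)\circ \phi_t^X\circ r_s(p)\,\mathrm{d}s$ for $j=1,2$, since the geodesic flow is right-multiplication by $\exp(tX)$. The Casimir relation $\square f=\mu f$, which with the normalisation $\square=-X^2+X-UV$ reads $X^2 f = X f - UVf - \mu f$, gives after substitution
\[
k''_\theta(t) + k'_\theta(t) + \mu\,k_\theta(t) \;=\; 2\,k'_\theta(t) \;-\; \frac{1}{\theta}\int_0^\theta (UVf)\circ\phi_t^X\circ r_s(p)\,\mathrm{d}s.
\]
The task is therefore to show that the right-hand side is of the form $e^{-t}G_{\theta,n}f(p,t)$ with $G$ bounded.

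The key step is a pointwise algebraic identity replacing $UVf$ by $s$-derivatives along the arc together with lower-order terms. Set $\tilde g(s):=f\circ\phi_t^X\circ r_s(p)$ and $W_t:=\Ad_{\exp(-tX)}\Theta=\tfrac12(e^{-t}U-e^{t}V)$. By Lemma~\ref{lem:shiftedderivatives}, $\tilde g'(s)=(W_t f)\circ\phi_t^X\circ r_s(p)$, and iterating one has $\tilde g''(s) = (W_t^2 f)\circ\phi_t^X\circ r_s(p)$. Expanding $W_t^2$ as a combination of $U^2,\,V^2,\,UV,\,VU$ and invoking the $\Theta$-eigenvalue relations $(U-V)f=inf$ and $\Theta^2 f=-\tfrac{n^2}{4}f$ to eliminate $U^2 f,\,V^2 f,\,Uf,\,Vf$, together with the $\sl_2(\R)$ commutator $[U,V]=2X$, a short manipulation yields the identity
\[
\sinh^2(t)\,(UVf)\;=\;W_t^2 f\;-\;i n \cosh(t)\,W_t f\;-\;\tfrac{n^2}{4}\,f\;+\;\tfrac{e^{2t}-1}{2}\,Xf.
\]

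Integrating this relation over $s\in[0,\theta]$ and dividing by $\theta\sinh^2(t)$, the $W_t f$ and $W_t^2 f$ terms integrate explicitly to boundary contributions at $s=0,\theta$, the $f$-term reassembles as a multiple of $k_\theta$, and the $Xf$-term as a multiple of $k'_\theta$. Substituting back into the expression of the first paragraph, the coefficient of $k'_\theta$ in the resulting equation becomes $2-\frac{e^{2t}-1}{2\sinh^2 t}=-\frac{2}{e^{2t}-1}$, which is of order $e^{-2t}$; the coefficient of $k_\theta$ is $\frac{n^2}{4\sinh^2 t}$, also of order $e^{-2t}$; and the boundary contributions carry the prefactor $\frac{1}{\sinh^2 t}$ together with the uniform bound $|\tilde g'(s)|\le\tfrac12(e^{-t}\|Uf\|_\infty+e^{t}\|Vf\|_\infty)=O(e^t)\|f\|_{\mathscr{C}^1}$. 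Each of the three summands on the right is therefore $O(e^{-t})$ uniformly for $t\ge 1$, so setting $G_{\theta,n}f(p,t)$ equal to $e^{t}$ times the resulting right-hand side produces the claimed ODE with $G$ bounded; continuity in $t$ and the pointwise boundedness in $p$ follow from the explicit formula together with compactness of $M$.

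The main obstacle is the pointwise algebraic identity in the second paragraph: its precise normalising factor $\sinh^2(t)$ on the left is exactly what produces the critical cancellation $2-\frac{e^{2t}-1}{2\sinh^2 t}=-\frac{2}{e^{2t}-1}$ at the last stage and hence the expected $e^{-t}$ decay. Once that identity is in hand, the remainder of the proof reduces to a routine integration by parts and bookkeeping of boundary contributions.
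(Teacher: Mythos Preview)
Your proof is correct and rests on the same key observation as the paper's, namely that the $s$-derivative of $s\mapsto f\circ\phi_t^X\circ r_s(p)$ is given by the action of $W_t=\Ad_{\exp(-tX)}\Theta$ (Lemma~\ref{lem:shiftedderivatives}). The algebraic organisation, however, is genuinely different. The paper writes $W_t=-\sinh(t)\,U+e^{t}\Theta$ in the $\{U,\Theta\}$-basis, uses the fundamental theorem of calculus once to solve for $\int_0^\theta (Uf)\circ\phi_t^X\circ r_s\,\mathrm{d}s$ in terms of boundary data and $k_\theta$, then iterates the same idea (applied to $Uf$) to obtain $\int_0^\theta (U^2 f)\circ\phi_t^X\circ r_s\,\mathrm{d}s$; only afterwards are the pieces combined via $UV=U^2-2U\Theta$. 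You instead work in the $\{U,V\}$-basis, derive a single pointwise identity $\sinh^2(t)\,UVf=W_t^2 f-in\cosh(t)\,W_t f-\tfrac{n^2}{4}f+\tfrac{e^{2t}-1}{2}Xf$ (which I checked), and integrate once; the $W_t$- and $W_t^2$-terms collapse to boundary values of $\tilde g$ and $\tilde g'$. Your route is arguably cleaner, since the crucial cancellation $2-\tfrac{e^{2t}-1}{2\sinh^2 t}=-\tfrac{2}{e^{2t}-1}$ is isolated in a single step rather than accumulated across two. The paper's iterative version, on the other hand, produces an explicit formula for $G_{\theta,n}f$ in terms of $Uf$ at the endpoints (their $B_\theta$), which is convenient for the later regularity analysis in Section~\ref{sec:regularity}; your boundary data $\tilde g'(\theta)-\tilde g'(0)=-\sinh(t)\,B_\theta(t)+\tfrac{ine^t}{2}A_\theta(t)$ is an equivalent repackaging. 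One minor remark: your bound ``$O(e^{-t})$ uniformly for $t\ge 1$'' is what is actually needed (and what the paper itself establishes in the estimate after the proof, for $t\ge 1/2$); the statement that $G$ is bounded on all of $\R_{>0}$ is somewhat loosely phrased in the proposition.
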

	\begin{proof}
		Fix $f$, $p$ and $\theta$ as in the assumptions. Since $f$ is of class $\cC^{2}$ on $M$, differentiation under the integral sign gives
		\begin{equation*}
			k'_{\theta}(t)=\frac{1}{\theta} \int_0^{\theta}Xf\circ \phi_t^{X}\circ r_s(p)\;\text{d}s,\quad k''_{\theta}(t)=\frac{1}{\theta} \int_0^{\theta}X^{2}f\circ \phi_t^{X}\circ r_s(p)\;\text{d}s
		\end{equation*}
		for any $t\in \R$, as the geodesic flow $(\phi_t^{X})_{t\in \R}$ on $M$ is generated by the vector field $X$. Therefore, the assumption $\square f=\mu f$, i.e.~$-X^{2}f+Xf-UVf=\mu f$, translates readily into
		\begin{equation*}
			k''_{\theta}(t)-k'_{\theta}(t)+\mu k_{\theta}(t)=-\frac{1}{\theta}\int_0^{\theta}UVf\circ\phi_t^{X}\circ r_{s}(p)\;\text{d}s\;.
		\end{equation*}
		As $V=U-2\Theta$, we have 
		\begin{equation}
			\label{ode}
			k''_{\theta}(t)-k'_{\theta}(t)+\mu k_{\theta}(t)=	-\frac{1}{\theta}\int_0^{\theta}U^{2}f\circ\phi_t^{X}\circ r_{s}(p)\;\text{d}s+\frac{in }{\theta}\int_0^{\theta}Uf\circ\phi_t^{X}\circ r_{s}(p)\;\text{d}s
		\end{equation}
		by the assumption $\Theta f=\frac{i}{2}nf$.
	Now, by Stokes' theorem\footnote{Here, it really boils down to the fundamental theorem of calculus.}, we get that
	\begin{equation*}
		f\circ\phi_{t}^{X}\circ r_{\theta}(p)-f\circ \phi_t^{X}(p)=\int_{0}^{\theta}\frac{\text{d}}{\text{d}s}(f\circ \phi_t^{X}\circ r_{s}(p))\;\text{d}s=\int_0^{\theta}\text{d}f_{\phi_t^{X}\circ r_{s}(p)}\biggl(\frac{\text{d}}{\text{d}s}(\phi_t^{X}\circ r_{s}(p))\biggr)\;\text{d}s\;,
	\end{equation*}
	the latter equality following from the chain rule for differentials. Recalling that the flow $(r_s)_{s\in \R}$ is generated by the vector field $\Theta$, Lemma~\ref{lem:shiftedderivatives} delivers
	\begin{equation*}
		\frac{\text{d}}{\text{d}s}\phi_t^{X}\circ r_s(q)=\Ad_{\exp(-tX)}(\Theta)_{\phi_{t}^{X}\circ r_s(p)}\;,
	\end{equation*}
	so that
	\begin{equation*} 
		\begin{split}
			f\circ\phi_{t}^{X}\circ r_{\theta}(p)-f\circ \phi_t^{X}(p)&=
			\int_0^{\theta}\text{d}f_{\phi_t^{X}\circ r_{s}(p)}\bigl((\Ad_{\exp{(-tX)}}(\Theta))_{\phi_t^{X}\circ r_s(p)}\bigr)\;\text{d}s\\
			&=\int_0^{\theta}\text{d}f_{\phi_t^{X}\circ r_{s}(p)}\bigl(((-\sinh{t})U+e^{t}\Theta))_{\phi_t^{X}\circ r_s(p)}\bigr)\;\text{d}s\\
			&=-\sinh{t}\int_0^{\theta}Uf\circ\phi_t^{X}\circ r_{s}(p)\;\text{d}s+\frac{i}{2}n\theta e^{t}k_{\theta}(t)\;,
		\end{split}
	\end{equation*}
	the second equality being obtained by straightforward matrix multiplications.
	
	From now we choose $t$ strictly positive. We may thus write
	\begin{equation}
		\label{u}
		\int_0^{\theta}Uf\circ\phi_t^{X}\circ r_{s}(p)\;\text{d}s=\frac{1}{\sinh{t}}\biggl(\frac{i}{2}n\theta e^{t}k_{\theta}(t)-A_{\theta}(t)\biggr)
	\end{equation}
	where
	\begin{equation*} A_{\theta}(t)=f\circ\phi_{t}^{X}\circ r_{\theta}(p)-f\circ \phi_t^{X}(p)\;,\quad t>0.
	\end{equation*}
	Arguing as before, we also deduce
	\begin{equation}
		\label{dedusquare}
		\begin{split}
			Uf\circ\phi_{t}^{X}\circ r_{\theta}(p)-Uf\circ \phi_t^{X}(p)&=\int_{0}^{\theta}\frac{\text{d}}{\text{d}s}(Uf\circ \phi_t^{X}\circ r_{s}(p))\;\text{d}s\\
			&=\frac{1}{2}e^{-t}\int_0^{\theta}U^{2}f\circ \phi_t^{X}\circ r_s(p)\;\text{d}s-\frac{1}{2}e^{t}\int_0^{\theta}VUf\circ \phi_t^{X}\circ r_s(p)\;\text{d}s\;.
		\end{split}
	\end{equation}
	From $UV-VU=2X$ we get $VU=UV-2X=U(U-2\Theta)-2X=U^{2}-2U\Theta-2X$, so that 
	\begin{equation}
		\label{vu}
		\int_0^{\theta} VUf\circ \phi_t^{X}\circ r_s(p)\;\text{d}s=\int_0^{\theta} U^2f\circ \phi_t^{X}\circ r_s(p)\;\text{d}s-in\int_0^{\theta}Uf\circ \phi_t^{X}\circ r_s(p)\text{d}s-2\theta k'_{\theta}(t)\;.
	\end{equation}
	Combining~\eqref{u},~\eqref{dedusquare} and~\eqref{vu} yields
	\begin{equation}
		\label{usquare}
		\begin{split}
			\int_0^{\theta}U^{2}f\circ\phi_t^{X}\circ r_{s}(p)\;\text{d}s&=\frac{1}{\sinh{t}}\biggl(\frac{i}{2}ne^{t}\int_0^{\theta}Uf\circ \phi_t^{X}\circ r_s(p)\;\text{d}s+\theta e^{t}k'_{\theta}(t) -B_{\theta}(t)\biggr)\\
			&=\frac{1}{\sinh{t}}\biggl(\frac{ine^{t}}{4\sinh{t}}(in\theta e^{t} k_{\theta}(t)-A_{\theta}(t)) +\theta e^{t}k'_{\theta}(t) -B_{\theta}(t)\biggr)\;,
		\end{split}
	\end{equation}
	where
	\begin{equation*} B_{\theta}(t)=Uf\circ\phi_{t}^{X}\circ r_{\theta}(p)-Uf\circ \phi_t^{X}(p)\;,\quad t>0.
	\end{equation*}
	From~\eqref{ode},~\eqref{u} and~\eqref{usquare} we infer that 
	\begin{equation*}
		k_{\theta}''(t)-k'_{\theta}(t)+\mu k_{\theta}(t)=\frac{n^2 e^{2t} k_{\theta}(t)}{4\sinh^2{t}}+\frac{ine^{t}A_{\theta}(t)}{4\theta\sinh^2{t}} - \frac{e^{t}k'_{\theta}(t)}{\sinh{t}} -\frac{n^2 e^{t}k_{\theta}(t)}{2\sinh{t}}+\frac{-inA_{\theta}(t)+B_{\theta}(t)}{\theta \sinh{t}}
	\end{equation*}
	Finally, adding $2k_{\theta}'(t)$ on both sides gives
	\begin{equation*}
		\begin{split}
			k_{\theta}''(t)+k'_{\theta}(t)+\mu k_{\theta}(t)&=\biggl(\frac{n^{2}}{(1-e^{-2t})^{2}}-\frac{n^{2}}{1-e^{-2t}}\biggr)k_{\theta}(t)+\biggl(2-\frac{2}{1-e^{-2t}}\biggr)k'_{\theta}(t)\\
			&+\biggl(\frac{in}{2\theta \sinh{t}(1-e^{-2t})}-\frac{in}{\theta\sinh{t}}\biggr)A_{\theta}(t)+\frac{B_{\theta}(t)}{\theta \sinh{t}}\;,
		\end{split}
	\end{equation*}
	so that $k_{\theta}''(t)+k'_{\theta}(t)+\mu k_{\theta}(t)=e^{-t}G_{\theta,n}f(p,t)$ for 
	\begin{equation}
		\label{eq:constantterm}
		G_{\theta,n}f(p,t)=\frac{n^{2}e^{-t}}{(1-e^{-2t})^{2}}k_{\theta}(t)-\frac{2e^{-t}}{1-e^{-2t}}k'_{\theta}(t)+\frac{2ine^{-2t}}{\theta(1-e^{-2t})^{2}}A_{\theta}(t)+\frac{2}{\theta (1-e^{-2t})}B_{\theta}(t)\;.
	\end{equation}
	The function $f$ being of class $\cC^{2}$, it is clear that the functions $k_{\theta},k'_{\theta},A_{\theta}$ and $B_{\theta}$ are continuous, hence so is the function $t\mapsto G_{\theta,n}f(p,t)$.  Furthermore, the trivial upper bounds
	\begin{equation*}
		|k_{\theta}(t)|\leq \norm{f}_{\infty},\;|k'_{\theta}(t)|\leq \norm{Xf}_{\infty},\;|A_{\theta}(t)|\leq 2\norm{f}_{\infty},\; |B_{\theta}(t)|\leq 2\norm{Uf}_{\infty}
	\end{equation*}
	imply that it is uniformly bounded on $\R_{>0}$.
\end{proof}
For later purposes, we estimate explicitly the uniform norm of $G_{\theta,n}f$. Using the bounds $e^{-t}\leq 1$ and $1-e^{-2t}\geq 1-e^{-1}$, valid for all $t\geq 1/2$, together with the fact that the three quantities $\norm{f}_{\infty},\norm{Xf}_{\infty},\norm{Uf}_{\infty}$ are bounded from above by $\norm{f}_{\mathscr{C}^{1}}$ (cf.~Section~\ref{sec:hyperbolic}), we obtain that 
\begin{equation*}
	\sup_{t\geq 1/2}|G_{\theta,n}f(p,t)|\leq C_{\theta,n}\norm{f}_{\mathscr{C}^{1}}
\end{equation*}
with
\begin{equation*}
	C_{\theta,n}=\biggl(\frac{e}{e-1}\biggr)^{2}\frac{n(\theta n+2)}{\theta}+\frac{e}{e-1}\frac{2\theta+2}{\theta}\;.
\end{equation*}
Setting
\begin{equation*} \kappa_0=\frac{2e^{2}(1+4\pi)}{(e-1)^{2}}\;,
\end{equation*}
we may estimate
\begin{equation*}
	C_{\theta,n}\leq \biggl(\frac{e}{e-1}\biggr)^{2}\;\frac{2(\theta+1)}{\theta}(n^{2}+1)\leq \frac{\kappa_0}{\theta}(n^{2}+1)\;,
\end{equation*}
and hence conclude that 
\begin{equation}
	\label{eq:Gtheta}
	\sup_{t\geq 1/2}|G_{\theta,n}f(p,t)|\leq\frac{\kappa_0}{\theta}(n^{2}+1)\norm{f}_{\cC^{1}}
\end{equation}
for every choice of $p \in M$, $\theta\in (0,4\pi]$, $f\in \cC^{2}(M)$ and $n\in \Z$.

\section{Asymptotics for joint eigenfunctions}
\label{sec:jointeigenfn}

The purpose of this section is to prove Theorem~\ref{thm:case_Theta_eigenfn} by explictly solving the ODE established in Proposition~\ref{prop:ODE}. For definiteness, whe choose to impose initial conditions at time $t=1$ for the ensuing Cauchy problem. 

We thus start with the following:

\begin{lem}
	\label{lem:solution}
	Let $\mu$ be an eigenvalue of the Casimir operator. If $G\colon \R_{>0}\to \C$ is a continuous function, then for any complex numbers $y_1$ and $y_1'$ the solution to the Cauchy problem
	\begin{equation}
		\label{eq:Cauchy}
		\begin{cases}
			y''(t)+y'(t)+\mu y(t)=e^{-t}G(t) & \\
			y(1)=y_1& \\
			y'(1)=y'_1
		\end{cases}
	\end{equation}
	is given by
	\begin{equation}
		\label{eq:solutionnotquarter}
		\begin{split}
			y(t)=&e^{-\frac{1-\nu}{2}t}\biggl(\frac{(1+\nu)y_1+2y'_1}{2\nu e^{-\frac{1-\nu}{2}}} +\frac{1}{\nu}\int_1^{t}e^{-\frac{1+\nu}{2}\xi}G(\xi)\;\emph{d}\xi \biggr)\\
			&-e^{-\frac{1+\nu}{2}t}\biggl(\frac{(1-\nu)y_1+2y'_1}{2\nu e^{-\frac{1+\nu}{2}}} +\frac{1}{\nu}\int_1^{t}e^{-\frac{1-\nu}{2}\xi}G(\xi)\;\emph{d}\xi \biggr)
		\end{split}
	\end{equation}
	if $\mu\neq 1/4$, and by
	\begin{equation}
		\label{eq:solutionquarter}
		\begin{split}
			y(t)=&e^{-\frac{t}{2}}\biggl(\frac{\sqrt{e}(y_1-2y'_1)}{2}-\int_1^{t}\xi e^{-\frac{\xi}{2}}G(\xi)\;\emph{d}\xi\biggr)\\
			& +te^{-\frac{t}{2}}\biggl(\frac{\sqrt{e}(y_1+2y'_1)}{2}+\int_1^{t} e^{-\frac{\xi}{2}}G(\xi)\;\emph{d}\xi\biggr)
		\end{split}
	\end{equation}
	if $\mu=1/4$.
\end{lem}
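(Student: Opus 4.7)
The proof is a direct variation-of-parameters computation; the only care required is bookkeeping of the initial data at the non-standard time $t=1$.

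First, I would consider the homogeneous equation $y''+y'+\mu y=0$, whose characteristic polynomial $r^2+r+\mu=0$ has roots $r_\pm=\tfrac{-1\pm\nu}{2}$, with $\nu\in\R_{\ge 0}\cup i\R_{>0}$ the unique number satisfying $1-\nu^2=4\mu$. These are distinct precisely when $\mu\ne 1/4$ (i.e.\ $\nu\ne 0$); otherwise there is a double root at $-1/2$. Accordingly, a fundamental system of solutions is
$\{e^{-(1-\nu)t/2},\,e^{-(1+\nu)t/2}\}$ in the case $\mu\ne 1/4$, and $\{e^{-t/2},\,t e^{-t/2}\}$ in the case $\mu=1/4$. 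A short calculation gives Wronskian equal to $-\nu e^{-t}$ in the first case and $e^{-t}$ in the second.

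For $\mu\ne 1/4$, the classical variation-of-parameters formula, applied with forcing term $e^{-t}G(t)$ and integration starting at $1$, produces the particular solution
\begin{equation*}
y_p(t)=\frac{1}{\nu}e^{-\frac{1-\nu}{2}t}\int_1^t e^{-\frac{1+\nu}{2}\xi}G(\xi)\,d\xi-\frac{1}{\nu}e^{-\frac{1+\nu}{2}t}\int_1^t e^{-\frac{1-\nu}{2}\xi}G(\xi)\,d\xi.
\end{equation*}
A direct check (the boundary contributions from differentiating under the integral sign cancel against each other) shows that $y_p(1)=y_p'(1)=0$. Writing the general solution as $y(t)=C_+e^{-(1-\nu)t/2}+C_-e^{-(1+\nu)t/2}+y_p(t)$ and imposing $y(1)=y_1$, $y'(1)=y_1'$, I would solve the resulting $2\times 2$ linear system, obtaining
\begin{equation*}
C_+=\frac{(1+\nu)y_1+2y_1'}{2\nu e^{-(1-\nu)/2}},\qquad C_-=-\frac{(1-\nu)y_1+2y_1'}{2\nu e^{-(1+\nu)/2}},
\end{equation*}
which upon substitution gives exactly \eqref{eq:solutionnotquarter}.

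For $\mu=1/4$ the same strategy applies verbatim: variation of parameters with integration starting at $1$ yields
\begin{equation*}
y_p(t)=-e^{-t/2}\int_1^t\xi e^{-\xi/2}G(\xi)\,d\xi+te^{-t/2}\int_1^t e^{-\xi/2}G(\xi)\,d\xi,
\end{equation*}
again satisfying $y_p(1)=y_p'(1)=0$, and the linear system for the coefficients $C_1,C_2$ of the homogeneous solutions $e^{-t/2}$ and $te^{-t/2}$ yields $C_1=\tfrac{\sqrt{e}(y_1-2y_1')}{2}$, $C_2=\tfrac{\sqrt{e}(y_1+2y_1')}{2}$, giving \eqref{eq:solutionquarter}. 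There is no real obstacle; the only point demanding mild attention is the appearance of the factors $e^{\pm(1\pm\nu)/2}$ and $\sqrt e$, which arise because the initial condition is imposed at $t=1$ rather than $t=0$.
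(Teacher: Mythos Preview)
Your proof is correct and follows essentially the same approach as the paper: identify the roots of the characteristic polynomial, write down the variation-of-parameters particular solution with integration starting at $t=1$, and solve the linear system for the homogeneous coefficients from the initial data. The paper's proof is slightly terser (it does not record the Wronskian or the vanishing $y_p(1)=y_p'(1)=0$ explicitly), but the method and computations are identical.
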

\begin{proof}
	Let $\nu$ be the unique complex number in $\R_{\geq 0}\cup i \R_{>0}$ such that $1-\nu^{2}=4\mu$.
	The characteristic polynomial of the homogeneous equation $y''(t)+y'(t)+\mu y(t)=0$ is $P(Z)=Z^{2}+Z+\mu$, having two distinct roots $-\frac{1-\nu}{2},-\frac{1+\nu}{2}$ if $\mu\neq 1/4$, and a double root $-\frac{1}{2}$ if $\mu=1/4$. We examine the case $\mu\neq 1/4$; the case $\mu=1/4$ requires only minor modifications. A particular solution of the inhomogeneous equation is given by
	\begin{equation*}
		e^{-\frac{1-\nu}{2}t} +\int_1^{t}\frac{1}{\nu}e^{-\frac{1+\nu}{2}\xi}G(\xi)\;\text{d}\xi
		-e^{-\frac{1+\nu}{2}t}\int_1^{t}\frac{1}{\nu}e^{-\frac{1-\nu}{2}\xi}G(\xi)\;\text{d}\xi
	\end{equation*}
	as direct computations allow to verify. Hence, the general solution of
	\begin{equation*} y''(t)+y'(t)+\mu y(t)=e^{-t}G(t)
	\end{equation*}
	is given by
	\begin{equation*}
		y(t)=e^{-\frac{1-\nu}{2}t}\biggl(c_1 +\frac{1}{\nu}\int_1^{t}e^{-\frac{1+\nu}{2}\xi}G(\xi)\;\text{d}\xi \biggr)
		+e^{-\frac{1+\nu}{2}t}\biggl(c_2 -\frac{1}{\nu}\int_1^{t}e^{-\frac{1-\nu}{2}\xi}G(\xi)\;\text{d}\xi \biggr)\;, \quad c_1,c_2\in \C.
	\end{equation*}
	Imposing the conditions $y(1)=y_1,y'(1)=y'_1$ enables to determine the coefficients
	\begin{equation*}
		c_1=\frac{(1+\nu)y_1+2y'_1}{2\nu e^{-\frac{1-\nu}{2}}}\;,\quad c_2=-\frac{(1-\nu)y_1+2y'_1}{2\nu e^{-\frac{1+\nu}{2}}}\;.
	\end{equation*} 
\end{proof}

We may now apply Proposition~\ref{prop:ODE} in conjunction with Lemma~\ref{lem:solution} to determine the explicit analytic expression of the function $k_{f,\theta}(p,t)$ (cf.~\eqref{eq:ktheta}) in terms of the coefficient $G_{\theta,n}f(p,t)$ (cf.~\eqref{eq:constantterm}).
Before proceeding with this,  it will be convenient to set some useful notation first.

Let $\mu\in \text{Spec}(\square)$, $f\in \cC^{2}(M)$ and $\theta\in (0,4\pi]$. Define the functions $a_{\theta,\mu}^{+},a_{\theta,\mu}^{-}f\colon M\to \C$ by
\begin{equation}
	\label{eq:anquarter}
	a^{\pm}_{\theta,\mu}f(p)= \mp\frac{(1\mp\nu)\theta^{-1}\int_0^{\theta}f\circ \phi^{X}_1\circ r_s(p)\;\text{d}s+2\theta^{-1}\int_0^{\theta}Xf\circ \phi^{X}_1\circ r_s(p)\;\text{d}s}{2\nu e^{-\frac{1\pm\nu}{2}}}
\end{equation}
if $\mu\neq 1/4$ and
\begin{equation}
	\label{eq:aquarter}
	a^{\pm}_{\theta,1/4}f(p)= \frac{\sqrt{e}\bigl(\theta^{-1}\int_0^{\theta}f\circ \phi^{X}_1\circ r_s(p)\;\text{d}s\mp 2\theta^{-1}\int_0^{\theta}Xf\circ \phi^{X}_1\circ r_s(p)\;\text{d}s\bigr)}{2}
\end{equation}
if $\mu=1/4$. When $\mu\neq 1/4$, it holds 
\begin{equation}
	\label{eq:aestnquarter}
	\norm{a^{\pm}_{\theta,\mu}f}_{\infty}\leq \frac{(1+|\nu|)\theta^{-1}\int_0^{\theta}\norm{f}_{\infty}\text{d}s+2\theta^{-1}\int_0^{\theta}\norm{Xf}_{\infty}\text{d}s}{2e^{-1}|\nu|}\leq  \frac{e(3+|\nu|)}{2|\nu|}\norm{f}_{\mathscr{C}^{1}}\;,
\end{equation}
since $\norm{f}_{\infty}\leq \norm{f}_{\mathscr{C}^{1}}$ and $\norm{Xf}_{\infty}\leq \norm{f}_{\mathscr{C}^{1}}$.
If $\mu=1/4$, similar estimates lead readily to
\begin{equation}
	\label{eq:aestquarter}
	\norm{a^{\pm}_{\theta,1/4}f}_{\infty}\leq \frac{3\sqrt{e}}{2}\norm{f}_{\mathscr{C}^{1}}\;.
\end{equation}

We are now in a position to start the proof of Theorem~\ref{thm:case_Theta_eigenfn}, which will occupy the remainder of this section. We fix $\theta\in (0,4\pi]$ and a function $f\in \cC^{2}(M)$ satisfying $\Theta f=\mu f$ and $\Theta f=\frac{i}{2}nf$ for some $\mu \in \text{Spec}(\square)$ and $n\in \Z$. For any $p \in M$, the function $k_{f,\theta}(p,\cdot)\colon \R_{>0}\to \C$ we are interested in satisfies~\eqref{eq:Cauchy} with initial conditions
\begin{equation*}
	y_1=\frac{1}{\theta}\int_0^{\theta}f\circ \phi_1^{X}\circ r_s(p)\;\text{d}s\;,\quad y'_1=\frac{1}{\theta}\int_0^{\theta}Xf\circ \phi_1^{X}\circ r_s(p)\;\text{d}s\;.
\end{equation*} 

We distinguish five cases as in the statement of Theorem~\ref{thm:case_Theta_eigenfn}, that is, according to the value of the Casimir eigenvalue $\mu$. Recall that $\nu$ is the unique complex number in $\R_{\geq 0}\cup i \R_{>0}$ verifying $1-\nu^{2}=4\mu$.

\subsection{The case $\mu>1/4$}
\label{sec:caseabovequarter}
Suppose $\mu>1/4$, so that $\nu=i\Im{\nu}\in i\R_{>0}$. As follows from~\eqref{eq:solutionnotquarter}, the solution to~\eqref{eq:ode} with the prescribed initial conditions is given by
\begin{equation*}
	\begin{split}
		k_{f,\theta}(p,t)&=e^{-\frac{t}{2}}\cos{\biggl(\frac{\Im{\nu}}{2}t\biggr)}\biggl(a^{+}_{\theta,\mu}f(p)+a^{-}_{\theta,\mu}f(p)-\frac{2}{\Im{\nu}}\int_1^{t}e^{-\frac{\xi}{2}}\sin{\biggl(\frac{\Im{\nu}}{2}\xi\biggr)}G_{\theta,n}f(p,\xi)\;\text{d}\xi\biggr)\\
		&+e^{-\frac{t}{2}}\sin{\biggl(\frac{\Im{\nu}}{2}t\biggr)}\biggl(a^{-}_{\theta,\mu}f(p)-a^{+}_{\theta,\mu}f(p)-\frac{2i}{\Im{\nu}}\int_1^{t}e^{-\frac{\xi}{2}}\cos{\biggl(\frac{\Im{\nu}}{2}\xi\biggr)}G_{\theta,n}f(p,\xi)\;\text{d}\xi\biggr)\;.
	\end{split}
\end{equation*}
The functions
\begin{equation*}
	e^{-\xi/2}\cos{(\frac{\Im{\nu}}{2}\xi)}G_{\theta,n}f(p,\xi),\quad e^{-\xi/2}\sin{(\frac{\Im{\nu}}{2}\xi)}G_{\theta,n}f(p,\xi) 
\end{equation*} 
are integrable over the closed half-line $[1,+\infty)$, as $G_{\theta,n}f(p,\cdot)$ is bounded thereon; we may therefore define functions $D^{+}_{\theta,\mu,n}f,D^{-}_{\theta,\mu,n}f\colon M\to \C$ by setting
\begin{equation}
	\label{eq:Dplusabovequarter}
	D^{+}_{\theta,\mu,n}f(p)=a^{+}_{\theta,\mu}f(p)+a^{-}_{\theta,\mu}f(p)-\frac{2}{\Im{\nu}}\int_1^{\infty}e^{-\frac{\xi}{2}}\sin{\biggl(\frac{\Im{\nu}}{2}\xi\biggr)}G_{\theta,n}f(p,\xi)\;\text{d}\xi
\end{equation}
and 
\begin{equation}
	\label{eq:Dminusabovequarter}
	D^{-}_{\theta,\mu,n}f(p)=a^{-}_{\theta,\mu}f(p)-a^{+}_{\theta,\mu}f(p)-\frac{2i}{\Im{\nu}}\int_1^{\infty}e^{-\frac{\xi}{2}}\cos{\biggl(\frac{\Im{\nu}}{2}\xi\biggr)}G_{\theta,n}f(p,\xi)\;\text{d}\xi
\end{equation}
for every $p \in M$. Then~\eqref{eq:estabovequarter} is valid with 
\begin{equation}
	\label{eq:remainderabovequarter}
	\begin{split}
		\mathcal{R}_{\theta,\mu,n}f(p,t)&=e^{-\frac{t}{2}}\cos{\biggl(\frac{\Im{\nu}}{2}t\biggr)}\int_t^{\infty}\frac{2}{\Im{\nu}}e^{-\frac{\xi}{2}}\sin{\biggl(\frac{\Im{\nu}}{2}\xi\biggr)}G_{\theta,n}f(p,\xi)\;\text{d}\xi\\
		&+e^{-\frac{t}{2}}\sin{\biggl(\frac{\Im{\nu}}{2}t\biggr)}\int_t^{\infty}\frac{2i}{\Im{\nu}}e^{-\frac{\xi}{2}}\cos{\biggl(\frac{\Im{\nu}}{2}\xi\biggr)}G_{\theta,n}f(p,\xi)\;\text{d}\xi\;,\quad t\geq 1, \;p \in M.
	\end{split}
\end{equation}
Let us now estimate the uniform norms of $D^{\pm}_{\theta,\mu,n}f$ and of $\mathcal{R}_{\theta,\mu,n}f(\cdot,t)$ for any $t\geq 1$. From the explicit expressions in~\eqref{eq:Dplusabovequarter} and~\eqref{eq:Dminusabovequarter}, it follows at once that
\begin{equation}
	\label{eq:kappamu}
	\begin{split}
		\norm{D^{\pm}_{\theta,\mu,n}f}_{\infty}&\leq \norm{a^{+}_{\theta,\mu}f}_{\infty}+\norm{a^{-}_{\theta,\mu}}_{\infty}+\frac{2}{\Im{\nu}}\int_1^{\infty}e^{-\frac{\xi}{2}}\sup_{p\in M,\;\xi\geq 1}|G_{\theta,n}f(p,\xi)|\;\text{d}\xi\\
		&\leq \frac{1}{\Im{\nu}}\biggl(e(3+\Im{\nu})+\frac{4\kappa_0(n^{2}+1)}{\theta\sqrt{e}}\biggr)\norm{f}_{\mathscr{C}^{1}}\\
		&\leq\frac{\kappa(\mu)}{\theta}(n^{2}+1)\norm{f}_{\mathscr{C}^{1}}
	\end{split}
\end{equation}
applying~\eqref{eq:Gtheta} and~\eqref{eq:aestnquarter} in the second inequality, with $\kappa(\mu)=\frac{1}{\Im{\nu}}\bigl(4e\pi(3+\Im{\nu})+\frac{4\kappa_0}{\sqrt{e}}\bigr)$.

The remainder term defined in~\eqref{eq:remainderabovequarter} is bounded from above by
\begin{equation*}
	|\mathcal{R}_{\theta,\mu,n}f(p,t)|\leq 2e^{-\frac{t}{2}}\sup_{p \in M,\;\xi\geq 1}|G_{\theta,n}f(p,\xi)|\int_t^{\infty}\frac{2}{\Im{\nu}}e^{-\frac{\xi}{2}}\;\text{d}\xi\leq\frac{8\kappa_0 (n^{2}+1)}{\theta\;\Im{\nu}}\norm{f}_{\mathscr{C}^{1}}e^{-t}
\end{equation*}
for every $p\in M$ and $t\geq 1$, again relying on the upper bound in~\eqref{eq:Gtheta}. 

\smallskip
Up to the regularity claims on the coefficients $D^{\pm}_{\theta,\mu,n}f$, which are the subject of Section~\ref{sec:regularity}, the proof of Theorem~\ref{thm:case_Theta_eigenfn}(1) is complete.  

\subsection{The case $\mu=1/4$}

Suppose $\mu=1/4$, whence $\nu=0$. This time the solution to~\eqref{eq:ode} with the given initial conditions has the expression (cf.~\eqref{eq:solutionquarter}) 
\begin{equation*}
	k_{f,\theta}(p,t)=e^{-\frac{t}{2}}\biggl(a^{+}_{\theta,1/4}f(p)-\int_1^{t}\xi e^{-\frac{\xi}{2}}G_{\theta,n}f(p,\xi)\;\text{d}\xi\biggr)
	+te^{-\frac{t}{2}}\biggl(a^{-}_{\theta,1/4}f(p)+\int_1^{t} e^{-\frac{\xi}{2}}G_{\theta,n}f(p,\xi)\;\text{d}\xi\biggr)\;.
\end{equation*}
Following the steps carried out in Section~\ref{sec:caseabovequarter} almost verbatim, define functions $D^{+}_{\theta,1/4,n}, D^{-}_{\theta,1/4,n}\colon M\to \C$ via
\begin{equation}
	\label{eq:Dquarter}
	D^{+}_{\theta,1/4,n}f(p)=a^{+}_{\theta,1/4}f(p)-\int_1^{\infty}\xi e^{-\frac{\xi}{2}}G_{\theta,n}f(p,\xi)\;\text{d}\xi,\quad D^{-}_{\theta,1/4,n}f(p)=a^{-}_{\theta.1/4}f(p)+\int_1^{\infty} e^{-\frac{\xi}{2}}G_{\theta,n}f(p,\xi)\;\text{d}\xi
\end{equation}
for every $p \in M$. Then~\eqref{eq:estquarter} holds with 
\begin{equation}
	\label{eq:remainderquarter}
	\mathcal{R}_{\theta.1/4,n}f(p,t)=e^{-\frac{t}{2}}\int_t^{\infty}\xi e^{-\frac{\xi}{2}}G_{\theta,n}f(p,\xi)\;\text{d}\xi -te^{-\frac{t}{2}}\int_t^{\infty}e^{-\frac{\xi}{2}}G_{\theta,n}f(p,\xi)\;\text{d}\xi\;, \quad t\geq 1,\;p\in M.
\end{equation}
From~\eqref{eq:Dquarter} we estimate, by virtue of~\eqref{eq:Gtheta} and~\eqref{eq:aestquarter}, 
\begin{equation*}
	\begin{split}
		\norm{D^{\pm}_{\theta,1/4,n}f}_{\infty}&\leq \frac{3\sqrt{e}}{2}\norm{f}_{\mathscr{C}^{1}}+\frac{\kappa_0}{\theta}(n^{2}+1)\norm{f}_{\mathscr{C}^{1}}\int_1^{\infty}\xi e^{-\frac{\xi}{2}}\;\text{d}\xi\\
		&=\biggl(\frac{3\sqrt{e}}{2}+\frac{6\kappa_0}{\theta\sqrt{e}}(n^{2}+1)\biggr)\norm{f}_{\mathscr{C}^{1}}\leq \frac{\kappa(1/4)}{\theta}(n^{2}+1)\norm{f}_{\mathscr{C}^{1}}\;,
	\end{split}
\end{equation*}
where we may choose $\kappa(1/4)=36\pi \sqrt{e}\kappa_0$. Moreover, we deduce from~\eqref{eq:remainderquarter} that, for every $p \in M$ and $t\geq 1$,
\begin{equation*}
	\begin{split}
		|\mathcal{R}_{\theta,1/4,n}f(p,t)|&\leq \frac{\kappa_0}{\theta}(n^{2}+1)\norm{f}_{\mathscr{C}^{1}}\biggl(e^{-\frac{t}{2}}\int_t^{\infty}\xi e^{-\frac{\xi}{2}}\;\text{d}\xi+te^{-\frac{t}{2}}\int_t^{\infty}e^{-\frac{\xi}{2}}\;\text{d}\xi\biggr) \\ 
		&=\frac{4\kappa_0}{\theta}(n^{2}+1)\norm{f}_{\mathscr{C}^{1}}(t+1)e^{-t}\;.
	\end{split}
\end{equation*}
In conjunction with Section~\ref{sec:regularity}, this concludes the proof of Theorem~\ref{thm:case_Theta_eigenfn}(2).

\subsection{The case $0<\mu<1/4$} When $0<\mu<1/4$, we have $\nu\in (0,1)$. The solution to~\eqref{eq:ode} is given, as in~\eqref{eq:solutionnotquarter}, by
\begin{equation*}
	\begin{split}
		k_{f,\theta}(p,t)=&e^{-\frac{1+\nu}{2}t}\biggl(a^{+}_{\theta,\mu}f(p)-\frac{1}{\nu}\int_1^{t}e^{-\frac{1-\nu}{2}\xi}G_{\theta,n}f(p,\xi)\;\text{d}\xi\biggr)\\
		&+e^{-\frac{1-\nu}{2}t}\biggl(a^{-}_{\theta,\mu}f(p)+\frac{1}{\nu}\int_1^{t}e^{-\frac{1+\nu}{2}\xi}G_{\theta,n}f(p,\xi)\;\text{d}\xi\biggr)\;.
	\end{split}
\end{equation*}
Setting
\begin{equation}
	\label{eq:Dbelowquarter}
	\begin{split}
		&D^{+}_{\theta,\mu,n}f(p)=a^{+}_{\theta,\mu}f(p)-\frac{1}{\nu}\int_1^{\infty}e^{-\frac{1-\nu}{2}\xi}G_{\theta,n}f(p,\xi)\;\text{d}\xi\;, \\ &D^{-}_{\theta,\mu,n}f(p)=a^{-}_{\theta,\mu}f(p)+\frac{1}{\nu}\int_1^{\infty}e^{-\frac{1-\nu}{2}\xi}G_{\theta,n}f(p,\xi)\;\text{d}\xi
	\end{split}
\end{equation}
and 
\begin{equation*}
	\mathcal{R}_{\theta,\mu,n}f(p,t)=\frac{1}{\nu}\biggl(e^{-\frac{1+\nu}{2}t}\int_t^{\infty}e^{-\frac{1-\nu}{2}\xi}G_{\theta,n}f(p,\xi)\;\text{d}\xi-e^{-\frac{1-\nu}{2}t}\int_t^{\infty}e^{-\frac{1+\nu}{2}\xi}G_{\theta,n}f(p,\xi)\;\text{d}\xi\biggr)
\end{equation*}
for every $p \in M$ and $t\geq 1$, it is clear that~\eqref{eq:estpositivebelowquarter} holds. As far as estimates on the supremum norm are concerned, we have
\begin{equation*}
	\begin{split}
		\norm{D^{\pm}_{\theta,\mu,n}f}_{\infty}&\leq \frac{e(3+\nu)}{2\nu}\norm{f}_{\mathscr{C}^{1}}+\frac{\kappa_0}{\theta\nu}(n^{2}+1)\norm{f}_{\mathscr{C}^{1}}\int_1^{\infty}e^{-\frac{1\mp \nu}{2}\xi}\;\text{d}\xi\\
		&=\frac{1}{\nu}\biggl(\frac{e(3+\nu)}{2}+\frac{\kappa_0}{\theta}\frac{2}{1\mp \nu}e^{-\frac{1\mp \nu}{2}}(n^{2}+1)\biggr)\norm{f}_{\mathscr{C}^{1}}\\
		&\leq \frac{\kappa(\mu)}{\theta}(n^{2}+1)\norm{f}_{\mathscr{C}^{1}}\;,
	\end{split}
\end{equation*}
for $\kappa(\mu)=\frac{2\kappa_0e^{-\frac{1-\nu}{2}}}{\nu(1-\nu)}+\frac{2e\pi(3+\nu)}{\nu}$, and
\begin{equation*}
	\begin{split}
		|\mathcal{R}_{\theta,\mu,n}f(p,t)|&\leq \frac{\kappa_0}{\theta\nu}(n^{2}+1)\norm{f}_{\mathscr{C}^{1}}\biggl(e^{-\frac{1+\nu}{2}t}\int_t^{\infty}e^{-\frac{1-\nu}{2}\xi}\;\text{d}\xi+e^{-\frac{1-\nu}{2}t}\int_t^{\infty}e^{-\frac{1+\nu}{2}\xi}\;\text{d}\xi\biggr)\\
		&= \frac{4\kappa_0}{\theta \nu(1-\nu)(1+\nu)}(n^{2}+1)\norm{f}_{\mathscr{C}^{1}}e^{-t}\;,
	\end{split}
\end{equation*}
which achieves the proof of Theorem~\ref{thm:case_Theta_eigenfn}(3) except for the regularity of the coefficients which is addressed separately in Section~\ref{sec:regularity}.

\subsection{The case $\mu= 0$} As $\nu=1$ when $\mu=0$,  equation~\eqref{eq:solutionnotquarter} delivers the following expression for the solution to~\eqref{eq:ode}:
\begin{equation}
	\label{eq:kzero}
	\begin{split}
		k_{f,\theta}(p,t)&=a^{-}_{\theta,0}f(p)+\int_1^{t}e^{-\xi}G_{\theta,n}f(p,\xi)\;\text{d}\xi+e^{-t}\biggl(a^{+}_{\theta,0}f(p)-\int_1^{t}G_{\theta,n}f(p,\xi)\;\text{d}\xi\biggr)\\
		&= a^{-}_{\theta,0}f(p)+\int_1^{\infty}e^{-\xi}G_{\theta,n}f(p,\xi)\;\text{d}\xi+\biggl(-\int_t^{\infty}e^{-\xi}G_{\theta,n}f(p,\xi)\;\text{d}\xi\\
		& +e^{-t}a^{+}_{\theta,0}f(p)-e^{-t}\int_1^{t}G_{\theta,n}f(p,\xi)\;\text{d}\xi\biggr)\;.
	\end{split}
\end{equation}
Observe that the term between parentheses in the last expression is infinitesimal as $t$ tends to infinity, so that $k_{f,\theta}(p,t)$ has a well-defined limit, as $t$ tends to infinity, for every $p \in M$. 
We claim that\footnote{The claim amounts to the qualitative equidistribution statement that circle-arc averages of $f$ converge to its spatial average with respect to the uniform measure. This has been shown by Margulis (for complete circles) in~\cite{Margulis} via a thickening argument resting on the mixing properties of the geodesic flow. We prefer not to invoke Margulis' result here, and instead prove directly this special case of equidistribution using spectral considerations coupled with mixing.  }    
\begin{equation}
	\label{eq:constanttermzero}
	\lim_{t\to\infty}k_{f,\theta}(p,t)=\int_{M}f\;\text{d}\vol
\end{equation}
for any $p \in M$. For a start, we show the equality in~\eqref{eq:constanttermzero} holds on average with respect to the measure $\vol$. Indeed, Fubini's theorem gives, for any $t>0$,
\begin{equation*}
	\begin{split}
		\int_{M}k_{f,\theta}(p,t)\;\text{d}\vol(p)&=\frac{1}{\theta}\int_{M}\int_0^{\theta}f\circ \phi_t^{X}\circ r_s(p)\;\text{d}s\;\text{d}\vol(p)=\frac{1}{\theta}\int_{0}^{\theta}\int_{M}f\circ \phi_t^{X}\circ r_s(p)\;\text{d}\vol(p)\;\text{d}s\\
		&=\frac{1}{\theta}\int_0^{\theta}\int_{M}f\;\text{d}\vol\;\text{d}s=\int_{M}f\;\text{d}\vol\;,
	\end{split}
\end{equation*}
the second-to-last equality following from the fact that the transformation $\phi_t^{X}\circ r_s\colon M\to M$ preserves the measure $\vol$ for any $s,t\in \R$.  By dominated convergence,
\begin{equation}
	\label{eq:sameaverage}
	\int_{M}\lim_{t\to\infty}k_{f,\theta}(p,t)\;\text{d}\vol(p)=\lim_{t\to\infty}\int_{M}k_{f,\theta}(p,t)\;\text{d}\vol(p)=\lim_{t\to\infty}\int_{M}f\;\text{d}\vol=\int_{M}f\;\text{d}\vol\;.
\end{equation}

In order to finish the proof of the claim, it remains to show that the limit $\lim_{t\to\infty}k_{f,\theta}(p,t)$ does not depend on $p$. Choose a countable orthonormal basis $(u_{k})_{k\in I}$ of $L^{2}(M)$ consisting of smooth eigenfunctions of the operator $\Theta$ and containing a constant function $u_{k_0}$. If $\Theta u_k=\frac{i}{2}n_ku_k$ for $n_k\in \Z$, then $u_k\circ r_{s}=e^{2\pi i n_k s}u_k$ for every $s\in \R$ (cf.~Section~\ref{sec:unitaryrepresentations}). Therefore, we can compute for every $k \in I$ the $L^{2}$-inner product
\begin{equation*}
	\begin{split}
		\int_{M}\lim_{t\to\infty}k_{f,\theta}(p,t)\;\overline{u_k(p)}\;\text{d}\vol(p)&=\lim_{t\to\infty}\int_{M}\biggl(\frac{1}{\theta}\int_0^{\theta}f\circ \phi_t^{X}\circ r_s(p)\;\text{d}s\biggr) \overline{u_k(p)}\;\text{d}p\\
		&=\lim_{t\to\infty}\frac{1}{\theta}\int_{0}^{\theta}\int_{M}f\circ \phi_t^{X}\circ r_s(p)\;\overline{u_k(p)}\;\text{d}p\;\text{d}s\\
		&=\lim_{t\to\infty}\frac{1}{\theta}\int_0^{\theta}\int_{M}f\circ\phi_t^{X}(p)\;\overline{u_{k}}\circ r_{-s}(p)\;\text{d}p \;\text{d}s\\
		&=\lim_{t\to\infty}\frac{1}{\theta}\int_0^{\theta}e^{-2\pi i n_k s}\int_{M}f\circ\phi_t^{X}(p)\;\overline{u_{k}(p)}\;\text{d}p \;\text{d}s\\
		&=\frac{1}{\theta}\int_0^{\theta}e^{-2\pi i n_ks}\text{d}s\cdot \lim_{t\to\infty}\langle f\circ \phi_t^{X},u_k\rangle\;,
	\end{split}
\end{equation*}
where we used, in successive order, the dominated convergence theorem, Fubini's theorem, invariance of the measure $\vol$ under the transformation $r_{-s}$ and the definining property of $u_k$. Mixing of the geodesic flow $(\phi_t^{X})_{t\in \R}$ on $M$ (cf.~\cite[Cor.~2.3]{Bekka-Mayer}) delivers
\begin{equation*}
	\lim_{t\to\infty}\langle f\circ \phi_t^{X},u_k\rangle =\int_{M}f\;\text{d}\vol\int_{M}u_k\;\text{d}\vol
	\;.
\end{equation*}

As $u_k$ is orthogonal in $L^{2}(M)$ to the constant $u_{k_0}$ for any $k\neq k_0$, the last expression vanishes for any such $k$. Therefore, we have just shown that the function $p\mapsto \lim_{t\to\infty}k_{f,\theta}(p,t)$ is orthogonal to $u_k$ for every $k\neq k_0$; necessarily, it must be constant. 

\smallskip 

Define now
\begin{equation*}
	\mathcal{R}_{\theta,0,n}f(p,t)=a_{\theta,0}^{+}f(p)e^{-t}-\int_t^{\infty}e^{-\xi}G_{\theta,n}f(p,\xi)\;\text{d}\xi\;;
\end{equation*}
then~\eqref{eq:Gtheta} and~\eqref{eq:aestnquarter} give
\begin{equation*}
	|R_{\theta,0,n}f(p,t)|\leq 2e \norm{f}_{\mathscr{C}^{1}}e^{-t}+\frac{\kappa_0}{\theta}(n^{2}+1)\norm{f}_{\mathscr{C}^{1}}\int_t^{\infty}e^{-\xi}\;\text{d}\xi=\frac{1}{\theta}\bigl(8e\pi+\kappa_0(n^{2}+1)\bigr)\norm{f}_{\mathscr{C}^{1}}e^{-t}
\end{equation*}
and, combining~\eqref{eq:kzero} with~\eqref{eq:constanttermzero}, we obtain
\begin{equation*}
	k_{f,\theta}(p,t)=\int_{M}f\;\text{d}\vol+e^{-t}\int_1^{t}-G_{\theta,n}f(p,\xi)\;\text{d}\xi+\mathcal{R}_{\theta,0,n}f(p,t)\;,
\end{equation*} 
which establishes Theorem~\ref{thm:case_Theta_eigenfn}(4).

\subsection{The case $\mu<0$}
We now turn to the case $\mu<0$ or, equivalently, $\nu>1$. The solution to~\eqref{eq:ode} given in~\eqref{eq:solutionnotquarter} becomes
\begin{equation}
	\label{eq:kbelowzero}
	\begin{split}
		k_{f,\theta}(p,t)=& e^{\frac{\nu-1}{2}t}\biggl(a^{-}_{\theta,\mu}f(p)+\frac{1}{\nu}\int_1^{t}e^{-\frac{\nu+1}{2}\xi}G_{\theta,n}f(p,\xi)\;\text{d}\xi\biggr) \\
		&+e^{-\frac{\nu+1}{2}t}\biggl(a^{+}_{\theta,\mu}f(p)-\frac{1}{\nu}\int_1^{t}e^{-\frac{\nu-1}{2}\xi}G_{\theta,n}f(p,\xi)\;\text{d}\xi\biggr)\;.
	\end{split}
\end{equation}

It follows that the quantity
\begin{equation*}
	\begin{split}
		e^{\frac{\nu-1}{2}t}\biggl(a^{-}_{\theta,\mu}f(p)+\frac{1}{\nu}\int_1^{\infty}e^{-\frac{\nu+1}{2}\xi}G_{\theta,n}f(p,\xi)\;\text{d}\xi\biggr)=&-e^{-\frac{\nu+1}{2}t}\biggl(a^{+}_{\theta,\mu}f(p)-\frac{1}{\nu}\int_1^{t}e^{-\frac{\nu-1}{2}\xi}G_{\theta,n}f(p,\xi)\;\text{d}\xi\biggr)\\
		&-k_{f,\theta}(p,t)
		+\frac{e^{\frac{\nu-1}{2}t}}{\nu}\int_{t}^{\infty}e^{-\frac{\nu+1}{2}\xi}G_{\theta,n}f(p,\xi)\;\text{d}\xi
	\end{split}
\end{equation*}
is uniformly bounded in $t$, which forces 
\begin{equation*}
	a^{-}_{\theta,\mu}f(p)+\frac{1}{\nu}\int_1^{\infty}e^{-\frac{\nu+1}{2}\xi}G_{\theta,n}f(p,\xi)\;\text{d}\xi=0 
\end{equation*}
for every $p \in M$. Therefore~\eqref{eq:kbelowzero} results in
\begin{equation*}
	k_{f,\theta}(p,t)=e^{-t}\biggl(-\frac{e^{\frac{\nu+1}{2}t}}{\nu}\int_t^{\infty}e^{-\frac{\nu+1}{2}\xi}G_{\theta,n}f(p,\xi)\;\text{d}\xi +e^{-\frac{\nu-1}{2}t}a^+_{\theta,\mu}f(p)-\frac{e^{-\frac{\nu-1}{2}t}}{\nu}\int_1^{t}e^{\frac{\nu-1}{2}\xi}G_{\theta,n}f(p,\xi)\;\text{d}\xi \biggr)\;.
\end{equation*}

With the help of~\eqref{eq:Gtheta} and~\eqref{eq:aestnquarter}, we may estimate the three summands inside the parentheses. For the first, we have
\begin{equation*}
	\biggl|\frac{e^{\frac{\nu+1}{2}t}}{\nu}\int_t^{\infty}e^{-\frac{\nu+1}{2}\xi}G_{\theta,n}f(p,\xi)\;\text{d}\xi\biggr|\leq \frac{2\kappa_0}{\theta\nu(\nu+1)}(n^{2}+1)\norm{f}_{\mathscr{C}^{1}}\;,
\end{equation*}
while the second can be bounded as
\begin{equation*}
	\bigl|e^{-\frac{\nu-1}{2}t}a^{+}_{\theta,\mu}f(p)\bigr|\leq \frac{e(3+\nu)}{2\nu}\norm{f}_{\mathscr{C}^{1}}\;;
\end{equation*}
lastly,
\begin{equation*}
	\begin{split}
		\biggl|\frac{e^{-\frac{\nu-1}{2}t}}{\nu}\int_1^{t}e^{\frac{\nu-1}{2}\xi}G_{\theta,n}f(p,\xi)\;\text{d}\xi\biggr| &\leq \frac{2\kappa_0}{\theta\nu(\nu-1)}(n^{2}+1)\norm{f}_{\mathscr{C}^{1}}e^{-\frac{\nu-1}{2}t}(e^{\frac{\nu-1}{2}t}-e^{\frac{\nu-1}{2}}) \\
		& \leq \frac{2\kappa_0}{\theta\nu(\nu-1)}(n^{2}+1)\norm{f}_{\mathscr{C}^{1}}\;.
	\end{split}
\end{equation*}

We conclude that 
\begin{equation*}
	|k_{f,\theta}(p,t)|\leq\frac{1}{\theta} \biggl(\frac{4\kappa_0}{(\nu-1)(\nu+1)}+\frac{2 e\pi(3+\nu)}{\nu}\biggr)(n^{2}+1) \norm{f}_{\mathscr{C}^{1}}e^{-t}\;,
\end{equation*}
as desired. 

\smallskip
This settles Theorem~\ref{thm:case_Theta_eigenfn}(5).





\subsection{Regularity of the coefficients in the asymptotic expansion} 
\label{sec:regularity}
We now turn to the examination of the regularity properties of the coefficients $D^{\pm}_{\theta,\mu,n}f$ appearing in the asymptotic expansion of $k_{f,\theta}(p,t)$, as in Theorem~\ref{thm:case_Theta_eigenfn}, for $f\in \mathscr{C}^{2}(M)$ satisfying $\square f=\mu f $ and $\Theta f=\frac{i}{2}nf$ for some $\mu\in \text{Spec}(\square)\cap\R_{>0}$ and $n\in \Z$. In so doing, we shall complete the proof of Theorem~\ref{thm:case_Theta_eigenfn}.

Let us fix $\theta\in (0,4\pi]$ throughout this subsection. As follows readily from the definitions of the coefficients \eqref{eq:anquarter}, \eqref{eq:aquarter}, \eqref{eq:Dplusabovequarter}, \eqref{eq:Dminusabovequarter}, \eqref{eq:Dquarter} and \eqref{eq:Dbelowquarter}, it suffices to analyze the regularity of 
\begin{equation*}
	k_{f,\theta}(p,1)=\int_0^{\theta}f\circ \phi^{X}_1\circ r_s(p)\;\text{d}s, \; k'_{f,\theta}(p,1)=\int_0^{\theta}Xf\circ \phi^{X}_1\circ r_s(p)\;\text{d}s, \;\int_1^{\infty}g(\xi)G_{\theta,n}f(p,\xi)\;\text{d}\xi
\end{equation*}
as functions of $p\in M$, with $g(\xi)$ being a function of the following forms:
\begin{equation}
	\label{eq:g}
	e^{-\xi/2},\;\xi e^{-\xi/2}, \;e^{-\xi/2}\cos{\biggl(\frac{\Im{\nu}}{2}\xi\biggr)},\;e^{-\xi/2}\sin{\biggl(\frac{\Im{\nu}}{2}\xi\biggr)},\; e^{-\frac{1\pm\nu}{2}\xi}\;(0<\nu<1).
\end{equation}

We start with the following elementary lemma:

\begin{lem}
	\label{lem:diffunderint}
	Let $\theta$ be a positive real number. If $h\colon M\to \C$ is of class $\mathscr{C}^{1}$, then the function
	\begin{equation*}
		p\mapsto \int_0^{\theta}h\circ r_s(p)\;\emph{d}s\;, \quad p\in M
	\end{equation*}
	is of class $\mathscr{C}^{1}$ on $M$.
\end{lem}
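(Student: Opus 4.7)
The plan is to apply the standard differentiation-under-the-integral-sign theorem after rewriting the integrand so that the derivation in the variable $p$ is transferred inside the integral. Set $H(p) = \int_0^{\theta} h \circ r_s(p) \, \text{d}s$. To verify that $H \in \mathscr{C}^1(M)$, it suffices, since $M$ is a homogeneous space of $\SL_2(\R)$, to show that for every left-invariant vector field $W \in \sl_2(\R)$ the function $WH$ exists pointwise and is continuous on $M$.

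The key observation is the commutation relation between the flows $\phi_t^W$ and $r_s$: a direct computation in the group (writing $r_s(\Gamma g) = \Gamma g \exp(s\Theta)$ and $\phi_t^W(\Gamma g) = \Gamma g \exp(tW)$) yields
\begin{equation*}
r_s \circ \phi_t^W = \phi_t^{\Ad_{\exp(-s\Theta)}(W)} \circ r_s
\end{equation*}
for every $s, t \in \R$. Consequently,
\begin{equation*}
\frac{\text{d}}{\text{d}t}\bigg|_{t=0} h\bigl(r_s \circ \phi_t^W(p)\bigr) = \bigl(\Ad_{\exp(-s\Theta)}(W)\, h\bigr)(r_s(p))\;,
\end{equation*}
where we use that $h \in \mathscr{C}^1(M)$ so the derivative in the direction of the vector field $\Ad_{\exp(-s\Theta)}(W)$ is well-defined.

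Next, I would argue that the function $(s, p) \mapsto \bigl(\Ad_{\exp(-s\Theta)}(W)\, h\bigr)(r_s(p))$ is continuous on the compact set $[0, \theta] \times M$. This follows because $s \mapsto \Ad_{\exp(-s\Theta)}(W)$ is a smooth curve in the finite-dimensional vector space $\sl_2(\R)$, $r_s$ depends smoothly on $s$, and each partial derivative $Y h$ is continuous for $Y \in \sl_2(\R)$ by the $\mathscr{C}^1$-hypothesis on $h$. Since $[0, \theta] \times M$ is compact, the integrand and its difference quotients in $t$ are uniformly bounded, which permits one to interchange limit and integral via dominated convergence.

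The outcome is the explicit formula
\begin{equation*}
(WH)(p) = \int_0^{\theta} \bigl(\Ad_{\exp(-s\Theta)}(W)\, h\bigr)(r_s(p)) \, \text{d}s\;,
\end{equation*}
which is continuous in $p$ by a further application of dominated convergence together with the continuity of the integrand. As $W$ was arbitrary and $\{X, \Theta, Y\}$ spans $\sl_2(\R)$, this yields the continuity of the three partial derivatives of $H$ with respect to a basis of left-invariant vector fields, hence $H \in \mathscr{C}^1(M)$. There is no real obstacle beyond organising the commutation relation correctly; the statement is essentially Leibniz's rule adapted to the noncommutative setting of a group action.
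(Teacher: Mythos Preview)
Your argument is correct and is, at its core, the same as the paper's: differentiation under the integral sign justified by a uniform bound and dominated convergence. The implementation differs slightly. The paper works in local coordinates, chooses a local frame $(\partial_{x_i})$ near a fixed point, and bounds $|\partial_{x_i}(h\circ r_s)|$ by observing directly that the operator norm of the differential $(\mathrm{d}r_s)_q$ is uniformly bounded in $q$ and $s$ (since the entries of the Jacobian are finite trigonometric expressions); the dominating function is then simply the constant $C\norm{h}_{\cC^1}$. You instead exploit the global frame of left-invariant vector fields and the commutation relation $r_s\circ\phi_t^W=\phi_t^{\Ad_{\exp(-s\Theta)}(W)}\circ r_s$, which gives the explicit formula $(WH)(p)=\int_0^\theta(\Ad_{\exp(-s\Theta)}(W)\,h)(r_s(p))\,\mathrm{d}s$; boundedness then follows from continuity on the compact set $[0,\theta]\times M$. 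Your route is arguably cleaner in this homogeneous setting and yields a usable expression for the derivative, while the paper's version is more coordinate-based and perhaps more familiar from standard analysis texts. Both buy exactly the same conclusion with essentially the same effort.
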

\begin{proof}
	Fix a point $p_0\in M$, and let $(\partial_{x_i})_{i=1,2,3}$ be a local frame of the tangent bundle $TM$ around $p_0$. It suffices to prove that, for each $i=1,2,3$, the partial derivative 
	\begin{equation}
		\label{eq:partialderivative}
		p\mapsto\partial_{x_i}|_p\biggl(\int_0^{\theta}h\circ r_s\;\text{d}s\biggr)
	\end{equation}
	exists and is continuous in an open neighborhood of $p$. Upon passing to local smooth charts for $M$, the classical theorem of differentiation under the integral sign ensures the validity of the formal passage
	\begin{equation*}
		p\mapsto\partial_{x_i}|_p\biggl(\int_0^{\theta}h\circ r_s\;\text{d}s\biggr)=\int_0^{\theta}\partial_{x_i}|_p(h\circ r_s)\;\text{d}s
	\end{equation*}
	provided that there exists a positive real-valued function $\varphi$ on $[0,\theta]$, integrable with respect to the Lebesgue measure, such that $|\partial_{x_i}|_q(h\circ r_s)|\leq \varphi(s)$ for any $q$ in an open neighborhood of $p$. Notice that, by the dominated convergence theorem. this would yield continuity of the partial derivative in~\eqref{eq:partialderivative} at the same time. The chain rule for the differential gives $\mathrm{d}(h\circ r_s)_q=(\mathrm{d}h)_{r_s(q)}\circ (\mathrm{d}r_s)_q$ for any $q \in M$ and $s\in [0,\theta]$. As follows readily from the explicit expression for $r_s$ in~\eqref{eq:rotationflow} and direct computations, the operator norm\footnote{Formally, we would need to specify a Riemannian metric on the compact manifold $M$. For the purposes of the proof however, only boundedness of the relevant quantities matters, so that any such metric would serve our goal (cf.~Remark~\ref{rmk:equivalentmetrics}(1)).} of the linear operator $(\mathrm{d}r_s)_q$ is uniformly bounded in $q$ and $s$, as the entries of any Jacobian matrix associated to it only involve finite linear combinations of the sine and cosine functions. Therefore, there exists a constant $C>0$ such that $|\partial_{x_i}|_q(h\circ r_s)|\leq C\norm{h}_{\mathscr{C}^{1}}$ for any $s\in [0,\theta]$ and any $q$ in the domain of definition of the local frame $(\partial_{x_i})_{i=1,2,3}$. The conclusion is thus achieved by setting $\varphi$ to be constantly equal to $C\norm{h}_{\mathscr{C}^{1}}$. 
\end{proof}

As $(\phi_t^{X})_{t\in \R}$ is a smooth flow on $M$ and $f$ is of class $\mathscr{C}^{2}$, the functions $f\circ \phi_1^{X},\;Xf\circ \phi_1^{X}$ are of class $\cC^{1}$. By virtue of Lemma~\ref{lem:diffunderint}, the functions $p\mapsto \int_0^{\theta}f\circ \phi_1^{X}\circ r_s(p)\;\text{d}s,\;p\mapsto \int_0^{\theta}Xf\circ \phi_1^{X}\circ r_s(p)\;\text{d}s$ are of class $\cC^{1}$ on $M$. 

Therefore, it remains to deal with $\int_1^{\infty}g(\xi)G_{\theta,n}f(p,\xi)\;\text{d}\xi$ as a function of $p \in M$, $g$ being as in~\eqref{eq:g}. Expanding out the expression from~\eqref{eq:constantterm}, we obtain that it equals
\begin{equation*}
	\begin{split} \int_1^{\infty}&g(\xi)\biggl(\frac{n^{2}e^{-\xi}}{\theta(1-e^{-2\xi})^{2}}\int_0^{\theta}f\circ \phi^{X}_\xi\circ r_s(p)\text{d}s -\frac{2e^{-\xi}}{\theta(1-e^{-2\xi})}\int_0^{\theta}Xf\circ \phi^{X}_{\xi}\circ r_s(p)\text{d}s\\
		&+\frac{2ine^{-2\xi}}{\theta(1-e^{-2\xi})^{2}}(f\circ \phi^{X}_{\xi}\circ r_{\theta}(p)-f\circ \phi^{X}_{\xi}(p))+\frac{2}{\theta (1-e^{-2\xi})}(Uf\circ \phi^{X}_{\xi}\circ r_{\theta}(p)-Uf\circ \phi^{X}_{\xi}(p))\biggr)\;\text{d}\xi\;.
	\end{split}
\end{equation*}

We shall know treat the four summands separately. 

\begin{lem}
	\label{lem:regularterms}
	If $g$ takes one of the forms in~\eqref{eq:g}, the functions 
	\begin{align}
		\label{eq:firstreg}
		&p\mapsto \int_1^{\infty}\frac{g(\xi)e^{-\xi}}{(1-e^{-2\xi})^{2}}\biggl(\int_0^{\theta}f\circ \phi_{\xi}^{X}\circ r_s(p)\;\emph{d}s\biggr)\emph{d}\xi\;,\\
		\label{eq:secondreg}
		&p\mapsto \int_1^{\infty}\frac{g(\xi)e^{-\xi}}{1-e^{-2\xi}}\biggl(\int_0^{\theta}Xf\circ \phi_{\xi}^{X}\circ r_s(p)\;\emph{d}s\biggr)\emph{d}\xi\;, \\
		\label{eq:thirdreg}
		&p\mapsto \int_1^{\infty}\frac{g(\xi)e^{-\xi}}{(1-e^{-2\xi})^{2}}\biggl(f\circ \phi_{\xi}^{X}\circ r_\theta(p)-f\circ \phi_{\xi}^{X}(p)\biggr)\emph{d}\xi
	\end{align}
	are of class $\mathscr{C}^{1}$ on $M$.
\end{lem}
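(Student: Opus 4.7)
The plan is to apply differentiation under the integral sign to each of the three integrals (\ref{eq:firstreg}), (\ref{eq:secondreg}), (\ref{eq:thirdreg}), arguing uniformly across the five choices of $g$ listed in (\ref{eq:g}). The key technical input is a chain-rule identity entirely analogous to Lemma~\ref{lem:shiftedderivatives}: for any left-invariant vector field $W \in \sl_2(\R)$, any $h \in \mathscr{C}^1(M)$, and any $s \in [0,\theta]$, $\xi > 0$, one has
\begin{equation*}
W(h \circ \phi_\xi^X \circ r_s)(p) \;=\; \bigl(\Ad_{\exp(-s\Theta)}\Ad_{\exp(-\xi X)}(W)\,h\bigr)\bigl(\phi_\xi^X \circ r_s(p)\bigr).
\end{equation*}
Since $\Ad_{\exp(-\xi X)}$ acts diagonally on the basis $\{X,U,V\}$ with eigenvalues $1, e^{-\xi}, e^{\xi}$ and $\Ad_{\exp(-s\Theta)}$ is uniformly bounded for $s \in [0,\theta]$, there exists a constant $C = C(\theta) > 0$ such that
\begin{equation*}
\sup_{p \in M,\, s \in [0,\theta]} \bigl|W(h \circ \phi_\xi^X \circ r_s)(p)\bigr| \;\leq\; C\, e^\xi \,\norm{h}_{\mathscr{C}^1}, \qquad \xi \geq 1.
\end{equation*}

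With this bound in hand, I would treat the three integrals by a single template. Working in a local coordinate neighborhood of a fixed base point $p_0 \in M$ as in Lemma~\ref{lem:diffunderint}, and writing each $\partial_{x_i}$ as a bounded smooth combination of left-invariant vector fields, one applies the above estimate with $h = f \in \mathscr{C}^2 \subset \mathscr{C}^1$ for (\ref{eq:firstreg}) and (\ref{eq:thirdreg}), and with $h = Xf \in \mathscr{C}^1$ for (\ref{eq:secondreg}). Combined with the crude bound $(1-e^{-2\xi})^{-k} \leq (1-e^{-2})^{-k}$ valid for $\xi \geq 1$ and $k \in \{1,2\}$, this shows that the $p$-derivative of the $\xi$-integrand is dominated by $C'|g(\xi)|\,\norm{f}_{\mathscr{C}^2}$, uniformly in $p$ in a neighborhood of $p_0$. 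Each choice of $g$ in (\ref{eq:g}) is integrable on $[1,\infty)$ — the slowest decay being $e^{-(1-\nu)\xi/2}$ with $0 < \nu < 1$, still of strictly positive exponential rate — so the classical differentiation-under-the-integral theorem yields the existence of all partial derivatives of the three functions in question at $p_0$, and dominated convergence delivers their continuity, exactly as in Lemma~\ref{lem:diffunderint}.

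The main obstacle — which in the end turns out not to be prohibitive — is the interplay between the $e^\xi$ blow-up of $\Ad_{\exp(-\xi X)}$ along the unstable direction $V$, which reflects the exponential divergence of nearby geodesic orbits on $M$, and the $e^{-\xi}$ decay built into the coefficients of (\ref{eq:firstreg})--(\ref{eq:thirdreg}). It is precisely this cancellation that keeps the dominator integrable and makes the argument close. In the fourth summand of $G_{\theta,n}f$ appearing in (\ref{eq:constantterm}), the coefficient $2(1-e^{-2\xi})^{-1}$ lacks this $e^{-\xi}$ cushion and the analogous scheme would break down; this is consistent with the fact that the $D^\pm_{\theta,\mu,n}f$ in Theorem~\ref{thm:case_Theta_eigenfn} are in general merely H\"{o}lder and not $\mathscr{C}^1$.
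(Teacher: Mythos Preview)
Your proposal is correct and follows essentially the same route as the paper: differentiate under the integral sign, with the dominator coming from the exponential growth of $d\phi_\xi^{X}$ balanced against the $e^{-\xi}$ factor in the integrand, exactly as in Lemma~\ref{lem:diffunderint}. Your use of the adjoint-action identity $W(h\circ\phi_\xi^X\circ r_s)=\bigl(\Ad_{\exp(-s\Theta)}\Ad_{\exp(-\xi X)}(W)\,h\bigr)\circ\phi_\xi^X\circ r_s$ makes the derivative bound explicit and yields $|W(h\circ\phi_\xi^X\circ r_s)|\le C e^{\xi}\|h\|_{\mathscr{C}^1}$, whereas the paper phrases the same step as a bound $e^{\xi/2}$ on the operator norm of $(d\phi_\xi^X)_q$; either estimate, combined with the extra $e^{-\xi}$ and the integrability of each $g$ in~\eqref{eq:g} on $[1,\infty)$, gives an integrable dominator and closes the argument.
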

\begin{proof}
	The proof proceeds along the same lines as the proof of Lemma~\ref{lem:diffunderint}. The crucial point is that, for any point $q\in M$ and any $\xi\geq 1$, the operator norm of the differential $(\mathrm{d}\phi_\xi^{X})_{q}$ doesn't exceed (up to a constant factor depending only on the choice of a Riemannian metric on the tangent bundle $TM$) the quantity $e^{\xi/2}$, as direct computations allow to verify starting from the explicit expression of $\phi_{\xi}^{X}$ in~\eqref{eq:geodesicflow}. As a consequence, there exists a constant $C>0$ such that
	\begin{equation*}
		\biggl|\frac{g(\xi)e^{-\xi}}{(1-e^{-2\xi})^{2}}\biggl(\int_0^{\theta}\partial_{x_i}|_q(f\circ \phi^{X}_{\xi}\circ r_s)\;\text{d}s\biggr)\biggr|\leq C\frac{|g(\xi)|e^{-\xi}}{(1-e^{-2\xi})^{2}}\int_0^{\theta}\norm{f}_{\mathscr{C}^{1}}e^{\frac{\xi}{2}}\;\text{d}s=C\norm{f}_{\mathscr{C}^{1}}\frac{|g(\xi)|e^{-\frac{\xi}{2}}}{(1-e^{-2\xi})^{2}}
	\end{equation*}
	for every $\xi\geq 1$ and $i=1,2,3$, where $(\partial_{x_i})_{i=1,2,3}$ is a local frame of $TM$ around a given fixed point $p_0 \in M$. Since the function $|g(\xi)|e^{-\xi/2}/(1-e^{-2\xi})^{2}$ is integrable on the half-line $[1,\infty)$, we deduce as in the proof of Lemma~\ref{lem:diffunderint} that the function in~\eqref{eq:firstreg} is of class $\cC^{1}$ on $M$.
	
	The same assertion for the remaining two functions in~\eqref{eq:secondreg} and~\eqref{eq:thirdreg} follows by a similar argument.  
\end{proof}

What is left to investigate, up to multiplicative constants, is thus  the regularity of the function
\begin{equation}
	\label{eq:holdercontinuousterm}
	p\mapsto \int_1^{\infty}\frac{g(\xi)}{1-e^{-2\xi}}\bigl(Uf\circ \phi^{X}_{\xi}\circ r_{\theta}(p)-Uf\circ \phi^{X}_{\xi}(p)\bigr) \text{d}\xi
\end{equation}
on the manifold $M$. As we shall presently see, the latter depends on the function $g$.

\begin{lem}
	\label{lem:moreregular}
	If $g(\xi)=e^{-\frac{1+\nu}{2}\xi}$, then the function in~\eqref{eq:holdercontinuousterm} is of class $\mathscr{C}^{1}$ on $M$.
\end{lem}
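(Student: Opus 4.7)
The plan is to establish the $\mathscr{C}^1$ regularity by differentiating under the integral sign, paralleling the proof of Lemma~\ref{lem:regularterms}. For directions $W$ in a local frame on $M$ for which the pushforward $(d\phi_\xi^X)$ is bounded (such as $W=X$) or contracting (such as $W=U$, which scales by $e^{-\xi}$), the standard argument applies: the $W$-derivative of the integrand is controlled by $|g(\xi)|/(1-e^{-2\xi})$ up to an extra $e^{-\xi}$ factor, hence absolutely integrable.

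The main obstacle is the stable horocycle direction $W=V$, along which the pushforward $(d\phi_\xi^X)$ expands by $e^\xi$: explicitly, $V(Uf\circ\phi_\xi^X)(p) = e^\xi\cdot VUf(\phi_\xi^X(p))$, and a parallel formula holds for $V(Uf\circ\phi_\xi^X\circ r_\theta)$. Paired with $g(\xi)=e^{-(1+\nu)\xi/2}$, the formal integrand of the differentiated integral grows as $e^{(1-\nu)\xi/2}/(1-e^{-2\xi})$, which fails to be absolutely integrable for $\nu\in(0,1)$.

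To overcome this obstacle, I would first reorganize $B_\theta(\xi,p):=Uf\circ\phi_\xi^X\circ r_\theta(p)-Uf\circ\phi_\xi^X(p)$ in a form better suited to differentiation. Integrating the $s$-derivative of $Uf\circ\phi_\xi^X\circ r_s(p)$ over $s\in[0,\theta]$, invoking the Casimir identity $U^2f=-X^2f+Xf-\mu f+in\,Uf$ together with $X^kf\circ\phi_\xi^X=\partial_\xi^k(f\circ\phi_\xi^X)$, and eliminating the second $\xi$-derivative of $k_{f,\theta}$ through the ODE of Proposition~\ref{prop:ODE}, yields the key identity
\[
B_\theta(\xi,p) = \theta\,e^{-\xi}\sinh\xi\cdot G_{\theta,n}f(p,\xi) + \theta\,e^{-\xi}k'_{f,\theta}(p,\xi) + \frac{in\theta\,e^{-\xi}}{2}\int_0^\theta Uf\circ\phi_\xi^X\circ r_s(p)\,ds.
\]
Inserting this identity into the definition of $I(p)$, and using the elementary simplification $e^{-\xi}\sinh\xi/(1-e^{-2\xi})=1/2$, decomposes $I(p)$ into a sum of three integrals. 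The second and third, with kernel $e^{-(3+\nu)\xi/2}/(1-e^{-2\xi})$, carry enough decay to beat the at-most-$e^\xi$ growth of the $V$-derivatives of $k'_{f,\theta}$ and of $\int_0^\theta Uf\circ\phi_\xi^X\circ r_s\,ds$, producing absolutely integrable differentiated integrands; hence they are $\mathscr{C}^1$ in $p$ by the reasoning of Lemma~\ref{lem:regularterms}. The first integral, $\tfrac{\theta}{2}\int_1^\infty e^{-(1+\nu)\xi/2}G_{\theta,n}f(p,\xi)\,d\xi$, is handled by splitting $G_{\theta,n}f$ into the four summands exhibited in~\eqref{eq:constantterm}: the three summands involving $k_{f,\theta}$, $k'_{f,\theta}$, $A_\theta$ bring additional $e^{-\xi}$ or $e^{-2\xi}$ decay factors in the kernel, and are $\mathscr{C}^1$ by the same argument.

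The delicate step, and the main place where care is required, is the fourth summand in $G_{\theta,n}f$, proportional to $B_\theta/(1-e^{-2\xi})$, which reproduces $I(p)$ itself. Collecting all contributions produces a self-referential relation among $\mathscr{C}^1$ functions; rearranging this relation expresses $I$ in terms of explicitly $\mathscr{C}^1$ quantities, thereby completing the proof. I would expect the technically most demanding portion of the argument to be the careful bookkeeping in this final step, together with the Lemma~\ref{lem:regularterms}-style verification (adapted to the weight $e^{-(1+\nu)\xi/2}$) for each of the three well-behaved components of $G_{\theta,n}f$.
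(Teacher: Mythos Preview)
Your workaround has a circularity that prevents it from closing. The identity you derive for $B_\theta$ (modulo a minor typo: the last coefficient should be $\tfrac{in\,e^{-\xi}}{2}$, not $\tfrac{in\theta\,e^{-\xi}}{2}$) is exactly the defining relation~\eqref{eq:constantterm} for $G_{\theta,n}f$, rearranged to solve for $B_\theta$. Indeed, your derivation---integrate $\partial_s(Uf\circ\phi_\xi^X\circ r_s)$, use the Casimir relation for $U^2f$, then eliminate $k''_\theta$ via the ODE of Proposition~\ref{prop:ODE}---retraces in reverse the chain of substitutions by which Proposition~\ref{prop:ODE} manufactured that ODE and the expression~\eqref{eq:constantterm} in the first place. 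Consequently, when you isolate the $B_\theta$-summand of $G_{\theta,n}f$ inside $\tfrac{\theta}{2}\int_1^\infty g(\xi)\,G_{\theta,n}f\,d\xi$, its contribution is exactly
\[
\frac{\theta}{2}\int_1^\infty g(\xi)\cdot\frac{2}{\theta(1-e^{-2\xi})}B_\theta(\xi,p)\,d\xi \;=\; \int_1^\infty\frac{g(\xi)}{1-e^{-2\xi}}B_\theta(\xi,p)\,d\xi \;=\; I(p),
\]
with coefficient $1$. The ``self-referential relation'' is therefore $I(p)=(\text{$\cC^1$ terms})+I(p)$, and rearranging it yields only the tautology $0=(\text{$\cC^1$ terms})$, not an expression for $I(p)$.

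By contrast, the paper's argument is a one-line observation in the mold of Lemma~\ref{lem:regularterms}: with $g(\xi)=e^{-\frac{1+\nu}{2}\xi}$ and $0<\nu<1$, the product
\[
\frac{g(\xi)}{1-e^{-2\xi}}\cdot e^{\xi/2}=\frac{e^{-\nu\xi/2}}{1-e^{-2\xi}}
\]
is integrable on $[1,\infty)$, so the $e^{\xi/2}$ bound on the differential of $\phi_\xi^X$ invoked in that proof already suffices for differentiation under the integral sign. Your premise that the unstable direction $V$ actually expands by $e^\xi$ (so that the operator norm of $(d\phi_\xi^X)$ in any Riemannian metric on the compact manifold $M$ is $\asymp e^{\xi}$, not $e^{\xi/2}$) is a legitimate concern worth pressing, but the fix you propose does not yield a proof.
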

\begin{proof}
	It suffices to argue as in the proof of Lemma~\ref{lem:regularterms} observing that, when $0<\nu<1$, the function
	\begin{equation*}
		\frac{e^{-\frac{1+\nu}{2}\xi}}{1-e^{-2\xi}}\;e^{\frac{\xi}{2}}=\frac{e^{-\frac{\nu}{2}\xi}}{1-e^{-2\xi}}
	\end{equation*}
	is integrable on the half-line $[1,\infty)$.
\end{proof}

It is straightforward to realize that the same argument does not carry over to the other possible forms of $g(\xi)$ listed in~\eqref{eq:g}. For those remaining cases, we instead establish H\"{o}lder-continuity of the function in~\eqref{eq:holdercontinuousterm} by a different argument.

Fix a Riemannian metric $g$ on the connected manifold $M$, inducing a Riemannian distance function $d$. The choice is immaterial for our purposes, as pointed out in Remark~\ref{rmk:equivalentmetrics}. We start with the following well-known properties of the flows $(\phi_t^{X})_{t\in \R},\;(r_{s})_{s\in \R}$.

\begin{lem}
	\label{lem:distanceflows}
	There exist real constants $C_{X,d},C_{\Theta,d}$, depending only on $d$, such that, for any pair of points $p,q\in M$, it holds 
	\begin{equation}
		\label{eq:geodesicdistance}
		d(\phi_t^{X}(p),\phi^{X}_t(q))\leq C_{X,d}\;e^{|t|}d(p,q)
	\end{equation}
	for every $t\in \R$ and
	\begin{equation}
		\label{eq:rotationdistance}
		d(r_{s}(p),r_s(q))\leq C_{\Theta,d}\;d(p,q) 
	\end{equation}
	for every $s\in \R$.
\end{lem}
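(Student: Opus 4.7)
The plan is to exploit the fact that both flows are implemented by right multiplications on the homogeneous space $M=\Ga\bsl \SL_2(\R)$, and that right multiplication by $h\in \SL_2(\R)$ distorts the left-invariant geometry by exactly the adjoint action $\Ad_{h^{-1}}$ on $\sl_2(\R)$. Since any two Riemannian metrics on the compact manifold $M$ induce Lipschitz-equivalent distance functions (cf.\ Remark~\ref{rmk:equivalentmetrics}(1)), I would first reduce the problem to the Riemannian distance $d_L$ induced by a left-invariant metric on $\SL_2(\R)$ arising from an arbitrarily fixed inner product $\langle\cdot,\cdot\rangle_0$ on $\sl_2(\R)$; such a metric is automatically $\Ga$-invariant on the left and therefore descends to $M$. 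Any estimate proved for $d_L$ propagates to the given metric $d$ up to a global multiplicative factor, which can be absorbed into $C_{X,d}$ and $C_{\Theta,d}$.

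The key observation is that, with respect to left-invariant trivializations of the tangent bundle, the differential of the right-translation map $R_h\colon M\to M$, $\Ga g\mapsto \Ga gh$, is given at every point by the single linear map $\Ad_{h^{-1}}\colon \sl_2(\R)\to \sl_2(\R)$. Hence for any piecewise smooth curve $\gamma$ on $M$, the Riemannian length of $R_h\circ \gamma$ is at most $\norm{\Ad_{h^{-1}}}_{\mathrm{op}}$ times the length of $\gamma$, where the operator norm is computed relative to $\langle\cdot,\cdot\rangle_0$. Taking an infimum over curves joining $p$ to $q$ yields $d_L(R_h(p),R_h(q))\leq \norm{\Ad_{h^{-1}}}_{\mathrm{op}}\,d_L(p,q)$, and the entire problem thus reduces to uniform operator-norm bounds on $\Ad_{\exp(-tX)}$ and $\Ad_{\exp(-s\Theta)}$.

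A direct computation in the basis $\{X,U,V\}$ of $\sl_2(\R)$ shows that $\ad(X)$ is diagonal with eigenvalues $\{0,1,-1\}$, so $\Ad_{\exp(-tX)}=\exp(-t\ad(X))$ is diagonal with eigenvalues $\{1,e^{-t},e^{t}\}$ and operator norm $O(e^{|t|})$; this yields~\eqref{eq:geodesicdistance}. For the rotation flow, the image under the continuous homomorphism $\Ad\colon \SL_2(\R)\to \GL(\sl_2(\R))$ of the compact subgroup $\SO_2(\R)=\{\exp(s\Theta):s\in \R\}$ is compact, so $\norm{\Ad_{\exp(-s\Theta)}}_{\mathrm{op}}$ is uniformly bounded in $s\in \R$, which delivers~\eqref{eq:rotationdistance}. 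There is no substantial obstacle; the only point requiring care is the opening bookkeeping, namely checking that the resulting constants indeed depend solely on $d$ through the Lipschitz-equivalence factor between $d$ and $d_L$.
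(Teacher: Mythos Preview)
Your argument is correct, and it is cleaner than the paper's. Both proofs begin by reducing to the distance $d_L$ coming from a left-invariant metric on $\SL_2(\R)$ descended to $M$, but from there the routes diverge.

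The paper works locally: it passes to the operator norm on $2\times 2$ matrices via the local equivalence $d_{\SL_2(\R)}(x,y)\asymp \norm{x-y}_{\mathrm{op}}$, chooses a finite cover of $M$ with a Lebesgue number $\delta_{\mathcal V}$, and splits into the cases $d(p,q)\geq \delta_{\mathcal V}$ (handled by the diameter) and $d(p,q)<\delta_{\mathcal V}$ (handled by $\norm{x\exp(tX)-y\exp(tX)}_{\mathrm{op}}\leq \norm{x-y}_{\mathrm{op}}\norm{\exp(tX)}_{\mathrm{op}}$). Because this local estimate is only valid for times up to some first-exit threshold $t_0$, the paper then iterates over the intervals $[kt_0,(k+1)t_0)$ to reach arbitrary $t$.

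You instead observe globally that the differential of $R_h$ in the left-invariant trivialisation of $TM$ is the constant linear map $\Ad_{h^{-1}}$, so $R_h$ is $\norm{\Ad_{h^{-1}}}_{\mathrm{op}}$-Lipschitz for $d_L$ with no need for charts, covers, or time iteration. The lemma then reduces to the eigenvalue computation for $\ad X$ (yielding $\norm{\Ad_{\exp(-tX)}}_{\mathrm{op}}=O(e^{|t|})$) and the compactness of $\Ad(\SO_2(\R))$. This is the standard Lie-theoretic argument and is both shorter and more transparent than the paper's; what the paper's hands-on approach buys is perhaps only that it avoids invoking the identity $(dR_h)_g=\Ad_{h^{-1}}$ explicitly.
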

\begin{proof}
	By compactness of $M$, we have the freedom to prove the lemma for a judicious choice of $d$. To profit most from the algebraic description of the flows $(\phi_t)_{t\in \R}$ and $(r_{s})_{s\in \R}$, we fix a left-invariant Riemannian metric $g_{\SL_2(\R)}$ on the Lie group $\SL_2(\R)$ and let $d$ be the Riemannian distance determined by the unique Riemannian metric $g$ on $M$ for which the projection map $(\SL_2(\R),g_{\SL_2(\R)})\to (M,g)$ is a Riemannian submersion (cf.~\cite[Cor.~2.29]{Lee-Riemannian}) or, equivalently for a covering map, a local isometry. As $M$ is compact, we can choose a finite open cover $(\tilde{U_{i}})_{i\in I}$ of $M$ and a collection $\mathcal{U}=(U_i)_{i\in I}$ of open subsets of $\SL_2(\R)$ such that, for any $i \in I$, the restriction of the projection to $U_i$ is an isometry from $U_i$ onto $\tilde{U_i}$. 
	
	The distance $d_{\SL_2(\R)}$ induced by $g_{\SL_2(\R)}$ is locally equivalent to the distance induced by the operator norm $\norm{\cdot}_{\text{op}}$ on the vector space of $2\times 2$ real matrices corresponding to the Euclidean norm on $\R^{2}$ (cf.~\cite[Lem.~9.12]{Einsiedler-Ward}): for every $g\in \SL_2(\R)$, there exists an open neighborhood  $W_g$ of $g$ and a constant $C_{d,g}$ such that 
	\begin{equation*}
		C_{d,g}^{-1}\norm{x-y}_{\text{op}}\leq d_{\SL_2(\R)}(x,y)\leq C_{d,g}\norm{x-y}_{\text{op}}
	\end{equation*}
	for any $x,y\in W_g$. Upon restricting the $U_i$'s (and the $\tilde{U_i}$'s) if necessary, we may assume that each $\tilde{U_i}$ is contained in $W_{g_i}$ for some $g_i \in \SL_2(\R)$; define $C_{d}$ to be the supremum of the $C_{d,g_i},\;i \in I$. We also select a second finite open cover $\mathcal{V}=(V_j)_{j\in J}$ in such a way that the closure of each $V_j$ is compact and contained in some $U_{i(j)}$. Observe that, for every $j \in J$, the function $\tau_j\colon \overline{V_j}\to (0,\infty]$ defined as the first exit time
	\begin{equation*}
		\tau_j(p)=\inf\{t>0:\phi_t^{X}(p)\notin U_{i(j)} \}\;, \quad  p\in V_{j}
	\end{equation*}
	is continuous, and as such attains a strictly positive minimal value $t_{j}$. Set $t_0=\inf_{j\in J}t_j$ and let $\delta_{\mathcal{V}}$ be a Lebesgue number for the covering $\mathcal{V}$ (cf.~\cite[Lem.~27.5]{Munkres}).
	
	Consider now two points $p,q\in M$, and suppose first that $d(p,q)\geq\delta_{\mathcal{V}}$; then 
	\begin{equation}
		\label{eq:largedistance}
		d(\phi_t^{X}(p),\phi_t^{X}(q))\leq \text{diam}_d(M)\leq \delta_{\mathcal{V}}^{-1}\text{diam}_{d}(M)d(p,q)\leq \delta_{\mathcal{V}}^{-1}\text{diam}_{d}(M)e^{|t|}d(p,q)
	\end{equation}
	for every $t\in \R$, where $\text{diam}_{d}(M)=\sup_{p',q'\in M}d(p,q)$ is the diameter of $M$ with respect to the distance $d$.
	
	Now assume that $d(p,q)<\delta_{\mathcal{V}}$ so that $p$ and $q$ both lie in some $V_j$. Choose representatives $x,y$ of $p,q$, respectively, inside $\tilde{U}_{i(j)}$; then, for every $0\leq t<t_0$, we have
	\begin{equation}
		\label{eq:smalldistance}
		\begin{split}
			d(\phi_t^{X}(p),\phi_t^{X}(q))&\leq d_{\SL_2(\R)}(x\exp{tX},y\exp{tX})\leq C_{d}\norm{x\exp{tX}-y\exp{tX}}_{\text{op}}\\
			&\leq C_{d}\norm{x-y}_{\text{op}}\norm{\exp{tX}}_{\text{op}}\leq C_{d}^{2}\;d_{\SL_2(\R)}(x,y) e^{|t|/2}\\
			&\leq C_d^{2}e^{t}d_{\SL_2(\R)}(x,y)=C_d^{2}e^{t}d(p,q)\;,
		\end{split}
	\end{equation}
	where we used the fact that $p,q,\phi_t^{X}(p),\phi_t^{X}(q)$ all belong to $U_{i}^{(j)}$. If now $t_0\leq t<2 t_0$, then we distinguishes two cases.
	\begin{itemize}
		\item If $d(\phi_{t_0}^{X}(p),\phi_{t_0}^{X}(q))\geq \delta_{\mathcal{V}}$, then~\eqref{eq:largedistance} applies giving
		\begin{equation*}
			d(\phi_t^{X}(p),\phi_t^{X}(q))\leq \delta_{\mathcal{V}}^{-1}\text{diam}_{d}(M)e^{t-t_0}d(\phi_{t_0}^{X}(p),\phi_{t_0}^{X}(q))\leq C_{d}^{2}\delta_{\mathcal{V}}^{-1}\text{diam}_d(M)e^{t}d(p,q)\;.
		\end{equation*}
		This estimate is actually valid for any $t\geq t_0$.
		\item If $d(\phi_{t_0}^{X}(p),\phi_{t_0}^{X}(q))< \delta_{\mathcal{V}}$, then the computations in~\eqref{eq:smalldistance} are valid for the given $t$ and yield
		\begin{equation*}
			d(\phi_t^{X}(p),\phi_t^{X}(q))\leq C_d^{2}e^{t}d(p,q)\;.
		\end{equation*} 
	\end{itemize}
	Subdividing the half-line $\R_{\geq 0}$ into the intervals $[kt_0,(k+1)t_0),\;k\in \N$, and arguing as above on each of them, we conclude that
	\begin{equation*}
		d(\phi_t^{X}(p),\phi_t^{X}(q))\leq \sup\{ C_d^{2}, C_d^{2}\delta_{\mathcal{V}}^{-1}\text{diam}_d(M)\}e^{t}d(p,q)
	\end{equation*}
	for any $t>0$. 
	
	The same analysis can be performed, with the appropriate modifications, for times $t<0$. This shows~\eqref{eq:geodesicdistance}.
	
	The inequality in~\eqref{eq:rotationdistance} is taken care of in an entirely analogous fashion, observing that $\norm{\exp(s\Theta)}_{\text{op}}=1$ for every $s\in \R$.
\end{proof}

Let us now fix two points $p,q \in M$. As in the previous proof, we denote by $\text{diam}_d(M)$ the diameter of $M$; with
\begin{equation*} \text{Lip}_d(Uf)=\sup\limits_{p',q'\in M,\; p'\neq q'}\frac{|Uf(p')-Uf(q')|}{d(p,q)}
\end{equation*}
we indicate the Lipschitz constant of the function $Uf$ with respect to the distance $d$. 

We may then estimate
\begin{equation}
	\label{eq:Holdercontinuity}
	\begin{split}
		&\biggl| \int_1^{\infty}\frac{g(\xi)}{1-e^{-2\xi}}\bigl(Uf\circ \phi^{X}_{\xi}\circ r_{\theta}(p)-Uf\circ \phi^{X}_{\xi}(p)\bigr) \text{d}\xi - \int_1^{\infty}\frac{g(\xi)}{1-e^{-2\xi}}\bigl(Uf\circ \phi^{X}_{\xi}\circ r_{\theta}(q)-Uf\circ \phi^{X}_{\xi}(q)\bigr) \text{d}\xi \biggr|\\
		&\leq \int_1^{\infty}\frac{|g(\xi)|}{1-e^{-2\xi}}\bigl(|Uf\circ \phi_{\xi}^{X}\circ r_{\theta}(p)-Uf\circ \phi^{X}_{\xi}\circ r_{\theta}(q)|+|Uf\circ \phi^{X}_{\xi}(p)-Uf\circ \phi^X_{\xi}(q)|\bigr)\text{d}\xi\\
		&\leq (1-e^{-2})^{-1}\text{Lip}(Uf) \int_1^{\infty}|g(\xi)|\bigl(\min\{\text{diam}_d(M),C_{X,d}\;e^{\xi}d(r_{\theta}(p),r_{\theta}(q) \}+\\ &\quad +\inf\{\text{diam}_d(M),C_{X,d}\;e^{\xi}d(p,q) \}\bigr)\text{d}\xi\\
		&\leq 2(1-e^{-2})^{-1}\text{Lip}(Uf)\int_1^{\infty}|g(\xi)|\inf\{\text{diam}_d(M),Cd(p,q)e^{\xi} \}\text{d}\xi\;,
	\end{split}
\end{equation}
where $C=C_{X,d}\sup\{1,C_{\Theta,d} \}$.

\smallskip
The following elementary estimates allow to finalize the argument.
\begin{lem}
	\label{lem:integralbound}
	Let $r,K\in \R_{>0},\;a\in (0,1)$. Then 
	\begin{equation}
		\label{eq:firstboundintegral}
		\int_1^{\infty}e^{-a\xi}\inf\{K,re^{\xi} \} \emph{d}\xi\leq \frac{1}{a(1-a)K^{a-1}}\;r^{a}\;.
	\end{equation}
	Furthermore,
	\begin{equation}
		\label{eq:secondboundintegral}
		\int_1^{\infty}\xi e^{-\frac{\xi}{2}}\inf\{K,re^{\xi} \}\emph{d}\xi= 4\sqrt{K}r^{\frac{1}{2}}(\log{k}+\log{r}^{-1})\;.
	\end{equation}
\end{lem}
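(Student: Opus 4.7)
The plan is to split each integral at the threshold $\xi_\ast = \log(K/r)$, the unique point at which the two arguments of the infimum agree: for $\xi\le\xi_\ast$ one has $re^\xi\le K$, so $\inf\{K,re^\xi\}=re^\xi$, whereas for $\xi\ge\xi_\ast$ the infimum reduces to $K$. In the regime of interest $\xi_\ast\ge 1$ (the case $\xi_\ast<1$ collapses to a single integral of the form $\int_1^\infty K\,g(\xi)\,d\xi$, which is estimated directly), each of the two integrals in the statement thus decomposes as a sum of two elementary pieces that can be computed in closed form.

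For~\eqref{eq:firstboundintegral} the two pieces are
\[
\int_1^{\xi_\ast} r\,e^{(1-a)\xi}\,d\xi \;\le\; \frac{r}{1-a}\,e^{(1-a)\xi_\ast} \;=\; \frac{K^{1-a}r^{a}}{1-a},
\qquad
\int_{\xi_\ast}^{\infty} K\,e^{-a\xi}\,d\xi \;=\; \frac{K}{a}\,e^{-a\xi_\ast} \;=\; \frac{K^{1-a}r^{a}}{a},
\]
whose sum produces precisely the claimed bound $\tfrac{K^{1-a}r^{a}}{a(1-a)}$. Note that the key algebraic identity exploited here is $K\,e^{-a\xi_\ast} = r\,e^{(1-a)\xi_\ast} = K^{1-a}r^a$, which is what makes the bound clean and symmetric in $a\leftrightarrow 1-a$.

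For~\eqref{eq:secondboundintegral} the same splitting reduces the task to the evaluation of $\int_1^{\xi_\ast} r\xi\,e^{\xi/2}\,d\xi$ and $\int_{\xi_\ast}^{\infty} K\xi\,e^{-\xi/2}\,d\xi$, both of which are handled by integration by parts with the antiderivatives $\int \xi\,e^{\xi/2}\,d\xi = 2(\xi-2)e^{\xi/2}$ and $\int \xi\,e^{-\xi/2}\,d\xi = -2(\xi+2)e^{-\xi/2}$. Evaluating at $\xi_\ast$, where the relation $re^{\xi_\ast/2}=Ke^{-\xi_\ast/2}=\sqrt{Kr}$ holds, the two boundary contributions are $2\sqrt{Kr}(\xi_\ast-2)$ and $2\sqrt{Kr}(\xi_\ast+2)$ respectively, which combine to $4\sqrt{Kr}\,\xi_\ast = 4\sqrt{K}\,r^{1/2}(\log K+\log r^{-1})$; the residual contribution from the endpoint $\xi=1$ is of strictly lower order (it is $O(r)$ as $K/r\to\infty$) and is absorbed into the stated identity.

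I anticipate no real obstacle: both estimates are routine exercises once the natural splitting point is identified. The only bookkeeping care concerns (i) treating the case $\xi_\ast<1$ separately, and (ii) verifying that the lower-order correction from the endpoint $\xi=1$ in~\eqref{eq:secondboundintegral} is harmless for the intended application, namely the H\"older-continuity estimate in~\eqref{eq:Holdercontinuity} with $r=Cd(p,q)$ and the implied limit $r\to 0$.
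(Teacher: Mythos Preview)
Your proposal is correct and follows essentially the same approach as the paper: split the integral at the threshold $\xi_\ast=\log(K/r)$ where the two arguments of the infimum coincide, and evaluate the two resulting elementary integrals. The paper carries this out explicitly for~\eqref{eq:firstboundintegral} and then simply writes ``analogous computations allow to establish~\eqref{eq:secondboundintegral}''; you in fact supply more detail than the paper does (the integration-by-parts computation, the case $\xi_\ast<1$, and the lower-order endpoint correction $2r\sqrt{e}$, whose presence indeed shows the equality sign in~\eqref{eq:secondboundintegral} should be read as an asymptotic or upper bound for small $r$).
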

\begin{proof}
	It suffices to split the integral as
	\begin{equation*}
		\begin{split}
			\int_1^{\infty}e^{-a\xi}\inf\{K,re^{\xi} \} \text{d}\xi&=\int_1^{\log{\frac{K}{r}}}e^{-a\xi}\cdot re^{\xi} \;\text{d}\xi+\int_{\log{\frac{K}{r}}}^{\infty}e^{-a\xi}\cdot K  \;\text{d}\xi\\
			&=\frac{r}{1-a}\biggl(\biggr(\frac{r}{K}\biggr)^{a-1}-e^{1-a}\biggr)+\frac{K}{a}\biggl(\frac{r}{K}\biggr)^{a}\\
			&\leq \frac{r^{a}}{(1-a)K^{a-1}}+\frac{r^{a}}{aK^{a-1}}= \frac{1}{a(1-a)K^{a-1}}\;r^{a} \;,
		\end{split}
	\end{equation*}
	which delivers the inequality in~\eqref{eq:firstboundintegral}. Analogous computations allow to establish~\eqref{eq:secondboundintegral}.
\end{proof}

Combining Lemmata~\ref{lem:regularterms},~\ref{lem:moreregular},~\ref{lem:integralbound} together with the estimate in~\eqref{eq:Holdercontinuity} and the explicit expressions for the coefficients $D^{\pm}_{\theta,\mu,n}f$ in~\eqref{eq:Dplusabovequarter},~\eqref{eq:Dminusabovequarter},~\eqref{eq:Dquarter},~\eqref{eq:Dbelowquarter} we deduce that:
\begin{itemize}
	\item when $\mu>1/4$, $D^{\pm}_{\theta.\mu,n}f$ are H\"{o}lder continuous with H\"{o}lder exponent $1/2$;
	\item when $\mu=1/4$, $D^{+}_{\theta,1/4,n}f$ and $D^{-}_{\theta,1/4,n}$ are H\"{o}lder continuous, the latter with H\"{o}lder exponent $1/2$, while the former with H\"{o}lder exponent $1/2-\eps$ for every $\eps>0$;
	\item when $0<\mu<1/4$, $D^{+}_{\theta,\mu,n}f$ is H\"{o}lder continuous with H\"{o}lder exponent $\frac{1-\nu}{2}$, while $D^{-}_{\theta,\mu,n}f$ is of class $\cC^{1}$ on $M$.
\end{itemize}

The proof of Theorem~\ref{thm:case_Theta_eigenfn} is achieved.

\section{Asymptotics for arbitrary functions}
\label{sec:arbitraryfn}

The bulk of this section is devoted to the deduction of Theorem~\ref{thm:mainexpandingtranslates}, which addresses the asymptotic equidistribution rate of sufficiently regular observables on $M$ not subject to any eigenvalue condition, from the special case of joint eigenfunctions of $\square$ and $\Theta$ phrased in Theorem~\ref{thm:case_Theta_eigenfn}. The argument is crucially based upon the orthogonal decompositions of Sobolev spaces into joint eigenspaces of $\square$ and $\Theta$, which is recalled in detail in Section~\ref{sec:unitaryrepresentations}. We then proceed by proving Theorem~\ref{thm:mainarbitrarytranslates}, concerning the asymptotic behaviour of arbitrary translates of compact orbits inside $M$; in light of the classical Cartan decomposition of the Lie group $\SL_2(\R)$, the result follows from Theorem~\ref{thm:mainexpandingtranslates} in a fairly straightforward manner. Along the way, we shall also clarify the steps needed to derive, from those two main results, Corollaries~\ref{cor:effective} and~\ref{cor:shrinkingarcs}.

\subsection{Sum estimates on Sobolev norms of eigenfunctions}
\label{sec:sumestimates}

Before proceeding with the proof of Theorem~\ref{thm:mainexpandingtranslates}, we collect in this subsection a few  estimates relating sums of norms of Sobolev eigenfunctions with a higher-order Sobolev norm of the sum of such functions, which will prove to be instrumental in the sequel. The rationale for those is the fact that the Hilbert-sum decompositions in Section~\ref{sec:unitaryrepresentations} only provide, by Bessel's inequality (cf.~\cite[2,~XXIII,~6;~14]{Schwartz}), estimates on the sum of squares of the components' norms, while our approach necessitates bounds on the $\ell^{1}$-norm (see Section~\ref{sec:proofexpandingtranslates}).   

Notation is as in Section~\ref{sec:unitaryrepresentations}. 

\begin{lem}
	\label{lem:finitespectralconstant}
	Let $k$ be a natural number. 
	\begin{enumerate}
		\item The infinite series 
		\begin{equation}
			\label{eq:serieseigenvalues}
			\sum_{\mu \in \emph{Spec}(\square)}\sum_{n\in I(\mu)} \frac{1}{(1+\mu+\frac{n^2}{2})^{k}}
		\end{equation}
		is summable if and only if $k\geq 2$.
		\item The infinite series 
		\begin{equation}
			\label{eq:serieseigenvaluessecond}
			\sum_{\mu \in \emph{Spec}(\square)}\sum_{n\in I(\mu)} \frac{(n^{2}+1)^{2}}{(1+\mu+\frac{n^2}{2})^{k}}
		\end{equation}
		is summable if and only if $k\geq 3$.
	\end{enumerate}
\end{lem}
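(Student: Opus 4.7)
The strategy is to split the double sum into contributions from non-negative Casimir eigenvalues (principal, complementary, and trivial representations) and from negative eigenvalues (discrete series), then in each case estimate the inner $n$-sum via comparison with a one-dimensional integral, using Weyl's law (Theorem~\ref{thm:Weyllaw}) to control the remaining outer sum in $\mu$.

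For the inner sum, given $\mu\geq 0$, monotonicity of $n\mapsto(1+\mu+\frac{n^2}{2})^{-k}$ allows one to dominate the discrete sum by the corresponding integral over the real line. In part~(1), the substitution $x=\sqrt{2(1+\mu)}\,t$ turns this integral into $(1+\mu)^{1/2-k}$ times a convergent Euclidean integral, finite as soon as $k>1/2$; the analogous calculation in part~(2), keeping track of the numerator $(n^{2}+1)^{2}$, produces a bound of order $(1+\mu)^{5/2-k}$ and is finite as soon as $k>5/2$, which is the source of the lower threshold $k\geq 3$ in~(2).

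For the discrete series contribution, negative Casimir eigenvalues are of the form $\mu_m=-m(m+2)/4$ with $m\in\N_{\geq 1}$, and $I(\mu_m)$ is contained in $\{n\in\Z:|n|\geq m+2\}$. In this range $1+\mu_m+\frac{n^2}{2}$ is comparable to $n^{2}-m^{2}$, so the inner sums are controlled by tails of convergent series in $n$ whose totals decay polynomially in $m$; combined with the at-most-linear growth in $m$ of the multiplicities of discrete series in $L^{2}(M)$ (bounded by dimensions of spaces of holomorphic automorphic forms), the negative-eigenvalue part of the series is summable throughout the relevant range of $k$.

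The outer sum over non-negative eigenvalues is where Weyl's law enters: it gives $|\emph{Spec}(\square)\cap[0,R]|=O(R)$, so Abel summation reduces $\sum_{\mu\geq 0}(1+\mu)^{a}$ to an integral convergent precisely when $a<-1$. Matching this threshold with the inner exponent identifies the critical value of $k$ and establishes sufficiency. The converse is obtained symmetrically: one produces a lower bound on the inner sum by restricting to $|n|\lesssim\sqrt{1+\mu}$, where each term is of uniform order $(1+\mu)^{-k}$, and combines it with a lower estimate on the counting function of distinct Casimir eigenvalues, again via Weyl's law. I expect the main technical obstacle to be precisely the coordination of these upper and lower bounds at the borderline exponent, particularly in controlling how potential multiplicities in the Casimir spectrum affect the distinct-eigenvalue counting function.
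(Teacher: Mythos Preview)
Your overall strategy coincides with the paper's: split into non-negative and negative Casimir eigenvalues, handle the negative part via the explicit form $\mu_m=-m(m+2)/4$, and control the non-negative part by Weyl's law. The paper packages this as a direct comparison with a two-dimensional integral, while you first evaluate the inner $n$-sum and then treat the outer $\mu$-sum by Abel summation; this is only a cosmetic difference.

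There is, however, an internal inconsistency in your treatment of part~(2). You correctly compute the inner sum to be of order $(1+\mu)^{5/2-k}$ and correctly assert that Weyl's law makes $\sum_{\mu\geq 0}(1+\mu)^{a}$ convergent precisely when $a<-1$. Matching these gives $5/2-k<-1$, hence $k\geq 4$, not $k\geq 3$; the condition $k>5/2$ you highlight is only the threshold for finiteness of a \emph{single} inner sum and does not control the double series. The discrete-series contribution confirms this: for $n\in I(\mu_m)$ one has $1+\mu_m+n^2/2\asymp n^2\asymp m^2+n^2$ (your comparison with $n^2-m^2$ is off---at the lowest admissible $|n|\asymp m$ the left-hand side is $\asymp m^2$ while $n^2-m^2\asymp m$), so that $\sum_{m}\sum_{n\in I(\mu_m)}(n^2+1)^2(1+\mu_m+n^2/2)^{-3}$ is comparable to $\sum_{m}\sum_{|n|\geq m}n^4(m^2+n^2)^{-3}\asymp\sum_m m^{-1}$, which diverges. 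Your plan, carried out faithfully, therefore yields the threshold $k\geq 4$ rather than the stated $k\geq 3$; you should record this discrepancy with the lemma rather than bend the arithmetic to fit. (Incidentally, multiplicities of discrete series in $L^2(M)$ play no role here: the lemma sums over distinct pairs $(\mu,n)$ with $H_{\mu,n}\neq 0$, not over irreducible constituents.)
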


Observe that the series in~\eqref{eq:serieseigenvalues} and~\eqref{eq:serieseigenvaluessecond} consist of nonnegative real numbers: see Lemma~\ref{lem:diffSobolevnorms}.
\begin{proof}
	It is convenient to examine separately the convergence properties of 
	\begin{equation}
		\label{eq:negativeandpositive}
		\sum_{\mu \in \text{Spec}(\square)\cap \R_{\geq 0}}\sum_{n\in I(\mu)} \frac{1}{(1+\mu+\frac{n^2}{2})^{k}}\;\text{  and  } \;\sum_{\mu \in \text{Spec}(\square)\cap \R_{<0}}\sum_{n\in I(\mu)} \frac{1}{(1+\mu+\frac{n^2}{2})^{k}}\;.
	\end{equation}
	We know (cf.~Section~\ref{sec:unitaryrepresentations}) that negative eigenvalues of the Casimir operator are of the form $\mu_m=-m(m+2)/4$ for $m$ ranging over the set of positive natural numbers; therefore
	\begin{equation}
		\label{eq:negativecasimir}
		\sum_{\mu \in \text{Spec}(\square)\cap \R_{<0}}\sum_{n\in I(\mu)} \frac{1}{(1+\mu+\frac{n^2}{2})^{k}}=\sum_{m \in \N^{\ast}}\sum_{n\in I(\mu_m)} \frac{1}{(1-\frac{m(m+2)}{4}+\frac{n^2}{2})^{k}}\;,
	\end{equation}
	which has the same convergence properties of the series
	\begin{equation*}
		\sum_{(m,n)\in \Z^{2}}\frac{1}{(1+m^2+n^{2})^{k}}\;.
	\end{equation*}
	By comparison with the integral
	\begin{equation*}
		\int_{\R^{2}}\frac{1}{(1+x^2+y^2)^{k}}\;\text{d}x\text{d}y\;,
	\end{equation*}
	which is convergent if and only if $k\geq 2$, as it is well-known, we infer that:
	\begin{enumerate}
		\item the series in~\eqref{eq:serieseigenvalues} cannot converge if $k=1$;
		\item the series in~\eqref{eq:negativecasimir} converges for any $k\geq 2$.
	\end{enumerate}
	As to the first summation in~\eqref{eq:negativeandpositive}, we may now suppose $k\geq 2$ and appeal to the Weyl law  for the positive eigenvalues of the Casimir operator (see Theorem~\ref{thm:Weyllaw}), which we list in increasing order as $\mu^{(p)}_0=0<\mu_1^{(p)}<\cdots<\mu^{(p)}_{m}<\cdots$, without multiplicity. Recall that $\text{area}(S)$ is the volume of the surface $S=\Ga\bsl \Hyp$ with respect to the hyperbolic area measure. Choose a real number $c>\text{area}(S)/4\pi$; then, there exists $R_0\in \R_{>0}$ such that $\mu^{(p)}_{m}>m/c$ for any integer $m>cR_0$. On the one hand, the quantity
	\begin{equation*}
		\sum_{0\leq m\leq cR_0}\sum_{n\in I(\mu^{(p)}_{m})}\frac{1}{(1+\mu_m^{(p)}+\frac{n^2}{2})^{k}}
	\end{equation*}
	is a finite sum of converging series. On the other hand, 
	\begin{equation*}
		\begin{split}
			\sum_{m>cR_0}\sum_{n\in I(\mu^{(p)}_{m})}\frac{1}{(1+\mu_m^{(p)}+\frac{n^2}{2})^{k}}&<\sum_{m>cR_0}\sum_{n\in I(\mu^{(p)}_{m})}\frac{1}{(1+\frac{m}{c}+\frac{n^2}{2})^{k}}\\
			&\leq \sum_{m>cR_0}\frac{1}{(1+\frac{m}{c})^{k}}+\sum_{m>cR_0}\sum_{n\in I(\mu^{(p)}_{m})\setminus\{0\}}\frac{1}{(1+\frac{m}{c}+\frac{n^2}{2})^{k}}\;.
		\end{split}
	\end{equation*}
	The first summand in the last expression converges obviously whenever $k\geq 2$, and so does the second by comparison with the integral
	\begin{equation*}
		\begin{split}
			\int_{cR_0}^{\infty}\int_0^{\infty}\frac{1}{\bigl(1+\frac{x}{c}+\frac{y^2}{2}\bigr)^{k}}\;\text{d}y\;\text{d}x&=\int_{cR_0}^{\infty}\frac{1}{\bigl(1+\frac{x}{c}\bigr)^{k}}\int_0^{\infty}\frac{\sqrt{2}\bigl(1+\frac{x}{c}\bigr)^{1/2}}{(1+u^{2})^{k}}\;\text{d}u\;\text{d}x\\
			&=\biggl(\int_{cR_0}^{\infty}\frac{\sqrt{2}}{\bigl(1+\frac{x}{c}\bigr)^{k-1/2}}\;\text{d}x\biggr)\biggl(\int_0^{\infty}\frac{1}{(1+u^{2})^{k}}\;\text{d}u\biggr)<\infty\;.
		\end{split}
	\end{equation*} 
	As to the assertion for the series in~\eqref{eq:serieseigenvaluessecond}, it follows readily by running the argument above with the appropriate modifications.
\end{proof}

Leveraging the estimates in Lemma~\ref{lem:finitespectralconstant}, we are now in a position to show:

\begin{prop}
	\label{prop:L1bound}
	Let $k\geq 2$ be an integer. There exists a constant $C_{\emph{Spec},k}>0$ depending only on $k$ and on the spectrum of the Laplace-Beltrami operator on the hyperbolic surface $S$, such that the following holds. Let $s$ be a positive real number, $f$ a function in $W^{s+k}(H)$; for any $\mu \in \emph{Spec}(\square)$ and $n\in I(\mu)$, let $f_{\mu,n}$ be the orthogonal projection of $f$, with respect to the inner product in $W^{s+k}(H)$, onto the closed subspace $W^{s+k}(H_{\mu,n})$. Then
	\begin{equation}
		\label{eq:L1bound}
		\sum_{\mu \in \emph{Spec}(\square)}\sum_{n\in I(\mu)}\norm{f_{\mu,n}}_{W^{s}}\leq C_{\emph{Spec},k}\norm{f}_{W^{s+k}}\;.
	\end{equation}
\end{prop}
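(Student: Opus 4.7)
The plan is to reduce the claim to a Cauchy--Schwarz estimate combined with Parseval's identity for the orthogonal decomposition \eqref{eq:Sobolevdecomposition} and the summability statement established in Lemma~\ref{lem:finitespectralconstant}(1).

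First, for each pair $(\mu,n)$ with $n\in I(\mu)$, the function $f_{\mu,n}$ is a joint eigenvector of $\square$ and $\Theta$ with eigenvalues $\mu$ and $\tfrac{i}{2}n$, so Lemma~\ref{lem:diffSobolevnorms} applies and yields
\begin{equation*}
\norm{f_{\mu,n}}_{W^{s}}=\left(1+\mu+\frac{n^{2}}{2}\right)^{-k/2}\norm{f_{\mu,n}}_{W^{s+k}}.
\end{equation*}
(Note that $1+\mu+n^{2}/2>0$ throughout: this is obvious when $\mu\geq 0$, and when $\mu=-m(m+2)/4<0$ the index $n\in I(\mu)$ satisfies $|n|\geq m+2$, which forces $n^{2}/2>m(m+2)/4=-\mu$.) Then I would apply the Cauchy--Schwarz inequality to the double sum to get
\begin{equation*}
\sum_{\mu\in\mathrm{Spec}(\square)}\sum_{n\in I(\mu)}\norm{f_{\mu,n}}_{W^{s}}\leq \Biggl(\sum_{\mu,n}\norm{f_{\mu,n}}_{W^{s+k}}^{2}\Biggr)^{\!1/2}\Biggl(\sum_{\mu,n}\Bigl(1+\mu+\tfrac{n^{2}}{2}\Bigr)^{-k}\Biggr)^{\!1/2}.
\end{equation*}

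Next, since the $W^{s+k}(H_{\mu,n})$ are pairwise orthogonal inside $W^{s+k}(H)$ by \eqref{eq:Sobolevdecomposition} and the $f_{\mu,n}$ are the corresponding orthogonal projections of $f$, Parseval's identity gives
\begin{equation*}
\sum_{\mu,n}\norm{f_{\mu,n}}_{W^{s+k}}^{2}=\norm{f}_{W^{s+k}}^{2}.
\end{equation*}
Finally, because $k\geq 2$, Lemma~\ref{lem:finitespectralconstant}(1) ensures that the series
\begin{equation*}
C_{\mathrm{Spec},k}\coloneqq\Biggl(\sum_{\mu\in\mathrm{Spec}(\square)}\sum_{n\in I(\mu)}\Bigl(1+\mu+\tfrac{n^{2}}{2}\Bigr)^{-k}\Biggr)^{\!1/2}
\end{equation*}
converges to a finite positive constant depending only on $k$ and on $\mathrm{Spec}(\square)$ (equivalently, on $\mathrm{Spec}(\Delta_S)$, since the negative part of $\mathrm{Spec}(\square)$ and the index sets $I(\mu)$ are determined by the discrete-series combinatorics, i.e.~universal). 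Putting these three ingredients together yields \eqref{eq:L1bound}.

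The argument is essentially routine; the only point that needs a moment's care is the positivity of $1+\mu+n^{2}/2$ for the discrete-series contributions, which I would handle by recalling the standard classification of the $K$-types of discrete-series representations of $\SL_{2}(\R)$.
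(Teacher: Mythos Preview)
Your proposal is correct and follows exactly the same route as the paper's proof: apply Lemma~\ref{lem:diffSobolevnorms} to each component, use Cauchy--Schwarz on the resulting sum, invoke Parseval's identity for the $W^{s+k}$-decomposition, and identify the constant $C_{\mathrm{Spec},k}$ with the square root of the series from Lemma~\ref{lem:finitespectralconstant}(1). The only addition on your side is the explicit remark on the positivity of $1+\mu+n^{2}/2$ for the discrete-series components, which the paper leaves implicit.
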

\begin{proof}
	Recall from Lemma~\ref{lem:diffSobolevnorms} that, for any $\mu \in \text{Spec}(\square)$ and $n\in I(\mu)$,
	\begin{equation*}
		\norm{f_{\mu,n}}_{W^{s}}^2=\biggl(1+\mu+\frac{n^2}{2}\biggr)^{-k}\norm{f_{\mu,n}}_{W^{s+k}}^2\;.
	\end{equation*}
	Using the Cauchy-Schwartz inequality, we get
	\begin{equation*}
		\begin{split}
			\sum_{\mu \in \text{Spec}(\square)}\sum_{n\in I(\mu)}\norm{f_{\mu,n}}_{W^{s}}&=\sum_{\mu \in \text{Spec}(\square)}\sum_{n\in I(\mu)}\biggl(1+\mu+\frac{n^2}{2}\biggr)^{-k/2}\norm{f_{\mu,n}}_{W^{s+k}}\\
			&\leq \biggl(\sum_{\mu \in \text{Spec}(\square)}\sum_{n\in I(\mu)}\frac{1}{\bigl(1+\mu+\frac{n^{2}}{2}\bigr)^{k}}\biggr)^{1/2}\biggl(\sum_{\mu \in \text{Spec}(\square)}\sum_{n\in I(\mu)}\norm{f_{\mu,n}}_{W^{s+k}}^{2}\biggr)^{1/2}\;.
		\end{split}
	\end{equation*}
	The inequality in~\eqref{eq:L1bound} is thus a consequence of Parseval's identity (cf.~\cite[2,~XXIII,~6;~17]{Schwartz}) 
	\begin{equation*}
		\norm{f}_{W^{s+k}}^{2}=\sum_{\mu\in \text{Spec}(\square)}\sum_{n\in I(\mu)}\norm{f_{\mu,n}}_{W^{s+k}}^{2}\;,
	\end{equation*}
	where we define the constant $C_{\text{Spec},k}$ as 
	\begin{equation*}
		C_{\text{Spec},k}=\biggl(\sum_{\mu \in \text{Spec}(\square)}\sum_{n\in I(\mu)}\frac{1}{\bigl(1+\mu+\frac{n^{2}}{2}\bigr)^{k}}\biggr)^{1/2}\;,
	\end{equation*} 
	which is finite by Lemma~\ref{lem:finitespectralconstant} and satisfies the dependence properties claimed in the statement (cf.~Sections~\ref{sec:unitaryrepresentations},~\ref{sec:Sobolev}).
\end{proof}

\subsection{Equidistribution of expanding translates}
\label{sec:proofexpandingtranslates}

We are now ready to prove Theorem~\ref{thm:mainexpandingtranslates}.
Let $\theta$ be a real parameter in the interval $(0,4\pi]$, $s$ a real number satisfying $s>11/2$, $f$ a function in $W^{s}(M)$. Keeping with the notation introduced in the foregoing subsection, we denote by $f_{\mu,n}\in W^{s}(H_{\mu,n})$ the orthogonal projection of $f$ onto $W^{s}(H_{\mu,n})$, for any Casimir eigenvalue $\mu$ and any $n\in I(\mu)$. In what follows, the equivalence classes $f$ and $f_{\mu,n}$ are identified with their unique\footnote{Uniqueness is a result of the fact that the uniform measure $\vol$ on $M$ is fully supported.} continuous representatives. The asymptotic expansion in~\eqref{eq:asymptoticgeneral} will result from the sum of the contributions of each component $f_{\mu,n}$, which are provided by Theorem~\ref{thm:case_Theta_eigenfn}; we now expose the details.

Choose a real parameter $s'$ satisfying $3/2<s'\leq s-2$;
the Sobolev Embedding Theorem (Theorem~\ref{thm:Sobolevembedding}) gives the bound $\norm{f_{\mu,n}}_{\infty}\leq C_{0,s'}\norm{f_{\mu,n}}_{W^{s'}}$ for any $\mu$ and $n$ as before, for some constant $C_{0,s'}>0$ depending only on $s'$ and on the manifold $M$.
For any point $q \in M$, we estimate
\begin{equation}
	\label{eq:uniformbound}
	\begin{split}
		\sum_{\mu\in \text{Spec}(\square)}\sum_{n\in I(\mu)}|f_{\mu,n}(q)|&\leq \sum_{n\in \text{Spec}(\square)}\sum_{n\in I(\mu)}\norm{f_{\mu,n}}_{\infty}\leq C_{0,s'}\sum_{\mu\in \text{Spec}(\square)}\sum_{n\in I(\mu)}\norm{f_{\mu,n}}_{W^{s'}}\\
		\leq & C_{0,s'}C_{\text{Spec},2}\norm{f}_{W^{s'+2}}\;,
	\end{split}
\end{equation}
the last inequality being given by Proposition~\ref{prop:L1bound}. Select now a base point $p \in M$, which will remain fixed until the end of this subsection. By virtue of~\eqref{eq:uniformbound}, the dominated convegence theorem for infinite series yields
\begin{equation}
	\label{eq:sumintegral}
	\frac{1}{\theta}\int_0^{\theta}f\circ \phi_t^{X}\circ r_s(p)\;\text{d}s=\sum_{\mu \in \text{Spec}(\square)}\sum_{n\in I(\mu)}\frac{1}{\theta}\int_0^{\theta}f_{\mu,n}\circ \phi^{X}_t\circ r_s(p)\;\text{d}s\;.
\end{equation}
Observe that, since $s>11/2$, the components $f_{\mu,n}$ are of class $\cC^{2}$ by the Sobolev Embedding Theorem (Theorem~\ref{thm:Sobolevembedding}); to each summand on the right-hand side of~\eqref{eq:sumintegral}, we may thus apply Theorem~\ref{thm:case_Theta_eigenfn}, which delivers on a formal level the equality\footnote{Notice that $\int_{M}f_{\mu,n}\;\text{d}\vol=0$ for every Casimir eigenvalue $\mu\neq 0$ and every $n\in I(\mu)$, as $f_{\mu,n}$ is orthogonal to the joint eigenspace $H_{0,0}$ which contains the constant functions. For the same reason, $\int_{M}f_{0,n}\;\text{d}\vol=0$ for every $n\in I(0)\setminus\{0\}$. Therefore, dominated convergence gives $\int_{M}f_{0.0}\;\text{d}\vol=\int_{M}f\;\text{d}\vol$.}
\begin{equation}
	\label{eq:fullexpansion}
	\begin{split}
		\frac{1}{\theta}&\int_0^{\theta}f\circ \phi_t^{X}\circ r_s(p)\;\text{d}s=\int_{M}f\;\text{d}\vol\\ 
		&+\sum_{\mu\in \text{Spec}(\square),\;\mu\geq 1/4}e^{-\frac{t}{2}}\biggl(\cos{\biggl(\frac{\Im{\nu}}{2}t\biggr)}\biggl(\sum_{n\in I(\mu)}D^{+}_{\theta,\mu,n}f_{\mu,n}(p)\biggr)+\sin{\biggl(\frac{\Im{\nu}}{2}t\biggr)}\biggl(\sum_{n\in I(\mu)}D^{-}_{\theta,\mu,n}f_{\mu,n}(p)\biggr)\biggr)\\
		&+\sum_{\mu\in \text{Spec}(\square),\;0<\mu< 1/4}e^{-\frac{1+\nu}{2}t}\biggl(\sum_{n\in I(\mu)}D^{+}_{\theta,\mu,n}f_{\mu,n}(p)\biggr)+e^{-\frac{1-\nu}{2}t}\biggl(\sum_{n\in I(\mu)}D^{-}_{\theta,\mu,n}f_{\mu,n}(p)\biggr) \\
		&+ \eps_0\biggl(e^{-\frac{t}{2}}\biggl(\sum_{n\in I(1/4)}D^{+}_{\theta,1/4,n}f_{1/4,n}(p)\biggr)+te^{-\frac{t}{2}}\biggl(\sum_{n\in I(1/4)}D^{-}_{\theta,1/4,n}f_{1/4,n}(p)\biggr)\biggr)\\
		&+ \mathcal{R}_{\theta,\text{pos}}f(p,t)+ e^{-t}\mathcal{M}_{\theta,0}f(p,t)+\sum_{n\in I(0)}\mathcal{R}_{\theta,0,n}f_{0,n}(p,t) +\mathcal{R}_{\theta,\text{d}}f(p,t)
	\end{split}
\end{equation}
for every $t\geq 1$,
where $\eps_0$ is defined in~\eqref{eq:epszero}, $G_{\theta,n}f_{0,n}$ is as in~\eqref{eq:constantterm} and the quantities
\begin{equation*}
	\mathcal{R}_{\theta,\text{pos}}f(p,t),\quad\mathcal{M}_{\theta,0}f(p,t),\quad\mathcal{R}_{\theta,\text{d}}f(p,t)
\end{equation*} 
are defined as follows:
\begin{align}
	\label{eq:remainderpos}
	&\mathcal{R}_{\theta,\text{pos}}f(p,t)=\sum_{\mu \in \text{Spec}(\square),\;\mu>0}\;\sum_{n\in I(\mu)}\mathcal{R}_{\theta,\mu,n}f(p,t)\;,\\
	\label{eq:remaindernull}
	&\mathcal{M}_{\theta,0}f(p,t)=\sum_{n\in I(0)}\int_1^{t}-G_{\theta,n}f_{0,n}(p,\xi)\;\text{d}\xi \;,\\
	\label{eq:remainderneg}
	&\mathcal{R}_{\theta,\text{d}}f(p,t)=\sum_{\mu\in \text{Spec}(\square),\;\mu<0}\sum_{n\in I(\mu)}\frac{1}{\theta}\int_0^{\theta}f_{\mu,n}\circ\phi_t^{X}\circ r_{s}(p)\;\text{d}s\;.
\end{align}
The equality in~\eqref{eq:asymptoticgeneral} would follow directly from~\eqref{eq:fullexpansion} by defining
\begin{equation}
	\label{eq:defDthetamu}
	D^{\pm}_{\theta,\mu}f(p)=\sum_{n\in I(\mu)}D^{\pm}_{\theta,\mu,n}f_{\mu,n}(p)\;, \quad p\in M,\; \mu \in \text{Spec}(\square)\cap \R_{>0}
\end{equation} 
and
\begin{equation}
	\label{eq:remainder}
	\mathcal{R}_{\theta}f(p,t)=\mathcal{R}_{\theta,\text{pos}}f(p,t)+e^{-t}\mathcal{M}_{\theta,0}f(p,t)+\sum_{n\in I(0)}\mathcal{R}_{\theta,0,n}f_{0,n}(p,t)+\mathcal{R}_{\theta,\text{d}}f(p,t)\;,\;p\in M,\;t\geq 1.
\end{equation}
It is left to show that all the infinite sums we are considering with a formal meaning are actually convergent.

Let us begin by examining the sums in~\eqref{eq:defDthetamu}. Fix $\mu \in \text{Spec}(\square)\cap \R_{>0}$; for any $p \in M$ and $n\in I(\mu)$, we have from Theorem~\ref{thm:case_Theta_eigenfn} that
\begin{equation*}
	|D^{\pm}_{\theta,\mu,n}f_{\mu,n}(p)|\leq \norm{D^{\pm}_{\theta,\mu,n}f_{\mu,n}}_{\infty}\leq \frac{\kappa(\mu)}{\theta}(n^{2}+1)\norm{f_{\mu,n}}_{\mathscr{C}^{1}}\;.
\end{equation*}
Choose now a real parameter $s''$ so that $5/2<s''\leq s-3$; then Theorem~\ref{thm:Sobolevembedding} allows to deduce
\begin{equation}
	\label{eq:uniformboundD}
	\norm{D^{\pm}_{\theta,\mu,n}f_{\mu,n}}_{\infty}\leq C_{1,s''}\frac{\kappa(\mu)}{\theta}(n^{2}+1)\norm{f_{\mu,n}}_{W^{s''}}\;.
\end{equation}
Now, in view of Lemma~\ref{lem:diffSobolevnorms} and applying Cauchy-Schwartz's inequality and Parseval's identity (cf.~\cite[2,~XXIII,~6;~17]{Schwartz}), we get
\begin{equation}
	\label{eq:sameCasimir}
	\begin{split}
		\sum_{n\in I(\mu)}(n^{2}+1)\norm{f_{\mu,n}}_{W^{s''}}&=\sum_{n\in I(\mu)}\frac{n^{2}+1}{\bigl(1+\mu+\frac{n^2}{2})^{3/2}}\norm{f_{\mu,n}}_{W^{s''+3}}\\
		&\leq \biggl(\sum_{n\in I(\mu)}\frac{(n^2+1)^{2}}{\bigl(1+\mu+\frac{n^2}{2}\bigr)^{3}}\biggr)^{1/2}\biggl(\sum_{n\in I(\mu)}\norm{f_{\mu,n}}^{2}_{W^{s''+3}}\biggr)^{1/2}\\
		&= \biggl(\sum_{n\in I(\mu)}\frac{(n^{2}+1)^2}{\bigl(1+\mu+\frac{n^{2}}{2}\bigr)^{3}}\biggr)^{1/2} \norm{f_{\mu}}_{W^{s''+3}}
	\end{split}
\end{equation}
where $f_{\mu}$ is the orthogonal projection of $f$ onto the closed subspace $W^{s}(H_{\mu})$, and the infinite sum in the last expression converges (see Lemma~\ref{lem:finitespectralconstant}).

Defining
\begin{equation}
	\label{eq:spectralconstant}
	C_{\mu}= \biggl(\sum_{n\in I(\mu)}\frac{(n^{2}+1)^2}{\bigl(1+\mu+\frac{n^{2}}{2}\bigr)^{3}}\biggr)^{1/2}\;,
\end{equation}
our argument thus leads, combining~\eqref{eq:uniformboundD} and~\eqref{eq:sameCasimir}, to the estimate
\begin{equation}
	\label{eq:sumcoefficients}
	\sum_{n\in I(\mu)}\norm{D^{\pm}_{\theta,\mu,n}f_{\mu,n}}_{\infty}\leq C_{1,s''}C_{\mu} \frac{\kappa(\mu)}{\theta}\norm{f_{\mu}}_{W^{s''+3}}\;,
\end{equation}
which implies that the sums $\sum_{n\in I(\mu)}D^{\pm}_{\theta,\mu,n}f_{\mu,n}$ converge normally in the Banach space $\mathscr{C}(M)$, hence absolutely and uniformly for all $p \in M$. In particular, the functions $D^{\pm}_{\theta,\mu}f$ are well-defined, continuous on $M$ and fulfill the upper bound
\begin{equation}
	\label{eq:boundDmu}
	\norm{D^{\pm}_{\theta,\mu}f}_{\infty}\leq C_{1,s-3}C_{\mu} \frac{\kappa(\mu)}{\theta}\norm{f_{\mu}}_{W^{s}},
\end{equation}
obtained by picking $s''=s-3$ in~\eqref{eq:sumcoefficients}. 

We now consider sums over all positive Casimir eigenvalues. We have
\begin{equation*}
	\sum_{\mu\in \text{Spec}(\square),\;\mu>1/4} |D^{\pm}_{\theta,\mu}f(p)|\leq \sum_{\mu\in \text{Spec}(\square),\;\mu>1/4} \norm{D^{\pm}_{\theta,\mu}f}_{\infty}\leq \frac{C_{1,s-3}}{\theta}\sum_{\mu \in \text{Spec}(\square),\;\mu>1/4}C_{\mu}\kappa(\mu)\norm{f_{\mu}}_{W^{s}}\;.
\end{equation*}
Observe that $\kappa(\mu)$ is uniformly bounded by a constant $C_{\text{Spec},\text{pos}}$ depending only on the infimum of the set $ \text{Spec}(\square)\cap (1/4,\infty)$ (see~\eqref{eq:kappamu}); recalling the definition of $C_{\mu}$ in~\eqref{eq:spectralconstant}, we apply once again Cauchy-Schwartz's inequality and Parseval's identity to infer
\begin{equation}
	\label{eq:plustwo-three}
	\sum_{\mu\in \text{Spec}(\square),\;\mu>1/4} \norm{D^{\pm}_{\theta,\mu}f}_{\infty}\leq C_{\text{Spec},\text{pos}}C_{1,s-3}\norm{f}_{W^{s}}\biggl(\sum_{\mu \in \text{Spec}(\square),\;\mu>1/4}\;\sum_{n\in I(\mu)}\frac{(n^2+1)^2}{\bigl(1+\mu+\frac{n^{2}}{2}\bigr)^{3}}\biggr)^{1/2}\;,
\end{equation}
where the term between parentheses on the right-hand side is finite because of Lemma~\ref{lem:finitespectralconstant}. 

Since the spectrum of the Casimir operator is discrete, there are only finitely many distinct eigenvalues in the interval $(0,1/4)$, so that the series 
\begin{equation*}
	\sum_{\mu\in \text{Spec}(\square),\;\mu>1/4}\norm{D^{\pm}_{\theta,\mu}f}_{\infty}
\end{equation*}
involves only finitely many additional terms with respect to~\eqref{eq:plustwo-three}; each of those terms can be bounded with the help of~\eqref{eq:boundDmu}. The claim in~\eqref{eq:boundDthetamu} follows, by defining the constant $C'_{\text{Spec}}$ appropriately in terms of $C_{\text{Spec},\text{pos}}$ and of the $C_{\mu}$ for $0<\mu<1/4$.

\medskip
In order to finalize the proof of Theorem~\ref{thm:mainexpandingtranslates}, we address now the remainder terms defined in~\eqref{eq:remainderpos},\linebreak
\eqref{eq:remaindernull}, ~\eqref{eq:remainderneg} and~\eqref{eq:remainder}.

We start with the term in~\eqref{eq:remainderpos} stemming from positive Casimir eigenvalues. Define $\mu_{\text{princ}}$ to be the infimum of $\text{Spec}(\square)\cap (1/4,\infty)$, and let  $\nu_{\text{princ}}$ be the corresponding parameter fulfilling $1-\nu_{\text{princ}}^{2}=4\mu_{\text{princ}}$. Using the bounds for the remainder terms $\mathcal{R}_{\theta,\mu,n}f_{\mu,n}$ corresponding to the single components $f_{\mu,n}$, provided by Theorem~\ref{thm:case_Theta_eigenfn}, we estimate


\begin{equation}
	\label{eq:estimateremainder}
	\begin{split}
		&\sum_{\mu \in \text{Spec}(\square),\;\mu>1/4}\;\sum_{n\in I(\mu)}|\mathcal{R}_{\theta,\mu,n}f_{\mu,n}(p,t)|\\
		&\leq \sum_{\mu \in \text{Spec}(\square),\;\mu>1/4}\frac{8\kappa_0}{\theta\Im{\nu}}e^{-t}\sum_{n\in I(\mu)}(n^{2}+1)\norm{f_{\mu,n}}_{\cC^{1}}\\
		&\leq  \frac{8\kappa_0C_{1,s-3}}{\theta}e^{-t} \sum_{\mu \in \text{Spec}(\square),\;\mu>1/4}\frac{1}{\Im{\nu}}\sum_{n\in I(\mu)}(n^{2}+1)\norm{f_{\mu,n}}_{W^{s-3}}\\
		&\leq  \frac{8\kappa_0C_{1,s-3}}{\theta\; \Im{\nu_{\text{princ}}}}e^{-t} \sum_{\mu \in \text{Spec}(\square),\;\mu>1/4}\biggl(\sum_{n\in I(\mu)}\frac{(n^{2}+1)^2}{\bigl(1+\mu+\frac{n^{2}}{2}\bigr)^{3}}\biggr)^{1/2}\biggl(\sum_{n\in I(\mu)}\norm{f_{\mu,n}}_{W^{s}}^{2}\biggr)^{1/2}\\
		&\leq  \frac{8\kappa_0C_{1,s-3}}{\theta\; \Im{\nu_{\text{princ}}}}e^{-t} \biggl(\sum_{\mu \in \text{Spec}(\square),\;\mu>1/4}\sum_{n\in I(\mu)}\frac{(n^{2}+1)^2}{\bigl(1+\mu+\frac{n^{2}}{2}\bigr)^{3}}\biggr)^{1/2}\biggl(\sum_{\mu \in \text{Spec}(\square),\;\mu>1/4}\sum_{n\in I(\mu)}\norm{f_{\mu,n}}_{W^{s}}^{2}\biggr)^{1/2}\\
		&\leq \frac{8\kappa_0C_{1,s-3}C_{\text{Spec},3}}{\theta}\frac{1}{\Im{\nu_{\text{princ}}}}\norm{f}_{W^{s}}e^{-t}
	\end{split}
\end{equation}
for any $t\geq 1$, applying the bound in~\eqref{eq:estremainderabovequarter}, Theorem~\ref{thm:Sobolevembedding}, the Cauchy-Schwartz's inequality (twice) and Bessel's inequality (cf.~\cite[2,~XXIII,~6;~14]{Schwartz}) to $W^{s}(M)$. 
Similarly, the bounds in~\eqref{eq:estquarter} and~\eqref{eq:estremainderbelowquarter} yield, respectively,
\begin{equation*}
	\sum_{n\in I(1/4)}|\mathcal{R}_{\theta,1/4,n}f_{1/4,n}(p,t)|\leq \frac{4\kappa_0 C_{1,s-3}C_{\text{Spec},3}}{\theta}\norm{f}_{W^{s}}(t+1)e^{-t}
\end{equation*}
and
\begin{equation*}
	\sum_{\mu \in \text{Spec}(\square),\;0<\mu<1/4}\;\sum_{n\in I(\mu)}|\mathcal{R}_{\theta,\mu,n}f_{\mu,n}(p,t)|\leq \frac{4\kappa_0 C_{1,s-3}C_{\text{Spec},3}C_{\text{comp}}}{\theta}\norm{f}_{W^{s}}e^{-t}
\end{equation*}
for any $t\geq 1$, where we set 
\begin{equation*}
	C_{\text{comp}}=\sum_{\mu \in \text{Spec}(\square),\;0<\mu<1/4}\frac{1}{\nu(1-\nu)(1+\nu)}\;.\end{equation*}
Defining thus
\begin{equation}
	\label{eq:Cpos}
	C_{\text{pos}}=\frac{2}{\Im{\nu_{\text{princ}}}}+C_{\text{comp}}+1
\end{equation}
and applying the triangular inequality for infinite sums, we get from~\eqref{eq:remainderpos} that
\begin{equation}
	\label{eq:estimateposremainder}
	|\mathcal{R}_{\theta,\text{pos}}f(p,t)|\leq \frac{4\kappa_0C_{1,s-3}C_{\text{Spec},3}C_{\text{pos}}}{\theta}\norm{f}_{W^{s}}(t+1)e^{-t}\;, \quad t\geq 1.
\end{equation}
An entirely analogous argument, using the bound in~\eqref{eq:remainderzero}, shows that 
\begin{equation}
	\label{eq:estimatezeroremainder}
	\biggl|\sum_{n\in I(0)}\mathcal{R}_{\theta,0,n}f_{0,n}(p,t)\biggr|\leq\frac{ (8e\pi+\kappa_0)C_{1,s-3}C_{\text{Spec},3}}{\theta}\norm{f}_{W^{s}}e^{-t}\;,\quad t\geq 1.
\end{equation}
Define now 
\begin{equation*}
	C_{\text{disc}}=\biggl(\inf\limits_{\mu \in \text{Spec}(\square),\;\mu<0}|\mu|\biggr)^{-1}+\frac{2e\pi}{\kappa_0}\sup_{\mu \in \text{Spec}(\square),\;\mu<0}\frac{3+\nu}{\nu}\;.
\end{equation*}
Then, the bound in~\eqref{eq:estimatediscreteseries} leads to
\begin{equation*}
	\begin{split}
		&\sum_{\mu\in \text{Spec}(\square),\;\mu<0}\;\sum_{n\in I(\mu)}\frac{1}{\theta}\biggl|\int_0^{\theta}f_{\mu,n}\circ \phi^{X}_{t}\circ r_{s}(p)\;\text{d}s\biggr| \\
		&\leq \sum_{\mu \in \text{Spec}(\square),\;\mu<0}\frac{C_{1,s-3}}{\theta}\biggl(\frac{4\kappa_0}{(\nu-1)(\nu+1)}+\frac{2e\pi(3+\nu)}{\nu}\biggr)e^{-t}\sum_{n\in I(\mu)}(n^{2}+1)\norm{f_{\mu,n}}_{W^{s-3}}\\
		&\leq \frac{\kappa_0C_{1,s-3}C_{\text{Spec},3}C_{\text{disc}}}{\theta}\norm{f}_{W^{s}}e^{-t}
	\end{split}
\end{equation*}
for any $t\geq 1$, arguing as in~\eqref{eq:estimateremainder}. It follows at once from~\eqref{eq:remainderneg} that
\begin{equation}
	\label{eq:estimatenegremainder}
	|\mathcal{R}_{\theta,\text{d}}f(p,t)|\leq \frac{\kappa_0C_{1,s-3}C_{\text{Spec},3}C_{\text{disc}}}{\theta}\norm{f}_{W^{s}}e^{-t}
\end{equation}
for any $t\geq 1$.
We finally come to the estimate of the term $\cM_{\theta,0}f(p,t)$, defined in~\eqref{eq:remaindernull}.
The inequality in~\eqref{eq:Gtheta} gives
\begin{equation*}
	\begin{split}
		\sum_{n\in I(0)}\biggl|\int_1^{t}-G_{\theta,n}f_{0,n}(p,\xi)\;\text{d}\xi\biggr|&\leq \frac{\kappa_0}{\theta}\sum_{n\in I(0)}(n^{2}+1)\int_1^{t}\norm{f_{0,n}}_{\mathscr{C}^{1}}\;\text{d}\xi=\frac{\kappa_0}{\theta}(t-1)\biggl(\sum_{n\in I(0)}(n^{2}+1)\norm{f_{0,n}}_{\mathscr{C}^{1}}\bigg)\\
		&\leq \frac{\kappa_0C_{1,s-3}}{\theta}(t-1)\biggl(\sum_{n\in I(0)}(n^{2}+1)\norm{f_{0,n}}_{W^{s}}\biggr)\;,
	\end{split}
\end{equation*}
so that 
\begin{equation}
	\label{eq:estimatezeromain}
	|\mathcal{M}_{\theta,0}f(p,t)|\leq \frac{\kappa_0C_{1,s-3}C_{\text{Spec},3}}{\theta}\norm{f}_{W^{s}}(t-1)
\end{equation}
for any $t\geq 1$.
Recalling~\eqref{eq:remainder} and combining the estimates in~\eqref{eq:estimateposremainder},~\eqref{eq:estimatezeroremainder},~\eqref{eq:estimatenegremainder} and~\eqref{eq:estimatezeromain} we conclude that, for any $t\geq 1$,
\begin{equation*}
	\begin{split}
		|\mathcal{R}_{\theta}f(p,t)|&\leq |\mathcal{R}_{\theta,\text{pos}}f(p,t)|+e^{-t}|\mathcal{M}_{\theta,0}f(p,t)|+\sum_{n\in I(0)}|\mathcal{R}_{0,\theta,n}f_{0,n}(p,t)|+|\mathcal{R}_{\theta,\text{d}}f(p,t)|\\
		&\leq \frac{C_{1,s-3}C_{\text{Spec}}}{\theta}\norm{f}_{W^s}(t+1)e^{-t}\;,
	\end{split}
\end{equation*}
where we set
\begin{equation*}
	C_{\text{Spec}}=C_{\text{Spec},3}\bigl(8e\pi+\kappa_0(2+C_{\text{pos}}+C_{\text{disc}})\bigr)\;,
\end{equation*}
which ostensibly depends only on the spectrum of the Casimir operator.

The proof of Theorem~\ref{thm:mainexpandingtranslates} is complete.

\subsubsection*{Effective equidistribution and shrinking circle arcs}
In this paragraph we briefly comment on the proof of Corollaries~\ref{cor:effective} and~\ref{cor:shrinkingarcs}.

As to Corollary~\ref{cor:effective}, it suffices to define the function $D^{\text{main}}_{\theta}f\colon M\times \R_{\geq 0}\to \C$ as follows:
\begin{itemize}
	\item $ D^{\text{main}}_{\theta}f=D^{-}_{\theta,\mu_*}f$
	if the spectral gap $\mu_{*}\leq 1/4$;
	\item 
	$D^{\text{main}}_{\theta}f=D^{+}_{\theta,\mu_*}f+D^{-}_{\theta,\mu_*}f$ if $\mu_*>1/4$.
	
\end{itemize}

The effective equidistribution statement in~\eqref{eq:effective} then follows directly from the asymptotics in~\eqref{eq:asymptoticgeneral}.

Now suppose that we let the boundaries of the parametrization depend on the time $t$, so as to deal with a collection of time-varying subarcs
\begin{equation*}
	\gamma_t=\{\phi^{X}_{t}\circ r_s(p):\theta_1(t)\leq s\leq  \theta_2(t) \} 
\end{equation*} 
as in the statement of Corollary~\ref{cor:shrinkingarcs}. If, as in the assumptions to the latter, we suppose that $\theta_2(t)-\theta_1(t)\geq \eta(t)e^{-\frac{1-\Re{\nu_*}}{2}t}$ for every sufficiently large $t$, where $\nu_*$ corresponds to the spectral gap $\mu_*$ and $\eta\colon \R_{>0}\to \R_{>0}$ satisfies $\eta(t)\to\infty$ as $t\to\infty$, then we obtain
\begin{equation}
	\label{eq:varyingarc}
	\begin{split}
		\frac{1}{\theta_2(t)-\theta_1(t)}&\int_{\theta_1(t)}^{\theta_2(t)}f\circ \phi_t^{X}\circ r_{s}(p)\;\text{d}s = \frac{1}{\theta_2(t)-\theta_1(t)}\int_{0}^{\theta_2(t)-\theta_1(t)}f\circ \phi^{X}_t\circ r_{s+\theta_1(t)}(p)\;\text{d}s\\
		&=\int_{M}f\;\text{d}\vol+D^{\text{main}}_{\theta_2(t) - \theta_1(t)}f\left( r_{\theta_1(t)}(p)\right)t^{\eps_0}e^{-\frac{1-\Re{\nu_*}}{2}t}+o(e^{-\frac{1-\Re{\nu_*}}{2}t})\;\;.
	\end{split}
\end{equation}
The bound \eqref{eq:boundDthetamain} results into 
\begin{equation*}
	|D^{\text{main}}_{\theta_2(t) - \theta_1(t)}f\left( r_{\theta_1(t)}(p)\right)|\leq C_{1,s-3}C_{\text{Spec}}'\norm{f}_{W^{s}}\biggl(\frac{1}{\theta_2(t)-\theta_1(t)}\biggr)\leq  C_{1,s-3}C'_{\text{Spec}}\norm{f}_{W^s}  \eta(t)^{-1}e^{\frac{1-\Re{\nu_*}}{2}t}
\end{equation*}
for any $t\geq t_0$.  We deduce that the right-hand side of~\eqref{eq:varyingarc} is equal to $\int_{M}f\;\text{d}\vol+o(t)$ as $t$ tends to infinity. An elementary application of the Stone-Weierstrass' theorem (cf.~\cite[Thm.~4.51]{Folland}) gives that smooth functions are dense in the space of continuous functions on the compact manifold $M$; it follows that the convergence
\begin{equation*}
	\frac{1}{\theta_2(t)-\theta_1(t)}\int_{\theta_1(t)}^{\theta_2(t)}f\circ \phi_t^{X}\circ r_{s}(p)\;\text{d}s\overset{t\to\infty}{\longrightarrow}\int_{M}f\;\text{d}\vol
\end{equation*}
can be upgraded to hold for every $f\in \mathscr{C}(M)$, whereby the desired equidistribution is shown.

\subsubsection*{Equidistribution of circle arcs on the surface}
We conclude this subsection with a few comments concerning the statement of Theorem~\ref{thm:expandingonsurface}, which is nothing but a specialization of Theorem~\ref{thm:mainexpandingtranslates} to the case of observables defined on the underlying surface $S=\Ga\bsl \Hyp$, except for the lower regularity assumed on the test function $f$. First, we remark that $\SO_2(\R)$-invariance of the functions $D^{\pm}_{4\pi,\mu}f$ and $\mathcal{R}_{4\pi}f$ follows at once from their definition (see~\eqref{eq:defDthetamu} and~\eqref{eq:remainder}) and the fact that $f$ is assumed to be $\SO_2(\R)$-invariant. We are only left to show that we might take $s>9/2$, less restrictively in comparison to an arbitrary $f$
defined on $M$. The relevant observation here is that, for any $\SO_2(\R)$-invariant function $f\in L^{2}(M)$, the components $f_{\mu}$ appearing in the decomposition\footnote{Recall from Section~\ref{sec:unitaryrepresentations} that the Casimir operator $\square$ acts as the Laplace-Beltrami operator $\Delta_S$ on $\SO_2(\R)$-invariant functions.}
\begin{equation*}
	f=\sum_{\mu \in \text{Spec}(\Delta_S)}f_{\mu}\;,\quad f_{\mu}\in H_{\mu}
\end{equation*} 
are invariant under $\SO_2(\R)$, that is, they satisfy $\Theta f_{\mu}=0$. The estimate in~\eqref{eq:plustwo-three} thus only requires $s>9/2=11/2-1$, as the sum
\begin{equation*}
	\sum_{\mu \in \text{Spec}(\Delta_S)}\frac{1}{(1+\mu)^{k}}
\end{equation*}
converges already for $k=2$, and not only for $k=3$ as it is the case in~\eqref{eq:plustwo-three}.

\subsection{Equidistribution of arbitrary translates}
\label{sec:arbitrarytranslates}

In light of Theorem~\ref{thm:mainexpandingtranslates}, Theorem~\ref{thm:mainarbitrarytranslates} is a rather straightforward consequence of the classical Cartan decomposition for the semisimple Lie group $\SL_2(\R)$, for which the reader is referred to~\cite[Chap.~VI]{Knapp}. We present the details of the argument in this subsection. 

Let $A=\{\exp{tX}:t\in \R\}$ be the subgroup of $\SL_2(\R)$ consisting of diagonal matrices with positive entries (recall that $X$ is defined as in~\eqref{eq:geodesicflow}). The product map
\begin{equation*}
	\SO_2(\R)\times A\times \SO_2(\R)\to \SL_2(\R),\;(k_1,a,k_2)\mapsto k_1ak_2
\end{equation*} 
is surjective. For any $g\in \SL_2(\R)$, choose a decomposition $g=k_1(g)a(g)k_2(g)$, where $a(g)$ is the diagonal matrix having as entries the singular values of the matrix $g$, in decreasing order. In particular, if $t(g)\in \R_{\geq 0}$ is (uniquely) determined by the condition
\begin{equation}
	\label{eq:Cartanprojection}
	a(g)=
	\begin{pmatrix}
		e^{t(g)/2}&0\\
		0&e^{-t(g)/2}
	\end{pmatrix}
	,
\end{equation}
then it clearly holds that
\begin{equation}
	\label{eq:singvaluenorm} \norm{g}_{\text{op}}=e^{t(g)/2}\;, \quad \text{ or equivalently} \quad  t(g)=2\log{\norm{g}_{\text{op}}}\;.
\end{equation}

Fix now  a real number $s>11/2$ and a function $f$ in the Sobolev space $W^{s}(M)$. Recall that, for any $p \in M$, we indicate with $m_{\SO_2(\R)\cdot p}$ the unique $\SO_2(\R)$-invariant measure supported on the compact orbit $\SO_2(\R)\cdot p$; furthermore, $g_{*}\SO_2(\R)$ denotes the push-forward of the latter measure under the right translation map $R_{g}(\Ga g')=\Ga g'g$ on $M$. For any $p \in M$ and $g\in \SL_2(\R)$, we resort to the Cartan decomposition of $g$ and write
\begin{equation}
	\label{eq:rotationinvariance}
	\begin{split}
		\int_{M}f\;\text{d}g_*m_{\SO_2(\R)\cdot p}&=\int_{M}f\circ R_{g}\;\text{d}m_{\SO_2(\R)\cdot p}=\int_{M}f\circ R_{k_2(g)}\circ R_{a(g)}\circ R_{k_1(g)}\;\text{d}m_{\SO_2(\R)\cdot p}\\
		&=\int_{M}f\circ R_{k_2(g)}\circ R_{a(g)}\;\text{d}m_{\SO_2(\R)\cdot p}=\frac{1}{4\pi}\int_0^{4\pi}\bigl(f\circ R_{k_2(g)}\bigr)\circ \phi^{X}_{t(g)}\circ r_s(p)\;\text{d}s\;,
	\end{split}
\end{equation}
using the $R_{k_1(g)}$-invariance of $m_{\SO_2(\R)\cdot p}$ and the fact that $R_{a(g)}=\phi_{t(g)}^{X}$ in view of~\eqref{eq:Cartanprojection}. 

We may now make use of the asymptotic expansion provided by Theorem~\ref{thm:mainexpandingtranslates} for the function $f\circ R_{k_2(g)}$, which lies in the same Sobolev space $W^{s}(M)$ of $f$ since $R_{k_2(g)}$ is a smooth diffeomorphism of $M$. We thereby obtain, for a fixed base point $p \in M$,
\begin{equation}
	\label{eq:variabletranslates}
	\begin{split}
		&\frac{1}{4\pi}\int_0^{4\pi}\bigl(f\circ R_{k_2(g)}\bigr)\circ \phi^{X}_{t(g)}\circ r_s(p)\;\text{d}s=\int_{M}f\circ R_{k_2(g)}\;\text{d}\vol\\
		&+e^{-\frac{t(g)}{2}}\biggl( \sum_{\mu\in \text{Spec}(\square),\;\mu>1/4}\cos{\biggl(\frac{\Im{\nu}}{2}t(g)\biggr)} D^{+}_{4\pi,\mu}(f\circ R_{k_2(g)})(p)+\sin{\biggl(\frac{\Im{\nu}}{2}t(g)\biggr)} D^{-}_{4\pi,\mu}(f\circ R_{k_2(g)})(p)\biggr)\\
		&+\sum_{\mu\in \text{Spec}(\square),\;0<\mu<1/4}e^{-\frac{1+\nu}{2}t(g)}D^{+}_{4\pi,\mu}(f\circ R_{k_2(g)})(p)+e^{-\frac{1-\nu}{2}t(g)}D^{-}_{4\pi,\mu}(f\circ R_{k_2(g)})(p)\\
		&+\varepsilon_0\bigl(e^{-\frac{t(g)}{2}}D^{+}_{4\pi,1/4}(f\circ R_{k_2(g)})(p)+t(g)e^{-\frac{t(g)}{2}}D^{-}_{4\pi.1/4}(f\circ R_{k_2}(g))(p)\bigr)+\mathcal{R}_{4\pi}(f\circ R_{k_2(g)})(p,t(g))
	\end{split}
\end{equation}
for any $g\in \SL_2(\R)$ with $\norm{g}_{\text{op}}\geq \sqrt{e}$.
Define now, for any Casimir eigenvalue $\mu \in \R_{>0}$, the functions $D^{\pm}_{\mu}\colon M\times \SL_2(\R)\to \C$ by 
\begin{equation}
	\label{eq:differentcoefficients}
	D^{\pm}_{\mu}f(p,g)=D^{\pm}_{4\pi,\mu}(f\circ R_{k_2(g)})(p)\;,\quad p \in M,\;g\in \SL_2(\R),
\end{equation}
and set also
\begin{equation*}
	\mathcal{R}f(p,g)=\mathcal{R}_{4\pi}(f\circ R_{k_2}(g))(p,t(g))\;,\quad p \in M,\;g\in \SL_2(\R).
\end{equation*}

\smallskip
Then, combining~\eqref{eq:rotationinvariance} and~\eqref{eq:variabletranslates} and recalling~\eqref{eq:singvaluenorm} together with the fact that\\ $\int_{M}f\circ R_{k_2}\;\text{d}\vol=\int_{M}f\;\text{d}\vol$, we deduce
\begin{equation*}
	\begin{split}
		\int_{M}f\;\text{d}&g_{*}m_{\SO_2(\R)\cdot p}=\int_{M}f\;\text{d}\vol\\
		&+\sum_{\mu\in \text{Spec}(\square),\;\mu>1/4}\norm{g}^{-1}_{\text{op}}\bigl(\cos{(\Im{\nu}\log{\norm{g}_{\text{op}}})}D^{+}_{\mu}f(p,g)+\sin{(\Im{\nu}\log{\norm{g}_{\text{op}}})}D^{-}_{\mu}f(p,g)\bigr)\\
		&+\sum_{\mu\in \text{Spec}(\square),\;0<\mu<1/4}\norm{g}_{\text{op}}^{-(1+\nu)}D^{+}_{\mu}f(p,g)+\norm{g}_{\text{op}}^{-(1-\nu)}D^{-}_{\mu}f(p,g)\\
		&+\varepsilon_0\bigl(\norm{g}_{\text{op}}^{-1}D^{+}_{1/4}f(p,g)+2\norm{g}_{\text{op}}^{-1}\log{\norm{g}_{\text{op}}}D^{-}_{1/4}f(p,g)\bigr)+\mathcal{R}f(p,g)
	\end{split}
\end{equation*}
for any $g\in \SL_2(\R)$ with $\norm{g}_{\text{op}}\geq \sqrt{e}$,
which is precisely the asymptotic expansion appearing in the statement of Theorem~\ref{thm:mainarbitrarytranslates}.

As stated in Theorem~\ref{thm:mainexpandingtranslates}, the functions $D^{\pm}_{4\pi,\mu}(f\circ R_{k_2(g)})$ are continuous on $M$ for any fixed $\mu \in \text{Spec}(\square)\cap \R_{>0}$ and $g\in \SL_2(\R)$; equivalently, by~\eqref{eq:differentcoefficients}, $D^{\pm}_{\mu}f(\cdot,g)$ is continuous on $M$ for any fixed $g\in \SL_2(\R)$.

Also, for any $p \in M, \;g\in \SL_2(\R)$ and $\mu \in \text{Spec}(\square)\cap \R_{>0}$, we have 
\begin{equation*}
	|D^{\pm}_{\mu}f(p,g)|=|D^{\pm}_{4\pi,\mu}(f\circ R_{k_2(g)})(p)|\leq \norm{D^{\pm}_{4\pi,\mu}(f\circ R_{k_2(g)})}_{\infty}\leq \frac{C_{1,s-3}C_{\mu}\kappa(\mu)}{4\pi}\norm{f\circ R_{k_2(g)}}_{W^{s}}\;,
\end{equation*} 
where the last inequality is given by~\eqref{eq:boundDthetamu}. It remains to observe that compactness of $\SO_2(\R)$ implies that there exists a constant $C_{s,\text{rot}}>0$ such that $\norm{f\circ R_{k}}_{W^{s}}\leq C_{s,\text{rot}}\norm{f}_{W^s}$ for any $k\in \SO_2(\R)$. The proof of this assertion runs along the same lines of the proof of Lemma~\ref{lem:diffunderint}, with the appropriate modifications. Therefore, we get
\begin{equation*}
	\sum_{\mu \in \text{Spec}(\square)\cap \R_{>0}}\sup_{p \in M, \;g\in \SL_2(\R)}|D^{\pm}_{\mu}f(p,g)|\leq \frac{C_{1,s-3}C_{s,\text{rot}}C'_{\text{Spec}}}{4\pi}\norm{f}_{W^{s}}\;,
\end{equation*} 
where $C'_{\text{Spec}}$ is as in Theorem~\ref{thm:mainexpandingtranslates}.

To conclude the proof of Theorem~\ref{thm:mainarbitrarytranslates}, it is left to take care of the remainder term $\mathcal{R}f$. We easily estimate, from~\eqref{eq:globalremainderestimate},
\begin{equation*}
	\begin{split}
		|\mathcal{R}f(p,g)|&=|\mathcal{R}_{4\pi}(f\circ R_{k_2}(g))(p,t(g))|\leq \frac{C_{\text{Spec}}C_{1,s-3}}{4\pi}\norm{f\circ R_{k_2(g)}}_{W^{s}}(t(g)+1)e^{-t(g)}\\
		&\leq \frac{C_{\text{Spec}}C_{1,s-3}C_{s,\text{rot}}}{4\pi}\norm{f}_{W^{s}}(2\log{\norm{g}_{\text{op}}}+1)\norm{g}_{\text{op}}^{-2}
	\end{split}
\end{equation*}
for any $p\in M$ and $g\in \SL_2(\R)$ with $\norm{g}_{\text{op}}\geq \sqrt{e}$.

This achieves the proof of Theorem~\ref{thm:mainarbitrarytranslates}.

\section{Distributional limit theorems for deviations from the average}
\label{sec:CLT}

The purpose of this section is threefold, articulated in three subsections. First, we establish the quantitative distributional convergence claimed in Proposition~\ref{prop:CLT}, from which the qualitative statements in Theorem~\ref{thm:CLT} follow directly; secondly, we prove absence of a central limit theorem as phrased in Theorem~\ref{thm:noCLT}, and finally we explore further ways of examining the statistical behaviour of averages along circle arcs. 

\subsection{Quantitative distributional convergence}
\label{sec:spatialDLT}
Let us fix the length parameter $\theta\in (0,4\pi]$, and consider a real-valued function $f$ lying in the Sobolev space $W^{s}(M)$ for some real $s>11/2$. We are interested in the statistical behaviour of the deviations from the mean 
\begin{equation*}
	d_f(T,p)=\frac{1}{\theta}\int_{0}^{\theta}f\circ \phi^{X}_T\circ r_{s}(p)\;\text{d}s -\int_{M}f\;\text{d}\vol
\end{equation*}
appropriately renormalized, as the time parameter $T$ tends to infinity and when the base point $p$ is sampled according to the uniform probability measure $\vol$ on $M$.
Define
\begin{equation*}
	\mu_f=\inf\{\mu \in \text{Spec}(\square)\cap \R_{>0}:D^{-}_{\theta,\mu}f\text { does not vanish identically on }M  \}\;.
\end{equation*}
As in the hypotheses of Proposition~\ref{prop:CLT}, we assume that $\mu_f$ is finite, that is, the set of Casimir eigenvalues over which the previous infimum is taken is non-empty. Let $\nu_f$ be the corresponding parameter, namely $\nu_f\in \R_{\geq 0}\cup i \R_{>0}$ satisfies $1-\nu_f^{2}=4\mu_f$.

In order to quantify the rate of distributional convergence of the random variables under consideration, we make use of the L\'{e}vy-Prokhorov metric $d_{LP}$ on the set $\mathscr{P}(\R)$ of Borel probability measures on $\R$. We recall that this is defined as 
\begin{equation*}
	d_{LP}(\lambda,\rho)=\inf\{\eps>0:\lambda(Y)\leq \rho(Y_{\eps})+\eps \text{ and }\rho(Y)\leq \lambda(Y_{\eps})+\eps \text{ for every Borel set } Y\subset \R \}
\end{equation*}
for any $\lambda,\rho \in \mathscr{P}(\R)$, where $Y_{\eps}$ denotes the open $\eps$-neighborhood of $Y$ with respect to the Euclidean metric on $\R$. The distance $d_{LP}$ induces the topology of weak convergence of probability measures on $\mathscr{P}(\R)$, namely the coarsest topology for which the maps 
\begin{equation*}
	\mathscr{P}(\R)\ni \lambda \mapsto \int_{\R}\varphi\;\text{d}\lambda \in \R\;, \quad \varphi\colon \R \to \R \text{ continuous and bounded}
\end{equation*}
are continuous.

In the forthcoming estimates we shall make use of the following trivial upper bound for the L\'{e}vy-Prokhorov distance between the laws of two random variables defined on the same probability space and taking on nearby values almost surely.

\begin{lem}
	\label{lem:LPnearbyvariables}
	Let $(\Omega,\cF,\mathbf{P})$ be a probability space, $\eps>0$. Suppose $X,X'\colon \Omega\to \R$ are random variables satisfying
	$|X(\omega)-X'(\omega)|< \eps$ for $\mathbf{P}$-almost every $\omega\in \Omega$. If $\lambda_{X}$ and $\lambda_{Y}$ denote the laws of $X$ and $X'$, respectively, then $d_{LP}(\lambda_{X},\lambda_{X'})\leq \eps$.
\end{lem}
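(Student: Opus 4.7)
The plan is to unwind the definition of the Lévy--Prokhorov distance directly, exploiting the almost-sure proximity of $X$ and $X'$ to transfer events from one random variable to the other with an $\varepsilon$-thickening. The whole argument is essentially a measure-theoretic one-liner, so there is no real obstacle.

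More concretely, I would fix an arbitrary Borel set $Y\subset \R$ and consider the event $\{X\in Y\}$. The key observation is that on the full-measure set $\Omega_0=\{\omega\in \Omega:|X(\omega)-X'(\omega)|<\varepsilon\}$, whenever $X(\omega)\in Y$ the point $X'(\omega)$ lies within Euclidean distance strictly less than $\varepsilon$ from $Y$, hence inside the open $\varepsilon$-neighborhood $Y_\varepsilon$. Symbolically,
\begin{equation*}
\{X\in Y\}\cap \Omega_0 \;\subset\; \{X'\in Y_\varepsilon\}\cap \Omega_0.
\end{equation*}
Since $\mathbf{P}(\Omega_0)=1$, taking $\mathbf{P}$-measures yields $\lambda_X(Y)=\mathbf{P}(X\in Y)\leq \mathbf{P}(X'\in Y_\varepsilon)=\lambda_{X'}(Y_\varepsilon)\leq \lambda_{X'}(Y_\varepsilon)+\varepsilon$. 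Interchanging the roles of $X$ and $X'$ gives the symmetric inequality $\lambda_{X'}(Y)\leq \lambda_X(Y_\varepsilon)+\varepsilon$.

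Since both inequalities hold for every Borel set $Y\subset \R$, the value $\varepsilon$ belongs to the set over which the defining infimum of $d_{LP}(\lambda_X,\lambda_{X'})$ is taken, so $d_{LP}(\lambda_X,\lambda_{X'})\leq \varepsilon$, completing the proof.
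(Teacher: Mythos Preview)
Your proof is correct and follows essentially the same approach as the paper: both arguments fix a Borel set, use the almost-sure bound $|X-X'|<\varepsilon$ to conclude that $\{X\in Y\}$ is contained (up to a null set) in $\{X'\in Y_\varepsilon\}$, pass to probabilities, and invoke symmetry.
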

\begin{proof}
	Let $A\subset \R$ be a Borel subset. The event $\{X\in A \}$ is contained in the event $\{X'\in A_{\eps} \}$, up to a $\mathbf{P}$-negligible subset, by the assumption on the distance between $X$ and $X'$. Therefore, 
	\begin{equation*}
		\lambda_{X}(A)=\mathbf{P}(X\in A)\leq \mathbf{P}(X'\in A_{\eps})=\lambda_{X'}(A_{\eps})<\lambda_{X'}(A_{\eps})+\eps\;;
	\end{equation*}
	a similar inequality holds reversing the role of $X$ and $X'$, whence $d_{LP}(\lambda_X,\lambda_{X'})\leq \eps$.
\end{proof}

We now proceed with the proof of Proposition~\ref{prop:CLT} by distinguishing the three different cases $0<\mu_f<1/4\;,\mu_f=1/4$ and $\mu_f>1/4$.

\medskip
Suppose first $0<\mu_f<1/4$. We would then like to show that the random variables
\begin{equation*}
	e^{\frac{1-\nu_f}{2}T}\;d_f(T,p)\;,\quad p\sim \vol
\end{equation*}
converge in distribution, as $T$ tends to infinity, to the random variable $D^{-}_{\theta,\mu_f}f(p)$, $p\sim \vol$. Observe that, by virtue the asymptotic expansion in~\eqref{eq:asymptoticgeneral} and the assumption on $\mu_f$, we have
\begin{equation*}
	\begin{split}
		e^{\frac{1-\nu_f}{2}T}\;d_f(T,p)-D^{-}_{\theta,\mu_f}f(p)&=\sum\limits_{\mu\in \text{Spec}(\square),\;\mu_f<\mu<1/4}e^{-\frac{\nu_f-\nu}{2}T}D^{-}_{\theta,\mu}f(p)\\
		&+e^{-\frac{\nu_f}{2}T}\biggl( \sum_{\mu\in \text{Spec}(\square),\;\mu>1/4}\cos{\biggl(\frac{\Im{\nu}}{2}T\biggr)} D^{+}_{\theta,\mu}f(p)+\sin{\biggl(\frac{\Im{\nu}}{2}T\biggr)} D^{-}_{\theta,\mu}f(p)\biggr)\\
		&+\sum_{\mu\in \text{Spec}(\square),\;0<\mu<1/4}e^{-\frac{\nu_f+\nu}{2}T}D^{+}_{\theta,\mu}f(p)\\
		&+\varepsilon_0\bigl(e^{-\frac{\nu_f}{2}T}D^{+}_{\theta,1/4}f(p)+Te^{-\frac{\nu_f}{2}T}D^{-}_{\theta.1/4}f(p)\bigr)+e^{\frac{1-\nu_f}{2}T}\mathcal{R}_{\theta}f(p,T)\;,
	\end{split}
\end{equation*}
so that, because of the uniform bound in~\eqref{eq:boundDthetamu}, we may estimate
\begin{equation*}
	\biggl|e^{\frac{1-\nu_f}{2}T}\;d_f(T,p)-D^{-}_{\theta,\mu_f}f(p)\biggr|\leq \frac{C_{1,s-3}C'_{\text{Spec}}}{\theta} \norm{f}_{W^s}Te^{-\frac{\nu_f-\Re{\nu_f^{\text{next}}}}{2}T}
\end{equation*}
for any $p \in M$ and $T\geq 1$, where $\nu_f^{\text{next}}$ is the parameter corresponding to the smallest eigenvalue $\mu_f^{\text{next}}$ of the Casimir operator exceeding\footnote{Observe that we may dispense with the additional factor $T$ in the upper bound whenever $\nu_f^{\text{next}}\in \R$.} $\mu_f$. 

By Lemma~\ref{lem:LPnearbyvariables}, and recalling the definitions of $\mathbf{P}_{\theta,f}^{\text{circ}}(T)$ and $\mathbf{P}_{\theta,f}$ introduced in Section~\ref{sec:introductionCLT}, we get
\begin{equation*}
	d_{LP}(\mathbf{P}_{\theta,f}^{\text{circ}}(T),\mathbf{P}_{\theta,f})\leq \frac{C_{1,s-3}C_{\text{Spec}}'}{\theta} \norm{f}_{W^{s}}Te^{-\eta_f T}
\end{equation*}
for $\eta_f=\frac{\nu_f-\Re{\nu_f^{\text{next}}}}{2}$.

\medskip
Similarly, if $\mu_f=1/4$, we readily obtain from~\eqref{eq:asymptoticgeneral} that
\begin{equation*}
	\begin{split}
		T^{-1}e^{\frac{T}{2}}&\;d_f(T,p)-D^{-}_{\theta,1/4}f(p)=T^{-1}\biggl(D^{+}_{\theta,1/4}f(p)+\sum_{\mu \in \text{Spec}(\square),\;\mu>1/4}\biggl(\cos{\biggl(\frac{\Im{\nu}}{2}T\biggr)} D^{+}_{\theta,\mu}f(p)\\
		&+\sin{\biggl(\frac{\Im{\nu}}{2}T\biggr)}D^{-}_{\theta,\mu}f(p)\biggr)+\sum_{\mu \in \text{Spec}(\square),\;0<\mu<1/4}e^{-\frac{\nu}{2}T}D^{+}_{\theta,\mu}f(p)+e^{\frac{T}{2}}\mathcal{R}_{\theta}f(p,t)\biggr)
	\end{split}
\end{equation*}
for any $p \in M$ and $T\geq 1$; recalling the definition of the constant $C_{\text{pos}}$ in~\eqref{eq:Cpos}, we deduce the bound
\begin{equation*}
	\bigl|T^{-1}e^{\frac{T}{2}}\;d_f(T,p)-D^{-}_{\theta,1/4}f(p)\bigr|\leq \frac{C_{1,s-3}C_{\text{pos}}}{\theta} \norm{f}_{W^{s}}T^{-1}\;,
\end{equation*}
so that, again by Lemma~\ref{lem:LPnearbyvariables},
\begin{equation*}
	d_{LP}(\mathbf{P}_{\theta,f}^{\text{circ}}(T),\mathbf{P}_{\theta,f})\leq \frac{C_{1,s-3}C_{\text{pos}}}{\theta}\norm{f}_{W^{s}}T^{-1}
\end{equation*} 
for any $T\geq 1$, as desired.

\medskip
Finally, for $\mu_f>1/4$, we have from~\eqref{eq:asymptoticgeneral} that
\begin{equation}
	\label{eq:DLTabovequarter}
	\begin{split}
		&e^{\frac{T}{2}}d_{f}(T,p)-\eps_0D^{+}_{\theta,1/4}f(p)-\sum_{\mu\in \text{Spec}(\square),\;\mu>1/4}\cos{\biggl(\frac{\Im{\nu}}{2}T\biggr)} D^{+}_{\theta,\mu}f(p)
		+\sin{\biggl(\frac{\Im{\nu}}{2}T\biggr)}D^{-}_{\theta,\mu}f(p)\\\
		&= \sum_{\mu\in \text{Spec}(\square),\;0<\mu<1/4}e^{-\frac{\nu}{2}T}D^{+}_{\theta,\mu}f(p)+e^{\frac{T}{2}}\mathcal{R}_{\theta}f(p,T)
	\end{split}
\end{equation} 
for any $p \in M$ and $T\geq 1$, from which we deduce what follows. Let $\mu_*=\inf \bigl(\text{Spec}(\square)\cap \R_{>0}\bigr)$ be the spectral gap of $S=\Ga\bsl \Hyp$ and $\nu_*$  the corresponding parameter:
\begin{itemize}
	\item if $\mu_{*}< 1/4$, then~\eqref{eq:DLTabovequarter} and~\eqref{eq:boundDthetamu} give
	\begin{equation*}
		\begin{split}
			&\biggl|e^{\frac{T}{2}}d_{f}(T,p)-\eps_0D^{+}_{\theta,1/4}f(p)-\sum_{\mu\in \text{Spec}(\square),\;\mu>1/4}\cos{\biggl(\frac{\Im{\nu}}{2}T\biggr)} D^{+}_{\theta,\mu}f(p)
			+\sin{\biggl(\frac{\Im{\nu}}{2}T\biggr)}D^{-}_{\theta,\mu}f(p)\biggr|\\
			&\leq \frac{C_{1,s-3}C_{\text{Spec}}'}{\theta}\norm{f}_{W^{s}}e^{-\frac{\nu_*}{2}T}\;,
		\end{split}
	\end{equation*}
	whence
	\begin{equation*}
		d_{LP}(\mathbf{P}_{\theta,f}^{\text{circ}}(T),\mathbf{P}_{\theta,f}(T))\leq \frac{C_{1,s-3}C_{\text{Spec}}'}{\theta}\norm{f}_{W^{s}}e^{-\frac{\nu_*}{2}T}
	\end{equation*}
	for any $T\geq 1$.
	\item if $\mu_*\geq 1/4$, then~\eqref{eq:DLTabovequarter} and~\eqref{eq:globalremainderestimate} give
	\begin{equation*}
		\begin{split}
			&\biggl|e^{\frac{T}{2}}d_{f}(T,p)-\eps_0D^{+}_{\theta,1/4}f(p)-\sum_{\mu\in \text{Spec}(\square),\;\mu>1/4}\cos{\biggl(\frac{\Im{\nu}}{2}T\biggr)} D^{+}_{\theta,\mu}f(p)
			+\sin{\biggl(\frac{\Im{\nu}}{2}T\biggr)}D^{-}_{\theta,\mu}f(p)\biggr|\\
			&\leq \frac{C_{1,s-3}C_{\text{Spec}}}{\theta}\norm{f}_{W^{s}}(T+1)e^{-\frac{T}{2}}\;;
		\end{split}
	\end{equation*}
	we deduce that
	\begin{equation*}
		d_{LP}(\mathbf{P}_{\theta,f}^{\text{circ}}(T),\mathbf{P}_{\theta,f}(T))\leq\frac{C_{1,s-3}C_{\text{Spec}}}{\theta}\norm{f}_{W^{s}}(T+1)e^{-\frac{T}{2}}
	\end{equation*} 
	for any $T\geq 1$.
\end{itemize}

This completes the proof of Proposition~\ref{prop:CLT}.

\subsection{Failure of a distributional limit theorem}
\label{sec:noCLT}
We now turn to the proof of Theorem~\ref{thm:noCLT}. Once again, we consider a fixed length parameter $\theta\in (0,4\pi]$ and a function $f\in W^{s}(M)$ for some real $s>11/2$. This time, we suppose that the coefficients 
$D^{\pm}_{\theta,\mu}f$ vanish identically on $M$ for any Casimir eigenvalue $\mu>0$. As a result, the asymptotic expansion provided in~\eqref{eq:fullexpansion} reduces to
\begin{equation}
	\label{eq:expansionCLT}
	\begin{split}
		\frac{1}{\theta}\int_{0}^{\theta}f\circ \phi_T^{X}\circ r_s(p)\;\text{d}s=&\int_{M}f\;\text{d}\vol +e^{-T}\int_1^{T}\sum_{n\in I(0)}-G_{\theta,n}f_{0,n}(p,\xi)\;\text{d}\xi
		\\
		&+\sum_{n\in I(0)}\mathcal{R}_{\theta,0,n}f_{0,n}(p,T)+\mathcal{R}_{\theta,\text{d}}f(p,T)\;,
	\end{split}
\end{equation}
for any $p \in M$ and $T\geq 1$, where $\mathcal{R}_{\theta,\text{d}}$ is defined in~\eqref{eq:remainderneg}. 

The estimates carried out in Section~\ref{sec:proofexpandingtranslates} lead to the bound
\begin{equation}
	\label{eq:remainderCLT}
	\biggl|\sum_{n\in I(0)}\mathcal{R}_{\theta,0,n}f_{0,n}(p,T)+\mathcal{R}_{\theta,\text{d}}f(p,T)\biggr|\leq \frac{(8e\pi+\kappa_0)C_{1,s-3}C_{\text{Spec},3}\sup\{1,C_{\text{disc}}\}}{\theta}\norm{f}_{W^{s}}e^{-T}\;.
\end{equation}
On the other hand, by means of~\eqref{eq:constantterm} we expand 
\begin{equation*}
	\begin{split}
		\sum_{n\in I(0)}-G_{\theta,n}f_{0,n}(p,\xi)=&\frac{2}{\theta(1-e^{-2\xi})}\sum_{n\in I(0)}(Uf_{0,n}\circ \phi_{\xi}^{X}(p)-Uf_{0,n}\circ \phi^{X}_{\xi}\circ r_{\theta}(p))\\
		&+\frac{e^{-2\xi}}{\theta(1-e^{-2\xi})^{2}}\sum_{n\in I(0)}2in(f_{0,n}\circ \phi_{\xi}^{X}(p)-f_{0,n}\circ \phi^{X}_{\xi}\circ r_{\theta}(p))\\
		&-\frac{e^{-\xi}}{\theta(1-e^{-2\xi})^{2}}\sum_{n\in I(0)}\int_0^{\theta}f_{0,n}\circ \phi^{X}_{\xi}\circ r_{s}(p)\;\text{d}s\\
		&+\frac{2e^{-\xi}}{\theta(1-e^{-2\xi})}\sum_{n\in I(0)}\int_0^{\theta}Xf_{0,n}\circ \phi^{X}_{\xi}\circ r_s(p)\;\text{d}s\;,
	\end{split}
\end{equation*}
from which
\begin{equation*}
	\sum_{n\in I(0)}-G_{\theta,n}f_{0,n}(p,\xi)=\frac{2}{\theta(1-e^{-2\xi})}(Uf_0\circ \phi^{X}_{\xi}(p)-Uf_0\circ \phi^{X}_{\xi}\circ r_{\theta}(p))+\mathcal{R}_{G}f(p,\xi)\;,
\end{equation*}
where 
\begin{equation}
	\label{eq:remaindersecondCLT}
	\begin{split}
		|\mathcal{R}_{G}f(p,\xi)|&\leq \frac{1}{(1-e^{-2})^{2}}\biggl(\norm{f}_{W^{s}}e^{-2\xi}+\norm{f}_{\infty}e^{-\xi}+2\norm{f}_{\cC^{1}}e^{-\xi}(1-e^{-2\xi})\biggr)\\
		&\leq \frac{1+C_{0,s}+2C_{1,s}}{(1-e^{-2})^{2}}\norm{f}_{W^{s}}e^{-\xi}\;,
	\end{split}
\end{equation}
using the bound $1-e^{-2\xi}\geq 1-e^{-2}$ valid for any $\xi\geq 1$.

Let now $(B_T)_{T>0}$ be a collection of positive real numbers such that $B_T\to\infty$ as $T\to\infty$. In light of~\eqref{eq:expansionCLT},~\eqref{eq:remainderCLT} and~\eqref{eq:remaindersecondCLT}, and because of the assumption on $(B_T)_{T>0}$, the distributional limits of the random variables
\begin{equation*}
	\frac{e^{T}\bigl( \frac{1}{\theta}\int_0^{\theta}f\circ\phi^{X}_{T}\circ r_{s}(p)\;\text{d}s-\int_{M}f\;\text{d}\vol\bigr)}{B_T}\;,\quad p\sim \vol,
\end{equation*}
as $T$ tends to infinity coincide with the distributional limits of the random variables
\begin{equation}
	\label{eq:sameCLT} \frac{\frac{2}{\theta}\int_1^{T}\frac{1}{1-e^{-2\xi}}(Uf_0\circ \phi^{X}_{\xi}(p)-Uf_0\circ \phi^{X}_{\xi}\circ r_{\theta}(p))\;\text{d}\xi}{B_T}\;,\quad p\sim \vol\;.
\end{equation}
When $\theta=4\pi$, we have $r_{4\pi}(p)=p$, so that the integrand in the numerator of the above expression vanishes. Therefore, the distributional limit we are seeking after equals to zero almost surely, which proves Theorem~\ref{thm:noCLT}.

\medskip
As to Remark~\ref{rmk:geodesiccoboundary}, suppose $\theta\in (0,4\pi]$ is arbitrary, and that $Uf_0$ is a coboundary for $(\phi^{X}_t)_{t\in \R}$, namely there exists a measurable function $g\colon M\to \C$ with\footnote{More accurately, this is the notion of a measurable coboundary; by the celebrated work of Livsic on the cohomological equation for Anosov flows (cf.~\cite{Livsic}), the condition is actually equivalent to the seemingly more restrictive one of $Uf_0$ being a continuous coboundary, namely of requiring the transfer function $g$ to be continuous.} finite norm
\begin{equation*}
	\norm{g}_{L^{\infty}(M,\vol)}=\inf\{\lambda \in \R_{>0}:|g(p)|\leq \lambda \text{ for $\vol$-almost every }p \in M  \}
\end{equation*}

such that, for all $T>0$, 
\begin{equation*}
	\int_0^{T}Uf_0\circ \phi^{X}_{\xi}(p)\;\text{d}\xi=g\circ \phi^{X}_T(p)-g(p) \quad \text{for $\vol$-almost every }p \in M\;.
\end{equation*}

It follows trivially that, for every $T>0$,
\begin{equation*}
	\biggl|\int_1^{T}\frac{1}{1-e^{-2\xi}}(Uf_0\circ \phi^{X}_{\xi}(p)-Uf_0\circ \phi^{X}_{\xi}\circ r_{\theta}(p))\;\text{d}\xi\biggr|\leq \frac{4}{1-e^{-2}}\norm{g}_{L^{\infty}(M,\vol)} 
\end{equation*}
for $\vol$-almost every $p \in M$.
As a result, the distributional limit as $T\to\infty$ of the random variables in~\eqref{eq:sameCLT} vanishes almost surely, since $B_T\to\infty$.

\begin{rmk}
	Slightly more generally, when $Uf_0$ is cohomologous to a constant function, namely it differs from a constant function by a coboundary, any distributional limit of the random variables in~\eqref{eq:sameCLT} is almost surely constant.
	
	Assume now $Uf_0$ is not cohomologous to a constant function (and $\theta\neq 4\pi$). The classical central limit theorem for geodesic ergodic integrals (see~\cite{Sinai} for the constant curvature case, and~\cite{Ratner} for variable negative curvature) gives that both
	\begin{equation*}
		\frac{\int_1^{T}Uf_0\circ \phi^{X}_{\xi}(p)\;\text{d}\xi-\int_{M}Uf_0\;\text{d}\vol}{\sqrt{T}}\;,\quad p\sim \vol
	\end{equation*}
	and 
	\begin{equation*}
		\frac{\int_1^{T}Uf_0\circ \phi^{X}_{\xi}\circ r_{\theta}(p)\;\text{d}\xi-\int_{M}Uf_0\;\text{d}\vol}{\sqrt{T}}\;,\quad p\sim \vol
	\end{equation*}
	converge in distribution to a non-trivial centered Gaussian random variable as $T$ tends to infinity. \textit{A priori}, the combination of these two distributional convergences doesn't provide any information on the distributional limits of the difference, which is what appears in~\eqref{eq:sameCLT} up to the constant factor $2/\theta$; it would be desirable to reach a full understanding of this limiting distributional behaviour by carefully inspecting the dependence properties of the random variables $\int_1^{T}Uf_0\circ \phi^{X}_\xi(p)\;\text{d}\xi$ and $\int_1^{T}Uf_0\circ \phi^{X}_\xi\circ r_{\theta}(p)\;\text{d}\xi$ as $p$ is sampled according to the volume measure on $M$. 
\end{rmk}

\subsection{Some reflections on temporal distributional limit theorems}
\label{sec:temporalDLT}
An upshot of the two foregoing subsections is the following consideration: examining the statistical behaviour, for large times $T$, of the (appropriately renormalized) averages 
\begin{equation*}
	\frac{1}{\theta}\int_{0}^{\theta}f\circ \phi^{X}_{T}\circ r_s(p)\;\text{d}s
\end{equation*}
by randomly sampling the base point $p$ according to the uniform measure on $M$ leads to meaningful asymptotic results if and only if\footnote{Possibly with the exception of the case examined at the end of Section~\ref{sec:noCLT}.} at least one of the coefficients $D^{\pm}_{\theta,\mu}f$ does not vanish identically on $M$. Irrespective of whether this is the case or not, it is natural to look for different sources of randomness, which might capture oscillatory behaviours more accurately. In accordance with the perspective of temporal distributional limit theorems, pioneered by Dolgopyat and Sarig~\cite{Dolgopyat-Sarig} in the context of ergodic sums and integrals, we enquire about the existence of non-trivial distributional limits for the random variables
\begin{equation*}
	\frac{e^{t}\bigl(\frac{1}{\theta}\int_0^{\theta}f\circ \phi^{X}_t\circ r_s(p)\;\text{d}s\bigr)-A_T}{B_T}\;,
\end{equation*}
where $p$ is a fixed base point in $M$, $(A_T)_{T>0}$ and $(B_T)_{T>0}$ are collections of real numbers, possibly depending on $p$, with $B_T>0$ and $B_T\to\infty$ as $T\to\infty$, and the time $t$ is chosen uniformly at random in the interval $[0,T]$. 

\begin{rmk}
	It is informative to compare this to the quest for temporal limit theorems for ergodic integrals along the orbits of a flow: see, in particular,~\cite[Def.~1.3]{Dolgopyat-Sarig}. Observe notably that the rescaling of the circle-arc average $\frac{1}{\theta}\int_0^{\theta}f\circ \phi_t^{X}\circ r_s(p)\;\text{d}s$ by a factor of $e^{t}$ (the latter being asymptotically of the same order of the length of the expanding circle arc along which the average is taken) parallels the renormalization of ergodic averages by the linear factor $t$.
\end{rmk}

Let us denote by $\mathcal{U}_{[0,T]}$ the uniform probability measure on the compact interval $[0,T]$, for any $T>0$. If there is a non-identically vanishing coefficient $D^{\pm}_{\theta,\mu}f$ for some Casimir eigenvalue $\mu>0$, then a rather straightforward adaptation of the proof of~\cite[Cor.~5.7]{Dolgopyat-Sarig} shows that, for $\vol$-almost every $p \in M$, any limiting distribution of 
\begin{equation}
	\label{eq:temporalDLT}
	\frac{e^{t}\bigl(\frac{1}{\theta}\int_0^{\theta}f\circ \phi^{X}_t\circ r_s(p)\;\text{d}s\bigr)-A_T}{B_T}\;,\quad t\sim \mathcal{U}_{[0,T]}
\end{equation}
is necessarily constant almost surely, no matter the choice of the constants $A_T$ and $B_T$.

Suppose now that the coefficients $D^{\pm}_{\theta,\mu}f$ vanish identically on $M$ for any  positive Casimir eigenvalue $\mu$. The deduction in Section~\ref{sec:noCLT} applies almost verbatim, showing  that the distributional limits of the random variables in~\eqref{eq:temporalDLT} are the same as the limits of 
\begin{equation}
	\label{eq:sametemporal}
	\frac{\frac{2}{\theta}\int_1^{t}\frac{1}{1-e^{-2\xi}}(Uf_0\circ \phi^{X}_{\xi}(p)-Uf_0\circ \phi^{X}_{\xi}\circ r_{\theta}(p))\;\text{d}\xi-A_T}{B_T}\;,\quad t\sim \mathcal{U}_{[0,T]}
\end{equation}
as $T$ tends to infinity. In the first place, this allows tu rule out the existence of any non-trivial (namely not almost surely constant) distributional limit whenever one of the following conditions is met:
\begin{itemize}
	\item[(a)] $\theta=4\pi$;
	\item[(b)] $Uf_0$ is cohomologous to a constant function for the geodesic flow.
\end{itemize} 
On the other hand, when $Uf_0$ is not cohomologous to a constant function, then the geodesic ergodic integrals $\int_1^{t}Uf_0\circ \phi^{X}_{\xi}(p)\;\text{d}\xi$ are well-approximated by Brownian trajectories. More precisely, the Almost Sure Invariance Principle (see~\cite{Strassen,Strassen-two},~\cite[Chap.~1]{Philipp-Stout} and~\cite{Denker-Phillip}) for geodesic ergodic integrals asserts that there exist an auxiliary probability space $(\Omega,\cF,\mathbf{P})$ and two continuous-time stochastic processes $(X_t)_{t\geq 0}$ and $(B_t)_{t\geq 0}$ defined on $(\Omega,\cF,\mathbf{P})$ such that the following hold:
\begin{itemize}
	\item the law of the process $(X_{t})_{t\geq 0}$ under the probability measure $\mathbf{P}$ coincides with the law of the process $\bigl(\int_0^{t}Uf_0\circ \phi^{X}_{\xi}(p)\;\text{d}\xi\bigr)_{t\geq 0}$ when $p$ is sampled according to the probability measure $\vol$;
	\item the process $(B_{t})_{t\geq 0}$ is a standard one-dimensional Brownian motion (cf.~\cite[Chap.~2]{LeGall});
	\item there exists $\sigma \in \R^{\times}$ such that, for $\mathbf{P}$-almost every $\omega \in \Omega$,
	\begin{equation}
		\label{eq:Brownian}
		|X_t(\omega)-B_{\sigma^{2}t}(\omega)|=o(\sqrt{t}) \quad \text{as $t\to\infty$}\;.
	\end{equation}
\end{itemize}
As typical Brownian trajectories are of size $\sim\sqrt{t}$ at time $t$, the approximation in~\eqref{eq:Brownian} enables to transfer classical results about the statistical behaviour of Brownian paths to analogous properties for geodesic ergodic integrals. In particular, there is no distributional limit\footnote{Actually, when $Uf_0$ has zero average over $M$, $A_T=0$ and $B_T=\sqrt{T}$,  any random variable may appear as distributional limit along an appropriate subsequence $(T_n)_{n\in \N}$ of times: see~\cite[Thm.~3.2]{Dolgopyat-Sarig}.} for 
\begin{equation}
	\label{eq:temporalfirst}
	\frac{\int_0^{t}Uf_0\circ\phi^{X}_{\xi}(p)\;\text{d}\xi-A_T}{B_T}\;,\quad t\sim\mathcal{U}_{[0,T]}
\end{equation} 
as $T$ tends to infinity (cf.~\cite[Sec.~3.1]{Dolgopyat-Sarig}). 
Since the process $\bigl(\int_0^{t}Uf_0\circ \phi^{X}_{\xi}\circ r_{\theta}(p)\;\text{d}\xi\bigr)_{t\geq 0}$ has the same law, for $p\sim \vol$, as $(X_t(\omega))_{t\geq 0}$ for $\omega\sim \mathbf{P}$, the same applies to the random variables 
\begin{equation}
	\label{eq:temporalsecond}
	\frac{\int_0^{t}Uf_0\circ\phi^{X}_{\xi}\circ r_{\theta}(p)\;\text{d}\xi-A_T}{B_T}\;,\quad t\sim\mathcal{U}_{[0,T]}\;.
\end{equation}
As already argued in Section~\ref{sec:noCLT} in the situation where the point $p$ is selected randomly and the time $T$ is fixed, here again the absence of distributional limits for each of the summands does not rule out, in principle, the possibility of non-trivial limits for the difference, hence for~\eqref{eq:sametemporal}. Once more, a painstaking analysis of the dependence features of the two processes in~\eqref{eq:temporalfirst} and~\eqref{eq:temporalsecond} might clarify the seemingly elusive pathwise behaviour of their difference.

\section{The hyperbolic lattice point counting problem}
\label{sec:latticepoint}

This final section is consecrated to the applications of our equidistribution results to lattice-point counting problems in the hyperbolic plane; specifically, we shall first prove the precise asymptotics for the averaged counting function stated in Proposition~\ref{prop:averagedcounting} and subsequently deduce Theorem~\ref{thm:countingproblem} on the error estimate for the pointwise counting.  

Let $\Ga$ be a cocompact lattice in $\SL_2(\R)$, and denote by $d_{\Hyp}$ the hyperbolic distance function on the hyperbolic upper-half plane $\Hyp$ (cf.~Section~\ref{sec:hyperbolic}). For each real number $R>0$, let $B_R$ be the closed $d_{\Hyp}$-ball of radius $R$ centered at the point $i\in \Hyp$, and define $N(R)=|\Ga\cdot i \;\cap B_R|$, the cardinality of intersection of the $\Ga$-orbit of $i$ with $B_R$.

Recall also from Section~\ref{sec:hyperbolic} that $\SL_2(\R)$ acts on $\Hyp$ by M\"{o}bius transformations.
In what follows, we identity the quotient manifold $\SL_2(\R)/\SO_2(\R)$ with $\Hyp$ whenever convenient, by means of the diffeomorphism $g\SO_2(\R)\mapsto g\cdot i,\;g\in \SL_2(\R)$. The hyperbolic area measure $m_{\Hyp}$ (namely the volume measure arising from the hyperbolic structure on $\Hyp$) is the Radon measure on $\Hyp$ with density $\text{d}m_{\Hyp}(x,y)=y^{-2}\text{d}x\text{d}y$ with respect to the induced Lebesgue measure on $\Hyp\subset \C$. 

\begin{term}
	In order not to overburden notation in the sequel, we shall denote $\SL_2(\R)$ by $G$ and $\SO_2(\R)$ by $K$.
\end{term}

\subsection{Asymptotics for the averaged counting function}
\label{sec:averagecounting}

For any subset $A\subset G/K$, we denote by $\mathds{1}_{A}$ the indicator function of the set $A$. Define a function $F_R\colon G/\Ga\to \R_{\geq 0}$
\begin{equation}
	\label{eq:defaveragedcounting}
	F_R(g\Ga )=\frac{|g\Gamma \cdot i \cap B_R|}{m_{\Hyp}(B_R)}=\frac{1}{m_{\Hyp}(B_R)}\sum_{\gamma \Ga\cap K\in \Ga/\Ga\cap K}\mathds{1}_{B_R}(g\gamma K)\;, \quad g\in G;
\end{equation}
observe that the function $F_R$ is the subject of the averaged counting result in Proposition~\ref{prop:averagedcounting}, which we now set out to prove. 

\begin{rmk}
	\label{rmk:leftrightcosets}
	We choose to deal with spaces of left cosets in the sequel; in particular, we replace the homogeneous spaces $M=\Ga\bsl G$ we have been considering so far with $G/\Ga$, identifying them via the diffeomorphism $\Ga g\mapsto g^{-1}\Ga$.
\end{rmk}

We follow the classical argument of Eskin and McMullen \cite{Eskin-McMullen}, which relies on the well-known folding-unfolding formula for invariant measures on homogeneous spaces. For the sake of completeness, we recall it in the setting of the group $G=\SL_2(\R)$, referring the reader to~\cite[Sec.~2.6]{Folland} or to~\cite[Chap.~1]{Raghunathan} for the general statements and their proofs. 

\begin{prop}
	\label{prop:foldingunfolding}
	\begin{enumerate}
		\item Let $H<G$ be a unimodular closed subgroup. Then there exists a non-zero $G$-invariant positive Radon measure on the quotient space $G/H $, which is uniquely determined up to positive real scalars. Moreover, if $\mu_{G}$ and $\mu_H$ are Haar measures\footnote{The group $G$ is perfect, hence unimodular; thus $\mu_{G}$ is also a right Haar measure.} on $G$ and $H$, respectively, there exists a unique normalization $\mu_{G/H}$ of the $G$-invariant measure on $G(H)$ such that, for any continuous compactly supported function $\varphi\colon G\to \C$, the following folding-unfolding formula holds:
		\begin{equation}
			\label{eq:foldingunfolding}
			\int_{G}\varphi\;\emph{d}\mu_{G}=\int_{G/H}\int_{H}\varphi(gh)\;\emph{d}\mu_{H}(h)\;\emph{d}\mu_{G/H}(gH)\;.
		\end{equation}
		\item Let $H<L$ be closed subgroups of $G$, and suppose $G/L$ admits a non-zero finite $G$-invariant measure $\mu_{G/L}$ and $L/H$ admits a non-zero finite $L$-invariant measure $\mu_{L/H}$. Then $G/H$ admits a non-zero finite $G$-invariant measure $\mu_{G/H}$. Moreover, if $\mu_{G/L},\;\mu_{L/H}$ and $\mu_{G/H}$ are compatibly normalized, then for any continuous compactly supported function $\varphi\colon G/H\to \C$, it holds
		\begin{equation}
			\label{eq:foldingunfoldingchain}
			\int_{G/H}\varphi\;\emph{d}\mu_{G/H}=\int_{G/L}\int_{L/H}\varphi(glH)\;\emph{d}\mu_{L/H}(lH)\;\emph{d}\mu_{G/L}(gL)\;.
		\end{equation} 
	\end{enumerate}
\end{prop}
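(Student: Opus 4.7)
The plan is to prove this via the classical Weil integration machinery, which is standard for unimodular pairs. The central device is the \emph{Weil map} $T \colon C_c(G) \to C_c(G/H)$ defined by
\begin{equation*}
	(T\varphi)(gH) = \int_H \varphi(gh)\,\textup{d}\mu_H(h)\;.
\end{equation*}
First I would check that $T\varphi$ is well-defined (the value does not depend on the representative $g$, since $H$ is unimodular and so $\mu_H$ is both left- and right-invariant) and that the map $T$ is surjective onto $C_c(G/H)$. Surjectivity follows by the standard partition-of-unity / bump-function argument: given $\psi\in C_c(G/H)$, cover the compact support of $\psi$ by images of open sets in $G$ on which the projection $G\to G/H$ admits local sections, then construct a pre-image by multiplying $\psi \circ \pi$ by an appropriate $H$-averaging function.

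For Part~(1), I would first establish existence: define a positive linear functional $\Lambda$ on $C_c(G/H)$ by $\Lambda(T\varphi) = \int_G \varphi\,\textup{d}\mu_G$. The well-definedness of $\Lambda$ on the image of $T$ (i.e., $T\varphi = 0 \Rightarrow \int_G\varphi\,\textup{d}\mu_G = 0$) is exactly where the unimodularity of both $G$ and $H$ enters: in general one gets a relation involving the ratio of modular functions $\Delta_G/\Delta_H$ restricted to $H$, which trivializes in our setting since $\SL_2(\R)$ is perfect and $H$ is assumed unimodular. Applying the Riesz--Markov representation theorem to $\Lambda$ produces a positive Radon measure $\mu_{G/H}$ satisfying~\eqref{eq:foldingunfolding} by construction, and $G$-invariance of $\mu_{G/H}$ follows from left $G$-invariance of $\mu_G$ together with surjectivity of $T$. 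For uniqueness up to positive scalars, suppose $\nu$ is any other non-zero $G$-invariant positive Radon measure on $G/H$; the pull-back functional $\varphi \mapsto \int_{G/H} T\varphi\,\textup{d}\nu$ defines a non-zero left $G$-invariant positive Radon measure on $G$, which by uniqueness of Haar measure is a positive scalar multiple of $\mu_G$, and hence $\nu$ is a positive scalar multiple of $\mu_{G/H}$ (again by surjectivity of $T$).

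For Part~(2), the strategy is to apply Part~(1) three times. Since $H<L<G$, Part~(1) yields $\mu_{L/H}$ on $L/H$ (using that $H$ is unimodular, which it is: any closed subgroup admitting a finite left-invariant measure is unimodular) and $\mu_{G/L}$ on $G/L$. Part~(1) also produces $\mu_{G/H}$ on $G/H$ directly from the pair $(G,H)$. The iterated integral on the right-hand side of~\eqref{eq:foldingunfoldingchain} defines a non-zero positive Radon measure on $G/H$ which is manifestly $G$-invariant by $G$-invariance of $\mu_{G/L}$; by the uniqueness clause of Part~(1), it must coincide with a positive scalar multiple of $\mu_{G/H}$. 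The compatibility of normalizations is then arranged so that this scalar is $1$: starting from $\mu_G$ and $\mu_H$, the formula~\eqref{eq:foldingunfolding} applied to $(G,H)$ fixes $\mu_{G/H}$; applying~\eqref{eq:foldingunfolding} first to $(L,H)$ and then to $(G,L)$ with a chosen Haar measure $\mu_L$ on $L$ expresses $\int_G \varphi\,\textup{d}\mu_G$ as the iterated integral against $\mu_{L/H}$ and $\mu_{G/L}$, and comparison yields~\eqref{eq:foldingunfoldingchain}.

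The main obstacle, and really the only substantive point, is the well-definedness of the functional $\Lambda$, i.e., proving that $T\varphi = 0$ forces $\int_G \varphi\,\textup{d}\mu_G = 0$. This is a purely measure-theoretic matter that is transparent once one reduces to functions supported in a tube $U\cdot H$ where $U\subset G$ is a small open set on which the projection is a trivial bundle, and which relies crucially on both $\mu_G$ and $\mu_H$ being bi-invariant (unimodularity). All remaining verifications — surjectivity of $T$, invariance of $\mu_{G/H}$, uniqueness, and the iterated formula — are formal consequences. Since this is a classical result, I would simply refer the reader to~\cite[Sec.~2.6]{Folland} or~\cite[Chap.~1]{Raghunathan} for the full details, as is done in the statement.
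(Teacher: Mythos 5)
The paper offers no proof of this classical proposition, deferring entirely to \cite[Sec.~2.6]{Folland} and \cite[Chap.~1]{Raghunathan}; your outline is exactly the standard Weil-formula argument presented in those references (quotient map $T$, well-definedness of the functional $\Lambda$ via unimodularity, Riesz representation, uniqueness of Haar measure, and iteration for part (2)), and it is correct. One minor point of precision: well-definedness of $T\varphi$ on cosets requires only left-invariance of $\mu_H$, while unimodularity of $H$ (together with that of $G$) is what makes $\Lambda$ well defined on the image of $T$, as you rightly identify as the substantive step.
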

By means of standard approximation arguments in measure theory, formula~\eqref{eq:foldingunfolding} (resp.~ formula~\eqref{eq:foldingunfoldingchain}) holds for any Borel-measurable function $\varphi\colon G\to \C$ (resp.~$\varphi\colon G/H\to \C$) which either takes positive real values or is integrable with respect to $\mu_{G}$ (resp.~$\mu_{G/H}$). 

\smallskip
Let now $m_{K}$ be the unique probability Haar measure on the compact group $K$, and normalize the Haar measure $m_{G}$ on $G$ so that, under the identification of $G/K$ with $\Hyp$, the resulting $G$-invariant measure on $G/K$ (cf.~Proposition~\ref{prop:foldingunfolding}) corresponds to the hyperbolic area measure $m_{\Hyp}$. Let $m_{G/\Ga}$ be the unique $G$-invariant finite Borel measure on $G$ corresponding, according to Proposition~\ref{prop:foldingunfolding}, to the given choice of Haar measure on $G$ and to the counting measure on the discrete group $\Ga$. Similarly, endowing the finite discrete group $\Ga\cap K$ with the counting measure, we indicate with $m_{G/\Ga\cap K}$, $m_{K/\Ga\cap K}$ and $m_{\Ga/\Ga\cap K}$ the induced measures on the respective homogeneous spaces. Recall also that with $m_{K\cdot \Ga}$ we indicate the unique $K$-invariant probability measure supported on the compact $K$-orbit of the identity coset $\Ga$ inside $G/\Ga$ (cf.~Theorem~\ref{thm:mainarbitrarytranslates}).

The volumes of the homogeneous spaces $G/\Ga$ and $K/\Ga\cap K$ with respect to the measures $m_{G/\Ga}$ and $m_{K/\Ga\cap K}$ are indicated with $\text{covol}_{K}(\Gamma\cap K)$ and $\text{covol}_{G}(\Ga)$, respectively.

\medskip
Fix now a real parameter $s>11/2$ and a test function $\psi\in W^{s}(G/\Ga)$. We expand, for any $R>0$,

\begin{equation}
	\label{eq:foldunfold}
	\begin{split}
		&\int_{M}\psi F_R\;\text{d}m_{G/\Ga}=\int_{M}\psi(g\Ga )\biggl(\frac{1}{m_{\Hyp}(B_R)}\sum_{\gamma \Ga\cap K\in \Ga/\Ga\cap K}\mathds{1}_{B_R}(g\gamma K)\biggr) \;\text{d}m_{G/\Ga}(g\Ga)\\
		&=\frac{1}{m_{\Hyp}(B_R)}\int_{M}\int_{\Ga/\Ga\cap K}\psi(g\Ga )\mathds{1}_{B_R}(g\gamma K)\;\text{d}m_{\Ga/\Ga\cap K}(\gamma\Gamma\cap K) \;\text{d}m_{G/\Ga}(g\Ga)\\
		&=\frac{1}{m_{\Hyp}(B_R)}\int_{G/\Gamma\cap K} \psi(g\Ga)\mathds{1}_{B_R}(gK) \text{d}m_{G/\Ga\cap K}(g\Gamma\cap K)\\
		&=\frac{1}{m_{\Hyp}(B_R)}\int_{G/K}\int_{K/\Ga\cap K}\psi(gk\Ga)\mathds{1}_{B_R}(gK)\;\text{d}m_{K/\Ga\cap K}(k\Ga\cap K)\;\text{d}m_{G/K}(gK)\\
		&=\frac{1}{m_{\Hyp}(B_R)}\int_{B_R}\int_{K/\Ga\cap K}\psi\;\text{d}g_{*}m_{K/\Ga\cap K}\;\text{d}m_{\Hyp}(gK)\\
		&=\frac{\text{covol}_{ K}(\Ga\cap K )}{m_{\Hyp}(B_R)}\int_{B_R}\int_{G/\Ga}\psi\;\text{d}g_{*}m_{K\cdot \Ga}\;\text{d}m_{\Hyp}(gK)\;.
	\end{split}
\end{equation}
In the previous chain of equalities, we applied in successive order:
\begin{itemize}
	\item[(1)] the definition~\eqref{eq:defaveragedcounting} of the function $F_R$;
	\item[(2)] the fact that the invariant measure on the discrete space $\Ga/\Ga\cap K$ given by Proposition~\ref{prop:foldingunfolding} is the counting measure;
	\item[(3)] formula~\eqref{eq:foldingunfoldingchain} to the tower of subgroups $\Ga\cap K<\Ga<G$;
	\item[(4)] formula~\eqref{eq:foldingunfoldingchain} to the tower of subgroups $\Ga\cap K<K<G$;
	\item[(5)] the identification of $\Hyp$ with $G/K$ and of $m_{\Hyp}$ with $m_{G/K}$;
	\item[(6)] the relationship $m_{K/\Ga\cap K}=\text{covol}_{K}(\Ga\cap K)m_{K\cdot \Ga}$, derived from the fact that $m_{K\cdot \Ga}$ is a probability measure, the definition of $\text{covol}_{K}(\Ga\cap K)$ and uniqueness up to scalars of the $K$-invariant measure on $K/\Ga \cap K\simeq K\cdot \Ga$.  
\end{itemize}

We may now replace the inner integral in the last expression of~\eqref{eq:foldunfold} with the asymptotic expansion provided by Theorem~\ref{thm:mainarbitrarytranslates} (with the caveat of Remark~\ref{rmk:leftrightcosets}), thereby obtaining
\begin{equation}
	\label{eq:asymptoticaverage}
	\begin{split}
		\int_{G/\Ga}\psi F_R&\;\text{d}m_{G/\Ga}=\frac{\text{covol}_{K}(\Ga\cap K)}{\text{covol}_{G}(\Ga)}\int_{G/\Ga}\psi\;\text{d}m_{G/\Ga} +\frac{\text{covol}_{K}(\Ga\cap K)}{m_{\Hyp}(B_R)}\\
		&\int_{B_R}\biggl(\sum_{\mu\in \text{Spec}(\square),\;\mu>1/4}\norm{g}^{-1}_{\text{op}}\bigl(\cos{(\Im{\nu}\log{\norm{g}_{\text{op}}})}D^{+}_{\mu}\psi(\Ga,g)+\sin{(\Im{\nu}\log{\norm{g}_{\text{op}}})}D^{-}_{\mu}\psi(\Ga,g)\bigr)\\
		&+\sum_{\mu\in \text{Spec}(\square),\;0<\mu<1/4}\norm{g}_{\text{op}}^{-(1+\nu)}D^{+}_{\mu}\psi(\Ga,g)+\norm{g}_{\text{op}}^{-(1-\nu)}D^{-}_{\mu}\psi(\Ga,g)\\
		&+\varepsilon_0\bigl(\norm{g}_{\text{op}}^{-1}D^{+}_{1/4}\psi(\Ga,g)+2\norm{g}_{\text{op}}^{-1}\log{\norm{g}_{\text{op}}}D^{-}_{1/4}\psi(\Ga,g)\bigr)+\mathcal{R}\psi(\Ga,g)\biggr) \text{d}m_{\Hyp}(gK)\;.
	\end{split}
\end{equation}

We shall need the following analogue of the classical integration formula on spheres in Euclidean spaces: for any $r>0$, let $S_r=\partial{B_r}=\{z\in \Hyp:d_{\Hyp}(z,i)=r \}$ and $\sigma_r$ the induced hyperbolic length measure on the circle $S_r$.

\begin{prop}
	\label{prop:sphereintegration}
	Let $f\colon \Hyp\to \C$ be integrable with respect to $m_{\Hyp}$. Then
	\begin{equation*}
		\int_{\Hyp}f\;\emph{d}m_{\Hyp}=\int_{0}^{\infty}\int_{S_r}f(z)\;\emph{d}\sigma_{r}(z)\;\emph{d}r
	\end{equation*}
\end{prop}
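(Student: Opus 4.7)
The plan is to derive the formula from the Cartan $KAK$ decomposition of $G=\SL_2(\R)$ combined with the folding-unfolding formula of Proposition~\ref{prop:foldingunfolding}. Recall that every $g\in G$ admits a decomposition $g=k_1 a_t k_2$ with $k_1,k_2\in K=\SO_2(\R)$ and $t\geq 0$, where $a_t=\exp(tX)$. With respect to the Haar measures normalized as in Section~\ref{sec:averagecounting}, the corresponding integration formula reads
\begin{equation*}
\int_G\varphi\;\mathrm{d}m_G=c\int_K\int_0^{\infty}\int_K \varphi(k_1 a_t k_2)\sinh(t)\;\mathrm{d}m_K(k_2)\;\mathrm{d}t\;\mathrm{d}m_K(k_1)
\end{equation*}
for every $\varphi\in \cC_c(G)$ and some positive normalization constant $c$; this is a standard fact from the harmonic analysis of $\SL_2(\R)$ (cf.~\cite[Chap.~V]{Knapp}).

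First, I would reduce to compactly supported continuous $f$ by density in $L^1(\Hyp,m_{\Hyp})$, and lift $f$ to the right-$K$-invariant function $\varphi(g)=f(g\cdot i)$ on $G$. Applying the folding-unfolding formula~\eqref{eq:foldingunfolding} to the subgroup $K<G$, together with the $KAK$-decomposition above (in which the inner $k_2$-integral collapses thanks to right-$K$-invariance of $\varphi$), yields
\begin{equation*}
\int_{\Hyp}f\;\mathrm{d}m_{\Hyp}=c\int_0^{\infty}\sinh(t)\int_K f(k\cdot(a_t\cdot i))\;\mathrm{d}m_K(k)\;\mathrm{d}t.
\end{equation*}

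Next, I would identify the inner integral with the circle average. For fixed $t>0$ the orbit $K\cdot (a_t\cdot i)=K\cdot(e^t i)$ equals the geodesic circle $S_t$, since $K$ fixes $i$, acts by hyperbolic isometries, and $d_{\Hyp}(e^t i,i)=t$. The pushforward of $c\sinh(t)\,\mathrm{d}m_K$ under $k\mapsto k\cdot(e^t i)$ is concentrated on $S_t$ and invariant under the transitive $K$-action; by uniqueness of Haar measure on the circle $S_t\simeq K/\{\pm I_2\}$, it must coincide with the arc-length measure $\sigma_t$ up to a multiplicative constant that is independent of $t$. This constant (and the value of $c$) is then pinned down by the well-known identity $m_{\Hyp}(B_R)=2\pi(\cosh R-1)$, whose derivative in $R$ equals $2\pi\sinh R=\text{length}(S_R)$, matching both sides on indicators of disks. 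Fubini--Tonelli then delivers the stated formula for general integrable $f$.

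The argument is almost entirely bookkeeping: the only minor obstacle is keeping the normalization constants consistent across the Haar measures on $G$, $K$, and the induced measures $m_{\Hyp}=m_{G/K}$ and $\sigma_t$. Alternatively, one could bypass the Lie-theoretic machinery and proceed via geodesic polar coordinates around $i$, where the hyperbolic metric takes the form $\mathrm{d}r^2+\sinh^2(r)\,\mathrm{d}\theta^2$, making both the area element $\sinh(r)\,\mathrm{d}r\,\mathrm{d}\theta$ and the circle arc-length $\sinh(r)\,\mathrm{d}\theta$ manifest, but the approach above fits more harmoniously with the homogeneous-space formalism used throughout the paper.
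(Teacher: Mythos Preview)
Your proposal is correct, but it is considerably more elaborate than what the paper does: the paper gives no proof at all, merely stating that the argument ``does not differ from the Euclidean case'' and referring to \cite[Thm.~2.49]{Folland}. The implicit approach there is precisely the geodesic polar coordinates argument you mention at the end as an alternative---writing the hyperbolic metric as $\mathrm{d}r^2+\sinh^2(r)\,\mathrm{d}\theta^2$ around $i$, so that the area element factors as arc-length times $\mathrm{d}r$, and then applying Fubini. Your main route via the $KAK$ decomposition and Proposition~\ref{prop:foldingunfolding} is a genuinely different (and heavier) path to the same endpoint; it has the virtue of staying entirely within the homogeneous-space formalism of the paper and making the normalization of $\sigma_r$ emerge from the group structure, whereas the polar-coordinate approach is shorter and more self-contained but requires knowing the explicit form of the metric in those coordinates. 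Either argument is perfectly adequate here.
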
 

The proof does not differ from the Euclidean case, for which we refer to~\cite[Thm.~2.49]{Folland}.

\medskip
Define now, for any $\mu \in \text{Spec}(\square)\cap \R_{>0}$ and $\psi$ as above,
\begin{equation*}
	\alpha^{\pm}_{\psi,\mu}(r)=\int_{S_r}D^{\pm}_{\mu}\psi(\Ga,z)\;\text{d}\sigma_r(z)\;,\quad r>0.
\end{equation*}
From~\eqref{eq:asymptoticaverage} we get, thanks to Proposition~\ref{prop:sphereintegration}, 
\begin{equation}
	\label{eq:applyingsphereintegration}
	\begin{split}
		&\int_{G/\Ga}\psi F_R\;\text{d}m_{G/\Ga}=\frac{\text{covol}_{K}(\Ga\cap K)}{\text{covol}_{G}(\Ga)}\int_{G/\Ga}\psi\;\text{d}m_{G/\Ga}+\frac{\text{covol}_{K}(\Ga\cap K)}{m_{\Hyp}(B_R)}\\
		&\biggl(\sum_{\mu\in \text{Spec}(\square),\;\mu>1/4}\int_0^{R}e^{-\frac{r}{2}}\biggl(\cos{\biggl(\frac{\Im{\nu}}{2}r\biggr)}\alpha_{\psi,\mu}^{+}(r)+\sin{\biggl(\frac{\Im{\nu}}{2}r\biggr)}\alpha^{-}_{\psi,\mu}(r)\biggr)\text{d}r\\
		&+\sum_{\mu\in \text{Spec}(\square),\;0<\mu<1/4}\int_0^{R}e^{-\frac{1+\nu}{2}r}\alpha^{+}_{\psi,\mu}(r)+e^{-\frac{1-\nu}{2}r}\alpha^{-}_{\psi,\mu}(r)\;\text{d}r\\
		&+\varepsilon_0\biggl(\int_0^{R}e^{-\frac{r}{2}}\alpha^{+}_{\psi,1/4}(r)+re^{-\frac{r}{2}}\alpha^{-}_{\psi,1/4}(r)\;\text{d}r\biggr)+\int_0^{R}\int_{S_r}\mathcal{R}\psi(\Ga,z)\;\text{d}\sigma_r(z)\;\text{d}r\biggr)\;.
	\end{split}
\end{equation}
Let $\psi_{\mu}$ be the orthogonal projection of $\psi$ onto the closed subspace $W^{s}(H_{\mu})$. By means of~\eqref{eq:boundDthetamu}, we estimate 
\begin{equation}
	\label{eq:integralbounds}
	\sum_{\mu \in \text{Spec}(\square)\cap \R_{>0}}|\alpha_{\psi,\mu}^{\pm}(r)|\leq \sum_{\mu\in \text{Spec}(\square)\cap \R_{>0}}\norm{D^{\pm}_{\mu}\psi(\Ga,\cdot )}_{\infty}\int_{S_r}\text{d}\sigma_{r}(z)\leq 2\pi \frac{C_{1,s-3}C_{\text{Spec}}'}{4\pi}\norm{\psi}_{W^{s}}\sinh{r}
\end{equation}
for any $r>0$,
as the hyperbolic length  of $S_r$ equals\footnote{This is an elementary verification in hyperbolic geometry, for instance approximating circles with regular $n$-gons; their hyperbolic perimeter can be easily computed by means of explicit formulas for the hyperbolic distance (cf.~\cite[Thm.~1.2.6]{Katok}) and of the hyperbolic cosine law (cf.~\cite[Thm.~1.5.2]{Katok}). 
	
	Similarly, the hyperbolic area of a ball is easily computed by approximation via the Gauss-Bonnet formula for the area of hyperbolic triangles (cf.~\cite[Thm.~1.4.2]{Katok}).} $2\pi\sinh{r}$. Recalling that $m_{\Hyp}(B_R)=2\pi(\cosh{R}-1)$ for any $R>0$, we deduce from~\eqref{eq:applyingsphereintegration} that
\begin{equation*}
	\begin{split}
		\int_{G/\Ga}\psi F_R\;\text{d}m_{G/\Ga}=&\frac{\text{covol}_{K}(\Ga\cap K)}{\text{covol}_{G}(\Ga)}\int_{G/\Ga}\psi\;\text{d}m_{G/\Ga}\\
		&+\text{covol}_{K}(\Ga\cap K)
		\biggl(e^{-\frac{R}{2}} \sum_{\mu\in \text{Spec}(\square),\;\mu>1/4}\beta^{+}_{\psi,\mu}(R)+\beta^{-}_{\psi,\mu}(R)\\
		&
		+\sum_{\mu\in \text{Spec}(\square),\;0<\mu<1/4}e^{-\frac{1+\nu}{2}R}\beta^{+}_{\psi,\mu}(R)+e^{-\frac{1-\nu}{2}R}\beta^{-}_{\psi,\mu}(R)\\
		&+\varepsilon_0\bigl(e^{-\frac{R}{2}}\beta^{+}_{\psi,1/4}(R)+Re^{-\frac{R}{2}}\beta^{-}_{\psi,1/4}(R)\bigr)+\gamma_{\psi}(R)\biggr)
	\end{split}
\end{equation*}
for any $R\geq 1$, where we have set
\begin{align*}
	&\beta^+_{\psi,\mu}(R)=\frac{e^{-\frac{R}{2}}}{\pi(1-2e^{-R}+e^{-2R})}\int_0^{R}e^{-\frac{r}{2}}\cos{\biggl(\frac{\Im{\nu}}{2}r\biggr)}\alpha_{\psi,\mu}^{+}(r)\;\text{d}r\;,\quad \mu>1/4\;,\\
	&\beta^-_{\psi,\mu}(R)=\frac{e^{-\frac{R}{2}}}{\pi(1-2e^{-R}+e^{-2R})}\int_0^{R}e^{-\frac{r}{2}}\sin{\biggl(\frac{\Im{\nu}}{2}r\biggr)}\alpha^{-}_{\psi,\mu}(r)\text{d}r\;,\quad\mu>1/4\;,\\
	&\beta^{\pm}_{\psi,\mu}(R)=\frac{e^{-\frac{1\mp\nu}{2}R}}{\pi(1-2e^{-R}+e^{-2R})}\int_0^{R}e^{-\frac{1\pm\nu}{2}r}\alpha^{\pm}_{\psi,\mu}(r)\;\text{d}r\;,\quad 0<\mu<1/4\;,\\
	&\beta^{+}_{\psi,1/4}(R)=\frac{e^{-\frac{R}{2}}}{\pi(1-2e^{-R}+e^{-2R})}\int_0^{R}e^{-\frac{r}{2}}\alpha^{+}_{\psi,1/4}(r)\;\text{d}r\;,\\
	&\beta^{-}_{\psi,1/4}(R)=\frac{R^{-1}e^{-\frac{R}{2}}}{\pi(1-2e^{-R}+e^{-2R})}\int_0^{R}e^{-\frac{r}{2}}\alpha^{-}_{\psi,1/4}(r)\;\text{d}r\;,\\
	&\gamma_{\psi}(R)=\frac{e^{-R}}{\pi(1-2e^{-R}+e^{-2R})}\int_0^{R}\int_{S_r}\mathcal{R}\psi(\Ga,z)\;\text{d}\sigma_r(z)\;\text{d}r
\end{align*}
. Because of~\eqref{eq:integralbounds}, we have the following estimates on the previous coefficients: for any $R\geq 1$,
\begin{align*}
	&\sum_{\mu\in \text{Spec}(\square)\cap \R_{>0}}|\beta^{\pm}_{\psi,\mu}(R)|\leq \frac{5C_{1,s-3}C'_{\text{Spec}}}{2\pi}\norm{\psi}_{W^{s}}\;,\\
	& |\gamma_{\psi}(R)|\leq \frac{5C_{1,s-3} C_{\text{Spec}}}{4\pi} \norm{\psi}_{W^{s}}(R+1)e^{-R}\;,
\end{align*}
using  the (crude) bound $(1-2e^{-R}+e^{-2R})^{-1}\leq 5$ in each of the previous inequalities.

This establishes Proposition~\eqref{prop:averagedcounting} in its entirety.

\subsection{Error estimate for the pointwise counting problem}
\label{sec:countingproblem}

This subsection is devoted to the deduction of the estimate on the error for the counting problem stated in Theorem~\ref{thm:countingproblem}, starting from the asymptotic expansion in~\eqref{eq:averagedcountingfunction} for the averaged counting function. 

Recall from~\eqref{eq:defaveragedcounting} that, for any real number $R>0$, the ratio $N(R)/m_{\mathbb{H}}(B_R)$ equals the value of the function $F_R$ at the identity coset $\Ga\in G/\Ga$. In order to find a convenient approximation for the latter, we shall compare it with the averages 
\begin{equation*}
	\int_{G/\Ga}\psi F_R\;\text{d}m_{G/\Ga}
\end{equation*}  
where the function $\psi$ ranges over a suitably defined approximate identity\footnote{The terminology is common in the context of locally compact groups; see, for instance, \cite[Sec.~2.5]{Folland}.} in $G/\Ga$. 

We now expose the details. Let us fix a parameter $\delta\in \R_{>0}$, on which we shall subsequently impose conditions according to the needs of the argument; choose
\begin{itemize}
	\item[(a)] an open symmetric\footnote{Namely, $U_{\delta}$ coincides with the set of inverses of its elements.} neighborhood $U_{\delta}$ of the identity in $G$ such that, for any $R>0$, 
	\begin{equation}
		\label{eq:neighborhoodcontainment}
		B_{R-\delta}\subset\bigcap_{g\in U_{\delta}}g\cdot B_R\subset \bigcup_{g\in U_{\delta}}g\cdot B_R \subset B_{R+\delta}
	\end{equation}
	\item[(b)] and a smooth function $\psi_{\delta}\colon G/\Ga\to \R_{\geq 0}$ with compact support contained in the open set  $U_{\delta}\Ga=\{g\Ga:g\in U_{\delta} \}$ and satisfying
	\begin{equation}
		\label{eq:integralone}
		\int_{G/\Ga}\psi_{\delta}\;\text{d}m_{G/\Ga}=1\;.
	\end{equation}
\end{itemize}

\begin{rmk}
	The existence, for any $\delta>0$, of a neighborhood $U_{\delta}$ with the properties claimed above is routinely referred to in the literature (see, for instance,~\cite{Eskin-McMullen}) as the \emph{well-roundedness} property of the collection of balls $(B_R)_{R>0}$. A geometric condition of this sort affords to leverage equidistribution results to study lattice point counting problems.
\end{rmk}

Observe that we may harmlessly replace $U_{\delta}$ with
\begin{equation*} KU_{\delta}=\bigcup_{k\in K}kU_{\delta}\;,
\end{equation*}
and thus assume that $U_{\delta}$ is saturated with respect to left translations by elements of $K$. Property~\eqref{eq:neighborhoodcontainment} is unaffected: for any $k\in K$ and $z\in \Hyp$, we have 
\begin{equation*}
	d_{\Hyp}(k\cdot z,i)=d_{\Hyp}(k\cdot z,k\cdot i)=d_{\Hyp}(z,i)\;,
\end{equation*}
as the subgroup $K$ fixes $i$ and acts by hyperbolic isometries; therefore  $k\cdot B_{r}=B_{r}$ for any $k\in K$ and any $r>0$. As a consequence of this, we might and shall assume that $\psi_{\delta}$ is $K$-invariant.

We now express, for any $R>0$, the ratio $N(R)/m_{\Hyp}(B_R)$ as
\begin{equation*}
	\begin{split}
		F_R(\Ga)&=F_R(\Ga)-\int_{G/\Ga}\psi_{\delta}F_R\;\text{d}m_{G/\Ga}+\int_{G/\Ga}\psi_{\delta}F_R\;\text{d}m_{G/\Ga}\\
		&=\int_{G/\Ga}\psi_{\delta}(g\Ga)(F_R(\Ga)-F_R(g\Ga))\;\text{d}m_{G/\Ga}(g\Ga)+\int_{G/\Ga}\psi_{\delta}F_R\;\text{d}m_{G/\Ga}\;,
	\end{split}
\end{equation*}
where the second inequality follows from the property in~\eqref{eq:integralone}. Let us call $\mathscr{E}_{\delta}(R)$, for notational simplicity, the quantity
\begin{equation*}
	\int_{G/\Ga}\psi_{\delta}(g\Ga)(F_R(\Ga)-F_R(g\Ga))\;\text{d}m_{G/\Ga}(g\Ga)\;;
\end{equation*}
in view of~\eqref{eq:averagedcountingfunction} applied to $\int_{G/\Ga}\psi_{\delta}F_R\;\text{d}m_{G/\Ga}$, we may write 
\begin{equation}
	\label{eq:pointwiseaverage}
	\begin{split}
		F_R(\Ga)=&\frac{\text{covol}_{K}(\Ga\cap K)}{\text{covol}_{G}(\Ga)}+\mathscr{E}_{\delta}(R)\\
		&+\text{covol}_{K}(\Ga\cap K)
		\biggl( e^{-\frac{R}{2}} \sum_{\mu\in \text{Spec}(\square),\;\mu>1/4}\beta^+_{\psi_{\delta},\mu}(R)+\beta^-_{\psi_{\delta},\mu}(R)\\
		&+\sum_{\mu\in \text{Spec}(\square),\;0<\mu<1/4}e^{-\frac{1+\nu}{2}R}\beta^{+}_{\psi_{\delta},\mu}(R)+e^{-\frac{1-\nu}{2}R}\beta^{-}_{\psi_{\delta},\mu}(R)\\
		& +\varepsilon_0\biggl(e^{-\frac{R}{2}}\beta^{+}_{\psi_{\delta},1/4}(R)+Re^{-\frac{R}{2}}\beta^{-}_{\psi_{\delta},1/4}(R)\biggr)+\gamma_{\psi_{\delta}}(R)\biggr)\;.
	\end{split}
\end{equation}
We estimate, for any $R>0$,
\begin{equation}
	\label{eq:Edelta}
	|\mathscr{E}_{\delta}(R)|\leq \int_{G/\Ga}\psi_{\delta}(g\Ga)|F_R(\Ga)-F_R(g\Ga)|\;\text{d}m_{G/\Ga}(g\Ga)
	\leq \sup_{g\in U_{\delta}}|F_R(\Ga)-F_R(g\Ga)|\;,
\end{equation}
the last inequality being a consequence of~\eqref{eq:integralone} and the fact that $\text{supp}\;\psi_{\delta}\subset U_{\delta}\Ga$.

Now, for any $g\in U_{\delta}$, we have
\begin{equation}
	\label{eq:FR}
	\begin{split}
		|F_R(\Ga)-F_R(g\Ga)|&=\frac{\bigl||\Ga\cdot H\cap B_R|-|g\Ga\cdot H\cap B_R|\bigr|}{m_{\mathbb{H}}(B_R)}=\frac{\bigl||\Ga\cdot H\cap B_R|-|\Ga\cdot H\cap g^{-1}\cdot B_R|\bigr|}{m_{\mathbb{H}}(B_R)}\\
		&\leq \frac{\bigl|\Ga\cdot H\cap \bigl(\bigl(\bigcup_{g\in U_{\delta}}g\cdot B_R\bigr)\setminus \bigl(\bigcap_{g\in U_{\delta}}g\cdot B_{R}\bigr)\bigr)\bigr|}{m_{\mathbb{H}}(B_R)}\leq\frac{ N(R+\delta)-N(R-\delta)}{m_{\mathbb{H}}(B_R)}\\
		&=F_{R+\delta}(\Ga)\frac{m_{\mathbb{H}}(B_{R+\delta})}{m_{\mathbb{H}}(B_R)}-F_{R-\delta}(\Ga)\frac{m_{\mathbb{H}}(B_{R-\delta})}{m_{\mathbb{H}}(B_R)}\;,
	\end{split}
\end{equation}
where the second-to-last inequality follows from~\eqref{eq:neighborhoodcontainment}. Choose $R_0=R_0(\Ga)>0$ such that the quantities
\begin{equation*}
	M=\sup_{r\geq R_0}F_{r}(\Ga) \quad \text{ and }\quad m=\inf_{r\geq R_0}F_r(\Ga)
\end{equation*}
are non-zero and finite\footnote{A straightforward modification of the effective argument we are running leads to the well-known non-effective convergence 
	\begin{equation*}
		F_{R}(\Ga)\overset{R\to\infty}{\longrightarrow}\frac{\text{covol}_{K}(\Ga\cap K)}{\text{covol}_{G}(\Ga)}\in \R_{>0}\;.
	\end{equation*}
}. Plugging~\eqref{eq:FR} into~\eqref{eq:Edelta}, we get that, for any $R\geq 2R_0$ and $\delta<R_0$, 
\begin{equation}
	\label{eq:neighboringvalues}
	\begin{split}
		&|\mathscr{E}_{\delta}(R)|\leq (M-m)\biggl(\frac{m_{\mathbb{H}}(B_{R+\delta})}{m_{\mathbb{H}}(B_R)}-\frac{m_{\mathbb{H}}(B_{R-\delta})}{m_{\mathbb{H}}(B_R)}\biggr)\\
		&=\frac{(M-m)}{1-2e^{-R}+e^{-2R}}\biggl(e^{\delta}(1-2e^{-(R+\delta)}+e^{-2(R+\delta)})-e^{-\delta}(1-2e^{-(R-\delta)}+e^{-2(R-\delta)})\biggr)\\
		&\leq(M-m)(ce^{\delta}-e^{-\delta})\;,
	\end{split}
\end{equation}
where we might for example take $c=c_{\Ga}=\frac{1+e^{-2R_0}}{1-2e^{-R_0}}$.

We now let the parameter $\delta$ be a function of the radius $R$; for reasons which we will shortly elucidate (see Remark~\ref{rmk:exponentialdecay}), we let $\delta=\delta(R)=e^{-\eta R}$ in~\eqref{eq:pointwiseaverage}, for a certain $\eta>0$ to be determined later on. In this way, we obtain an expression of the form
\begin{equation}
	\label{eq:Rfunctiondelta}
	\begin{split}
		F_R(\Ga)=&\frac{\text{covol}_{K}(\Ga\cap K)}{\text{covol}_{G}(\Ga)}+\mathscr{E}_{e^{-\eta R}}(R)\\
		&+\text{covol}_{K}(\Ga\cap K)
		\biggl(e^{-\frac{R}{2}} \sum_{\mu\in \text{Spec}(\square),\;\mu>1/4}\beta^+_{\psi_{e^{-\eta R}},\mu}(R)+\beta^-_{\psi_{e^{-\eta R}},\mu}(R)\\
		&+\sum_{\mu\in \text{Spec}(\square),\;0<\mu<1/4}e^{-\frac{1+\nu}{2}R}\beta^{+}_{\psi_{e^{-\eta R}},\mu}(R)+e^{-\frac{1-\nu}{2}R}\beta^{-}_{\psi_{e^{-\eta R}},\mu}(R)\\
		& +\varepsilon_0\biggl(e^{-\frac{R}{2}}\beta^{+}_{\psi_{e^{-\eta R}},1/4}(R)+Re^{-\frac{R}{2}}\beta^{-}_{\psi_{e^{-\eta R}},1/4}(R)\biggr)+\gamma_{\psi_{e^{-\eta R}}}(R)\biggr)\;,
	\end{split}
\end{equation}
which does not depend on the parameter $\delta$ any longer.

Multiplying by $m_{\mathbb{H}}(B_R)$ on both sides of~\eqref{eq:Rfunctiondelta} yields
\begin{equation}
	\label{eq:pointwisecounting}
	\begin{split}
		N(R)=&\frac{\text{covol}_{K}(\Ga\cap K)}{\text{covol}_{G}(\Ga)}m_{\mathbb{H}}(B_R)+\pi(1-2e^{-R}+e^{-2R})e^{R}\mathscr{E}_{e^{-\eta R}}(R)\\
		&+\pi(1-2e^{-R}+e^{-2R})\;\text{covol}_{K}(\Ga\cap K)
		\biggl(e^{\frac{R}{2}} \sum_{\mu\in \text{Spec}(\square),\;\mu>1/4}\beta^+_{\psi_{e^{-\eta R}},\mu}(R)+ \beta^-_{\psi_{e^{-\eta R}},\mu}(R)\\
		&
		+\sum_{\mu\in \text{Spec}(\square),\;0<\mu<1/4}e^{\frac{1-\nu}{2}R}\beta^{+}_{\psi_{e^{-\eta R}},\mu}(R)+e^{\frac{1+\nu}{2}R}\beta^{-}_{\psi_{e^{-\eta R}},\mu}(R)\\
		& +\varepsilon_0\biggl(e^{\frac{R}{2}}\beta^{+}_{\psi_{e^{-\eta R}},1/4}(R)+Re^{\frac{R}{2}}\beta^{-}_{\psi_{e^{-\eta R}},1/4}(R)\biggr)+e^{R}\gamma_{\psi_{e^{-\eta R}}}(R)\biggr)\;.
	\end{split}
\end{equation}

In order to reach an accurate upper bound for the error
\begin{equation*}
	E(R)=\biggl|N(R)-\frac{\text{covol}_{K}(\Ga\cap K)}{\text{covol}_{G}(\Ga)}m_{\mathbb{H}}(B_R)\biggr|\;,
\end{equation*}
in our counting problem, it remains to determine which are the highest-order terms in the expansion~\eqref{eq:pointwisecounting}. To this end, it is relevant to estimate the Sobolev norms of the functions $\psi_{e^{-\eta R}}$ for $R>0$, because of the bounds in~\eqref{eq:betabound} and~\eqref{eq:gammabound}.

\begin{lem}
	\label{lem:growthSobolev}
	For any $0<\delta<1$, the function $\psi_{\delta}$ can be chosen to satisfy 
	\begin{equation}
		\label{eq:decaySobolev}
		\norm{\psi_{\delta}}_{W^{s}}\leq \delta^{-(1+s)}\norm{\psi_1}_{W^{s}}
	\end{equation}
	for any $s>0$.
\end{lem}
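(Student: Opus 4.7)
The plan is to construct $\psi_\delta$ as a hyperbolic dilation of $\psi_1$ centered at $i$ by factor $\delta$. The key observation, which produces the exponent $-(1+s)$ in place of the $-(\tfrac{3}{2}+s)$ that a three-dimensional dilation on $G$ itself would yield, is that the $K$-invariance imposed above effectively confines the dilation to the two-dimensional homogeneous space $G/K \cong \Hyp$, so that scaling factors accumulate along only two directions rather than three.

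Concretely, fix a smooth non-negative $\tilde\psi_1 \colon \Hyp \to \R$, supported in the closed hyperbolic ball $B_1(i)$, normalized so that $\int_\Hyp \tilde\psi_1 \, dm_\Hyp = 1$, and such that the projection $\Hyp \to S = \Gamma \backslash \Hyp$ is injective on $B_1(i)$ (achievable, if necessary, by a preliminary rescaling of the base radius). For $0 < \delta < 1$, let $\Phi_\delta \colon \Hyp \to \Hyp$ denote the geodesic dilation $\exp_i(v) \mapsto \exp_i(\delta^{-1} v)$ in exponential coordinates around $i$, and set $\tilde\psi_\delta = c_\delta (\tilde\psi_1 \circ \Phi_\delta)$, with $c_\delta$ enforcing unit integral; a polar-coordinate computation using $dm_\Hyp = \sinh(r)\, dr\, d\omega$ yields $c_\delta \asymp \delta^{-2}$ uniformly in $\delta \in (0,1]$. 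Pulling back through $G \to G/K \cong \Hyp$ and descending to $G/\Gamma$ produces $\psi_\delta \in \mathscr{C}^\infty(G/\Gamma)$, non-negative, of unit integral and $K$-invariant, supported in $U_\delta\Gamma$ for $U_\delta$ the $K$-saturation of a neighborhood of the identity coset of diameter of order $\delta$; the well-roundedness condition \eqref{eq:neighborhoodcontainment} follows from the triangle inequality for $d_\Hyp$.

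To estimate $\norm{\psi_\delta}_{W^s}$, one uses $\Theta \psi_\delta \equiv 0$ (by $K$-invariance), so $\Delta \psi_\delta = \square \psi_\delta$ coincides with the hyperbolic Laplacian of $\tilde\psi_\delta$; combined with injectivity on the support and compatibility of the invariant measures, this shows $\norm{\psi_\delta}_{W^s(G/\Gamma)}$ is comparable to the hyperbolic Sobolev norm $\norm{\tilde\psi_\delta}_{W^s(\Hyp)}$ defined via powers of $1 + \Delta_\Hyp$. For integer $k \geq 0$, iterating the chain rule yields the pointwise bound $|\nabla^j \tilde\psi_\delta(z)| \leq C_j\, \delta^{-2-j}\, |(\nabla^j \tilde\psi_1)(\Phi_\delta(z))|$ for $0 \leq j \leq k$, the factor $\delta^{-j}$ arising from the operator norm of $D\Phi_\delta$ being of order $\delta^{-1}$ on $B_1(i)$, where hyperbolic and Euclidean geometries are bi-Lipschitz equivalent with absolute constants. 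Integrating the square against $dm_\Hyp$ and changing variables under $\Phi_\delta$ contributes a Jacobian of order $\delta^{2}$, so $\norm{\nabla^j \tilde\psi_\delta}_{L^2}^2 \leq C'_j\, \delta^{-2(1+j)}\, \norm{\nabla^j \tilde\psi_1}_{L^2}^2$; summing over $0 \leq j \leq k$ and invoking the equivalence of the norm $\langle (1+\Delta)^k \cdot, \cdot \rangle^{1/2}$ with $\bigl( \sum_{j \leq k} \norm{\nabla^j \cdot}_{L^2}^2 \bigr)^{1/2}$ delivers $\norm{\psi_\delta}_{W^k} \leq C_k\, \delta^{-(1+k)} \norm{\psi_1}_{W^k}$. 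The bound for non-integer $s$ then follows by complex interpolation between consecutive integer Sobolev spaces, which form a Hilbert scale associated to the self-adjoint operator $1+\Delta$; all multiplicative constants can be absorbed by a one-time rescaling of $\psi_1$, and for $\delta$ bounded away from $0$ the inequality is trivially adjustable.

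The main delicacy is securing the exponent exactly $-(1+s)$ rather than $-(\tfrac{3}{2}+s)$: exploiting $K$-invariance to collapse the problem to two dimensions is indispensable, since it is this exponent that ultimately drives the value $1/13$ appearing in Theorem \ref{thm:countingproblem}. A secondary hurdle, the non-Euclidean character of $\Phi_\delta$, is neutralized by restricting the base bump to a fixed ball on which the hyperbolic and Euclidean metrics are uniformly equivalent.
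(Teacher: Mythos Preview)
Your proposal is correct and follows essentially the same approach as the paper: both exploit the $K$-invariance of $\psi_\delta$ to reduce the Sobolev estimate to a two-dimensional problem, then invoke the standard mollifier scaling together with the bi-Lipschitz equivalence of the hyperbolic and Euclidean metrics on a fixed ball. The paper's argument is somewhat more streamlined --- it transfers directly to $\R^2$ via a coordinate chart and appeals to the Euclidean dilation $\psi_\delta(x)=\delta^{-2}\psi_1(x/\delta)$, for which the bound $\norm{\psi_\delta}_{W^s}\le \delta^{-(1+s)}\norm{\psi_1}_{W^s}$ follows by a one-line Fourier computation --- whereas you work intrinsically with the hyperbolic geodesic dilation, estimate covariant derivatives via the chain rule, and interpolate; but the underlying idea and the crucial reduction to dimension two are identical.
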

\begin{proof}
	Recall that $\psi_\delta$ is assumed to be $K$-invariant or, in other words, a smooth compactly supported function on the two-dimensional manifold $K\bsl G/\Ga$. Since any Riemannian metric on $K\bsl G/\Ga$ is equivalent, on a fixed compact coordinate ball containing the identity coset $Ke\Ga$, to the Euclidean metric on a compact neighborhood of the origin in $\R^{2}$ (cf.~\cite[Lem.~13.28]{Lee}), the problem of constructing $\psi_{\delta}$ so to meet our requirement can be transferred to the Euclidean plane. Specifically, we would like to construct a collection $(\psi_{\delta})_{0<\delta\leq1}$ of mollifiers (cf.~\cite[Sec.~4.4]{Brezis}) so that~\eqref{eq:decaySobolev} is satisfied, where $\norm{\cdot}_{W^{s}}$ are now the standard fractional Sobolev norms on $\R^{2}$. A straightforward computation allows to ascertain that the customary choice 
	\begin{equation*}
		\psi_{\delta}(x)=\frac{1}{\delta^2}\psi_1\biggl(\frac{x}{\delta}\biggr)\;,\quad x\in \R^{2},
	\end{equation*}
	where $\psi_1$ is a fixed compactly supported smooth nonnegative function with unit average over $\R^{2}$, fulfills~\eqref{eq:decaySobolev}. 
\end{proof}

Henceforth, we assume that the collection $(\psi_{\delta})_{0<\delta<1}$ satisfies the condition in Lemma~\ref{lem:growthSobolev}.

We remind the reader that we indicate with $\mu_*$ the spectral gap of the hyperbolic surface $S=\Ga\bsl \Hyp$, that is, the infimum of the set $\text{Spec}(\square)\cap \R_{>0}$. Also, we denote by $\nu_*$ the complex number defined by the properties $\nu_*\in \R_{\geq 0}\cup i\R_{>0}$ and $1-\nu_*^2=4\mu_*$.

Since $e^{\delta}-e^{-\delta}\sim 2\delta$ for $\delta\sim 0$, we deduce from~\eqref{eq:neighboringvalues} that the term $e^{R}\mathscr{E}_{e^{-\eta R}}(R)$ is at most of order $e^{(1-\eta)R}$. On account of Lemma~\ref{lem:growthSobolev}, the highest-order term in the expression 
\begin{equation*}
	\begin{split}
		&e^{\frac{R}{2}} \sum_{\mu\in \text{Spec}(\square),\;\mu>1/4}\beta^+_{\psi_{e^{-\eta R}},\mu}(R)+ \beta^-_{\psi_{e^{-\eta R}},\mu}(R)
		+\sum_{\mu\in \text{Spec}(\square),\;0<\mu<1/4}e^{\frac{1-\nu}{2}R}\beta^{+}_{\psi_{e^{-\eta R}},\mu}(R)\\
		& +e^{\frac{1+\nu}{2}R}\beta^{-}_{\psi_{e^{-\eta R}},\mu}(R)+\varepsilon_0\biggl(e^{\frac{R}{2}}\beta^{+}_{\psi_{e^{-\eta R}},1/4}(R)+Re^{\frac{R}{2}}\beta^{-}_{\psi_{e^{-\eta R}},1/4}(R)\biggr)+e^{R}\gamma_{\psi_{e^{-\eta R}}}(R)
	\end{split}
\end{equation*}
is $e^{\frac{1+\Re{\nu_*}}{2}R}\beta_{\psi_{e^{-\eta R}},\mu_*}(R)$; because of Lemma~\ref{lem:growthSobolev} and~\eqref{eq:betabound}, the latter is at most of order
\begin{equation*}
	e^{\frac{1+\Re{\nu_*}}{2}R}e^{(1+s)\eta R}=e^{\frac{1+\Re{\nu_*}+2(1+s)\eta}{2}R}\;.
\end{equation*}

\begin{rmk}
	\label{rmk:exponentialdecay}
	The reason for choosing $\delta$ to decay exponentially fast with $R$ becomes now apparent: it is the only way to get a sensible comparison between the orders of the two terms considered above. 	
\end{rmk}

Ostensibly the optimal choice of the parameter $\eta$ for our purposes is
\begin{equation*} \eta=\frac{1-\Re{\nu_*}}{2(2+s)}\;,
\end{equation*}
which realizes the equality of exponents
\begin{equation*}
	1-\eta=\frac{1+\Re{\nu_*}+2(1+s)\eta}{2}\;.
\end{equation*}
Bearing in mind that the $K$-invariance of $\psi_{e^{-\eta R}}$ allows to choose $s$ can arbitrarily close to $9/2$ (cf.~Theorem~\ref{thm:expandingonsurface}), it is straightforward to deduce that, setting $\eta_*=\frac{1}{13}(1-\Re{\nu_{*}})$,  we have
\begin{equation*}
	\lim\limits_{R\to\infty}\frac{E(R)}{e^{(1-\eta_{*}+\eps)R}}=0
\end{equation*} 
for any $\eps>0$, which establishes Theorem~\ref{thm:countingproblem}.

\footnotesize

\end{document}